\pdfoutput=1  
\documentclass[11pt,reqno]{amsart}
\usepackage[utf8]{inputenc}
\usepackage{amsmath,amssymb,amsthm}
\usepackage{booktabs}
\usepackage{array}   
\usepackage{longtable}  
\usepackage[table]{xcolor}
\definecolor{hproved}{HTML}{2A78D6}   
\definecolor{hverif}{HTML}{EB6834}    
\definecolor{hband}{HTML}{F2F1EC}     
\newcommand{\stproved}{\textcolor{hproved}{\small proved}}
\newcommand{\stverif}{\textcolor{hverif}{\small verified}}
\definecolor{hext}{HTML}{6B6560}     
\newcommand{\stext}{\textcolor{hext}{\small external}}
\definecolor{hrule}{HTML}{B9B7AE}     
\newcommand{\keybox}[1]{%
  \par\medskip\noindent
  \fcolorbox{hrule}{hband}{%
    \parbox{\dimexpr\linewidth-2\fboxsep-2\fboxrule\relax}{\smallskip #1\smallskip}}%
  \par\medskip}
\usepackage{graphicx}
\usepackage{tikz}
\usetikzlibrary{calc,arrows.meta}
\usepackage[margin=3.1cm]{geometry}
\usepackage[colorlinks=true,linkcolor=blue!55!black,citecolor=blue!55!black,urlcolor=blue!55!black]{hyperref}
\hypersetup{
  pdftitle={Schur polynomials twisted by roots of unity and reciprocal pairs: exactly three factors, and where they vanish},
  pdfauthor={Carles Mar\'{\i}n},
  pdfsubject={Schur polynomials evaluated at a full orbit of roots of unity together with free reciprocal pairs},
  pdfkeywords={Schur polynomials; roots of unity; reciprocal variables; factorization; cores and quotients; signed enumeration}}
\usepackage{caption}
\makeatletter
\renewcommand\section{\@startsection{section}{1}%
  \z@{\linespacing\@plus\linespacing}{.5\linespacing}%
  {\normalfont\large\bfseries\centering}}
\makeatother

\theoremstyle{plain}
\newtheorem{theorem}{Theorem}[section]
\newtheorem{lemma}[theorem]{Lemma}
\newtheorem{corollary}[theorem]{Corollary}
\newtheorem{proposition}[theorem]{Proposition}
\newtheorem{conjecture}[theorem]{Conjecture}
\theoremstyle{definition}
\newtheorem{remark}[theorem]{Remark}
\newtheorem{definition}[theorem]{Definition}
\newtheorem{example}[theorem]{Example}
\newtheorem{problem}[theorem]{Problem}

\newcommand{\ZZ}{\mathbb{Z}}
\newcommand{\CC}{\mathbb{C}}
\newcommand{\PP}{\mathcal{P}}
\newcommand{\zt}{\mu_t}
\newcommand{\inv}{\operatorname{inv}}
\newcommand{\sgn}{\operatorname{sgn}}
\newcommand{\core}{\operatorname{core}}
\newcommand{\quot}{\operatorname{quot}}
\newcommand{\zb}{\bar z}
\newcommand{\Fix}{\operatorname{Fix}}
\newcommand{\wt}{\operatorname{wt}}

\ifdefined\LONGABS
  \makeatletter
  \def\@setabstracta{%
    \ifvoid\abstractbox
    \else
      \skip@20\p@ \advance\skip@-\lastskip
      \advance\skip@-\baselineskip \vskip\skip@
      \unvbox\abstractbox
    \fi}
  \makeatother
\fi

\begin{document}

\title[Schur polynomials twisted by roots of unity and reciprocal pairs]
{Schur polynomials twisted by roots of unity and reciprocal pairs: exactly three factors, and where they vanish}

\author{Carles Mar\'in}
\address{Independent researcher}
\email{karlesmarin@gmail.com}

\date{August 21, 2026}

\subjclass[2020]{Primary 05E05; Secondary 05A15, 05E10, 20G05}
\keywords{Schur polynomials, roots of unity, reciprocal variables, factorization, cores and
quotients, signed enumeration}

\begin{abstract}
\ifdefined\LONGABS
Let $\zt=\{1,\zeta,\dots,\zeta^{t-1}\}$ be the full set of $t$-th roots of unity and let
$(z,z^{-1})$ be a free reciprocal pair. We evaluate $s_\lambda(\zt,z,z^{-1})$ for every $t\ge2$ and
every partition $\lambda$ with at most $t+2$ parts, and no hypothesis on its shape: it is a signed product of
exactly three factors over a fixed denominator, or zero, and all three arguments are read off the
residue profile and the
$t$-quotient of $\lambda$. Equivalently, and with no exponential in sight, it is a ratio of
$\mathfrak{sl}_2$ characters: a triple product over the square of one. What the identity exposes is a
structure, and the structure is the part we expect to outlast it: everything this evaluation can see
of $\lambda$ is a residue profile, an interval triple and a sign, and we isolate that datum as an
\emph{evaluation invariant}. The value factors through it; it is complete, two partitions of
any sizes that share the invariant sharing the value; and, read as a multiset together with the
sign, it is also minimal, two partitions sharing a nonzero value only if they share it. So an
arbitrarily large partition is compressed to three integers and a sign, exactly. The proof is a Laplace
expansion along the $t$ frozen rows of the
bialternant together with one cancellation lemma in the symmetric group, and it delivers the sign,
which is the sorting sign already present in Littlewood's evaluation of $s_\lambda(\zt)$ and which we
also give in closed form in the notation of Ayyer and Kumari. Three
consequences are recorded. First, a vanishing criterion: the character vanishes exactly when some
residue class modulo $t$ is empty, or when two distinguished classes are concentric as intervals ---
and the second case occurs only for $t$ even, so for odd $t$ an empty class is the only way to
vanish. Second, an extension of a recent independence criterion of Ayyer and Kumari: their theorem
determines when a character is independent of the variables it is twisted by, and we show that for
two-row shapes on the reciprocal locus the criterion acquires exactly one extra family, which we classify by its core
and quotient. Their theorem is stated for free variables and a reciprocal pair is not free, so the
locus lies outside its hypotheses rather than against them, and what we determine is what the
criterion becomes there. Third, an
enumerative reading: the
identity is a product formula for a root-of-unity weighted count of semistandard tableaux in which a
parameter is still free, and at $t=2$ this is a $(-1)$-enumeration of plane partitions in a box
refined by that parameter; a sign-reversing proof of it has to carry that parameter, and we supply
the half of the cancellation that crosses the splitting of the alphabet. Finally we show the
factorization is isolated: it fails under each of the
four deformations of the alphabet --- more reciprocal pairs, more orbit variables, the orbit replaced
by a coset, the reciprocal pair replaced by a free pair --- and a single reading of the proof
accounts for all four failures at once.

What survives the first of those deformations is the \emph{zero locus}, and the last section is about
it. Write $\Psi_r=s_\lambda(1,-1,z_1,z_1^{-1},\dots,z_r,z_r^{-1})$. Two conditions make $\Psi_r$
vanish: the beta set of $\lambda$ having constant parity, and $\lambda$ being self-complementary of
odd width. \emph{That} implication we prove for every $r$ and every $\lambda$, and we claim nothing
for it: it is a short corollary of the complementation identity over an index family Ayyer and
Behrend already single out, and at the same alphabet \emph{without} the letter $-1$ their theorems
make those very shapes factorize instead --- the determinant of the alphabet is what decides.
The new content is the \emph{converse}, that nothing else vanishes, and we prove it for every $r$
and every $\lambda$ by an extremal argument in the degree filtration whose only external input is a
rigidity theorem for products of Schur polynomials. Read on the group rather than on the alphabet,
that criterion classifies exactly the irreducible polynomial $GL(N)$-modules whose restriction to
$O(N)$ is stable under twisting by the determinant. The same argument settles a second line of the
$(t,r)$ plane and settles it outright, with no external input at all: for every \emph{odd} $t$ and
every $r$, the character vanishes if and only if a residue class is absent from the beta set, so the
concentric branch is a phenomenon of even $t$ alone. That leaves $t\ge4$ even and $r\ge2$ as the one
region where the zero locus is open, and there we record the necessary condition the argument does
give. One pair also turns out not to be typical:
there $\Psi_1$ is a genuine character up to sign, so a single order-two specialization decides
whether it vanishes, and from two pairs on that stops being true.
\else
Let $\zt$ be the full set of $t$-th roots of unity. Adjoining $r$ free reciprocal pairs gives a
two-parameter family of alphabets; we settle three parts of it.

At $r=1$, for every $t\ge2$ and every $\lambda$ with at most $t+2$ parts,
$s_\lambda(\zt,z,z^{-1})$ is a signed product of exactly three factors over a fixed denominator, or
zero, the arguments read off core and quotient. What it sees of $\lambda$ is a multiset of
three integers and a sign, and exactly that: two partitions of any sizes share a
nonzero value if and only if they agree on that datum. The proof is a Laplace expansion along the
$t$ frozen rows with one cancellation lemma, and delivers the sign, of which Littlewood's is one factor.

At $t=2$ and every $r$, $s_\lambda(1,-1,z_1^{\pm1},\dots,z_r^{\pm1})$ vanishes exactly when the beta
set has constant parity or $\lambda$ is self-complementary of odd width; that direction is a
corollary of complementation over an index family of Ayyer and Behrend, the converse an
extremal argument in the degree filtration, modulo one rigidity theorem for Schur products.
Equivalently: exactly those $V_\lambda$ restrict to $O(N,\mathbb{C})$ $\det$-stably. At odd
$t$ and every $r$ it vanishes exactly when a residue class is absent, at no external cost. And for
every $t$ and $r$, a reflection of the beta set's \emph{excess part} with one increment hitting its
centre forces vanishing.

Three consequences of the first. A vanishing criterion: an empty residue class, or two
distinguished classes concentric as intervals, the second only for even $t$. An extension of
Ayyer-Kumari's independence criterion: on the reciprocal locus it acquires one further family,
classified by core and quotient. And at $t=2$ a $(-1)$-enumeration of plane partitions in a box
refined by a parameter that stays free. The factorization is isolated: it fails under each of four
deformations of the alphabet, for one reason.
\fi
\end{abstract}

\maketitle

\section{Introduction}

A Schur polynomial is a character. Written $s_\lambda(x_1,\dots,x_n)$, it is the trace of a matrix
with eigenvalues $x_1,\dots,x_n$ acting on the irreducible representation of $GL_n$ labelled by
$\lambda$, and reading it that way turns every choice of the variables into a question about a group
element rather than about a polynomial. Two such questions are as old as the subject. At the
identity, all $x_i=1$, the trace is a dimension, and it grows with $\lambda$. At an element of finite
order it stops growing: if the variables run over a full orbit of $t$-th roots of unity the value is
$0$ or $\pm1$, and which of the three it is depends on a divisibility property of $\lambda$ and not
on its size. Something that could have been arbitrarily large has been compressed to almost nothing,
and the reason is arithmetic. Everything below is about how far that compression reaches: how much
of an alphabet one may add before the collapse stops, and what exactly survives of a partition when
it does not.

Two structural reasons for such a collapse are known, and each has produced its own literature. The
first is that the alphabet is an \emph{orbit}: a full set of $t$-th roots of unity is the eigenvalue
list of an element of order $t$, and orbits are what the operators of that line are built to see. The
second is that the alphabet is \emph{closed under $x\mapsto x^{-1}$}: such a list is the spectrum of
an orthogonal matrix, so the $GL$ character is being restricted to a smaller group, and it is the
restriction that factorizes. The two reasons are independent of one another, and so are the two
literatures; the paragraphs that follow trace each, and \S\ref{sec:sharp} shows that neither survives
being pushed into the other's territory. The alphabet of this paper is the smallest one that carries
one reason of each kind.

Figure \ref{fig:thread} is the paper read as a single chain: each answer is what raises the
next question, and each pearl carries the section that settles it. Figure \ref{fig:alphabet}
draws the alphabet itself, which is the one object everything below is about.

\begin{figure}[!ht]
\centering
\includegraphics[width=\textwidth]{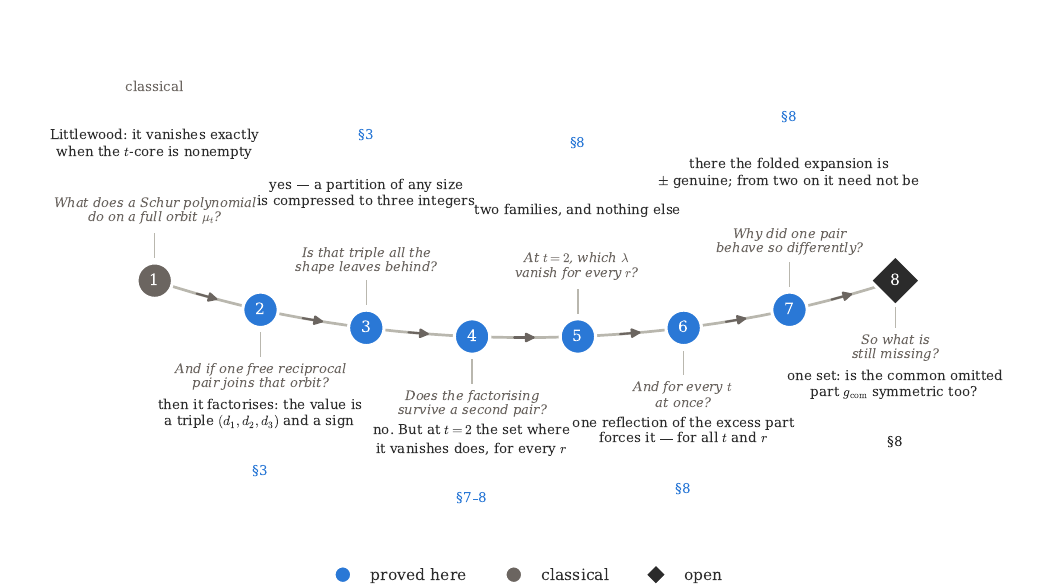}
\caption{The paper as one thread. Each question is answered by the pearl below it, and each
answer is what makes the next question the natural one to ask: the orbit invites the free pair,
the factorisation invites the triple, the triple invites the second pair, and the second pair
turns a formula into a locus. Colour marks what is classical, what is proved here, and what is
left open, and the tag under each answer is the section that settles it. Drawn by
\texttt{anc/fig\_intro.py}.}
\label{fig:thread}
\end{figure}

\begin{figure}[!ht]
\centering
\includegraphics[width=\textwidth]{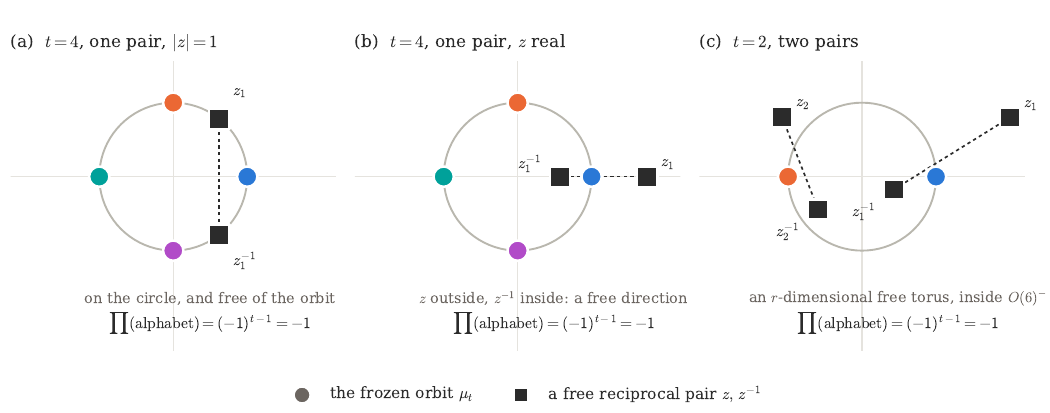}
\caption{The alphabet, in the complex plane. The orbit $\mu_t$ is frozen --- a regular
$t$-gon on the unit circle, coloured by residue --- and each reciprocal pair is one free complex
direction, drawn joined by a dashed segment. \emph{(a)} the pair may sit on the circle; it is a free
variable and not part of the orbit, and for all but finitely many $z$ it misses $\mu_t$, which is why
an operator built for $\zeta$-orbits has nothing to say about it. \emph{(b)} off the circle, $z$
outside and $z^{-1}$ inside. \emph{(c)} $r$ pairs give an $r$-dimensional free torus. In every case
the alphabet is closed under inversion, so it is the eigenvalue list of an element of $O(N,\CC)$
--- panel (b) shows why the group has to be the complex one --- and the product of its letters is
$\zeta^{t(t-1)/2}=(-1)^{t-1}$: for even $t$ that matrix lies in the non-identity component.
Computed by \texttt{anc/fig\_intro.py}.}
\label{fig:alphabet}
\end{figure}

A partition can be arbitrarily large, and an alphabet of $N=t+2$ letters can only see so much
of it. This paper settles, for one alphabet, exactly how much. The full set of $t$-th roots of
unity together with one free reciprocal pair sees a multiset of three integers and a sign, and
nothing else whatever: two partitions of any sizes that agree on that datum take the same value
there. The closed form of Theorem \ref{thm:main} is how we prove it, and the compression is the
part we expect to outlast the formula.

The evaluation of Schur polynomials at roots of unity is classical and remarkably clean. Littlewood
and Richardson \cite{LR34} showed that $s_\lambda$ specializes to $0$ or $\pm1$ when the variables
are taken to be all the $t$-th roots of unity, the sign being a sorting sign and the vanishing being
governed by the $t$-core of $\lambda$; and in the same work they generalized this to an alphabet
carrying a free variable in every letter. The latter result was rediscovered by Prasad \cite{Pra16},
given a new proof by Karmakar \cite{Kar22}, extended to all classical groups by Ayyer and Kumari
\cite{AK22} --- who also identified the earlier, almost unnoticed work of Lecouvey \cite{Lec09} ---
and carried to the universal characters by Albion \cite{Alb23}. Kumari \cite{Kum24} extended the
family further, Karmakar \cite{Kar24} treated elements of order two, and Ayyer and Kumari
\cite{AK25} added several more specializations, and Albion \cite{Alb25} related the factorizations to
$z$-asymmetric partitions and plethysm. Sathish Kumar \cite{SK23} carried the factorization to
flagged skew shapes, where it produces a family of Demazure characters; the flags there are
required to be multiples of $t$, which is the same demand for root-of-unity structure in another
guise. Read together, that line does more than accumulate cases. The statement migrates from a group
to an alphabet: once the characters are universal, evaluating one is a question about which letters
are written down, and the classical types stop being separate theorems --- \cite[Theorem 5.3]{AK25}
is one statement for $s$, $sp$ and $o$ at once. That migration is what makes the question of this
paper askable, because the alphabet below belongs to no group in the list; and the tools we borrow in
\S\ref{sec:zeros} are the ones that migration produced. We refer to Albion \cite{Alb23} for an
account of the whole line.

Read for its \emph{value}, Littlewood's theorem is a factorization. Read for its \emph{zeros} it is
something else --- a classification of the $\lambda$ that die at an element of order $t$ --- and
that second reading has a descendant of its own, which is worth naming here because
\S\ref{sec:zeros} turns out to be its cousin. Prasad \cite[Theorem 2]{Pra16} asks the vanishing
question not at a point but along a whole family: on the non-identity component of
$GL_m^n\rtimes\ZZ/n$ that paper determines when the character is identically zero \emph{as a function of the
free parameter}, and the answer is again read off the residue classes of $\lambda+\delta$ --- each
class must be represented exactly $m$ times. At $m=1$ that is Littlewood's condition. So one 1934
theorem has two descendants, and they part company over what is allowed to stay free: there,
complete root-of-unity orbits; here, as we shall see, a single frozen orbit together with free
\emph{reciprocal} directions. Neither statement contains the other.

A second line of factorization results concerns alphabets closed under $x\mapsto x^{-1}$. Okada
\cite{Oka98} and Ciucu--Krattenthaler \cite{CK09} treated rectangular shapes; Behrend, Fischer and
Konvalinka, and Ayyer, Behrend and Fischer, treated double staircases, for which we follow the
attributions given in \cite{AB19}; and Ayyer--Behrend
\cite{AB19} treated self-dual shapes, with bijective proofs of the latter given by Ayyer and Fischer
\cite{AF20}. There the payoff is enumerative: the factorizations produce
product formulas for symmetry classes of plane partitions and for rhombus tilings. The
self-complementary shapes of \cite{AB19} reappear in \S\ref{sec:zeros} from the other side: adding
the letter $-1$ turns the determinant of the alphabet from $+1$ to $-1$, and the shapes that
factorize there are exactly the shapes that vanish here.

In the factorisation literature surveyed here the two lines do not appear to have shared an
alphabet, and the obstruction is structural in both directions.
The engine of the first is an operator --- the Verschiebung $\varphi_t$, in Albion's formulation
\cite{Alb23}, acting on universal characters by $\varphi_t h_r=h_{r/t}$ if $t\mid r$ and $0$
otherwise --- which requires the alphabet to be a union of $\zeta$-orbits; every extension in that
line therefore adds \emph{more} root-of-unity structure, never less. The engine of the second
requires the alphabet to be entirely reciprocal, or the shape to be self-complementary. A free
reciprocal pair is not a $\zeta$-orbit, and a root-of-unity orbit is not a union of free pairs.

The word \emph{smallest} below can be made precise, and inversion-closure is what makes it so. An
inversion-closed extension of $\zt$ can adjoin only reciprocal pairs $\{x,x^{-1}\}$ and the fixed
points of inversion, which are $1$ and $-1$; and $1$ lies in $\zt$ for every $t$, while $-1$ lies in
$\zt$ exactly when $t$ is even. The only extension smaller than a pair is therefore $\zt\cup\{-1\}$
with $t$ odd, and it is frozen: over $t=3,5,7$ and $|\lambda|\le14$, all $1092$ shapes,
$s_\lambda(\zt,-1)$ takes only the values $0$ and $\pm1$, with nothing left to depend on. A free
reciprocal pair is the smallest extension that leaves anything to evaluate.

In this paper we take the smallest alphabet containing one letter of each kind,
\begin{equation}\label{eq:object}
\Phi_t(\lambda;z)\;:=\;s_\lambda\bigl(\,\underbrace{1,\zeta,\dots,\zeta^{t-1}}_{\zt},\;z,\;z^{-1}\bigr),
\qquad \zeta=e^{2\pi i/t},
\end{equation}
a Schur polynomial in $N=t+2$ variables.

Described that way \eqref{eq:object} is a compromise between two lines, which is how we found it. It
is better described as one object, and that description is what returns in every section below:
\eqref{eq:object} is \emph{a torus element of $O(N)$ with a single free direction}. It is closed
under inversion, so it already lives where the second line lives; its determinant is $(-1)^{t+1}$,
and that determinant is what separates factorizing from vanishing in \S\ref{sec:zeros}; the
$\zeta$-orbit is what places it in the non-identity component when $t$ is even, where a character is
a twining character, which is Remark \ref{rem:twining}; and the one free pair is the one variable
that survives the signed enumeration of \S\ref{sec:enum}. The three results of this paper are three
readings of that sentence.

Our main result, Theorem \ref{thm:main}, evaluates it for
every $t$ and every $\lambda\in\PP_N$ --- every partition with at most $N$ parts, with no
restriction whatever on its shape or its size. Write $\beta=\beta(\lambda,N)$ for the shifted partition and let
$n_i(\lambda)$ be the number of parts of $\beta$ congruent to $i$ modulo $t$, as in \cite{AK22}. Since
there are $t$ residue classes and $N=t+2$ parts, $\sum_i(n_i-1)=2$; so either some class is empty,
or, every class being occupied, the excess above one each is exactly two. Only three profiles
occur: some $n_i$ vanishes, or two classes have size two, or one class has size three. In the first
case $\Phi_t=0$; in the other two, if $A=\{a_1>a_2\}$ and $B=\{b_1>b_2\}$ denote the two
distinguished classes and
\[
d_1=a_1-a_2,\qquad d_2=b_1-b_2,\qquad
\tilde d_3=a_1+a_2-b_1-b_2,\qquad d_3=|\tilde d_3| ,
\]
then $\Phi_t=0$ if the \emph{oriented coupling} $\tilde d_3$ vanishes, and otherwise, with
$z=e^{\theta}$,
\[
\boxed{\;\;\Phi_t(\lambda;z)\;=\;\varepsilon_\lambda\,
\frac{\sinh(d_1\theta/2)\,\sinh(d_2\theta/2)\,\sinh(d_3\theta/2)}
{\sinh^2(t\theta/2)\,\sinh\theta}\;\;}
\]
with $\varepsilon_\lambda=\pm1$ given explicitly. \emph{Three factors, never more, and no hypothesis
on the shape of $\lambda$.} With the reciprocal pair removed the statement degenerates to
Littlewood's, so the content is what that pair adds: exactly one coupling term, namely $d_3$. Read
with $\mathfrak{sl}_2$ characters the same statement carries no exponential at all: it is the triple
product $\chi_{d_1/2-1}\chi_{d_2/2-1}\chi_{d_3/2-1}$ over $\chi_{t/2-1}^2$, uniformly in $t$
(Corollary \ref{cor:chars}), with half-integer indices exactly when $t$ is odd.

The alphabet is one object, and so is the answer. The residue profile, the triple $(d_1,d_2,d_3)$ and
the sign are the whole of what the theorem needs, and in \S\ref{sec:main} we collect them into a
single \emph{evaluation invariant} $I_t(\lambda)$, \eqref{eq:invariant}, through which the value factors.
It is complete but not minimal: \eqref{eq:main} is symmetric in the three integers, so what is seen
is the multiset and the sign, which is the compression announced above. What is \emph{not} seen is
then a well-posed question. Section \ref{sec:fibrelattice} answers it for the three integers ---
the partitions sharing them form a lattice, and we write its generating function down --- and what
the sign adds to that is Problem \ref{prob:fibres}.

The three integers have a reading that makes the formula transparent, and we state it here because it
is how we think about the result. Regard $A$ and $B$ as intervals $[a_2,a_1]$ and $[b_2,b_1]$ on the
beta line. Then
\begin{equation}\label{eq:intervals}
d_1=|A|,\qquad d_2=|B|,\qquad d_3=2\,\bigl|\mathrm{centre}(A)-\mathrm{centre}(B)\bigr| :
\end{equation}
\emph{the two lengths, and twice the distance between the centres} (Figure \ref{fig:intervals}). The
vanishing criterion becomes geometric --- a residue class is empty, or the two intervals are
concentric --- and it explains at once why the size-three profile, where the intervals share an
endpoint, never vanishes. Concentric intervals turn out to require $t$ even (Proposition
\ref{rem:lattice}), so for odd $t$ an empty residue class is the only way to vanish at all --- and
that stays true for every number of reciprocal pairs, which is Corollary \ref{cor:oddgen}.
Equivalently, in the language of the first line, $d_1$ and $d_2$ are $t$
times the $\mathfrak{sl}_2$ contents of the two distinguished quotient components, and the oriented
coupling $\tilde d_3$ is an \emph{affine} functional on the vector of quotient \emph{sizes}, the two
residues entering as a shift, with $d_3=|\tilde d_3|$
(Proposition \ref{prop:quotient}). It is not a functional on the residue vector, and hence not one
on the root lattice of type $A_{t-1}$ under the Garvan--Kim--Stanton correspondence \cite{GKS90}:
already at $t=2$ the single two-class profile carries ten different values of $d_3$ over
$|\lambda|\le16$. Which half of the core--quotient decomposition each clause of the vanishing
criterion lives in is settled in Proposition \ref{rem:lattice}.

Two consequences follow. \cite[Theorem 5.3]{AK25} determines when a universal character is
independent of the variables by which it is twisted, uniformly in the three classical types, and read
on \eqref{eq:object} it would say that
$s_\lambda(z,z^{-1},\zt)=s_\lambda(z,z^{-1})$ exactly on the $t$-cores, for $\ell(\lambda)\le2$. That
criterion is stated for free variables, and a reciprocal pair is not free; on that locus one further
family appears, and
Theorem \ref{thm:extra} classifies it by its core and quotient. In \S\ref{sec:enum} we read the
identity as an enumeration: a product formula for a root-of-unity weighted count of semistandard
tableaux in which the parameter $z$ is still free, and at $t=2$ a $(-1)$-enumeration of plane
partitions in a box refined by $z$.

The last two sections ask a different question, not about what the identity gives but about how far
it reaches. Section \ref{sec:sharp} tests four natural deformations of the alphabet, each of which
destroys the identity, and one reading of the proof accounts for all four. Then
\S\ref{sec:zeros} isolates what does survive. Adding further reciprocal pairs
costs the product formula of Theorem \ref{thm:main}, but not the \emph{zero locus}: for every number
$r$ of pairs, the character
$s_\lambda(1,-1,z_1,\bar z_1,\dots,z_r,\bar z_r)$ vanishes exactly when the beta set has constant
parity or $\lambda$ is self-complementary of odd width (Theorem \ref{conj:crit}). Read on the group
rather than on the alphabet the same statement is a classification: those are exactly the $\lambda$
whose $GL(N)$-module restricts to $O(N)$ stably under the twist by $\det$ (Corollary
\ref{cor:dettwist}). The shapes are the
self-complementary ones of \cite{AB19}, where the same alphabet without the letter $-1$ makes the
character factorize instead; the determinant of the alphabet decides which. The sufficient direction
is a short corollary of complementation and we say so; the content is the converse, and we prove it
for every $r$ and every $\lambda$, modulo one rigidity theorem for products of Schur polynomials.

It is worth drawing the map, and Figure \ref{fig:plane} is it. Adjoining $r$ reciprocal pairs to the
orbit $\zt$ gives a family in the two parameters $(t,r)$, and the zero locus is settled on three
parts of it --- one row, one column, and every odd column, which is half the plane.

\begin{figure}[!ht]
\centering
\includegraphics[width=\textwidth]{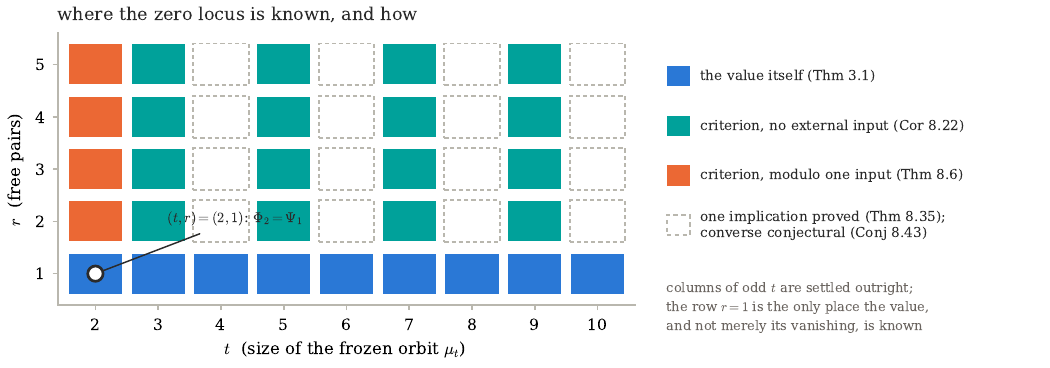}
\caption{The two-parameter family, and what is known on it. Each cell is an alphabet
$\zt\cup\{z_1^{\pm1},\dots,z_r^{\pm1}\}$; the colour is not assigned by hand but computed from the
statements themselves, one rule per line of the paper. The row $r=1$ is the only place where the
\emph{value} is known and not merely its vanishing, which is Theorem \ref{thm:main}; the columns of
odd $t$ are settled by Corollary \ref{cor:oddgen} with no external input; the column $t=2$ by
Theorem \ref{conj:crit}, which uses one. On the remaining cells Theorem \ref{thm:suff} proves one
implication and Conjecture \ref{conj:general} is the other. The marked corner is where the two
halves of the paper meet: there $\Phi_2=\Psi_1$, and it is the only alphabet both of them see.
Computed by \texttt{anc/fig\_plane.py}.}
\label{fig:plane}
\end{figure} Theorem \ref{thm:main}
is $r=1$: one pair, every $t$, and there we have the value and not merely its vanishing. Theorem
\ref{conj:crit} is $t=2$: every number of pairs, the smallest orbit. And Corollary
\ref{cor:oddgen} is \emph{every odd $t$ and every $r$} at once: there the character vanishes if and
only if a residue class is absent, so the concentric branch is a phenomenon of even $t$ alone. The
third of these is not an edge but half the plane, and it is the cheapest of the three --- it drops
out of the extremal argument of \S\ref{sec:extremal} with no external input at all, where the
second needs Theorem \ref{thm:PvW}. Off those three regions the locus is not settled, but it is no
longer untouched. Theorem \ref{thm:suff} gives a condition on the beta set --- a reflection of its
excess part, and one increment hitting the centre --- that forces the character to vanish for
\emph{every} $t$ and $r$, by an involution pairing off every term of the expansion; Corollary
\ref{cor:necgen} gives a necessary condition from the other side. That the two meet is Conjecture
\ref{conj:general}, and what separates them is a single implication, which \S\ref{sec:rigidity}
shows cannot come from the stratum that argument works in. The product formula of Theorem
\ref{thm:main} already fails at $r=2$, by (D1) --- that no such formula exists for any $r\ge2$ is
Conjecture \ref{conj:rank2} and not something we prove --- so past the line $r=1$ the locus is what
we can offer.

One thing the alphabet gives away deserves saying here rather than in passing. It has a
representation-theoretic reading --- of the kind \cite{CK09} record as missing for their
factorizations, as do \cite{AB19} and \cite{AK25} --- and the mechanism is not ours: the alphabet is closed under
inversion, hence a torus element of $O(N)$ of determinant $(-1)^{t+1}$, so for even $t$ it sits in
the non-identity component, where a character is a \emph{twining} character in the sense of Jantzen
\cite{Jantzen} and Kumar--Lusztig--Prasad \cite{KLP} and the folding is by the orbit Lie algebra.
Remark \ref{rem:twining} makes that precise and Lemma \ref{lem:AtoSp} makes it concrete --- there the
folded algebra is $C_r$ and the twining character is a symplectic character on the nose. The proof of
Theorem \ref{thm:main} uses none of it, which is why we state it as a reading and not as a method.

The article is organized as follows. We begin by fixing notation and recalling the two evaluations we
build on in Section \ref{sec:background}. In Section \ref{sec:main} we state the main theorem,
identify its three integers in terms of the core and the $t$-quotient, describe the partitions they
fail to separate, and record the vanishing criterion and the image of the resulting map. Section \ref{sec:proof} contains the proof. In Section
\ref{sec:independence} we compare the theorem with the independence criterion of \cite{AK25} and
determine the extra family it acquires on the reciprocal locus. In Section \ref{sec:enum} we read the
theorem as an enumeration and assemble, at $t=2$, the sign-reversing involution it asks for,
and in Section \ref{sec:sharp} we try four deformations of the alphabet, each of which destroys the
identity.
Section \ref{sec:zeros} determines the zero locus at $t=2$ for an arbitrary number of reciprocal
pairs --- and, at no external cost, at every odd $t$ as well --- reads that criterion on the group as
a determinant-twist rigidity, and gives a sufficient criterion, with a conjectural converse, for
arbitrary $t$ and $r$.
Section \ref{sec:verif} collects the numerical checks, Section \ref{sec:attr} separates what we use
from what we claim, statement by statement, Section \ref{sec:open} states the open problems, and \S\ref{sec:closing} closes where the
introduction began.

Two of those problems have moved since the first version, and both moves are recorded where the
problems stand rather than announced here as results. Problem \ref{prob:versch} asked whether the
two formalisms --- the Verschiebung $\varphi_t$ on the twisted half, the universal characters on the
reciprocal half --- compose. They do, and Remark \ref{rem:composes} writes the composition down; it
is Albion's Theorem 3.1 evaluated at $1$ rather than a theorem of ours, and it also corrects a
sentence of that problem which asserted more ignorance than was warranted, since the one-free-letter
operator it says we knew of nothing about is in \cite[\S6.1]{Alb23}, the very reference the problem
names. Three of them are also touched by the companion paper \cite{PaperII}, which continues
this one past $r=1$; what it moves is said where each problem stands, with the limits that paper
puts on it, and is not repeated here. And the \emph{classical} symplectic analogue of Problem \ref{prob:types} is
\cite[Theorem 2.8]{Kum24}, whose vanishing criterion we have checked against ours; the single-orbit
universal object is a different function, and that gap is isolated where the problem stands.

\section{Background and notation}\label{sec:background}

We follow the notation of \cite{AK22,AK25}. All algebraic groups here are over $\CC$; in particular
$O(N)$ means $O(N,\CC)$, which is what an alphabet closed under $x\mapsto x^{-1}$ gives when its
letters are allowed off the unit circle.

\subsection{Partitions, beta sets, cores and quotients}
A partition $\lambda=(\lambda_1,\dots,\lambda_n)$ is a weakly decreasing sequence of nonnegative
integers; $\PP_n$ denotes the set of partitions of length at most $n$, $|\lambda|$ the size and
$\ell(\lambda)$ the length. Throughout, $t\ge2$ is a fixed integer, $\zeta$ is a primitive $t$-th root
of unity, $\zt=\{1,\zeta,\dots,\zeta^{t-1}\}$, and
\[
N:=t+2 .
\]
For $\lambda\in\PP_N$ the \emph{beta set} is $\beta(\lambda)=(\beta_1,\dots,\beta_N)$ with
$\beta_j=\lambda_j+N-j$, so that $\beta_1>\beta_2>\dots>\beta_N\ge0$; we index by \emph{columns}
$j=1,\dots,N$, and note that a larger $\beta$ means a smaller column index. For $0\le i\le t-1$ we
write $n_i(\lambda)$ for the number of parts of $\beta(\lambda)$ congruent to $i$ modulo $t$, so
\begin{equation}\label{eq:excess}
\sum_{i=0}^{t-1}n_i(\lambda)=N=t+2 .
\end{equation}
We call the vector $\bigl(n_0(\lambda),\dots,n_{t-1}(\lambda)\bigr)$ the \emph{residue profile} of
$\lambda$. By \eqref{eq:excess} we have $\sum_i\bigl(n_i(\lambda)-1\bigr)=2$; if moreover every class
is occupied, that is an excess of exactly two above one each, and only two partitions of two are
available. So a profile is of exactly one of three kinds: \emph{degenerate}, if some $n_i=0$;
\emph{two-class}, if two entries equal $2$; or \emph{size-three}, if one entry equals $3$.
Everything in this paper is controlled by which of the three occurs.
Let $\sigma\in S_N$ be the permutation rearranging the parts of $\beta(\lambda)$ so that they are
grouped by residue class, the classes in increasing order and the values within each class in
decreasing order; this is the permutation of \cite[(2.1)]{AK25}. We write $\core_t(\lambda)$ and
$\quot_t(\lambda)=(\lambda^{(0)},\dots,\lambda^{(t-1)})$ for the $t$-core and $t$-quotient, defined
from $\beta(\lambda)$ as in \cite[\S2.1]{AK25}. By \cite{GKS90}, $t$-cores are in bijection with
vectors of the root lattice of type $A_{t-1}$, and the residue counts $n_i$ are the natural
coordinates.

We set $\zb:=z^{-1}$, put $z=e^{\theta}$, and abbreviate
\[
f(u):=z^{u}-z^{-u}=2\sinh(u\theta),\qquad
V:=\prod_{0\le k<k'\le t-1}\bigl(\zeta^{k'}-\zeta^{k}\bigr).
\]
For a set $S$ of columns, $b_S$ is the word of residues $\beta_j\bmod t$ for $j\in S$ read in
increasing column order, and $\inv(b_S)$ its number of inversions. We use the bialternant
\[
s_\lambda(x_1,\dots,x_N)=\frac{\det\bigl(x_i^{\beta_j}\bigr)_{i,j=1}^N}
{\det\bigl(x_i^{N-j}\bigr)_{i,j=1}^N}.
\]

\subsection{The two evaluations we build on}
We recall the two statements the present result sits between. The first is Littlewood's evaluation,
in the form of \cite[Theorem 2.2]{AK25}.

\begin{theorem}[{\cite[Theorem IX]{LR34}}]\label{thm:LR}
Let $\lambda\in\PP_t$. Then
\[
s_\lambda(1,\zeta,\dots,\zeta^{t-1})=
\begin{cases}
(-1)^{\binom{t}{2}}\sgn(\sigma) & \text{if }\core_t(\lambda)=\varnothing,\\
0 & \text{otherwise.}
\end{cases}
\]
\end{theorem}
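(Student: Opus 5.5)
The plan is to evaluate the bialternant directly at the $t$ letters $x_i=\zeta^{i-1}$, $i=1,\dots,t$, with $\beta_j=\lambda_j+t-j$ for $j=1,\dots,t$. The key observation is that the entry $x_i^{\beta_j}=\zeta^{(i-1)\beta_j}$ depends on $\beta_j$ only through its residue modulo $t$. Consequently the $j$-th column of the numerator $\det\bigl(\zeta^{(i-1)\beta_j}\bigr)$ equals the column $\bigl(\zeta^{(i-1)r}\bigr)_i$ with $r=\beta_j\bmod t$. The denominator is a Vandermonde determinant in distinct letters, so it is nonzero, and the whole question reduces to the numerator.

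\emph{Vanishing case.} If two parts of $\beta$ share a residue, the numerator has two equal columns and vanishes. Since there are exactly $t$ parts and $t$ residues, the numerator can be nonzero only if $n_i(\lambda)=1$ for every $i$. I will show that this condition is equivalent to $\core_t(\lambda)=\varnothing$, using the abacus description underlying \cite[\S2.1]{AK25}:
\begin{itemize}
\item Place the $t$ beads $\beta_j$ on a $t$-runner abacus.
\item Removing a $t$-hook slides a bead down its runner, so it preserves the runner counts $n_i$.
\item The core is reached when every runner is pushed flush to the bottom.
\item The empty partition with $t$ parts has beta set $\{0,1,\dots,t-1\}$, which has one bead per runner.
\end{itemize}
Hence $\lambda$ has empty core exactly when each runner carries one bead. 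I expect this identification to be the only step needing care. It is standard, but the bead count must be fixed at $t$ (the length bound $\lambda\in\PP_t$) so that the flush configuration with one bead per runner is precisely $\varnothing$.

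\emph{Nonvanishing case.} When the residues $\beta_j\bmod t$ form a permutation of $\{0,\dots,t-1\}$, I will reorder the columns of the numerator into increasing residue order. By definition of $\sigma$ (one element per class here, so $\sigma$ just sorts residues increasingly), this gives
\[
\det\bigl(\zeta^{(i-1)\beta_j}\bigr)=\sgn(\sigma)\,\det\bigl(\zeta^{(i-1)(j-1)}\bigr).
\]
The sign of $\sigma$ and of $\sigma^{-1}$ agree, so the direction convention is irrelevant. The denominator $\det\bigl(\zeta^{(i-1)(t-j)}\bigr)$ is the same matrix with its columns in reversed order, $t-1,\dots,0$. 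Reversing $t$ columns costs $(-1)^{\binom t2}$, so
\[
\det\bigl(\zeta^{(i-1)(t-j)}\bigr)=(-1)^{\binom t2}\det\bigl(\zeta^{(i-1)(j-1)}\bigr).
\]
Taking the quotient, the common Vandermonde factor $\det\bigl(\zeta^{(i-1)(j-1)}\bigr)=V\neq0$ cancels, leaving $(-1)^{\binom t2}\sgn(\sigma)$ as claimed.
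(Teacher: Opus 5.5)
Your proof is correct. The numerator is a Vandermonde determinant in the values $\zeta^{\beta_j}$, so it vanishes exactly when a residue repeats, which by the abacus count with $t$ beads is exactly when $\core_t(\lambda)\ne\varnothing$. Otherwise, sorting the columns gives $\sgn(\sigma)V$, against the reversed-Vandermonde denominator $(-1)^{\binom t2}V$.

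The paper does not reprove this statement; it quotes it from \cite{LR34} in the form of \cite[Theorem 2.2]{AK25}. Your argument is nonetheless precisely the computation the paper runs on its own frozen block in Lemmas \ref{lem:L1} and \ref{lem:L2}, with $\sgn(\sigma)=(-1)^{\inv(w)}$ as in the proof of Proposition \ref{rem:sign}.
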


The second is the independence criterion we extend, \cite[Theorem 5.3]{AK25}; we state the case we
need. Let $f_\lambda$ be a universal character of type $s$, $sp$ or $o$, let $Y$ be a set of $m$
variables and let $x_1,\dots,x_{n-tm}$ be free. Then
\begin{equation}\label{eq:AK53}
f_\lambda(x_1,\dots,x_{n-tm},Y,\zeta Y,\dots,\zeta^{t-1}Y)=f_\lambda(x_1,\dots,x_{n-tm})
\quad\Longleftrightarrow\quad \lambda=\core_t(\lambda),
\end{equation}
for $\lambda\in\PP_{n-tm}$. The criterion is uniform in the type --- one statement for $s$, $sp$ and
$o$ at once --- which is what makes it quotable here: our alphabet is a $GL$ alphabet, and the
question we put to it is about a specialization and not about a group. Taking $m=1$, $Y=(1)$ and
$(x_1,x_2)=(z,\zb)$ gives $n=t+2$ and our alphabet, but that substitution is precisely where the
hypothesis that $x_1,x_2$ be free is given up. So \eqref{eq:AK53} is not being applied on the
reciprocal locus; it is being asked about there, which is \S\ref{sec:independence}.

\begin{remark}\label{rem:notacase}
The nearest statement in the literature adjoins two extra letters, as we do, so it is worth recording
why it does not contain \eqref{eq:object}. Kumari \cite[Theorem 2.2]{Kum24} evaluates
$s_\lambda(X,\zeta X,\dots,\zeta^{t-1}X,\,y,\zeta y,\dots,\zeta^{m-1}y)$; with $X=(1)$ and $m=2$ the
adjoined block is $\{y,\zeta y\}$, while ours is $\{z,z^{-1}\}$. The two coincide only when
$z^{2}=\zeta^{\pm1}$, a finite set of points rather than a free variable. The same applies to the
single extra free variable of \cite[Theorem 2.7]{AK22} and to Littlewood--Richardson's Theorem XI.
\end{remark}

Figure \ref{fig:beta} carries out the whole dictionary on one shape.

\begin{figure}[!ht]
\centering
\includegraphics[width=\textwidth]{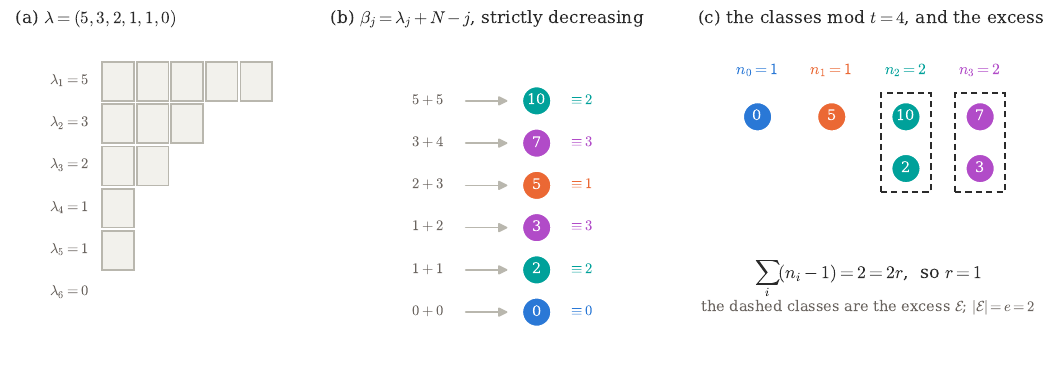}
\caption{The dictionary the rest of the paper runs on, on $\lambda=(5,3,2,1,1,0)$ at $t=4$.
\emph{(a)} the shape, padded to $N$ parts. \emph{(b)} $\beta_j=\lambda_j+N-j$, which turns a
weakly decreasing sequence into a strictly decreasing one and so into a set; each value is then
coloured by its residue modulo $t$. \emph{(c)} the residue profile $(n_0,\dots,n_{t-1})$ and the
count \eqref{eq:excess}: $\sum_i(n_i-1)=2r$ always, so when every class is occupied the classes with
$n_i\ge2$ --- the excess $\mathcal{E}$, dashed --- carry exactly $2r$ elements more than one each.
If some class is empty the signed sum is still $2r$, but it is no longer an excess; that is the
degenerate branch, and it is settled first. Everything the paper proves is a statement about that
picture. Computed by \texttt{anc/fig\_intro.py}.}
\label{fig:beta}
\end{figure}

\section{The evaluation}\label{sec:main}

\keybox{\emph{The theorem in one sentence.} The value depends on $\lambda$ through its residue
profile, an interval triple $(d_1,d_2,d_3)$ and a sign, and through nothing else: an arbitrarily
large partition is compressed to three integers and a sign.}

\begin{theorem}\label{thm:main}
Let $t\ge2$ and $\lambda\in\PP_N$, $N=t+2$.
\begin{enumerate}
\item[\rm(i)] If $n_i(\lambda)=0$ for some $i$, then $\Phi_t(\lambda;z)=0$.
\item[\rm(ii)] Otherwise, by \eqref{eq:excess}, either exactly two residue classes have $n_i=2$, or
exactly one has $n_i=3$. Let $r_A\le r_B$ be the residues involved and let
\[
A=\{a_1>a_2\},\qquad B=\{b_1>b_2\}
\]
be, in the first case, the parts of $\beta(\lambda)$ in the classes $r_A$ and $r_B$, and in the
second case the two overlapping pairs $A=\{p,q\}$, $B=\{q,r\}$ of the class $\{p>q>r\}$, so that
$r_A=r_B$. Set
\[
d_1=a_1-a_2,\qquad d_2=b_1-b_2,\qquad
\tilde d_3=a_1+a_2-b_1-b_2,\qquad d_3=|\tilde d_3| ,
\]
and call $\tilde d_3$ the \emph{oriented coupling}. If $\tilde d_3=0$ then $\Phi_t(\lambda;z)=0$.
Otherwise, with $z=e^\theta$,
\begin{equation}\label{eq:main}
\boxed{\;\;\Phi_t(\lambda;z)\;=\;\varepsilon_\lambda\cdot
\frac{\sinh(d_1\theta/2)\,\sinh(d_2\theta/2)\,\sinh(d_3\theta/2)}
{\sinh^2(t\theta/2)\,\sinh\theta}\;\;}
\end{equation}
where, writing $j_{A_1}$ and $j_{B_1}$ for the columns carrying $a_1$ and $b_1$, $S$ for the
complement of $\{j_{A_1},j_{B_1}\}$ in $\{1,\dots,N\}$, and $b_S$ for the word of residues of
$\beta(\lambda)$ on $S$ read in increasing column order,
\begin{equation}\label{eq:sign}
\varepsilon_\lambda\;=\;(-1)^{\,t+\binom{N+1}{2}}\;
(-1)^{\,j_{A_1}+j_{B_1}+\inv(b_S)}\;\sgn(a_1-b_1)\;\sgn(\tilde d_3).
\end{equation}
\end{enumerate}
Each of the four factors is $\pm1$ --- the last because $\tilde d_3\ne0$ is exactly the hypothesis
of {\rm(ii)} --- so $\varepsilon_\lambda=\pm1$. In the size-three profile $r_A=r_B$ and
$\tilde d_3=p-r>0$, so that factor is $+1$.

The left-hand side is a Laurent polynomial in $z$. We read \eqref{eq:main} as an identity in
$\CC(z)$; after cancellation the right-hand side is a Laurent polynomial too, and at the zeros of
the denominator --- $z=\pm1$ and $z^t=1$ --- we use that continuation. We do so in
\S\ref{sec:enum}, where the value is taken at $z=1$.
\end{theorem}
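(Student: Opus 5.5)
The plan is to evaluate the bialternant directly, with rows indexed by the letters $1,\zeta,\dots,\zeta^{t-1},z,\zb$ and columns by $j=1,\dots,N$. The denominator is a Vandermonde determinant. Up to a fixed sign depending only on $t$ and the row order, it factors as $V\cdot(z-\zb)\cdot\prod_k(z-\zeta^k)(\zb-\zeta^k)$. Since $\prod_k(z-\zeta^k)=z^t-1$, this becomes $V\,(z-\zb)\,(z^t-1)(z^{-t}-1)=-V\,f(1/2)\cdot f(t/2)^2$ up to that sign. This produces the fixed denominator $\sinh^2(t\theta/2)\sinh\theta$ and one power of $V$.

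\textbf{Numerator.} I would Laplace-expand along the two free rows $z,\zb$. Equivalently, this is an expansion along the $t$ frozen rows with complementary column set $S$, $|S|=t$. For columns $j<j'$ outside $S$, the $2\times2$ minor is $z^{\beta_j-\beta_{j'}}-z^{\beta_{j'}-\beta_j}=f(\beta_j-\beta_{j'})$. The complementary $t\times t$ minor is $\det(\zeta^{k\beta_m})_{k,\,m\in S}$.

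That frozen minor depends only on the residues $\beta_m\bmod t$. It is zero unless these residues are pairwise distinct, in which case they are a permutation of $0,\dots,t-1$. In that case it equals $\pm V$, with sign $(-1)^{\inv(b_S)}$ times the sign of the fixed reference order. Two consequences follow at once.
\begin{itemize}
\item If some $n_i=0$, no $S$ has all residues distinct, so every term vanishes and (i) holds.
\item In the two-class profile, the admissible $S$ are exactly those omitting one element of $A$ and one of $B$, giving four terms $\pm f(a_i-b_k)$. In the size-three profile they omit two of $\{p,q,r\}$, giving three terms $\pm f(p-q)$, $\pm f(p-r)$, $\pm f(q-r)$.
\end{itemize}
The factor $V$ cancels against the denominator.

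\textbf{Cancellation lemma.} The heart of the proof is a lemma in $S_N$ about the relative signs of these terms. Each term carries the Laplace sign $(-1)^{j+j'}$ and the frozen-minor sign $(-1)^{\inv(b_S)}$. The claim is that exchanging the omitted element of a class, say $a_1\leftrightarrow a_2$, changes the product of these two signs by exactly $-1$.

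The reason is as follows. The two columns carrying $a_1,a_2$ have the same residue. Moving that residue from one column position to the other changes $\inv(b_S)$ by the number of intermediate residues lying strictly between in value, counted with a parity. This parity should combine with the change in $j+j'$ to give $-1$, after using that the intermediate columns contain each other residue class. I expect this parity count to be the main obstacle. I would first try to prove it by writing the swap as an explicit product of transpositions in $S_N$ acting on the word $b$. Granting the lemma, the numerator becomes a fixed sign times
\[
f(a_1-b_1)-f(a_1-b_2)-f(a_2-b_1)+f(a_2-b_2).
\]
The oriented arguments require $\sgn(a_1-b_1)$ to orient $f$. In the size-three case it becomes $f(p-q)-f(p-r)+f(q-r)$.

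\textbf{Product formula and sign.} It remains to apply sum-to-product twice. Pair the terms as $f(a_1-b_1)-f(a_1-b_2)$ and $f(a_2-b_2)-f(a_2-b_1)$, and use $\sinh\alpha-\sinh\beta=2\cosh\frac{\alpha+\beta}2\sinh\frac{\alpha-\beta}2$. This yields $\sinh(d_2\theta/2)$ times a difference of two $\cosh$ terms. A second application turns that difference into $\sinh(d_1\theta/2)\sinh(\tilde d_3\theta/2)$. The resulting quantity vanishes iff $\tilde d_3=0$, and replacing $\tilde d_3$ by $d_3$ costs the factor $\sgn(\tilde d_3)$. The size-three case is the same identity with $d_1=p-q$, $d_2=q-r$, $d_3=p-r>0$.

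Finally, I would normalise the overall sign by reading it off the single term in which $a_1,b_1$ are the two columns outside $S$. Collecting the reference sign, the Vandermonde sign and the factors of $2$ then gives \eqref{eq:sign}. I would check this constant against Theorem \ref{thm:LR} and a few small cases at $t=2,3$.
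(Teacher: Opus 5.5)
Your route is the paper's: Laplace along the frozen rows, the frozen minor as $(-1)^{\inv(b_S)}V$ (Lemma \ref{lem:L1}), a column-move lemma for the relative signs, then a product identity. Your two sum-to-product steps are Lemma \ref{lem:L5}(a) in another form. The gap is the column-move lemma, which you rightly call the heart: as you state it, it is false.

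\emph{The counterexample.} You claim that the product of the Laplace sign $(-1)^{j+j'}$ and the frozen sign $(-1)^{\inv(b_S)}$ flips when the omitted element of a class is exchanged. Take $t=2$, $\lambda=(2,2)$, $\beta=(5,4,1,0)$, so $A=\{4,0\}$ and $B=\{5,1\}$. The free column pairs $\{2,3\}$ (carrying $a_1,b_2$) and $\{3,4\}$ (carrying $a_2,b_2$) both give $(-1)^{j_1+j_2+\inv(b_S)}=+1$.

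\emph{The correct statement.} What alternates is the coefficient $\kappa_{ij}$ of the \emph{oriented} term $f(a_i-b_j)$. It carries the extra factor $\sgn(a_i-b_j)$, because the $2\times2$ minor is $f(\beta_{j_1}-\beta_{j_2})$ with $j_1<j_2$; this is Lemma \ref{lem:L4}.

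\emph{The correct count.} The proof is also not the parity count you sketch. Passing from keeping column $j_{A_2}$ to keeping $j_{A_1}$ moves the letter $r_A$ across the kept columns strictly between them. Every crossed letter flips the parity of $\inv$, whatever its value, since none equals $r_A$ (the class has two elements); nothing requires those columns to carry the other residues. The number crossed is $j_{A_2}-j_{A_1}-1-[a_2<b_j<a_1]$, because the column of $b_j$ is free and is not crossed. Combined with the Laplace change $(-1)^{j_{A_1}+j_{A_2}}$, the unoriented ratio is $-(-1)^{[a_2<b_j<a_1]}$. The indicator is cancelled exactly by $\sgn(a_1-b_j)\sgn(a_2-b_j)$.

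This is not cosmetic. The four-term combination you write down is the right one, but it does not follow from your lemma. Read literally, your lemma gives the numerator $\pm\bigl(2f(1)-f(3)-f(5)\bigr)$ in the example, instead of the true $\pm\bigl(2f(1)+f(3)-f(5)\bigr)$. The former vanishes only to first order at $z=1$, while the denominator vanishes to third order, so the quotient would have a pole. A single global factor $\sgn(a_1-b_1)$ cannot repair this: the orientations $\sgn(a_i-b_j)$ agree across the four terms only when the intervals $[a_2,a_1]$ and $[b_2,b_1]$ are disjoint, so the sign must be absorbed term by term. In the size-three profile your version happens to be correct, since all three arguments are positive.

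With the oriented lemma, normalising off the $(1,1)$ term gives $\varepsilon_\lambda=(-1)^{t+\binom{N+1}{2}}\kappa_{11}\sgn(\tilde d_3)$, which is \eqref{eq:sign}. The constant should be computed, not checked on examples. The Laplace sign $(-1)^{\binom{t+1}{2}+\binom{N+1}{2}-j_1-j_2}$ combines with the denominator's $(-1)^{\binom t2}$. The minus signs from $(z^t-1)(z^{-t}-1)=-f(t/2)^2$ and from the product identity then cancel. (Minor: $z-\zb=f(1)$, not $f(1/2)$.)
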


\noindent Splitting $d_3$ into its size and its orientation is not bookkeeping. The sign
\eqref{eq:sign} needs the orientation and the formula needs only the size, and at $\tilde d_3=0$ the
orientation does not exist: a statement carrying $\sgn(a_1+a_2-b_1-b_2)$ and asserting that it is
$\pm1$ is false on the concentric locus, where that argument is zero. The value there is $0$ by the
factor $\sinh(d_3\theta/2)$, so nothing downstream changes; what changes is that the case is now
excluded before the sign is written rather than after.

We write
\[
d(\lambda):=(d_1,d_2,d_3)
\]
and call it the \emph{interval triple} of $\lambda$, after the reading \eqref{eq:intervals}; and we
call $d_3$ the \emph{coupling term}, since it is the only one of the three that mixes the two
distinguished classes.

It is worth giving the whole datum a name, because the theorem is better read as a statement about it
than as a formula. Set
\begin{equation}\label{eq:invariant}
\boxed{\;\;I_t(\lambda)\;:=\;\begin{cases}
0 & \text{if the residue profile is degenerate, or }\tilde d_3=0,\\[2pt]
\bigl(d_1,d_2,d_3,\varepsilon_\lambda\bigr) & \text{otherwise},
\end{cases}\;\;}
\end{equation}
and call it the \emph{evaluation invariant} of $\lambda$. Its first branch is exactly the vanishing
locus of Corollary \ref{cor:zero}, and it is one branch and not two because on the whole of it there
is no sign to record: \eqref{eq:sign} needs $\tilde d_3\ne0$. Theorem \ref{thm:main} then says two
things at once. First, $\Phi_t$ \emph{factors through} $I_t$: the map
$\lambda\mapsto\Phi_t(\lambda;z)$ is
the composite of $\lambda\mapsto I_t(\lambda)$ with a function of the invariant alone. Second, that
invariant is \emph{complete} for this evaluation: if two partitions have the same $I_t$ they have the
same $\Phi_t$, whatever their sizes. Completeness is what makes the compression announced above worth
a name rather than an observation --- it says the invariant is not a convenient summary of the
dependence but the whole of it --- and it turns the natural next question, what the invariant
forgets, into a well-posed one. Proposition \ref{prop:minimal} closes the question from the other
side: once the triple is read as a multiset, nothing in $I_t$ can be dropped either. So
\S\ref{sec:fibrelattice} settles what is forgotten for the triple and Problem
\ref{prob:fibres} is what the sign leaves. The $(d_1,d_2,d_3)$ of Figure
\ref{fig:image} are the values this invariant takes, drawn.

That factorisation is one pipeline, and every arrow in it loses information that never comes back:
\begin{multline*}
\lambda\;\longrightarrow\;\beta(\lambda)\;\longrightarrow\;
\bigl(\text{residue profile},\ \text{the two distinguished intervals}\bigr)\\
\longrightarrow\;\bigl(d_1,d_2,d_3,\varepsilon_\lambda\bigr)\;\longrightarrow\;\Phi_t(\lambda;z).
\end{multline*}
The intervals have to be carried alongside the profile: the profile says which classes are
distinguished and nothing about where their beta numbers sit, so it does not determine the triple.
Coordinate by coordinate, the invariant divides the work as follows.

\begin{center}\small
\begin{tabular}{ll}
\toprule
datum of $\lambda$ & what it controls\\
\midrule
residue profile & the degenerate vanishing branch\\
the two interval lengths $d_1,d_2$ & the first two factors\\
the distance between the centres, $d_3$ & the coupling factor, and the concentric vanishing branch\\
the orientation $\sgn(\tilde d_3)$ & one factor of the sign\\
the total sign $\varepsilon_\lambda$ & the overall sign\\
\bottomrule
\end{tabular}
\end{center}

\noindent Neither vanishing branch is the other's: the profile cannot see the concentric one, and
the triple cannot see the degenerate one, which is why the criterion of Corollary \ref{cor:zero} has
two clauses and \S\ref{sec:main} spends a proposition on where each of them lives.


\enlargethispage{2\baselineskip}
\keybox{One count runs through the paper. With $t$ residue classes and $N=t+2$ parts, on the branch
where every class is occupied the excess is exactly two, so by \eqref{eq:excess} a profile can only
be degenerate, two-class or size-three:
that trichotomy is the shape of Theorem \ref{thm:main}, the case split of its proof in
\S\ref{sec:proof}, and the first branch of the zero locus in \S\ref{sec:zeros}. Two distinguished
classes are what the reciprocal pair has to couple, hence exactly one coupling term and three
factors rather than two.}

The count of three has a second explanation, which owes nothing to the proof. The excess two gives
two distinguished classes, which are two intervals on the beta line, and an interval pair is four
numbers $(a_1,a_2,b_1,b_2)$. The three arguments of \eqref{eq:main} see how that pair is shaped and
not where it sits: a common shift $\beta\mapsto\beta+m$ leaves $d_1$, $d_2$ and $\tilde d_3$ alone.
So what can enter them is a linear functional killing the common translation, and those form the
hyperplane $\sum\alpha_i=0$ of the four-dimensional dual: a space of dimension exactly \emph{three}.
The triple $d_1=(1,-1,0,0)$, $d_2=(0,0,1,-1)$ and $\tilde d_3=(1,1,-1,-1)$ is a basis of it. Modulo
the common translation an interval pair has three independent linear coordinates, and that is where
the count of three comes from. What Theorem \ref{thm:main} adds, and what the dimension count does
not force, is that in this specialization those three coordinates \emph{separate multiplicatively}:
a product of one factor each, rather than some other function of three variables.

The \emph{value} is not translation-invariant, and saying so is worth a line, because what it is
instead is a constraint on the sign. A shift $\beta\mapsto\beta+m$ is $\lambda\mapsto\lambda+(m^N)$,
and $s_{\lambda+(m^N)}(A)=\det(A)^{m}s_\lambda(A)$ for any alphabet $A$, so by \eqref{eq:object}
\begin{equation}\label{eq:shift}
\Phi_t\bigl(\lambda+(m^N);z\bigr)\;=\;(-1)^{(t+1)m}\,\Phi_t(\lambda;z).
\end{equation}
For odd $t$ the value is invariant; for even $t$ an odd shift reverses it. The triple does not move,
so the whole of that reversal is carried by $\varepsilon_\lambda$ --- which makes \eqref{eq:shift} a
check on \eqref{eq:sign} that costs nothing and that the sign passes, over the shapes of
\S\ref{sec:verif}. It is also the reason the slogan has to be stated about the triple and not about
the value: for $t$ even the value is not a function of the triple's translation class.

The sign \eqref{eq:sign} refines the sorting sign of Theorem
\ref{thm:LR}; Proposition \ref{rem:sign} below records a shorter expression for it, in the notation
of \cite{AK25}.

The right-hand side of \eqref{eq:main} is a ratio, and the content of the theorem is that the quotient is a Laurent polynomial
in $z$. For odd $t$ the individual factor $\sinh(t\theta/2)$ is not a Laurent monomial, so the three
factors do not separately lie in $\ZZ[z,z^{-1}]$; only their combination does. The statement is
uniform in $t$ nonetheless.

\begin{corollary}\label{cor:zero}
$\Phi_t(\lambda;z)\equiv0$ if and only if some residue class modulo $t$ is empty, or
$a_1+a_2=b_1+b_2$. In particular the size-three profile never vanishes.
\end{corollary}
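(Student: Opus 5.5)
This follows directly from Theorem \ref{thm:main}, and I would argue the two directions separately. For the ``if'' direction nothing is needed beyond the theorem. If some residue class is empty, part (i) gives $\Phi_t\equiv0$. If every class is occupied and $a_1+a_2=b_1+b_2$, then $\tilde d_3=0$, and part (ii) asserts vanishing directly.

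For the ``only if'' direction, suppose every class is occupied and $\tilde d_3\ne0$. Then \eqref{eq:main} holds with $\varepsilon_\lambda=\pm1$. It suffices to check that the right-hand side is not the zero element of $\CC(z)$. Each numerator factor $\sinh(d\theta/2)=\tfrac12(z^{d/2}-z^{-d/2})$ is nonzero as a function of $z$ precisely when $d\ne0$. Here $d_3=|\tilde d_3|>0$ by hypothesis. Also $d_1=a_1-a_2>0$ and $d_2=b_1-b_2>0$, because these are differences of distinct parts of the strictly decreasing beta set, listed in decreasing order. The denominator is a fixed nonzero function, and $\CC(z)$ is a field, so the product is nonzero. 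Hence $\Phi_t\not\equiv0$.

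One could worry about half-integer exponents when $t$ is odd. To avoid this, I would pass to $w=z^{1/2}$ and read the identity in $\CC(w)$: every factor is then a nonzero Laurent polynomial in $w$. Since $\Phi_t$ is a Laurent polynomial in $z$, its nonvanishing in $\CC(w)$ is the same as nonvanishing in $\CC(z)$.

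For the size-three profile, the class is $\{p>q>r\}$ with $A=\{p,q\}$ and $B=\{q,r\}$. Then
\[
\tilde d_3=(p+q)-(q+r)=p-r>0,
\]
so the concentric clause cannot occur, and no class is empty. The preceding paragraph therefore gives nonvanishing. There is no real obstacle in this argument; the only point needing care is the observation that $d_1,d_2>0$ automatically, so that $d_3$ is the only numerator factor that can vanish.
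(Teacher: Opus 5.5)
Your proof is correct and is essentially the paper's argument: the corollary is read off Theorem \ref{thm:main}, using that $d_1,d_2>0$ always and that $\tilde d_3=p-r>0$ in the size-three profile. The paper gives no separate proof; it phrases the last point as ``two intervals sharing an endpoint are concentric only if they coincide,'' and your passage to $z^{1/2}$ matches the substitution $u=z^{1/2}$ it uses elsewhere.
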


\noindent The vanishing is identical in $z$ and not at a point: when it happens, $\lambda$
contributes nothing at any element of the one-parameter family the reciprocal pair sweeps out. That
is the form in which Prasad's question was posed in \S\ref{sec:background}, and the sense in which
\eqref{eq:object} is read outside combinatorics --- the frozen half of the alphabet being a
holonomy of finite order, and the free pair what survives of the torus.

\begin{corollary}\label{cor:chars}
Write $\chi_k(z)=(z^{k+1}-z^{-k-1})/(z-z^{-1})$. For an integer $k\ge0$ this is the character of the
$(k+1)$-dimensional irreducible $\mathfrak{sl}_2$-module. For half-integral $k$ we keep the same
formula, read in $u=z^{1/2}$; it is then notation and not a character, since the individual factors
are not Laurent in $z$. Then
\eqref{eq:main} can be written with no exponential and no convention for $\theta$ at all:
\begin{equation}\label{eq:chiratio}
\boxed{\;\;\Phi_t(\lambda;z)\;=\;\varepsilon_\lambda\,
\frac{\chi_{\frac{d_1}{2}-1}(z)\;\chi_{\frac{d_2}{2}-1}(z)\;\chi_{\frac{d_3}{2}-1}(z)}
{\chi_{\frac{t}{2}-1}(z)^{2}}\;\;}
\end{equation}
For $t=2$ the denominator is $\chi_0=1$ and every $d_i$ is even, so $\Phi_2$ is a signed product of
three $\mathfrak{sl}_2$ characters outright. For odd $t$ the indices are half-integers, which is the
statement above in another form: the three factors are not separately Laurent, and only the ratio is.
Each factor is read in $u=z^{1/2}$ and reverses sign under $u\mapsto-u$ exactly when its index is
half-integral; since $d_1+d_2+d_3$ is even --- it has the parity of
$d_1+d_2+\tilde d_3=2(a_1-b_2)$ --- the ratio does not, and it is the Laurent polynomial in $z$ of
\eqref{eq:main}.
\end{corollary}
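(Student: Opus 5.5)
The plan is to rewrite each $\sinh$ in terms of $z$ and to check that the two expressions agree factor by factor; \eqref{eq:chiratio} then follows from \eqref{eq:main} by pure algebra. Set $u=z^{1/2}$, so that $2\sinh(k\theta/2)=u^{k}-u^{-k}$ for every integer $k$, with $z=e^\theta$ and $u=e^{\theta/2}$. The definition $\chi_k(z)=(z^{k+1}-z^{-k-1})/(z-z^{-1})$, read in $u$, gives
\[
\chi_{\frac{d}{2}-1}(z)=\frac{u^{d}-u^{-d}}{u^{2}-u^{-2}}=\frac{\sinh(d\theta/2)}{\sinh\theta}
\]
for every integer $d\ge1$. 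Whether $d$ is even or odd, the index $d/2-1$ is an integer or a half-integer, and the formula is the same.

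Next I substitute into the right-hand side of \eqref{eq:chiratio}. The numerator is $\prod_{i=1}^{3}\sinh(d_i\theta/2)/\sinh^3\theta$ and the denominator is $\sinh^2(t\theta/2)/\sinh^2\theta$. Their quotient is exactly the fraction in \eqref{eq:main}: one surviving factor $\sinh\theta$ stays in the denominator, and the $\sinh^2(t\theta/2)$ sits below it. The sign $\varepsilon_\lambda$ passes through unchanged. This uses that $d_1,d_2\ge1$ because $a_1>a_2$ and $b_1>b_2$, and that $d_3\ge1$ because $\tilde d_3\ne0$ in case (ii). The identity therefore holds in $\CC(u)$, and hence in $\CC(z)$ once it is shown that the $u$-expression is invariant under $u\mapsto-u$.

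The remaining claims are bookkeeping. For $t=2$ we have $\chi_0=1$. Moreover $a_1\equiv a_2$ and $b_1\equiv b_2\pmod 2$, so $d_1,d_2$ are even. Also $d_3\equiv d_1+d_2+2(b_1-b_2)\cdot0$; more directly, $\tilde d_3=d_1+d_2-2(a_2-\dots)$, that is $d_1+d_2+\tilde d_3=2(a_1-b_2)$, so $d_3$ is even too. Every index is then an integer, and each factor is a genuine $\mathfrak{sl}_2$ character, Laurent in $z$.

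For general $t$, the factor $(u^{d}-u^{-d})/(u^2-u^{-2})$ changes sign under $u\mapsto-u$ exactly when $d$ is odd, that is, when the index is half-integral. The numerator of \eqref{eq:chiratio} picks up $(-1)^{d_1+d_2+d_3}$ under this substitution. Since $d_3\equiv\tilde d_3\pmod 2$, the identity $d_1+d_2+\tilde d_3=2(a_1-b_2)$ makes this exponent even, so the numerator is invariant. The denominator is a square, so it is invariant as well. The ratio is therefore a function of $u^2=z$, and by Theorem \ref{thm:main} it equals the Laurent polynomial $\Phi_t(\lambda;z)$. I expect no real obstacle; the only point needing care is the parity identity $d_1+d_2+\tilde d_3=2(a_1-b_2)$, which is immediate from the definitions ($a_1-a_2+b_1-b_2+a_1+a_2-b_1-b_2$).
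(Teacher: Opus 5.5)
Your proof is correct and is essentially the paper's own: the paper writes $\chi_k=f(k+1)/f(1)$ with $f(u)=2\sinh(u\theta)$, cancels three factors of $f(1)$ in the numerator against two in the denominator exactly as you do with $\sinh\theta$, and obtains the parity remark from $d_1+d_2+\tilde d_3=2(a_1-b_2)$, which you also use. The one thing to fix is the garbled intermediate line in your $t=2$ paragraph (``$d_3\equiv d_1+d_2+2(b_1-b_2)\cdot0$'', ``$\tilde d_3=d_1+d_2-2(a_2-\dots)$''); delete it and keep only the identity you state next, which is what the argument actually uses.
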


\begin{proof}
Write $f(u)=z^{u}-z^{-u}=2\sinh(u\theta)$, so that $\chi_k=f(k+1)/f(1)$. The numerator of
\eqref{eq:main} is $f(d_1/2)f(d_2/2)f(d_3/2)/8$ and its denominator is $f(t/2)^2f(1)/8$, so
\eqref{eq:main} equals $\varepsilon_\lambda f(d_1/2)f(d_2/2)f(d_3/2)/\bigl(f(t/2)^2f(1)\bigr)$;
dividing three factors of $f(1)$ out of the numerator and two out of the denominator gives
\eqref{eq:chiratio}.
\end{proof}

\subsection{The three integers come from the quotient}
The following identifies $(d_1,d_2,d_3)$ in the language of the root-of-unity line. For a partition
$\nu$ put $k(\nu)=\nu_1-\nu_2$, its $\mathfrak{sl}_2$ content.

\begin{proposition}\label{prop:quotient}
In the two-class profile, with $\quot_t(\lambda)=(\lambda^{(0)},\dots,\lambda^{(t-1)})$,
\[
d_1=t\bigl(k(\lambda^{(r_A)})+1\bigr),\qquad
d_2=t\bigl(k(\lambda^{(r_B)})+1\bigr),\qquad
\tilde d_3=t\bigl(|\lambda^{(r_A)}|-|\lambda^{(r_B)}|\bigr)+2(r_A-r_B) .
\]
In the size-three profile the single distinguished component $\lambda^{(r_A)}$ has three parts and
\[
d_1=t\bigl(\lambda^{(r_A)}_1-\lambda^{(r_A)}_2+1\bigr),\quad
d_2=t\bigl(\lambda^{(r_A)}_2-\lambda^{(r_A)}_3+1\bigr),\quad \tilde d_3=d_1+d_2 .
\]
\end{proposition}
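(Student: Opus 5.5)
The proof is a direct computation from the definition of the $t$-quotient through the beta set. The plan has three steps: write each distinguished class in base-$t$ form, read off the corresponding quotient component, and substitute into the definitions of $d_1,d_2,\tilde d_3$ in Theorem \ref{thm:main}. No part of the evaluation itself is needed, only the dictionary of \S\ref{sec:background}.

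\emph{Setting up the quotient.} I will use the convention of \cite[\S2.1]{AK25}. Write the elements of $\beta(\lambda)$ in class $i$ as $i+tm_1>i+tm_2>\dots>i+tm_k$ with $k=n_i(\lambda)$. The quotient component $\lambda^{(i)}$ is then the partition with beta set $\{m_1,\dots,m_k\}$ of length $k$, that is $\lambda^{(i)}_j=m_j-(k-j)$. The one point that needs care is the labelling of the components. Depending on $N\bmod t$, the convention may attach the class of residue $i$ to a cyclically shifted index. I will check this against \cite[\S2.1]{AK25} first and, if needed, transport the labels $r_A,r_B$ accordingly. The contents $k(\cdot)$ and the sizes $|\cdot|$ of the components do not depend on that relabelling, so only the identification of which component is ``$\lambda^{(r_A)}$'' is at stake.

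\emph{Two-class profile.} Here $A=\{r_A+tm_1,\,r_A+tm_2\}$ with $m_1>m_2$, so $\lambda^{(r_A)}=(m_1-1,m_2)$. This gives $k(\lambda^{(r_A)})+1=m_1-m_2$, hence $d_1=a_1-a_2=t(m_1-m_2)=t\bigl(k(\lambda^{(r_A)})+1\bigr)$. The same argument gives $d_2$. For the coupling, compute
\[
a_1+a_2=2r_A+t(m_1+m_2)=2r_A+t\bigl(|\lambda^{(r_A)}|+1\bigr).
\]
Subtracting the analogous expression for $B$ cancels the two $+t$ terms and yields $\tilde d_3=t\bigl(|\lambda^{(r_A)}|-|\lambda^{(r_B)}|\bigr)+2(r_A-r_B)$.

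\emph{Size-three profile.} The class is $\{p>q>r\}=\{r_A+tm_1,\,r_A+tm_2,\,r_A+tm_3\}$, so $\lambda^{(r_A)}=(m_1-2,\,m_2-1,\,m_3)$. Then $d_1=p-q=t(m_1-m_2)=t\bigl(\lambda^{(r_A)}_1-\lambda^{(r_A)}_2+1\bigr)$, and likewise $d_2=q-r=t\bigl(\lambda^{(r_A)}_2-\lambda^{(r_A)}_3+1\bigr)$. Finally $\tilde d_3=(p+q)-(q+r)=p-r=d_1+d_2$.

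I expect the only real obstacle to be the indexing convention for the quotient just described. Everything else is a one-line substitution.
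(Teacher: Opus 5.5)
Your proof is correct and is the paper's argument: write each distinguished class as residue plus $t$ times a decreasing sequence, read the quotient component off those $n_i$ beta numbers as $(\tilde a_1-1,\tilde a_2)$ or $(\tilde p-2,\tilde q-1,\tilde r)$, and substitute into $d_1,d_2,\tilde d_3$. On your labelling concern, the paper takes class $r_A$ of the $N$ beta numbers to give $\lambda^{(r_A)}$ directly, per the convention of \cite[\S2.1]{AK25} it cites, so no relabelling is needed.
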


\begin{proof}
Write $a_i=t\tilde a_i+r_A$. The class $r_A$ carries two beta numbers, so
$\lambda^{(r_A)}=(\tilde a_1-1,\tilde a_2)$ by the definition of the quotient, whence
$k(\lambda^{(r_A)})=\tilde a_1-\tilde a_2-1$ and $d_1=t(\tilde a_1-\tilde a_2)$, which is the first
identity; the second is the same computation. For the third,
$\tilde a_1+\tilde a_2=|\lambda^{(r_A)}|+1$ and likewise for $B$, so
$a_1+a_2-b_1-b_2=t(|\lambda^{(r_A)}|-|\lambda^{(r_B)}|)+2(r_A-r_B)$. The size-three case is
identical with $\lambda^{(r_A)}=(\tilde p-2,\tilde q-1,\tilde r)$.
\end{proof}

For $t=2$ the third identity reads $\tilde d_3=2\bigl(|\lambda^{(0)}|-|\lambda^{(1)}|-1\bigr)$, which
already shows that $\tilde d_3$ is a functional on the quotient sizes and not on the residue vector, so it
is not one on the root lattice of type $A_{t-1}$ under the Garvan--Kim--Stanton correspondence
\cite{GKS90} between that lattice and the $t$-cores. The condition $r_B-r_A=t/2$, which
governs the family of Theorem \ref{thm:extra} below, is an order-two element of the relevant
quotient, and this is the source of the parity phenomena throughout the paper.

\begin{proposition}[the concentric locus]\label{rem:lattice}
In the two-class profile, ordered so that $r_A<r_B$,
\[
\boxed{\;\;d_3=0\quad\Longleftrightarrow\quad t\ \text{is even},\quad r_B-r_A=\tfrac t2,
\quad\text{and}\quad \bigl|\lambda^{(r_A)}\bigr|=\bigl|\lambda^{(r_B)}\bigr|+1\;\;}
\]
In particular, for $t$ odd $\Phi_t(\lambda;z)=0$ if and only if some residue class modulo $t$ is
empty: the concentric clause of Corollary \ref{cor:zero} is vacuous there.
\end{proposition}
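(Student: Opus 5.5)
We work in the two-class profile, ordered so that $r_A<r_B$, and start from the identity of Proposition \ref{prop:quotient},
\[
\tilde d_3=t\bigl(|\lambda^{(r_A)}|-|\lambda^{(r_B)}|\bigr)+2(r_A-r_B).
\]
Write $m:=|\lambda^{(r_A)}|-|\lambda^{(r_B)}|\in\ZZ$ and $\delta:=r_B-r_A$, with $1\le\delta\le t-1$. Then $d_3=0$ is exactly the equation $tm=2\delta$, and everything follows from solving this equation in integers under that constraint.

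For the forward direction, suppose $tm=2\delta$. Since $0<2\delta<2t$, we get $0<tm<2t$, hence $m=1$ and $2\delta=t$. So $t$ is even, $\delta=t/2$, and $|\lambda^{(r_A)}|=|\lambda^{(r_B)}|+1$. Conversely, if these three conditions hold, substituting them gives $\tilde d_3=t\cdot1-2\cdot\tfrac t2=0$. This proves the boxed equivalence.

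For the consequence at odd $t$, we combine this with Corollary \ref{cor:zero}. That corollary says $\Phi_t\equiv0$ if and only if some class is empty or $\tilde d_3=0$. In the size-three profile, Theorem \ref{thm:main} gives $\tilde d_3=p-r>0$, so the concentric clause can only bite in the two-class profile. There the boxed equivalence forbids $d_3=0$ when $t$ is odd. Hence for odd $t$ the only way to vanish is an empty residue class.

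There is no real obstacle here. The one point to state carefully is the bound $1\le\delta\le t-1$. It holds because $r_A,r_B$ are distinct residues in $\{0,\dots,t-1\}$ in the two-class case, and it is what forces $m=1$.
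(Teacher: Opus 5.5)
Your proof is correct and is essentially the paper's: both rest on the identity $\tilde d_3=t\bigl(|\lambda^{(r_A)}|-|\lambda^{(r_B)}|\bigr)+2(r_A-r_B)$ of Proposition \ref{prop:quotient}, and both dispose of the size-three profile by $\tilde d_3>0$. The only difference is small: the paper first reduces modulo $t$ to get $t\mid 2(r_B-r_A)$ and then substitutes, whereas your bound $0<tm<2t$ gives $m=1$ and $t=2\delta$ in one step.
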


\begin{proof}
Since $a_1,a_2\equiv r_A$ and $b_1,b_2\equiv r_B$ modulo $t$, the equality $a_1+a_2=b_1+b_2$ forces
$2r_A\equiv2r_B$, that is $t\mid2(r_B-r_A)$. For $t$ odd this gives $t\mid r_B-r_A$ and hence
$r_A=r_B$, which is the size-three profile, where $d_3=d_1+d_2>0$ by Proposition
\ref{prop:quotient}. For $t$ even and $0\le r_A<r_B\le t-1$ the only solution of $t\mid2(r_B-r_A)$
is $r_B-r_A=t/2$. Writing $a_1+a_2=t\bigl(|\lambda^{(r_A)}|+1\bigr)+2r_A$ as in the proof of
Proposition \ref{prop:quotient}, and likewise for $B$, the equality becomes
$t\bigl(|\lambda^{(r_A)}|-|\lambda^{(r_B)}|\bigr)=2(r_B-r_A)=t$.
\end{proof}

The two clauses of Corollary \ref{cor:zero} therefore live in different halves of the
core--quotient decomposition, and that is the answer to the question the criterion raises. The first
is a condition on the residue profile, hence on the $t$-core: the coordinate hyperplanes $n_i=0$ cut
out of the slice $\sum_i n_i=t+2$, which is not the Coxeter arrangement of type $A_{t-1}$. The
second is not a condition on the core at all; it is a single hyperplane in the quotient sizes, and
the admissible pairs of residues are indexed by the order-two element $r_B-r_A=t/2$ rather than by
any reflection. No restatement of the vanishing criterion in terms of $\core_t(\lambda)$ alone can
exist, which is for $\Phi_t$ what Remark \ref{rem:core} records for $\Psi_r$.

\subsection{What the triple forgets}\label{sec:fibrelattice}
Proposition \ref{prop:quotient} reads the triple off the quotient. Run backwards, the same reading
says which partitions the triple cannot tell apart, and says it exactly. Nothing here is deep --- it
is the core--quotient bijection at a fixed profile --- but it is what turns the compression of
Theorem \ref{thm:main} from a picture into a count.

\begin{proposition}[the two-class stratum is a lattice]\label{prop:fibrelattice}
Let $t\ge2$, $N=t+2$, and let $\mathcal{T}_t\subset\PP_N$ be the set of partitions of two-class
profile. Sending $\lambda$ to its two distinguished residues together with its $t$-quotient is a
bijection
\begin{equation}\label{eq:fibrelattice}
\mathcal{T}_t\;\xrightarrow{\ \sim\ }\!\!\!\coprod_{0\le r_A<r_B\le t-1}\!\!\!
\PP_2\times\PP_2\times\ZZ_{\ge0}^{\,t-2},
\qquad
\lambda\longmapsto\bigl(r_A,r_B;\ \lambda^{(r_A)},\lambda^{(r_B)};\ (m_i)_{i\ne r_A,r_B}\bigr),
\end{equation}
where $m_i$ is the single part of $\lambda^{(i)}$. Under it $\core_t(\lambda)$ depends only on the
pair $(r_A,r_B)$; write $\kappa(r_A,r_B)$ for its size. Then
\begin{equation}\label{eq:sizelattice}
|\lambda|\;=\;\bigl|\core_t(\lambda)\bigr|
+t\Bigl(\bigl|\lambda^{(r_A)}\bigr|+\bigl|\lambda^{(r_B)}\bigr|+\textstyle\sum_i m_i\Bigr).
\end{equation}
\end{proposition}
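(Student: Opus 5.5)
The proof is the classical core--quotient bijection for beta sets, restricted to a fixed residue profile, so the plan is to make that bijection explicit at the level of $\beta(\lambda)$ and read everything off it.

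The first step is to record that $\lambda\mapsto\beta(\lambda)$ is a bijection from $\PP_N$ onto $N$-element subsets of $\ZZ_{\ge0}$. A subset is then the same as, for each residue $i$, the finite set $\{\tilde x : t\tilde x+i\in\beta\}$ of $n_i$ nonnegative integers. In the two-class stratum we have $n_{r_A}=n_{r_B}=2$ and $n_i=1$ otherwise. A two-element subset $\{\tilde a_1>\tilde a_2\}$ of $\ZZ_{\ge0}$ is the same as $(\tilde a_1-1,\tilde a_2)\in\PP_2$, which is the convention already used in the proof of Proposition \ref{prop:quotient}. A one-element subset $\{\tilde x\}$ is a single part $m_i=\tilde x\in\ZZ_{\ge0}$. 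Since the quotient is defined from $\beta(\lambda)$ with $N$ fixed as in \cite[\S2.1]{AK25}, these components are exactly $\lambda^{(i)}$. Conversely, any choice of $r_A<r_B$ and of data in $\PP_2\times\PP_2\times\ZZ_{\ge0}^{t-2}$ reassembles a set of $N$ distinct nonnegative integers with exactly that profile. This gives \eqref{eq:fibrelattice}. The one point to check is that no other profile maps here and that distinct residue pairs give disjoint images, both of which are immediate because the profile is read off the data.

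For the core, I would use that $\core_t(\lambda)$ is obtained by sliding every bead down its runner. That is, the $n_i$ elements of class $i$ are replaced by $i,i+t,\dots,i+(n_i-1)t$. This depends only on the profile, and the profile is determined by $(r_A,r_B)$, so the core depends only on the pair.

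The size formula \eqref{eq:sizelattice} follows from $|\lambda|=\sum_j\beta_j-\binom N2$. Sliding one bead down by one step on its runner lowers $\sum\beta_j$ by $t$. On runner $i$, the number of steps needed to reach the packed position is $\sum_k\tilde x_k-\binom{n_i}{2}$, which equals $|\lambda^{(i)}|$ under the convention above: $\tilde a_1+\tilde a_2-1$ for the two-bead runners and $m_i$ for the others. Summing over runners gives $|\lambda|-|\core_t(\lambda)|=t\sum_i|\lambda^{(i)}|$. The only step needing care is that the core computed from $\beta(\lambda,N)$ is the genuine $t$-core, independent of the number $N$ of beads. That is standard, and I would cite \cite{GKS90} or \cite[\S2.1]{AK25} rather than reprove it.
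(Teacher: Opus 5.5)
Your proposal is correct and follows the paper's proof: you read $\beta(\lambda)$ runner by runner with the same convention $\{\tilde a_1>\tilde a_2\}\mapsto(\tilde a_1-1,\tilde a_2)$ to get the bijection, and you let the profile (equivalently the pair $(r_A,r_B)$) determine the core. The only difference is that you verify the size formula explicitly from $\sum_j\beta_j=|\lambda|+\binom N2$ and bead-sliding, where the paper simply cites the core--quotient bijection.
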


\begin{proof}
A class carrying $n_i$ beta numbers $a_1>\dots>a_{n_i}$, written $a_j=t\tilde a_j+i$, contributes
$\lambda^{(i)}=(\tilde a_1-n_i+1,\dots,\tilde a_{n_i})$, so a class of size $n_i$ gives a partition
with at most $n_i$ parts, and every such partition arises from exactly one strictly decreasing
$(\tilde a_j)$. In the two-class profile the sizes are $2,2$ and $1$ repeated $t-2$ times, which is
\eqref{eq:fibrelattice}; the residues are recorded rather than forgotten because the profile is part
of the datum, and they determine $\core_t(\lambda)$ by the Garvan--Kim--Stanton coordinate of
Remark \ref{rem:core}. Then \eqref{eq:sizelattice} is the core--quotient bijection.
\end{proof}

\begin{corollary}[the fibre of the triple, and its generating function]\label{cor:fibregf}
Let $(d_1,d_2,d_3)$ be an interval triple attained in the two-class profile, and set
$k_A=d_1/t-1$ and $k_B=d_2/t-1$. Its fibre in $\mathcal{T}_t$ is infinite. It is the disjoint union
of the branches indexed by the pairs $r_A<r_B$ and the signs $\eta=\pm1$ for which
\begin{equation}\label{eq:branch}
c:=\tfrac12\Bigl(\tfrac{\eta\,d_3-2(r_A-r_B)}{t}-k_A+k_B\Bigr)\ \in\ \ZZ ,
\end{equation}
the two signs giving one and the same branch when $d_3=0$. On a branch the free parameters are an
integer $v\ge\max(0,c)$ and $(m_i)\in\ZZ_{\ge0}^{t-2}$, with $\lambda^{(r_A)}=(v+k_A,v)$ and
$\lambda^{(r_B)}=(v-c+k_B,v-c)$. Consequently
\begin{equation}\label{eq:fibregf}
\sum_{\lambda\ \mathrm{in\ the\ fibre}}\!\!q^{|\lambda|}
\;=\;\frac{\sum_{\mathrm{branches}}q^{\,e}}{\bigl(1-q^{4t}\bigr)\bigl(1-q^{t}\bigr)^{t-2}},
\qquad e=\kappa(r_A,r_B)+t\bigl(k_A+k_B-2c\bigr)+4t\max(0,c),
\end{equation}
and the number of $\lambda$ in the fibre with $|\lambda|\le n$ is eventually a quasi-polynomial in
$n$ of degree $t-1$ --- eventually, because the shifts $q^{\,e}$ in the numerator put the count at
$0$ below the smallest of them.
\end{corollary}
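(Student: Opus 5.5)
The plan is to read the fibre off the bijection \eqref{eq:fibrelattice} of Proposition \ref{prop:fibrelattice}, using the formulas of Proposition \ref{prop:quotient} to turn the triple into conditions on the quotient.

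\emph{Step 1: fix the lengths.} Fix a triple $(d_1,d_2,d_3)$ attained in the two-class profile, and a pair $r_A<r_B$. In this profile each distinguished quotient component is a partition with at most two parts. By Proposition \ref{prop:quotient}, $d_1=t(k(\lambda^{(r_A)})+1)$, so the difference of the two parts of $\lambda^{(r_A)}$ is forced to equal $k_A=d_1/t-1$. Since the triple is attained, $k_A$ is a nonnegative integer. Hence $\lambda^{(r_A)}=(v+k_A,v)$ for a free integer $v\ge0$. In the same way $\lambda^{(r_B)}=(w+k_B,w)$ with $w\ge0$.

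\emph{Step 2: fix the coupling.} The remaining condition is $|\tilde d_3|=d_3$, that is, $\tilde d_3=\eta d_3$ for some $\eta=\pm1$. By the third identity of Proposition \ref{prop:quotient} this reads
\[
t(2v+k_A-2w-k_B)+2(r_A-r_B)=\eta d_3 .
\]
Solving for $w$ gives $w=v-c$, with $c$ exactly as in \eqref{eq:branch}. So the branch $(r_A,r_B,\eta)$ is nonempty precisely when $c\in\ZZ$. In that case its elements are parametrized by $v\ge\max(0,c)$, which is what $w\ge0$ requires, together with arbitrary $(m_i)\in\ZZ_{\ge0}^{t-2}$.

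\emph{Step 3: disjointness and infinitude.} Distinct branches are disjoint, because $(r_A,r_B)$ is part of the image under \eqref{eq:fibrelattice} and $\eta=\sgn(\tilde d_3)$ is determined by $\lambda$ whenever $d_3\neq0$. When $d_3=0$ the two values of $\eta$ give the same equation, hence the same branch, which is the clause in the statement. The fibre is infinite because $v$ is unbounded on any nonempty branch, and at least one branch is nonempty since the triple is attained.

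\emph{Step 4: the generating function.} Apply \eqref{eq:sizelattice}:
\[
|\lambda|=\kappa(r_A,r_B)+t\bigl(2v+k_A+2(v-c)+k_B+\textstyle\sum m_i\bigr)
=\kappa+t(k_A+k_B-2c)+4tv+t\sum m_i .
\]
Summing the geometric series in $v\ge\max(0,c)$ gives $q^{4t\max(0,c)}/(1-q^{4t})$. Each of the $t-2$ free variables $m_i$ contributes $1/(1-q^t)$. Adding over the finitely many branches (finitely many pairs, two signs) yields \eqref{eq:fibregf}.

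\emph{Step 5: the counting claim.} This is the only step needing care; the rest is bookkeeping.
\begin{enumerate}
\item[(a)] The count of $\lambda$ in the fibre with $|\lambda|\le n$ is the coefficient of $q^n$ in $F(q)/(1-q)$, where $F$ is the right-hand side of \eqref{eq:fibregf}.
\item[(b)] The numerator $\sum_{\mathrm{branches}}q^{e}$ is a polynomial with positive coefficients, so it does not vanish at $q=1$.
\item[(c)] The denominator $(1-q^{4t})(1-q^t)^{t-2}$ has a pole of order $1+(t-2)=t-1$ at $q=1$, and every other pole, at a root of unity, has order at most $t-1$.
\item[(d)] Multiplying by $1/(1-q)$ raises the order at $q=1$ to exactly $t$ and leaves the other orders at most $t-1$.
\item[(e)] By the standard partial-fraction argument, the coefficients of $P(q)/\prod(1-q^{a_j})$ agree with a quasi-polynomial once $n$ exceeds $\deg P$ minus $\sum a_j$. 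Its degree is one less than the maximal pole order, hence $t-1$. The leading term is genuinely present, with a constant coefficient, because only $q=1$ attains order $t$.
\end{enumerate}
The word ``eventually'' is needed because the numerator shifts $q^{e}$ make the count $0$ below $\min e$, so equality with the quasi-polynomial can only hold from some point on.
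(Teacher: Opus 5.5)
Your proposal is correct and follows the paper's proof. Both fix the two distinguished quotient components by Proposition \ref{prop:quotient}, solve the coupling equation for the second size as $v-c$, and sum geometric series via \eqref{eq:sizelattice}. Your Step 5 is more careful than the paper's one-line ``its denominator has $t-1$ factors, whence the degree'': the extra factor $1/(1-q)$ for the cumulative count, together with $\sum_{\mathrm{branches}}q^{e}$ not vanishing at $q=1$, is exactly what makes the degree $t-1$ rather than the $t-2$ of the count at fixed size.
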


\begin{proof}
By Proposition \ref{prop:quotient} the first two entries fix $k(\lambda^{(r_A)})$ and
$k(\lambda^{(r_B)})$, so each distinguished component is determined by its size alone; writing them
$(v+k_A,v)$ and $(u+k_B,u)$, the third entry reads
$t\bigl((2v+k_A)-(2u+k_B)\bigr)+2(r_A-r_B)=\eta\,d_3$, which is \eqref{eq:branch} with $c=v-u$; the
absolute value in $d_3$ is what puts the two signs there, and it is vacuous at $d_3=0$.
Substituting in \eqref{eq:sizelattice} gives $|\lambda|=e+4t\bigl(v-\max(0,c)\bigr)+t\sum_i m_i$, and
summing the geometric series in $v$ and in each $m_i$ gives \eqref{eq:fibregf}. Its denominator has
$t-1$ factors, whence the degree.
\end{proof}

\begin{remark}[the sign is not along for the ride]\label{rem:signsees}
Two readings, and the second is a warning. First, a fibre is infinite for every $t$ and every
attained triple, which is what Figure \ref{fig:fibres} shows and \eqref{eq:fibregf} explains: the
columns there do not thin out because a fibre is a lattice section and not a finite accident.
Second, \eqref{eq:fibregf} is the fibre of the \emph{triple} and not of $I_t$. The $t-2$ parts $m_i$
are invisible to $(d_1,d_2,d_3)$ by Proposition \ref{prop:quotient}, and it would be natural to read
them as directions along which $\Phi_t$ is constant. They are not, from $t=3$ on: the sign sees them,
as \eqref{eq:sign} makes possible, since $\varepsilon_\lambda$ carries $\inv(b_S)$, a statistic of
the whole beta-word, while \eqref{eq:fibrelattice} splits that word into two distinguished classes
and $t-2$ free letters. Over the ranges of \S\ref{sec:verif} the sign is constant in $(m_i)$ at fixed
$v$ in all $107$ of the slots at $t=2$, but in only $43$ of $62$ at $t=3$ and $26$ of $36$ at $t=4$.
What the sign does on a branch is Problem \ref{prob:fibres}.
\end{remark}

\subsection{The interval reading}\label{sec:intervals}
Figure \ref{fig:intervals} draws \eqref{eq:intervals} in an example. Corollary \ref{cor:zero} reads:
the character vanishes when a residue class is empty, or when the two intervals are
\emph{concentric}; and two intervals sharing an endpoint are concentric only if they coincide, which
is why the size-three profile never vanishes.

\begin{figure}[t]
\centering
\begin{tikzpicture}[font=\small,x=1.3cm,y=1.0cm]
\draw[blue!55!black,very thick] (1,1.25) -- (7,1.25);
\draw[blue!55!black,thick] (1,1.10) -- (1,1.40);
\draw[blue!55!black,thick] (7,1.10) -- (7,1.40);
\filldraw[blue!55!black] (4,1.25) circle (1.8pt);
\node[blue!55!black,above=9pt] at (4,1.25) {$A=\{7,1\}$,\quad $d_1=6$};
\draw[red!60!black,very thick] (2,-1.25) -- (5,-1.25);
\draw[red!60!black,thick] (2,-1.40) -- (2,-1.10);
\draw[red!60!black,thick] (5,-1.40) -- (5,-1.10);
\filldraw[red!60!black] (3.5,-1.25) circle (1.8pt);
\node[red!60!black,below=9pt] at (3.5,-1.25) {$B=\{5,2\}$,\quad $d_2=3$};
\draw[->,gray!70] (-0.55,0) -- (8.1,0) node[right,black]{$\beta$};
\foreach \b in {0,...,7} \draw[gray!45] (\b,-0.07) -- (\b,0.07);
\foreach \b in {3,6} \node[gray,font=\scriptsize,below=4pt] at (\b,0) {$\b$};
\foreach \b/\c/\r in {7/blue!55!black/1, 5/red!60!black/2, 2/red!60!black/2,
                      1/blue!55!black/1, 0/gray!55/0}
  {\filldraw[\c] (\b,0) circle (2.7pt);
   \node[\c,font=\scriptsize,below=4pt] at (\b,0) {$\b$};
   \node[\c,font=\scriptsize,above=4pt] at (\b,0) {$\r$};}
\draw[gray!70,dashed] (4,1.25) -- (4,-0.62);
\draw[gray!70,dashed] (3.5,-1.25) -- (3.5,-0.62);
\draw[{Latex[length=1.5mm]}-{Latex[length=1.5mm]},thick] (3.5,-0.62) -- (4,-0.62);
\node[font=\scriptsize,gray!60!black,anchor=west] at (-0.55,1.25) {residue $1$};
\node[font=\scriptsize,gray!60!black,anchor=west] at (-0.55,-1.25) {residue $2$};
\end{tikzpicture}
\caption{The mechanism for $t=3$, $\lambda=(3,2)$: here $N=5$, $\beta=(7,5,2,1,0)$ with residues
$(1,2,2,1,0)$ modulo $3$, so no class is empty and two classes have size two. The three arguments of
\eqref{eq:main} are the two interval lengths and twice the distance between the interval centres;
the short double arrow marks that distance, here $\tfrac12$, so that $d_3=1$.}
\label{fig:intervals}
\end{figure}

\begin{example}
For $t=3$, $\lambda=(3,2)$ as in Figure \ref{fig:intervals}, $(d_1,d_2,d_3)=(6,3,1)$ and
\[
\Phi_3\bigl((3,2);z\bigr)=\varepsilon\,
\frac{\sinh(3\theta)\sinh(\theta/2)}{\sinh(3\theta/2)\sinh\theta}.
\]
For $t=3$, $\lambda=(3,1)$ the profile is the size-three one, $(d_1,d_2,d_3)=(3,3,6)$ and
$\Phi_3=\varepsilon\,\chi_2(z)$. For $t=4$, $\lambda=(1,1,1)$ we have $\beta=(6,5,4,2,1,0)$, the
class $3$ is empty and $\Phi_4=0$.
\end{example}

\subsection{The image of the map}
Theorem \ref{thm:main} sends every partition with at most $N$ rows to three integers, so the whole
family collapses onto a sparse subset of $\ZZ^3$. By Proposition \ref{prop:quotient} the first two
coordinates are multiples of $t$; the third is not a multiple of $t$ except in the size-three
profile, where $d_3=d_1+d_2$. Figure \ref{fig:image} shows the image for $t=3$ and $t=4$, coloured
by the number of partitions collapsing onto each point.

\begin{figure}[t]
\centering
\includegraphics[width=\textwidth]{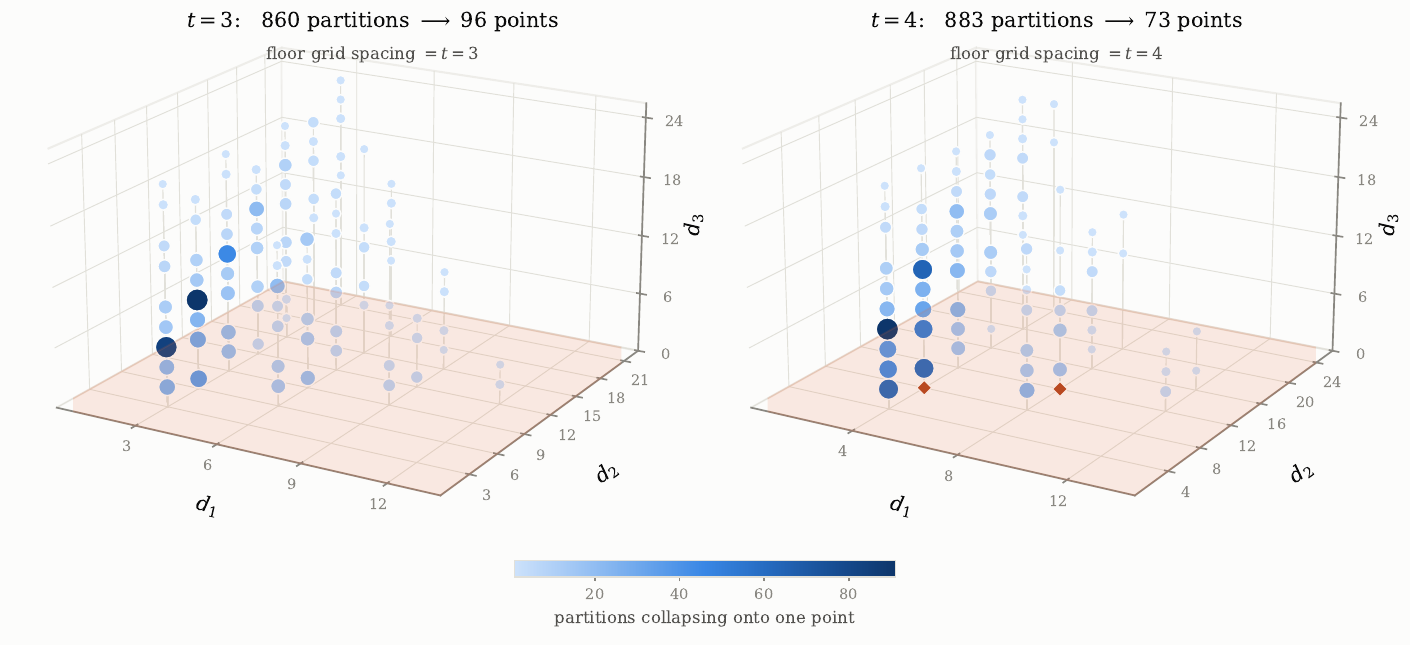}
\caption{The image of $\lambda\mapsto(d_1,d_2,d_3)$ over all $\lambda$ with $|\lambda|\le20$ and at
most $t+2$ rows, for $t=3$ (left) and $t=4$ (right), taken \emph{up to the exchange of $d_1$ and
$d_2$}, under which \eqref{eq:main} is symmetric; without that identification, and with $A$ and $B$
in the order $r_A\le r_B$ of Theorem \ref{thm:main}, the two panels carry $126$ and $92$ points
rather than $96$ and $73$. The compression is the point of the picture: $860$
partitions land on $96$ triples on the left and $883$ on $73$ on the right, and colour and area give
the number of partitions mapping to each. The floor grid spacing is $t$, since $d_1,d_2\in t\ZZ$ by Proposition
\ref{prop:quotient}; the shaded plane $d_3=0$ is the concentric locus of Corollary \ref{cor:zero},
where $s_\lambda$ vanishes, so those shapes are not in the image and are drawn on the plane as
diamonds instead. The two panels are the two parities, which is the content of Proposition
\ref{rem:lattice}: the odd one carries no diamond, and can carry none; the even one carries two,
between them the images of $32$ partitions.}
\label{fig:image}
\end{figure}

Figure \ref{fig:image} is one half of the picture: it shows which triples occur, not what a triple
hides. Figure \ref{fig:fibres} is the other half. Over each cell of the $(d_1,d_2)$ floor it stands
the column of sizes $|\lambda|$ that land there, so a column is a set of partitions of different
sizes on which $\Phi_t$ takes one and the same value --- and the columns do not thin out as
$|\lambda|$ grows, since the values are confined to a lattice and the partitions are not. The most
visible one is the fibre of the empty partition: at $t=3$ the invariant $I_3=(3,3,2,+1)$ is shared by
$26$ partitions, of $19$ different sizes between $0$ and $20$, so $\Phi_3$ does not distinguish
$\varnothing$ from a shape of twenty boxes.

\begin{figure}[!ht]
\centering
\includegraphics[width=\textwidth]{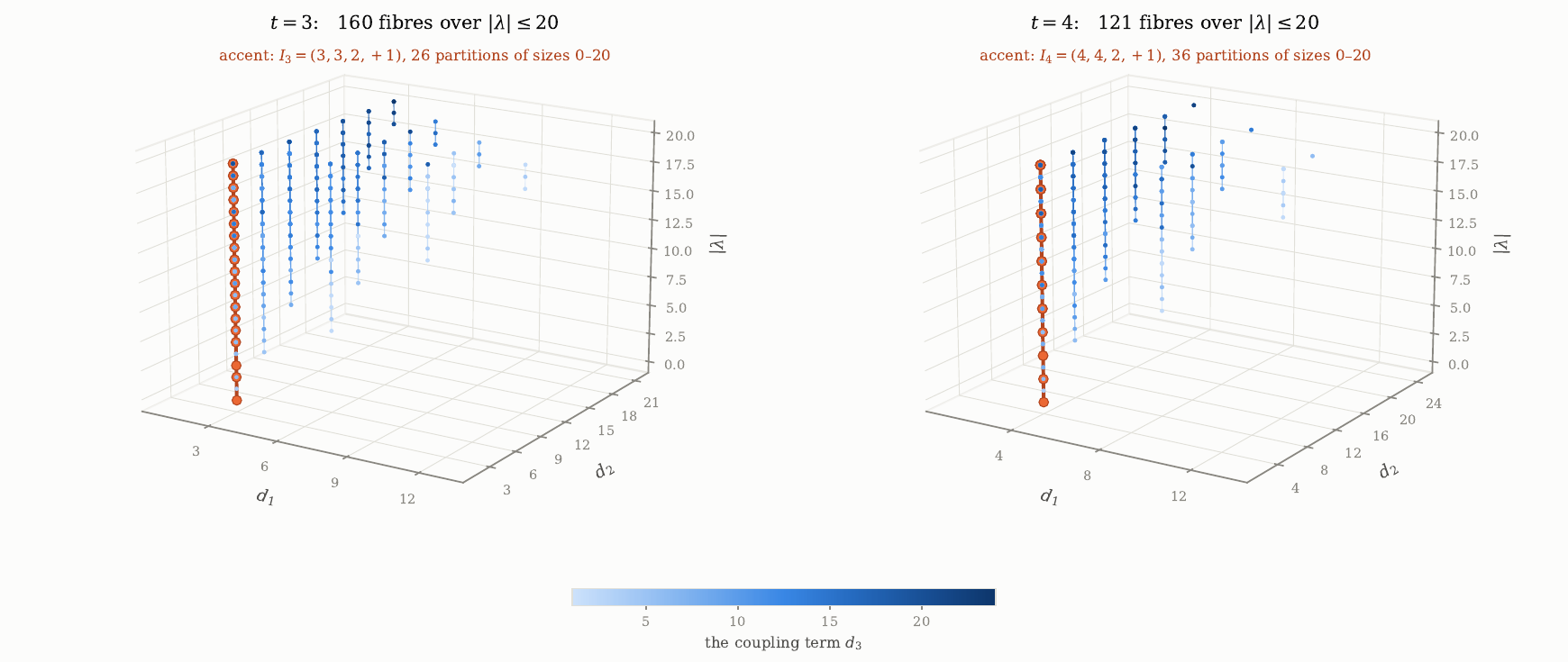}
\caption{The fibres of the evaluation invariant \eqref{eq:invariant}, the complement of Figure
\ref{fig:image}: the same range $|\lambda|\le20$, with the coupling term moved to colour and the
size $|\lambda|$ on the vertical axis. A column is one fibre, so its height is the range of sizes
this alphabet cannot tell apart; the floor ticks are the lattice $t\ZZ$ of Proposition
\ref{prop:quotient}, and only the half $d_1\le d_2$ is occupied because the triple is taken up to the
exchange, as in Figure \ref{fig:image}. That every marker of a column carries the same value is
computed and not assumed: over the $860$ partitions on the left and the $883$ on the right the
bialternant is constant on each fibre to $10^{-37}$. The accent column is the fibre of
$\lambda=\varnothing$.}
\label{fig:fibres}
\end{figure}

Figure \ref{fig:fibres} makes one thing about \eqref{eq:invariant} precise: $I_t$ is complete but
not minimal. The numerator of \eqref{eq:main} is symmetric in $d_1,d_2,d_3$, so the value sees only the multiset
$\{d_1,d_2,d_3\}$ together with the sign, and that is a real reduction --- over $t=4$ and
$|\lambda|\le20$ the $121$ invariants take only $110$ values, while at $t=3$ in the same range the
two counts agree. That reduction is the whole of it: nothing below the multiset and the sign is
lost, and this is where the compression announced in the introduction becomes an exact statement
rather than an upper bound.

\begin{proposition}[the multiset and the sign are exactly what is seen]\label{prop:minimal}
Fix $t\ge2$ and let $\lambda,\mu\in\PP_N$ with $\Phi_t(\lambda;z)\not\equiv0$. Then
\[
\Phi_t(\lambda;z)=\Phi_t(\mu;z)
\quad\Longleftrightarrow\quad
\{d_1,d_2,d_3\}(\lambda)=\{d_1,d_2,d_3\}(\mu)\ \text{as multisets and}\
\varepsilon_\lambda=\varepsilon_\mu .
\]
\end{proposition}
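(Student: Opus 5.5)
The backward implication is immediate from Theorem \ref{thm:main}, and the forward one reduces to a uniqueness statement for products of the form $\prod_i(1-w^{d_i})$. Since $\Phi_t(\lambda;z)\not\equiv0$, Corollary \ref{cor:zero} places $\lambda$ in case (ii) of Theorem \ref{thm:main} with $\tilde d_3\neq0$. So \eqref{eq:main} applies, and $d_1,d_2,d_3\ge1$: the first two are differences of distinct beta numbers, and the third is nonzero by hypothesis.

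\emph{Backward direction.} Suppose $\mu$ has the same multiset and the same sign. Its multiset then contains no zero, so $\mu$ is also outside the vanishing locus of Corollary \ref{cor:zero}. Formula \eqref{eq:main} for $\mu$ has the same denominator and a numerator that is symmetric in the three integers. Hence $\Phi_t(\mu;z)=\Phi_t(\lambda;z)$.

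\emph{Forward direction, first step.} Suppose $\Phi_t(\lambda;z)=\Phi_t(\mu;z)$. Then $\Phi_t(\mu;z)\not\equiv0$ too, so $\mu$ has its own triple $(e_1,e_2,e_3)$ of positive integers and its own sign $\varepsilon_\mu$. Both sides of \eqref{eq:main} share the denominator $\sinh^2(t\theta/2)\sinh\theta$, so I clear it. With $u=z^{1/2}$ and $2\sinh(d\theta/2)=u^{d}-u^{-d}$, this leaves the identity
\[
\varepsilon_\lambda\prod_{i=1}^3\bigl(u^{d_i}-u^{-d_i}\bigr)=\varepsilon_\mu\prod_{i=1}^3\bigl(u^{e_i}-u^{-e_i}\bigr)
\]
of Laurent polynomials in $u$. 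The top-degree term on each side has coefficient $+1$ times the sign, with exponent $d_1+d_2+d_3$ and $e_1+e_2+e_3$ respectively. Comparing these gives $\varepsilon_\lambda=\varepsilon_\mu$ and $\sum d_i=\sum e_i$.

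\emph{Forward direction, second step.} I factor $u^{\sum d_i}$ out of the left side and $u^{\sum e_i}$ out of the right, and put $w=u^{-2}$. The identity becomes
\[
\prod_{i=1}^3(1-w^{d_i})=\prod_{i=1}^3(1-w^{e_i})
\]
in $\ZZ[w]$. It remains to show that $\prod(1-w^{d_i})$ with all $d_i\ge1$ determines the multiset $\{d_i\}$. I would do this by induction on the number of factors. Let $m$ be the least $d_i$, occurring with multiplicity $k$. Every other monomial in the expansion has degree at least $\min(2m,\text{next } d_i)>m$. So the lowest nonconstant term of the product is $-k\,w^{m}$, which reads off $m$ and $k$. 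Dividing by $(1-w^m)^k$, allowed because $\ZZ[w]$ is a domain, reduces to fewer factors.

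An alternative is to use the cyclotomic factorization $1-w^d=-\prod_{n\mid d}\Phi_n(w)$. The largest $n$ with $\Phi_n$ dividing the product is the maximal $d_i$, and one peels it off and recurses.

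\emph{Main obstacle.} The only point needing care is the first step: checking that nonvanishing genuinely excludes $d_3=0$. This is what ensures no factor is identically zero and that the top and bottom coefficients are exactly $\pm1$. It follows from Corollary \ref{cor:zero}, together with the observation already made after Theorem \ref{thm:main} that the sign \eqref{eq:sign} is only defined for $\tilde d_3\neq0$.
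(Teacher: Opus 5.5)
Your proof is correct. Its skeleton is the paper's: clear the common denominator, write $2\sinh(v\theta/2)=u^{v}-u^{-v}$, and compare an extreme coefficient to get $\varepsilon_\lambda=\varepsilon_\mu$ and $\sum d_i=\sum e_i$. You use the top monomial $u^{D}$ with coefficient $\varepsilon$; the paper uses the bottom monomial $u^{-D}$ with coefficient $-\varepsilon$. That is the same step.

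The genuine difference is in how you show that $\prod_i(1-w^{d_i})$ determines the multiset $\{d_i\}$. The paper writes $u^{2d_i}-1=\prod_{m\mid 2d_i}Q_m(u)$ in cyclotomic polynomials. It then invokes unique factorization in $\ZZ[u]$, together with irreducibility and pairwise distinctness of the $Q_m$, and peels off the largest $d_i$ three times by locating the largest cyclotomic index. Your ``alternative'' is exactly this route.

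Your primary argument peels from the bottom instead. It reads the smallest exponent $m$ and its multiplicity $k$ off the lowest nonconstant coefficient $-k\,w^{m}$, then cancels $(1-w^{m})^{k}$ in $\ZZ[w]$. This is more elementary: it needs nothing about cyclotomic polynomials, only that $\ZZ[w]$ has no zero divisors. It also recovers multiplicities in one step rather than by repeated deletion of divisor sets. Both arguments apply verbatim to any two products of binomials, so yours would serve equally for Lemma \ref{lem:single}, which the paper proves by the same cyclotomic peeling.

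One notational point: writing $\Phi_n$ for the cyclotomic polynomials collides with $\Phi_t$, which is why the paper calls them $Q_m$.
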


\begin{proof}
$(\Leftarrow)$ is \eqref{eq:main}, whose right-hand side is symmetric in the three arguments. For
$(\Rightarrow)$, note first that $\Phi_t(\mu;z)=\Phi_t(\lambda;z)\not\equiv0$, so $\mu$ too has a
non-degenerate profile and $\tilde d_3(\mu)\ne0$ and both invariants are defined. Write
$d=d(\lambda)$ and $e=d(\mu)$, all six entries being $\ge1$, and put
$u=z^{1/2}$, so that $2\sinh(v\theta/2)=u^{v}-u^{-v}$ with integer exponents. The
denominator of \eqref{eq:main} depends on $t$ alone, so the hypothesis is
\[
\varepsilon_\lambda\prod_{i=1}^{3}\bigl(u^{d_i}-u^{-d_i}\bigr)
=\varepsilon_\mu\prod_{i=1}^{3}\bigl(u^{e_i}-u^{-e_i}\bigr).
\]
Using $u^{v}-u^{-v}=u^{-v}(u^{2v}-1)$ on each factor and setting $D=\sum_id_i$, $E=\sum_ie_i$,
\[
\varepsilon_\lambda\,u^{-D}\prod_{i}\bigl(u^{2d_i}-1\bigr)
=\varepsilon_\mu\,u^{-E}\prod_{i}\bigl(u^{2e_i}-1\bigr).
\]
Each product is a polynomial with constant term $(-1)^3=-1$, so the lowest exponent occurring is
$-D$ on the left and $-E$ on the right, with coefficients $-\varepsilon_\lambda$ and
$-\varepsilon_\mu$: hence $D=E$ and $\varepsilon_\lambda=\varepsilon_\mu$, and the two products
agree. Now $u^{2v}-1=\prod_{m\mid2v}Q_m(u)$, where $Q_m$ is the $m$-th cyclotomic polynomial ---
written $Q$ and not $\Phi$, which in this paper is the object. The $Q_m$ are
irreducible and pairwise distinct, so by unique factorization in $\ZZ[u]$ the two sides carry the
same multiset of cyclotomic factors,
\[
\biguplus_i\{m:m\mid 2d_i\}=\biguplus_i\{m:m\mid 2e_i\} .
\]
Its largest element is $\max_i2d_i$ on the left and $\max_i2e_i$ on the right, so those agree;
deleting one copy of the divisor set of that maximum from each side and repeating twice more gives
$\{2d_1,2d_2,2d_3\}=\{2e_1,2e_2,2e_3\}$.
\end{proof}

\noindent So the pair (multiset, sign) is a \emph{complete separating} invariant for this evaluation
off the vanishing locus --- complete because the value factors through it, separating because
distinct data give distinct values --- and that is a theorem and not a range. We stop short of
calling it \emph{minimal} outright, since minimality is relative to a class of invariants one would
have to fix first. The hypothesis $\Phi_t(\lambda;z)\not\equiv0$ is not
removable and not a defect: on the vanishing locus every invariant collapses to one value, which is
why \eqref{eq:invariant} sends the whole of it to $0$. We keep the ordered form there
because $d_3$ is the coordinate the reciprocal pair contributes and the other two are not, and
Corollary \ref{cor:fibregf} describes the fibres of either, since the two differ by a symmetry of
the numerator and not by anything the lattice sees. Lemma \ref{lem:single} below is the same
argument run against a target that is not itself a value of $\Phi_t$.

\begin{proposition}[a shorter form of the sign]\label{rem:sign}
Let the residue profile be non-degenerate and $\tilde d_3\ne0$, and order the two distinguished
classes so that $r_A\le r_B$. Then
\begin{equation}\label{eq:shortsign}
\boxed{\;\;\varepsilon_\lambda\;=\;(-1)^{\lfloor t/2\rfloor}\,\sgn(\sigma)\,(-1)^{\,r_A+r_B}\,
\sgn(\tilde d_3)\;\;}
\end{equation}
where $\sigma$ is the permutation of \S\ref{sec:background}.
\end{proposition}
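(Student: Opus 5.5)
The plan is to derive \eqref{eq:shortsign} from \eqref{eq:sign} by purely combinatorial bookkeeping on the residue word. No new evaluation of $\Phi_t$ is needed.

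\textbf{Step 1: $\sgn(\sigma)$ as an inversion count.} Let $b$ be the full residue word $b_{\{1,\dots,N\}}$, read in increasing column order. Increasing column order is decreasing $\beta$. So within each residue class the letters already appear in the order $\sigma$ requires: classes increasing, values decreasing. Hence $\sigma$ is the stable sort of $b$ into weakly increasing order, and
\[
\sgn(\sigma)=(-1)^{\inv(b)} .
\]
Proving \eqref{eq:shortsign} is therefore equivalent to proving the following parity identity, once the constants are matched:
\[
(-1)^{\inv(b)-\inv(b_S)}=(-1)^{\,j_{A_1}+j_{B_1}+r_A+r_B+c_t}\,\sgn(a_1-b_1).
\]
The constant comes from comparing $(-1)^{t+\binom{N+1}{2}}$ with $(-1)^{\lfloor t/2\rfloor}$. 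I would get $c_t$ by computing $\binom{t+3}{2}+t-\lfloor t/2\rfloor \bmod 2$ separately for $t$ in each class modulo $4$. The factor $\sgn(\tilde d_3)$ appears on both sides and cancels.

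\textbf{Step 2: counting the removed inversions.} Deleting the two letters at columns $j:=j_{A_1}$ and $j':=j_{B_1}$ changes the inversion count as follows:
\[
\inv(b)-\inv(b_S)=I(j)+I(j')-[\text{the pair }(j,j')\text{ is an inversion}].
\]
Here $I(j)$ is the number of inversions of $b$ involving position $j$. For the letter at $j$, of residue $r_A$:
\[
I(j)=\#\{k<j:b_k>r_A\}+\#\{k>j:b_k<r_A\}\equiv (j-1)+\#\{k<j:b_k\le r_A\}+\#\{k>j:b_k<r_A\}\pmod 2 .
\]
Since every class is occupied and the profile is known, $\#\{k:b_k<r_A\}$ equals $r_A$ plus the number of excess letters in classes below $r_A$. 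Because $a_1$ is the largest element of its class, no class-$r_A$ letter from $A$ precedes column $j$. So, modulo $2$, $I(j)$ reduces to $j-1+r_A$ plus corrections coming only from the other distinguished class $B$: whether $r_B<r_A$, $=r_A$ or $>r_A$, and where $b_1,b_2$ sit relative to $j$. The same computation applies to $I(j')$.

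\textbf{Step 3: casework.} I would split into the two-class profile with $r_A<r_B$ and the size-three profile with $r_A=r_B$, $A=\{p,q\}$, $B=\{q,r\}$, where $j'$ is the column of $q$. In each case I split further by the sign of $a_1-b_1$, which decides whether $j<j'$. In the two-class case with $r_A<r_B$, the corrections are: the letters of $B$ lying after $j$ contribute nothing to $\#\{k>j:b_k<r_A\}$; the letters of $A$ lying before $j'$ contribute to $\#\{k<j':b_k\le r_B\}$; and the pair $(j,j')$ is an inversion exactly when $j'<j$, that is when $b_1>a_1$. These should combine to exactly $\sgn(a_1-b_1)$ times a sign depending only on $t$. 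The size-three case is one explicit configuration, $p>q>r$ in columns $j<j'<j''$, and should be checked directly.

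\textbf{Main obstacle.} The delicate step is Step 3. There the corrections from the companion letters $a_2$ and $b_2$, whose positions relative to $j,j'$ are not fixed, must cancel modulo $2$. My first attempt is to note that $a_2$ is always after $j$ and $b_2$ always after $j'$. Their contributions to $I(j)$ and $I(j')$ then reduce to the indicator that $b_2$ precedes $j$ (respectively that $a_2$ precedes $j'$), counted only when the residue inequality points the right way. With $r_A<r_B$ only one of these survives, and it should pair with the $(j,j')$ inversion term to leave $\sgn(a_1-b_1)$. As an independent check on the constant $c_t$, I would test the resulting identity against the shift relation \eqref{eq:shift} and against the examples in Figure~\ref{fig:intervals}.
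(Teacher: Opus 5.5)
Your plan is correct and is essentially the paper's own proof: the stable-sort identity $\sgn(\sigma)=(-1)^{\inv(w)}$, the deletion formula for $\inv(w)-\inv(b_S)$, and the per-position parity $I(p)\equiv(p-1)+\nu_{<c}+e_p$. That parity is exactly your Step 2 rewrite, with $\nu_{<c}$ the total number of letters below $c$ and $e_p$ the number of copies of $c$ before $p$, and your constant comes out as $c_t\equiv1$ for every $t$. The obstacle you anticipate in Step 3 does not arise, because in that rewrite each letter of residue below $c$ is counted exactly once wherever it sits, so the positions of $a_2,b_2$ never enter: in the two-class case $\nu_{<r_A}=r_A$, $\nu_{<r_B}=r_B+1$ and $e=0$ at both deleted columns; in the size-three case $\nu_{<r}=r$ at both, with $e=0$ and $e=1$; the leftover $+1$ is absorbed by $c_t\equiv1$; and the only position-dependent term is the inversion indicator $[a_1<b_1]$, which is $\sgn(a_1-b_1)$ on the nose.
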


\begin{proof}
Let $w$ be the residue word of $\beta(\lambda)$ read in increasing column order. Since $\beta$
decreases with the column index, grouping the parts by class with the values decreasing inside each
class is the stable sort of $w$, so $\sgn(\sigma)=(-1)^{\inv(w)}$. Both sides of
\eqref{eq:shortsign} carry the factor $\sgn(\tilde d_3)$, and
$\sgn(a_1-b_1)=(-1)^{[a_1<b_1]}$, so by \eqref{eq:sign} the assertion is equivalent to
\begin{equation}\label{eq:signparity}
j_{A_1}+j_{B_1}+\inv(w)+\inv(b_S)+[a_1<b_1]\;\equiv\;
\Bigl\lfloor\tfrac t2\Bigr\rfloor+t+\binom{N+1}{2}+r_A+r_B \pmod 2 .
\end{equation}

We use one parity count. Let $u$ be a word, $p$ a position, $c=u_p$, and let $I(p)$ be the number of
inversions of $u$ in which $p$ takes part; write $\nu_{<c}$ for the number of letters of $u$ strictly
below $c$ and $e_p$ for the number of occurrences of $c$ strictly before $p$. Putting
$x=\#\{j<p:u_j>c\}$, the positions before $p$ split as $p-1=x+e_p+\#\{j<p:u_j<c\}$ and the letters
below $c$ split as $\nu_{<c}=\#\{j<p:u_j<c\}+\#\{j>p:u_j<c\}$, whence
\begin{equation}\label{eq:parity}
I(p)\;=\;x+\#\{j>p:u_j<c\}\;=\;2x+e_p+\nu_{<c}-(p-1)\;\equiv\;(p-1)+\nu_{<c}+e_p \pmod 2 .
\end{equation}
Since $b_S$ is $w$ with the letters at the positions $j_{A_1}$ and $j_{B_1}$ deleted,
\[
\inv(w)-\inv(b_S)=I(j_{A_1})+I(j_{B_1})
-\bigl[\,\{j_{A_1},j_{B_1}\}\ \text{is an inversion of }w\,\bigr],
\]
and $\inv(w)+\inv(b_S)\equiv\inv(w)-\inv(b_S)$, so \eqref{eq:parity} computes the left-hand side of
\eqref{eq:signparity}.

In the two-class profile $r_A<r_B$, and $w$ carries $r_A$ twice, $r_B$ twice and every other residue
once. Because $\beta$ decreases with the column index, $a_1>a_2$ puts $j_{A_1}$ at the \emph{first}
occurrence of $r_A$, so $e=0$ there, and likewise at $j_{B_1}$. The letters below $r_A$ are one copy
of each of $0,\dots,r_A-1$, so $\nu_{<r_A}=r_A$; those below $r_B$ are the same together with the
second copy of $r_A$, so $\nu_{<r_B}=r_B+1$. The two positions form an inversion exactly when the
larger letter comes first, that is when $j_{B_1}<j_{A_1}$, that is when $a_1<b_1$. Hence
\begin{multline*}
\inv(w)-\inv(b_S)\;\equiv\;(j_{A_1}-1+r_A)+(j_{B_1}-1+r_B+1)+[a_1<b_1]\\
\equiv\;j_{A_1}+j_{B_1}+r_A+r_B+1+[a_1<b_1],
\end{multline*}
and substituting this in \eqref{eq:signparity} cancels the columns and $[a_1<b_1]$ in pairs, leaving
$\lfloor t/2\rfloor+t+\binom{N+1}{2}\equiv1$.

In the size-three profile $r_A=r_B=:r$ and the class $\{p>q>p'\}$ sits at columns $j_1<j_2<j_3$ with
$A=\{p,q\}$ and $B=\{q,p'\}$, so $j_{A_1}=j_1$ and $j_{B_1}=j_2$ carry the same letter $r$, with
$e=0$ and $e=1$ respectively and $\nu_{<r}=r$ in both; two equal letters are not an inversion, and
$a_1>b_1$. So $\inv(w)-\inv(b_S)\equiv j_1+j_2+1$, which is the displayed expression again because
$r_A+r_B=2r$ and $[a_1<b_1]=0$, and \eqref{eq:signparity} reduces to the same congruence.

Finally $N=t+2$, so that congruence reads $\lfloor t/2\rfloor+t+\binom{t+3}{2}\equiv1$: for $t=2m$
its three terms are $m$, $0$ and $m+1$, and for $t=2m+1$ they are $m$, $1$ and $m$. Both sums are
odd, and the identity holds for every $t\ge2$.
\end{proof}

The ordering by residue is not a convenience, and it is worth saying why. Exchanging the two blocks
negates the sign $\kappa_{11}$ of Lemma \ref{lem:L4} --- through its $\sgn(a_1-b_1)$ factor --- and negates
$\sgn(\tilde d_3)$ as well, so it leaves $\varepsilon_\lambda$, which is their product,
invariant; as it must, since $\varepsilon_\lambda$ is attached to $\lambda$. The right-hand side
above is \emph{not} invariant: $(-1)^{r_A+r_B}$ is symmetric, so only its last factor changes sign.
Without a rule fixing which block is $A$, the right-hand side is therefore not a function of
$\lambda$ at all. The hypothesis enters the proof at one point and one only: it is what makes
$\nu_{<r_A}=r_A$ and $\nu_{<r_B}=r_B+1$ rather than the other way round, and dropping it exchanges
those two counts and breaks \eqref{eq:signparity}.

Read the other way round, \eqref{eq:shortsign} says that $\Phi_t$ is a function of a small
combinatorial datum and of nothing else, and it says so for every $t$ rather than over a range: by
Theorem \ref{thm:main} the triple $(d_1,d_2,d_3)$ determines $\Phi_t$ up to sign, and adjoining
$\sgn(\sigma)$, the ordered pair $r_A\le r_B$ and the \emph{orientation}
$\sgn(\tilde d_3)$ determines it outright. The shape of $\lambda$ enters nowhere else.
The orientation is not optional, and that part remains a check rather than a proof: over $t\le6$ and
$|\lambda|\le14$, putting $\sgn(a_1-b_1)$ in its place leaves collisions at $t=2$.

\section{Proof of Theorem \ref{thm:main}}\label{sec:proof}

The proof uses the Vandermonde determinant, the Laplace expansion, and one cancellation lemma. We
give the steps separately so that a reader checking the ancillary scripts can localise any
disagreement.

\begin{figure}[!ht]
\centering
\includegraphics[width=\textwidth]{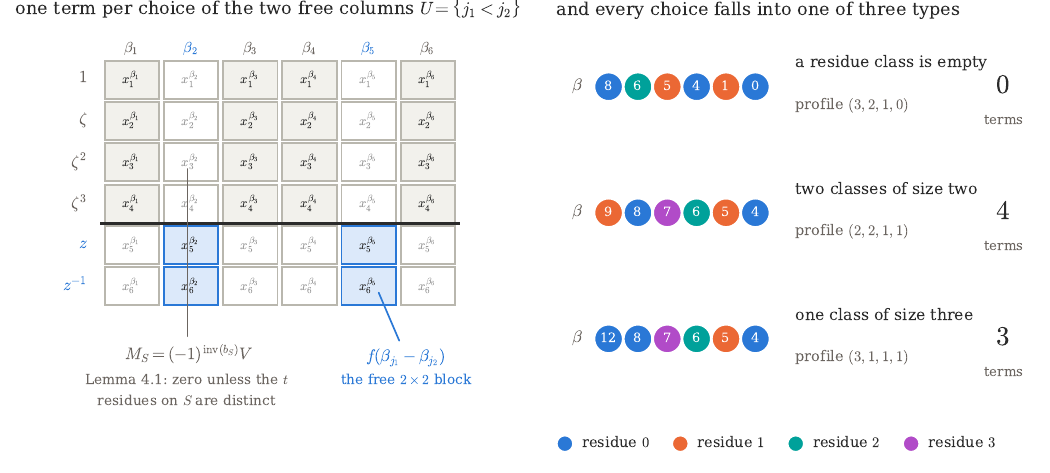}
\caption{The architecture of the proof, drawn at $t=4$, $N=6$. \emph{Left:} the determinant
$\det(x_i^{\beta_j})$ expanded along the $t$ frozen rows. Each choice of the two columns
$U=\{j_1<j_2\}$ left to the free rows $z,z^{-1}$ leaves a $t\times t$ minor $M_S$ on the
complementary columns, which by Lemma \ref{lem:L1} is $\pm V$ or zero, times the $2\times2$
block $f(\beta_{j_1}-\beta_{j_2})$. \emph{Right:} by the excess-two count \eqref{eq:excess}
only three residue profiles occur, one per row, each drawn on a beta set that realises it; the
number at the right is how many choices of $U$ survive, counted rather than asserted --- $0$, $4$
and $3$, which is the trichotomy the section is organised around. Colour is the residue class.
Computed by \texttt{anc/fig\_proof.py}.}
\label{fig:laplace}
\end{figure}

The expansion itself is the standard move of this literature and we claim nothing for it: Albion
\cite{Alb25} applies the Laplace expansion to a determinant of this shape \emph{according to the
given block structure}, holding a first set of rows fixed, following Krattenthaler and
Ciucu--Krattenthaler. What is particular here is not the expansion but what it runs into.

What the argument really does is classify rather than compute. Expanding along the $t$ frozen rows
leaves one term for each way of choosing which two columns are \emph{not} frozen, and the excess-two
count says every such choice falls into exactly one of three combinatorial types: none survives
(a residue class is empty), four survive (two classes of size two), or three do (one class of size
three). Everything after that is bookkeeping inside a type, and Lemma \ref{lem:L5} shows the last two
types are one statement seen twice. Figure \ref{fig:laplace} is that sentence drawn.

\begin{lemma}\label{lem:L1}
Let $S$ be a set of $t$ columns and $M_S=\det(\zeta^{k\beta_j})_{0\le k<t,\,j\in S}$. Then $M_S=0$
unless the residues $\{\beta_j\bmod t:j\in S\}$ are pairwise distinct, in which case
$M_S=(-1)^{\inv(b_S)}V$.
\end{lemma}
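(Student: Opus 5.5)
The plan is to reduce $M_S$ to a Vandermonde determinant by observing that each entry depends only on the residue of $\beta_j$ modulo $t$. Since $\zeta^t=1$, the entry $\zeta^{k\beta_j}$ equals $\zeta^{k\rho_j}$, where $\rho_j:=\beta_j\bmod t\in\{0,\dots,t-1\}$. So the column of $M_S$ indexed by $j$ is the vector $\bigl(1,\zeta^{\rho_j},\zeta^{2\rho_j},\dots,\zeta^{(t-1)\rho_j}\bigr)^{\mathsf T}$, which depends on $j$ only through $\rho_j$. Throughout, the columns are taken in increasing column order, as in the definition of $b_S$.

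\emph{Vanishing case.} If two columns $j\ne j'$ in $S$ have $\rho_j=\rho_{j'}$, then $M_S$ has two equal columns, and so $M_S=0$. This settles the first clause.

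\emph{Distinct residues.} If the residues are pairwise distinct, then, since $|S|=t$, the word $b_S=(\rho_j)_{j\in S}$ is a permutation $\pi$ of $\{0,\dots,t-1\}$. I will permute the columns of $M_S$ so that the residues appear in increasing order $0,1,\dots,t-1$. The result is the matrix $\bigl(\zeta^{kr}\bigr)_{0\le k,r\le t-1}$, that is, $\bigl(x_r^{k}\bigr)$ with nodes $x_r=\zeta^r$, rows indexed by increasing powers $k$ and columns by increasing nodes. By the Vandermonde evaluation $\det\bigl(x_r^{k}\bigr)=\prod_{r<r'}(x_{r'}-x_r)$, this determinant is exactly
\[
\prod_{0\le r<r'\le t-1}\bigl(\zeta^{r'}-\zeta^{r}\bigr)=V .
\]
Column reordering multiplies the determinant by the sign of the sorting permutation of the word $b_S$, and that sign is $(-1)^{\inv(b_S)}$. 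Hence $M_S=(-1)^{\inv(b_S)}V$.

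The only step needing care is the sign bookkeeping, on two points. First, the product in $V$ must match the Vandermonde convention with rows indexed by exponents and columns by nodes. Second, undoing the reading order of $b_S$ must cost exactly $(-1)^{\inv(b_S)}$ and not its inverse. The inverse causes no trouble, because a permutation and its inverse have the same sign. I expect no genuine obstacle beyond these checks.
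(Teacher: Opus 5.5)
Your proof is correct and takes essentially the paper's approach. Both arguments read $M_S$ as a Vandermonde determinant in the nodes $\zeta^{\beta_j}$, which depend only on $\beta_j \bmod t$, so a repeated residue forces vanishing, and otherwise the determinant is $V$ times the sign $(-1)^{\inv(b_S)}$ of the permutation sorting $b_S$; your explicit reordering of the columns into the matrix $(\zeta^{kr})$ simply spells out the alternating property the paper invokes.
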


\begin{proof}
Since $\zeta^{k\beta_j}=(\zeta^{\beta_j})^k$, the matrix is a Vandermonde matrix in the values
$y_j=\zeta^{\beta_j}$, whose determinant is $\prod_{j<j'}(y_{j'}-y_j)$. The value $y_j$ depends on
$\beta_j$ only modulo $t$, so two columns of $S$ sharing a residue make the determinant vanish. If
the residues are pairwise distinct then $|S|=t$ forces $\{y_j\}_{j\in S}=\zt$ as a set, and the
Vandermonde determinant is alternating in its arguments, so it equals $V$ times the sign of the
permutation sorting $b_S$ increasingly, which is $(-1)^{\inv(b_S)}$.
\end{proof}

\begin{lemma}\label{lem:L2}
$\det\bigl(x_i^{N-j}\bigr)_{i,j=1}^{N}
=(-1)^{\binom t2}\,V\,(z^t-1)(z^{-t}-1)(z-z^{-1})$ for the alphabet
$x=(1,\zeta,\dots,\zeta^{t-1},z,z^{-1})$ in that order.
\end{lemma}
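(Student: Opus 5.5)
The plan is to evaluate the determinant directly as a Vandermonde product and then group the factors by which part of the alphabet they come from; no earlier result beyond the definitions of $V$ and of the alphabet order is needed. With the columns ordered by decreasing exponent $N-1,N-2,\dots,0$, the standard Vandermonde evaluation gives
\[
\det\bigl(x_i^{N-j}\bigr)_{i,j=1}^{N}=\prod_{1\le i<i'\le N}\bigl(x_i-x_{i'}\bigr).
\]
I would first fix this sign convention, checking it at $N=2$, where the determinant is $x_1-x_2$. This sign bookkeeping is the one place an error could creep in, so it is the step to double-check.

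Next I would split the pairs $i<i'$ into three groups, according to the order $x=(1,\zeta,\dots,\zeta^{t-1},z,z^{-1})$.

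\emph{Both letters in the orbit.} These pairs give $\prod_{0\le k<k'\le t-1}(\zeta^k-\zeta^{k'})$. Each of the $\binom t2$ factors is the negative of the corresponding factor of $V$, so this group contributes $(-1)^{\binom t2}V$.

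\emph{One letter in the orbit, the other a free letter.} Here the orbit letter always comes first. Pairing with $z$ gives
\[
\prod_{k}(\zeta^k-z)=(-1)^t\prod_k(z-\zeta^k)=(-1)^t(z^t-1),
\]
using $\prod_k(X-\zeta^k)=X^t-1$. Pairing with $z^{-1}$ gives $(-1)^t(z^{-t}-1)$ in the same way. The two signs $(-1)^t$ cancel, so this group contributes $(z^t-1)(z^{-t}-1)$.

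\emph{Both letters free.} The single pair $(z,z^{-1})$ contributes $z-z^{-1}$.

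Multiplying the three contributions gives the stated formula. The only genuine point is the sign, and it is controlled by two facts: the cancellation of the two factors $(-1)^t$, and the reversal of all $\binom t2$ orbit factors relative to the orientation $\zeta^{k'}-\zeta^k$ used in the definition of $V$.
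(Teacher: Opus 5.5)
Your proposal is correct and is the paper's proof step for step. The paper also writes the determinant as $\prod_{i<i'}(x_i-x_{i'})$, takes $(-1)^{\binom t2}V$ from the orbit pairs and $(-1)^t(z^t-1)$ and $(-1)^t(z^{-t}-1)$ from the mixed pairs, observes that the two factors $(-1)^t$ cancel, and ends with the factor $z-z^{-1}$ from the last pair.
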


\begin{proof}
The determinant is $\prod_{i<i'}(x_i-x_{i'})$. Pairs inside $\zt$ contribute
$\prod_{k<k'}(\zeta^k-\zeta^{k'})=(-1)^{\binom t2}V$; pairs $(\zeta^k,z)$ contribute
$\prod_k(\zeta^k-z)=(-1)^t(z^t-1)$ and pairs $(\zeta^k,z^{-1})$ contribute $(-1)^t(z^{-t}-1)$; the
last pair contributes $z-z^{-1}$. The two factors $(-1)^t$ cancel.
\end{proof}

\noindent We compute it because it is three lines, not because it is unfamiliar: a determinant over
an alphabet closed under $x\mapsto x^{-1}$, evaluated in closed product form, is exactly the subject
of Krattenthaler's complement to the determinant calculus \cite[\S5.11]{KratADCC}, whose Lemma 20
carries the Weyl denominator formulas for $B_n$, $C_n$ and $D_n$ among its special cases. What that
literature does not evaluate --- and what the next lemma needs --- is the same determinant with a
root-of-unity orbit sitting inside the alphabet.

\begin{lemma}\label{lem:L3}
Let $\mathcal U$ be the set of pairs $U=\{j_1<j_2\}$ of columns whose complement $S_U$ carries all
$t$ residues. Then
\[
\Phi_t(\lambda;z)=\frac{(-1)^{\,t+\binom{N+1}{2}}}{(z^t-1)(z^{-t}-1)(z-z^{-1})}
\sum_{U\in\mathcal U}(-1)^{\,j_1+j_2+\inv(b_{S_U})}\,f(\beta_{j_1}-\beta_{j_2}).
\]
\end{lemma}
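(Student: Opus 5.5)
The plan is a generalized Laplace expansion of the numerator $\det(x_i^{\beta_j})$ of the bialternant along its first $t$ rows, the frozen rows $x_i=\zeta^{i-1}$. After that, Lemma \ref{lem:L1} evaluates the $t\times t$ minors and Lemma \ref{lem:L2} evaluates the denominator. The alphabet is ordered as in Lemma \ref{lem:L2}: $x=(1,\zeta,\dots,\zeta^{t-1},z,\zb)$. The rows $1,\dots,t$ carry $\zeta^{k\beta_j}$ with $k=0,\dots,t-1$, and rows $t+1,t+2$ carry $z^{\beta_j}$ and $z^{-\beta_j}$.

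\emph{Step 1: Laplace expansion.} Fix the row set $R=\{1,\dots,t\}$. The expansion writes $\det(x_i^{\beta_j})$ as a sum over $t$-subsets $S$ of columns of
\[
(-1)^{\sum_{i\in R}i+\sum_{j\in S}j}\,M_S\cdot\det\bigl(x_i^{\beta_j}\bigr)_{i\in\{t+1,t+2\},\,j\in U},
\]
where $U=\{j_1<j_2\}$ is the complement of $S$. Here $\sum_{i\in R}i=\binom{t+1}{2}$ and $\sum_{j\in S}j=\binom{N+1}{2}-j_1-j_2$. The complementary $2\times2$ minor is
\[
z^{\beta_{j_1}}z^{-\beta_{j_2}}-z^{\beta_{j_2}}z^{-\beta_{j_1}}=f(\beta_{j_1}-\beta_{j_2}).
\]

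\emph{Step 2: the frozen minors.} By Lemma \ref{lem:L1}, $M_S$ vanishes unless $S$ carries all $t$ residues, that is, unless $U\in\mathcal U$. When $U\in\mathcal U$ we have $M_S=(-1)^{\inv(b_{S_U})}V$. The numerator therefore becomes
\[
(-1)^{\binom{t+1}{2}+\binom{N+1}{2}}\,V\sum_{U\in\mathcal U}(-1)^{j_1+j_2+\inv(b_{S_U})}f(\beta_{j_1}-\beta_{j_2}).
\]
The parity $-j_1-j_2\equiv j_1+j_2$ is harmless.

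\emph{Step 3: divide by the denominator.} Lemma \ref{lem:L2} gives $(-1)^{\binom t2}V(z^t-1)(z^{-t}-1)(z-z^{-1})$. The factor $V$ is nonzero because the $\zeta^k$ are distinct, and it cancels. The remaining sign is $(-1)^{\binom{t+1}{2}-\binom t2}=(-1)^t$, which multiplies $(-1)^{\binom{N+1}{2}}$ to give the stated prefactor.

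There is no real obstacle here. The one place to be careful is the sign bookkeeping: the Laplace sign convention for a fixed row set, the requirement that the column order in $b_{S_U}$ (increasing) match the column order in $M_S$, and the requirement that the row order of the free block ($z$ before $\zb$) match the orientation of $f$. I would check these conventions against a tiny case such as $t=2$, $\lambda=\varnothing$, where $\Phi_2=1$.
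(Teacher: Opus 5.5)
Your proposal is correct and follows the paper's proof step for step: Laplace expansion along the $t$ frozen rows, Lemma~\ref{lem:L1} to discard or evaluate the $t\times t$ minors, and division by Lemma~\ref{lem:L2}, leaving the residual sign $(-1)^{\binom{t+1}{2}-\binom t2}=(-1)^t$. Your sanity check also goes through: at $t=2$, $\lambda=\varnothing$ the sum is $3f(1)-f(3)=-(z-z^{-1})^3$, which equals $(z^2-1)(z^{-2}-1)(z-z^{-1})$, and the prefactor is $+1$, so $\Phi_2=1$.
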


\begin{proof}
Expanding the numerator by Laplace along the first $t$ rows,
\[
\det\bigl(x_i^{\beta_j}\bigr)=\sum_{|S|=t}(-1)^{\binom{t+1}{2}+\sum_{j\in S}j}\,M_S\,C_{S^c},
\]
where for $S^c=\{j_1<j_2\}$ the complementary $2\times2$ minor is
$z^{\beta_{j_1}-\beta_{j_2}}-z^{-(\beta_{j_1}-\beta_{j_2})}=f(\beta_{j_1}-\beta_{j_2})$. By Lemma
\ref{lem:L1} only $S$ with pairwise distinct residues contribute. Writing
$\sum_{j\in S}j=\binom{N+1}{2}-j_1-j_2$ and dividing by Lemma \ref{lem:L2}, the factors $V$ cancel
and $(-1)^{\binom{t+1}{2}-\binom{t}{2}}=(-1)^t$.
\end{proof}

If some $n_i=0$ then no $S$ of size $t$ carries all residues, $\mathcal U=\varnothing$, and
$\Phi_t=0$: this is Theorem \ref{thm:main}(i). Otherwise all $n_i\ge1$ and by \eqref{eq:excess}
either two classes have size two, in which case $S_U$ must drop one element of each and
$|\mathcal U|=4$, or one class has size three, in which case $S_U$ drops two of the three and
$|\mathcal U|=3$.

\begin{lemma}[the column move]\label{lem:L4}
Let $A$ and $B$ be distinct classes of size two, at columns $j_{A_1}<j_{A_2}$ and
$j_{B_1}<j_{B_2}$. For $i,j\in\{1,2\}$ let $U_{ij}$ consist of the column of $a_i$ and the column of
$b_j$, and set $\kappa_{ij}=(-1)^{\,j_{A_i}+j_{B_j}+\inv(b_{S_{U_{ij}}})}\sgn(a_i-b_j)$, so that
the $U_{ij}$ term of Lemma \ref{lem:L3} equals $\kappa_{ij}f(a_i-b_j)$. Then
$\kappa_{ij}=\kappa_{11}(-1)^{i-1}(-1)^{j-1}$.
\end{lemma}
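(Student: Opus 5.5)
The plan is to compute the parity of the exponent in $\kappa_{ij}$ directly, using the single-position inversion count \eqref{eq:parity} from the proof of Proposition \ref{rem:sign}. The aim is to show that the exponent, together with the factor $\sgn(a_i-b_j)$, equals a quantity independent of $(i,j)$ plus $(i-1)+(j-1)$.

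\emph{Reduction to the full word.} Let $w$ be the residue word of $\beta(\lambda)$ in increasing column order. The word $b_{S_{U_{ij}}}$ is $w$ with the letters at positions $p=j_{A_i}$ and $q=j_{B_j}$ deleted. Therefore
\[
\inv\bigl(b_{S_{U_{ij}}}\bigr)=\inv(w)-I(p)-I(q)+\bigl[\{p,q\}\text{ is an inversion of }w\bigr].
\]
By \eqref{eq:parity} we have $I(p)\equiv(p-1)+\nu_{<r_A}+e_p$ and $I(q)\equiv(q-1)+\nu_{<r_B}+e_q$ modulo $2$. Substituting into the exponent $j_{A_i}+j_{B_j}+\inv(b_{S_{U_{ij}}})$, the column indices $p$ and $q$ cancel in pairs. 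This cancellation is the point of the computation. It leaves
\[
\inv(w)+\nu_{<r_A}+\nu_{<r_B}+e_p+e_q+\bigl[\{p,q\}\text{ inversion}\bigr]\pmod 2 .
\]
The counts $\nu_{<r_A}$ and $\nu_{<r_B}$ are counts of letters in the whole word $w$, so they do not depend on $i$ or $j$. Next, $\beta$ decreases with the column index, so $a_1$ sits at the first occurrence of $r_A$ and $a_2$ at the second. Hence $e_p=i-1$, and likewise $e_q=j-1$.

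\emph{The inversion term against the sign.} It remains to show that $\bigl[\{p,q\}\text{ inversion}\bigr]+[a_i<b_j]$ is independent of $(i,j)$, since $\sgn(a_i-b_j)=(-1)^{[a_i<b_j]}$. Here $a_i\neq b_j$ because the residues differ. If $r_A<r_B$, the pair is an inversion exactly when the letter $r_B$ comes first, i.e.\ $j_{B_j}<j_{A_i}$, i.e.\ $b_j>a_i$; the two brackets coincide and their sum is even. If $r_A>r_B$, the pair is an inversion exactly when $a_i>b_j$; the two brackets are complementary and their sum is $1$. In either case the sum is a constant.

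Collecting terms gives $\kappa_{ij}=c\,(-1)^{i-1}(-1)^{j-1}$ with $c$ independent of $(i,j)$. Setting $i=j=1$ identifies $c=\kappa_{11}$. The only delicate step is the bookkeeping of the deletion identity for $\inv$, in particular the sign of the pair correction. I would double-check it on the example of Figure \ref{fig:intervals}, where $t=3$, $w=(1,2,2,1,0)$, and all four $U_{ij}$ can be listed by hand.
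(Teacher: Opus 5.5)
Your proof is correct, and it takes a different route from the paper's.

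\textbf{The paper's route.} It argues locally. It fixes $j$ and compares $U_{1j}$ with $U_{2j}$. Passing between the two kept sets slides the single letter $r_A$ across the $j_{A_2}-j_{A_1}-1-[a_2<b_j<a_1]$ kept columns strictly between $j_{A_1}$ and $j_{A_2}$. Combined with the change in the Laplace exponent and with $\sgn(a_1-b_j)\sgn(a_2-b_j)=(-1)^{[a_2<b_j<a_1]}$, this gives $\kappa_{1j}/\kappa_{2j}=-1$, and the same argument in $B$ gives $\kappa_{i1}/\kappa_{i2}=-1$.

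\textbf{Your route.} You evaluate each $\kappa_{ij}$ absolutely against the full word $w$, using the single-position count \eqref{eq:parity}. That count is elementary and independent of Theorem \ref{thm:main}, so borrowing it from the proof of Proposition \ref{rem:sign} is not circular. The column indices then cancel identically. The pair $(i,j)$ survives only in $e_p+e_q=(i-1)+(j-1)$ and in a pair term that you correctly show to be constant, for either order of $r_A,r_B$. The sign of the pair correction that you flag is immaterial, since only parities enter.

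\textbf{What each buys.} Your route yields a closed form, $\kappa_{11}=(-1)^{\inv(w)+\nu_{<r_A}+\nu_{<r_B}+[r_A>r_B]}$. In the two-class profile with $r_A<r_B$ one has $\nu_{<r_A}=r_A$ and $\nu_{<r_B}=r_B+1$. The closed form then becomes exactly the computation the paper redoes separately to reach \eqref{eq:shortsign}, so you obtain Lemma \ref{lem:L4} and Proposition \ref{rem:sign} in one pass.

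The paper's local move buys portability. It is the argument that becomes Lemma \ref{lem:step} for classes of any size and any $r$. There a transversal deletes $2r$ positions, and a global inclusion--exclusion would have to carry the inversions among all the deleted letters. The local move only ever follows one letter across one interval.
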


\begin{proof}
Fix $j$ and compare $i=1$ with $i=2$. The sets $S_{U_{1j}}$ and $S_{U_{2j}}$ differ only in that the
first contains the column $j_{A_2}$ and the second the column $j_{A_1}$, and the letter carried is
the residue $r_A$ in both. Passing from one word to the other moves that letter across exactly the
kept columns strictly between $j_{A_1}$ and $j_{A_2}$, and none of those letters equals $r_A$
because the class $A$ has only two elements; each crossing changes the parity of $\inv$. Hence
\[
(-1)^{\inv(b_{S_{U_{1j}}})-\inv(b_{S_{U_{2j}}})}
=(-1)^{(j_{A_2}-j_{A_1}-1)-[\,a_2<b_j<a_1\,]},
\]
using that $\beta$ is strictly decreasing in the column index, so that $j_{B_j}$ lies strictly
between $j_{A_1}$ and $j_{A_2}$ exactly when $a_2<b_j<a_1$. Multiplying by the Laplace factor
$(-1)^{j_{A_1}+j_{A_2}}$,
\[
\frac{\kappa_{1j}}{\kappa_{2j}}
=(-1)^{2j_{A_2}-1}(-1)^{[\,a_2<b_j<a_1\,]}\sgn(a_1-b_j)\sgn(a_2-b_j)=-1,
\]
because $\sgn(a_1-b_j)\sgn(a_2-b_j)=-1$ exactly when $b_j$ lies between $a_2$ and $a_1$. Exchanging
the roles of $A$ and $B$ gives $\kappa_{i1}/\kappa_{i2}=-1$.
\end{proof}

\begin{lemma}\label{lem:L5}
For all $c,p,q$ and all $u,v$,
\begin{align*}
\text{\rm(a)}&\quad \sum_{\epsilon,\eta\in\{\pm1\}}\epsilon\eta\,f(c+\epsilon p+\eta q)=f(c)f(p)f(q),\\
\text{\rm(b)}&\quad f(2u)-f(2u+2v)+f(2v)=-f(u)f(v)f(u+v).
\end{align*}
\end{lemma}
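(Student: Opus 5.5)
The plan is to expand the product directly in monomials, using nothing beyond the definition $f(u)=z^{u}-z^{-u}$, and then to obtain (b) as the specialization $c=u+v$, $p=u$, $q=v$ of (a). Both identities are formal: they hold in the group ring of the additive group of exponents. So they hold for arbitrary (even half-integer or real) arguments, which is the generality in which they will be applied to the four and three surviving terms of Lemma \ref{lem:L3}.

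For (a), I write
\[
f(c)f(p)f(q)=\sum_{\delta,\epsilon,\eta\in\{\pm1\}}\delta\epsilon\eta\,z^{\delta c+\epsilon p+\eta q}
\]
and split the sum according to $\delta$. The $\delta=+1$ half is $\sum_{\epsilon,\eta}\epsilon\eta\,z^{c+\epsilon p+\eta q}$. In the $\delta=-1$ half I reindex by $(\epsilon,\eta)\mapsto(-\epsilon,-\eta)$, which leaves $\epsilon\eta$ unchanged. That half then becomes $-\sum_{\epsilon,\eta}\epsilon\eta\,z^{-(c+\epsilon p+\eta q)}$. Adding the two halves pairs each exponent with its negative, giving $\sum_{\epsilon,\eta}\epsilon\eta\,f(c+\epsilon p+\eta q)$, which is (a). Nothing more is needed.

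For (b), I substitute $c=u+v$, $p=u$, $q=v$ into (a). The four sign choices give the following terms:
\begin{itemize}
\item $(+,+)$ gives $f(2u+2v)$;
\item $(+,-)$ gives $-f(2u)$;
\item $(-,+)$ gives $-f(2v)$;
\item $(-,-)$ gives $f(0)=0$.
\end{itemize}
Hence $f(2u+2v)-f(2u)-f(2v)=f(u+v)f(u)f(v)$, and negating both sides is exactly (b).

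There is no real obstacle here. The only point needing care is the sign bookkeeping in the reindexing of the $\delta=-1$ half, namely checking that the factor $\epsilon\eta$ is invariant under the simultaneous flip. The other thing to note explicitly is that $f(0)=0$ is what makes the size-three case have three terms rather than four. That observation is the sense in which Lemma \ref{lem:L5}(b) is the degenerate ("one statement seen twice") form of (a).
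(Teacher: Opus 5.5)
Your proof is correct. For (a) it is the paper's computation read in the opposite direction. The paper factors $\sum_{\epsilon,\eta}\epsilon\eta\,z^{c+\epsilon p+\eta q}=z^{c}f(p)f(q)$, does the same for the negative exponents to get $z^{-c}f(p)f(q)$, and subtracts. You instead expand $f(c)f(p)f(q)$ and split by $\delta$. These establish the same identity by the same monomial bookkeeping.

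For (b) your route is genuinely different. The paper proves (b) on its own: it puts $s=z^u$, $w=z^v$ and recombines the eight terms of $(s-s^{-1})(w-w^{-1})(sw-s^{-1}w^{-1})$. You obtain it as the specialization $c=u+v$, $p=u$, $q=v$ of (a), with the $(-,-)$ term vanishing as $f(0)=0$.

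Your route makes the lemma a single computation. It also moves into the lemma itself an observation the paper makes only in the last sentence of the proof of Theorem \ref{thm:main}: the size-three formula is the two-class formula whose fourth selection carries $f(0)$. The paper's route keeps (b) independent of (a), at the cost of a second expansion, and gains nothing beyond that independence.
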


\begin{proof}
For (a), $\sum_{\epsilon,\eta}\epsilon\eta\,z^{c+\epsilon p+\eta q}
=z^{c}\bigl(\sum_\epsilon\epsilon z^{\epsilon p}\bigr)\bigl(\sum_\eta\eta z^{\eta q}\bigr)
=z^{c}f(p)f(q)$, and the same computation for the negative exponents gives
$z^{-c}(-f(p))(-f(q))$; subtract. For (b), put $s=z^u$, $w=z^v$ and expand
$(s-s^{-1})(w-w^{-1})(sw-s^{-1}w^{-1})$: the eight terms recombine as
$(s^2w^2-s^{-2}w^{-2})-(s^2-s^{-2})-(w^2-w^{-2})=f(2u+2v)-f(2u)-f(2v)$.
\end{proof}

\begin{proof}[Proof of Theorem \ref{thm:main}]
In the two-class profile, Lemmas \ref{lem:L3} and \ref{lem:L4} give
\[
\sum_{U\in\mathcal U}(\cdots)=\kappa_{11}\sum_{i,j}(-1)^{i-1}(-1)^{j-1}f(a_i-b_j).
\]
Writing $a_i=\alpha+\epsilon_ip$ and $b_j=\gamma+\eta_jq$ with $p=d_1/2$, $q=d_2/2$ and
$\epsilon=\eta=(+1,-1)$, we have $a_i-b_j=c+\epsilon_ip-\eta_jq$ with $c=\alpha-\gamma$. Replacing
$\eta$ by $-\eta$ turns the sum into $-\sum_{\epsilon,\eta}\epsilon\eta f(c+\epsilon p+\eta q)$,
which is $-f(c)f(p)f(q)$ by Lemma \ref{lem:L5}(a). Since
$(z^t-1)(z^{-t}-1)=2-z^t-z^{-t}=-f(t/2)^2$ and $z-z^{-1}=f(1)$, the two minus signs cancel, and
substituting $f(u)=2\sinh(u\theta)$ the numerical factors cancel. Here $2c=\tilde d_3$. If
$\tilde d_3=0$ then $f(c)=0$ and the whole sum vanishes, which is the first assertion of {\rm(ii)};
otherwise $\sinh(c\theta)=\sgn(\tilde d_3)\sinh(d_3\theta/2)$, giving \eqref{eq:main} with
$\varepsilon_\lambda=(-1)^{\,t+\binom{N+1}{2}}\kappa_{11}\sgn(\tilde d_3)$, which is
\eqref{eq:sign} by the definition of $\kappa_{11}$ in Lemma \ref{lem:L4}.

For the size-three profile $\{p>q>r\}$ at columns $j_1<j_2<j_3$ the same column-move argument
applies, again because no letter strictly between two of these columns carries the class residue,
and gives alternating signs $(\varsigma,-\varsigma,\varsigma)$ for the three terms $f(p-q)$,
$f(p-r)$, $f(q-r)$. By Lemma \ref{lem:L5}(b) with $u=(p-q)/2$, $v=(q-r)/2$ the sum is
$-\varsigma f(u)f(v)f(u+v)$, which is the two-class formula for $A=\{p,q\}$, $B=\{q,r\}$ with
$\varsigma=\kappa_{11}$. Equivalently, the missing fourth selection of the two-class profile would
drop $q$ twice and its term carries $f(0)=0$, so the two profiles are one statement.
\end{proof}

All the statements of this paper were also checked numerically; the counts are collected once, in
\S\ref{sec:verif}, rather than repeated as they arise.

\section{Independence on the reciprocal locus}\label{sec:independence}

Theorem 5.3 of \cite{AK25} is stated for free variables, and $(z,\zb)$ is not a free pair. Nothing
in this section contradicts it: the reciprocal locus lies outside its hypotheses, not against them,
and what we do here is determine what its criterion becomes there. Read at face value,
\eqref{eq:AK53} would say that for $\ell(\lambda)\le2$ one has
$s_\lambda(z,\zb,\zt)=s_\lambda(z,\zb)$ exactly on the $t$-cores. Theorem
\ref{thm:main} evaluates the left-hand side for every $\lambda$, and comparing the two shows that the
reciprocal locus carries further solutions --- exactly one family. Measuring how far a criterion
survives a degeneration is a way of saying how sharp it was, and the answer here is that it is sharp
up to a single family.

We first isolate when $\Phi_t$ degenerates to a single character. Recall
$\chi_k(z)=(z^{k+1}-z^{-k-1})/(z-z^{-1})$.

\begin{lemma}\label{lem:single}
$\Phi_t(\lambda;z)=\pm\chi_k(z)$ for some $k\ge0$ if and only if the multiset $\{d_1,d_2,d_3\}$
contains $t$ at least twice, and in that case the remaining entry equals $2(k+1)$.
\end{lemma}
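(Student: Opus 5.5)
The plan is to run the cyclotomic argument of Proposition \ref{prop:minimal}, this time against the target $\pm\chi_k$ instead of another value of $\Phi_t$. When $\Phi_t(\lambda;z)\equiv0$ there is nothing to prove: $\chi_k\not\equiv0$, and the vanishing branch is excluded from both sides. Otherwise Theorem \ref{thm:main} applies with $\tilde d_3\ne0$ and all $d_i\ge1$. Put $u=z^{1/2}$ and clear denominators, so that $2\sinh(v\theta/2)=u^v-u^{-v}$. The identity $\Phi_t=\pm\chi_k$ then becomes
\[
\varepsilon_\lambda\prod_{i=1}^3\bigl(u^{d_i}-u^{-d_i}\bigr)\cdot\bigl(u^{2}-u^{-2}\bigr)
=\pm\bigl(u^{t}-u^{-t}\bigr)^2\bigl(u^{2(k+1)}-u^{-2(k+1)}\bigr)\bigl(u^2-u^{-2}\bigr)\cdot\tfrac{1}{1},
\]
where I cross-multiply \eqref{eq:main}, whose denominator is $\sinh^2(t\theta/2)\sinh\theta$, against $\chi_k=\sinh((k+1)\theta)/\sinh\theta$. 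The factor $\sinh\theta$ cancels from both sides. What remains is an equation between a product of three factors $u^{d_i}-u^{-d_i}$ on the left and the three factors $u^t-u^{-t}$, $u^t-u^{-t}$, $u^{2(k+1)}-u^{-2(k+1)}$ on the right, up to sign.

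Next I rewrite each factor as $u^{-v}(u^{2v}-1)$ and compare lowest-order terms exactly as in Proposition \ref{prop:minimal}. This forces equal exponent sums and matching signs, and leaves an equality of products of $u^{2v}-1$. I then factor both sides into cyclotomic polynomials $Q_m$ and peel off the largest divisor-set repeatedly. That gives the multiset identity $\{2d_1,2d_2,2d_3\}=\{2t,2t,2(k+1)\}$, i.e.\ $\{d_1,d_2,d_3\}=\{t,t,2(k+1)\}$, which is the forward direction including the value of the remaining entry. For $k\ge0$ the right side consists of honest integer exponents with $2(k+1)\ge2$, so the peeling argument applies verbatim.

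For the converse, suppose the multiset contains $t$ twice and the third entry is $e$. The two $t$-factors cancel the denominator $\sinh^2(t\theta/2)$, leaving $\varepsilon_\lambda\sinh(e\theta/2)/\sinh\theta$. This equals $\pm\chi_k$ with $k=e/2-1$. The only point needing care is that $e$ must be even and at least $2$ for $k$ to be a nonnegative integer. Evenness follows from the parity observation in Corollary \ref{cor:chars}: $d_1+d_2+d_3$ is even, so $e\equiv 2t\equiv0\pmod 2$. Since $e\ge1$ and $e$ is even, $e\ge2$, so $k\ge0$.

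I expect the main obstacle to be bookkeeping rather than any new idea: the nonvanishing hypothesis has to be tracked correctly, and the cancellation of the common $\sinh\theta$ has to be stated cleanly so that the cyclotomic comparison is between two products of exactly three factors.
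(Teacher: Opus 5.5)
Your argument follows the paper's proof. You clear denominators in $u=z^{1/2}$, with the common factor $u^2-u^{-2}$ cancelling. You match lowest monomials to get $d_1+d_2+d_3=2t+2(k+1)$ and the sign. You then compare cyclotomic factors in $\ZZ[u]$ and peel off the largest divisor set three times. Your converse is more explicit than the paper's ``the same computation read backwards'', because you use the parity of $d_1+d_2+d_3$ from Corollary \ref{cor:chars} to check that $k=e/2-1$ is an integer. Two points still need repair.

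\emph{A factor of two.} The right-hand factor $u^{2(k+1)}-u^{-2(k+1)}$ equals $u^{-2(k+1)}\bigl(u^{4(k+1)}-1\bigr)$. So the peeling yields $\{2d_1,2d_2,2d_3\}=\{2t,2t,4(k+1)\}$, not $\{2t,2t,2(k+1)\}$. Halving what you wrote gives $\{t,t,k+1\}$, which would contradict the exponent sum $2t+2(k+1)$ you had just derived. Your final multiset $\{t,t,2(k+1)\}$ is correct, but the line before it should read $4(k+1)$.

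\emph{The vanishing branch in the converse.} It is not ``nothing to prove'' there. The triple is still defined on the concentric locus, and your step $e\ge1$ tacitly assumes that the multiset $\{t,t,0\}$ never occurs on it. It does not occur:
\begin{itemize}
\item Since $d_1,d_2\ge1$, that multiset would need $d_1=d_2=t$ with $\tilde d_3=0$.
\item Then $a_1=a_2+t$, $b_1=b_2+t$ and $a_1+a_2=b_1+b_2$ give $a_2=b_2$, which is impossible for two distinct residue classes.
\item The size-three profile has $\tilde d_3=d_1+d_2>0$, so it never vanishes this way.
\end{itemize}
This line has to be supplied in your proof; the paper's own proof passes over it as well.
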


\begin{proof}
Put $u=z^{1/2}$, so that $2\sinh(d_i\theta/2)=u^{d_i}-u^{-d_i}$, $2\sinh(t\theta/2)=u^{t}-u^{-t}$ and
$2\sinh\theta=u^{2}-u^{-2}$; all exponents are integers. By \eqref{eq:main}, the asserted equality is
\[
\prod_{i=1}^{3}\bigl(u^{d_i}-u^{-d_i}\bigr)
=\pm\bigl(u^{t}-u^{-t}\bigr)^{2}\bigl(u^{2k+2}-u^{-2k-2}\bigr).
\]
Using $u^{n}-u^{-n}=u^{-n}(u^{2n}-1)$ on each factor and comparing the monomial prefactors gives
$d_1+d_2+d_3=2t+2(k+1)$ together with
\[
\prod_{i=1}^{3}\bigl(u^{2d_i}-1\bigr)=\pm\bigl(u^{2t}-1\bigr)^{2}\bigl(u^{4(k+1)}-1\bigr),
\]
and evaluating at $u=0$ fixes the sign to $+$. Now $u^{2n}-1=\prod_{m\mid 2n}Q_m(u)$ with
$Q_m$ the cyclotomic polynomials as in Proposition \ref{prop:minimal}, which are irreducible and
pairwise distinct, so by unique
factorization in $\ZZ[u]$ the two sides have the same multiset of cyclotomic factors. On the left
that multiset is $\biguplus_i\{m:m\mid 2d_i\}$ and on the right it is
$2\times\{m:m\mid 2t\}\uplus\{m:m\mid 4(k+1)\}$. Its largest element is $\max_i 2d_i$ on the left and
$\max(2t,4(k+1))$ on the right; deleting the divisor set of that maximum from both sides and
repeating twice more yields
\[
\{2d_1,2d_2,2d_3\}=\{2t,\,2t,\,4(k+1)\}
\]
as multisets, which is the assertion. The converse is the same computation read backwards.
\end{proof}

\begin{theorem}\label{thm:extra}
Let $\ell(\lambda)\le2$. Then
\[
\Phi_t(\lambda;z)=s_\lambda(z,\zb)
\]
--- an equality, not an equality up to sign --- if and only if either
$\lambda=\core_t(\lambda)$, or
\begin{equation}\label{eq:extra}
t\ \text{is even}\quad\text{and}\quad \lambda=\Bigl(\lambda_2+\tfrac{3t}{2}-1,\ \lambda_2\Bigr)
\quad\text{with}\quad \tfrac t2\le\lambda_2\le t-1 .
\end{equation}
In the second case $\{d_1,d_2,d_3\}=\{3t,t,t\}$ and $\varepsilon_\lambda=+1$, while
$\core_t(\lambda)=\bigl(\tfrac t2-1+j,j\bigr)$
and $\quot_t(\lambda)$ has the single nonempty component $(2)$, in slot $j=\lambda_2-\tfrac t2$.
\end{theorem}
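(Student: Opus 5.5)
By Corollary \ref{cor:chars} and Lemma \ref{lem:single}, the equation $\Phi_t(\lambda;z)=s_\lambda(z,\zb)$ becomes a condition on the triple $(d_1,d_2,d_3)$ and the sign $\varepsilon_\lambda$. I would then enumerate the few beta-set configurations available when $\ell(\lambda)\le2$.

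First, reduce the right-hand side. For $\ell(\lambda)\le2$ we have $s_\lambda(z,\zb)=(z\zb)^{\lambda_2}\chi_{\lambda_1-\lambda_2}(z)=\chi_k(z)$ with $k=\lambda_1-\lambda_2$. By Lemma \ref{lem:single}, $\Phi_t=\pm\chi_k$ holds if and only if the multiset $\{d_1,d_2,d_3\}$ contains $t$ twice and its third entry is $2(k+1)$. Equality with sign $+$ additionally requires $\varepsilon_\lambda=+1$, which I would read off Proposition \ref{rem:sign}. The vanishing case is excluded automatically, since $\chi_k\not\equiv0$.

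Second, write the beta set explicitly. With $N=t+2$,
\[
\beta=(x,y,t-1,t-2,\dots,0),\qquad x=\lambda_1+t+1,\quad y=\lambda_2+t .
\]
The bottom $t$ entries give one representative $\rho$ of each residue, with $0\le\rho\le t-1$. So the profile is never degenerate, and only two situations occur.

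If $x\not\equiv y\pmod t$, the profile is two-class. The classes are $\{x,\bar x\}$ and $\{y,\bar y\}$, where $\bar x=x\bmod t$ and $\bar y=y\bmod t$. Then
\[
d_1=x-\bar x,\qquad d_2=y-\bar y,\qquad \tilde d_3=\pm\bigl(x+\bar x-y-\bar y\bigr),
\]
with the assignment of $A$ and $B$ fixed by $r_A<r_B$.

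If $x\equiv y\pmod t$, the profile is size-three, and $\{d_1,d_2,d_3\}=\{d_1,d_2,d_1+d_2\}$ with $d_1=x-y$ and $d_2=y-\bar y$.

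In the size-three case the condition forces $d_1=d_2=t$ and $2(k+1)=2t$. This gives $\lambda_1-\lambda_2=t-1$, while $x-y=\lambda_1-\lambda_2+1=t$ and $y-\bar y=t$ force $\lambda_2<t$. I expect these shapes to be exactly the non-core ones that must then fail the sign test; this needs checking against the statement, which lists only $\{3t,t,t\}$ as the extra family.

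In the two-class case I would split according to which two of $d_1,d_2,d_3$ equal $t$:
\begin{itemize}
\item[(a)] $d_1=d_2=t$. Then $t\le x,y<2t$. I expect this to be precisely the $t$-core locus, because no beta number can be lowered by $t$ onto a free position. Here $d_3=2(k+1)$ should hold identically, and the sign should come out $+1$.
\item[(b)] One of $d_1,d_2$ equals $t$ together with $d_3$. The other interval length is then a multiple of $t$ equal to $2(k+1)$. Combining the congruences gives $t$ even, $x-\bar x=3t$, $y-\bar y=t$, and residues differing by $t/2$. This should reproduce \eqref{eq:extra}, including the range $t/2\le\lambda_2\le t-1$ and the announced core and quotient, via Proposition \ref{prop:quotient}.
\end{itemize}

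For the core direction there is an independent cross-check. When $\lambda=\core_t(\lambda)$, the identity \eqref{eq:AK53} holds as a polynomial identity in free $x_1,x_2$, so it survives the specialization $(x_1,x_2)=(z,\zb)$. This gives the "if" direction for cores without computing the sign.

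The main obstacle is the bookkeeping. That means the residue ordering $r_A<r_B$, the orientation $\sgn(\tilde d_3)$, and evaluating $\sgn(\sigma)(-1)^{\lfloor t/2\rfloor+r_A+r_B}$ on these explicit beta words, so as to confirm $\varepsilon_\lambda=+1$ exactly on the claimed shapes. Equally important is showing that every other solution of the multiset condition (for instance the size-three shapes, or the reflected orientation in case (b)) has $\varepsilon_\lambda=-1$, or is already a core. I would handle the sign by computing $\inv(w)$ for the word $w$ whose first two letters are the residues of $x$ and $y$, followed by $t-1,\dots,0$. In this word only those two letters create inversions beyond the reversed tail, so the parity becomes explicit in $\bar x$ and $\bar y$.
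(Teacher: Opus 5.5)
Your overall route is the paper's. You reduce by Lemma \ref{lem:single} to $\{d_1,d_2,d_3\}=\{t,t,2(\lambda_1-\lambda_2+1)\}$ and read the triple off the beta set $(x,y,t-1,\dots,0)$. You then take the sign on cores from \eqref{eq:AK53}, and on the new family from $\inv(w)$ via Proposition \ref{rem:sign}. The gap is the case-to-core bookkeeping, on which the ``only if'' direction rests.

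\textbf{The size-three solutions are cores.} These are $\lambda_1=\lambda_2+t-1$ with $\lambda_2\le t-1$, and they are not non-cores awaiting a sign test. Examples are $(1)$ at $t=2$, and $(2)$ and $(3,1)$ at $t=3$. By \eqref{eq:AK53} they satisfy the equality with sign $+1$; directly, $\Phi_2((1);z)=z+z^{-1}=s_{(1)}(z,\zb)$. Your case (a), namely $\lambda_1\le t-2$, is therefore only part of the core locus. The step you plan, proving that the size-three shapes have $\varepsilon_\lambda=-1$ unless they are cores, would fail, because the second half of the core locus is exactly this family.

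\textbf{The missing ingredient} is the description of two-row $t$-cores on the two-element beta set $(A',B')=(\lambda_1+1,\lambda_2)$. No beta number can be lowered by $t$ onto a free nonnegative position if and only if $B'<t$ and either $A'<t$ or $A'=B'+t$. The first alternative is your case (a). The second is your size-three case, since $A'=B'+t$ means $x=y+t$. With this, case (a) $\cup$ size-three equals the cores and case (b) equals \eqref{eq:extra}, which is the ``only if''.

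\textbf{The sign never filters anything.} Every solution of $\Phi_t=\pm s_\lambda(z,\zb)$ turns out to have $\varepsilon_\lambda=+1$. In particular the ``reflected orientation'' you worry about in (b) does not arise.
\begin{itemize}
\item If the class of $x$ had length $t$ and that of $y$ length at least $2t$, then $\lambda_1+1<t\le\lambda_2$, contradicting $\lambda_1\ge\lambda_2$.
\item In the remaining orientation, $x=3t+\bar x$ and $y=t+\bar y$. The condition $|2t+2(\bar x-\bar y)|=t$ with $|\bar x-\bar y|\le t-1$ forces $\bar y-\bar x=t/2$. Hence $r_A=\bar x$ and $\tilde d_3=+t$, with no second orientation to discard.
\end{itemize}
The sign computation is needed only to upgrade $\pm$ to $+$ on \eqref{eq:extra}. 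There $\inv(w)=\binom{t}{2}+\bar x+\bar y$ is even, and the remaining factors of \eqref{eq:shortsign} cancel.
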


Figure \ref{fig:locus} is the statement computed rather than asserted: both families are plotted
over a range of $t$, and the odd $t$ where \eqref{eq:extra} cannot occur is drawn as the control.

\begin{figure}[!ht]
\centering
\includegraphics[width=\textwidth]{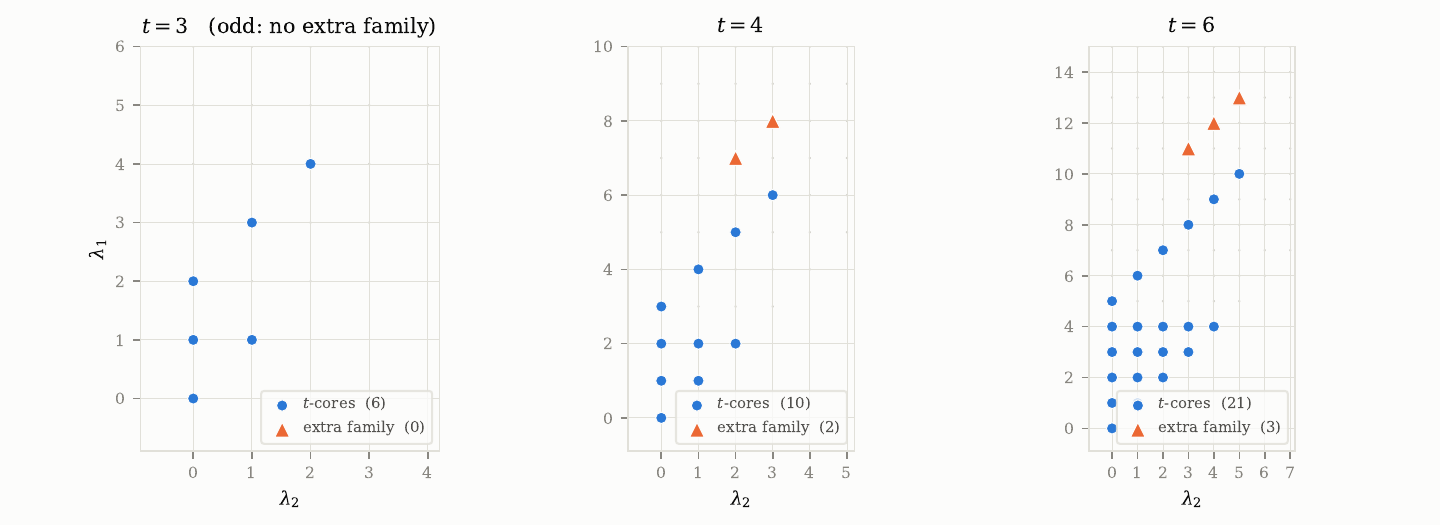}
\caption{Theorem \ref{thm:extra}, computed rather than asserted. For every two-row $\lambda$ in
range, both sides of $\Phi_t(\lambda;z)=s_\lambda(z,\zb)$ were evaluated, sign included; the
partitions where
they agree are plotted and coloured by family. Circles are the $t$-cores, the solutions already
described by \eqref{eq:AK53}; triangles are the extra family \eqref{eq:extra}, which the reciprocal
specialization creates. The odd case $t=3$ is the control: by Theorem \ref{thm:extra} no extra
solution can exist there, and none is found.}
\label{fig:locus}
\end{figure}

\begin{proof}
Since $\ell(\lambda)\le 2$ and $N=t+2$, the beta set is
$\beta(\lambda)=\bigl(A'+t,\;B'+t,\;t-1,t-2,\dots,1,0\bigr)$ where
\[
A':=\lambda_1+1,\qquad B':=\lambda_2,\qquad A'>B'\ge0 .
\]
The block $\{0,1,\dots,t-1\}$ meets every residue class exactly once, so no class is empty and the
two large parts decide the profile. Write $A'=t\alpha+\rho_A$ and $B'=t\gamma+\rho_B$ with
$0\le\rho_A,\rho_B\le t-1$. We first determine the solutions of
$\Phi_t(\lambda;z)=\pm s_\lambda(z,\zb)$ and settle the sign at the end. As
$s_\lambda(z,\zb)=\chi_{\lambda_1-\lambda_2}(z)$, Lemma
\ref{lem:single} says that this holds precisely when $\{d_1,d_2,d_3\}$ contains $t$ twice
and the third entry is $2(\lambda_1-\lambda_2+1)=2(A'-B')$.

\emph{Case $\rho_A=\rho_B=:\rho$.} One class has size three, namely $\{A'+t,B'+t,\rho\}$, and by
Theorem \ref{thm:main} the three arguments are
$d_1=A'-B'$, $d_2=B'+t-\rho=t(\gamma+1)$, $d_3=A'+t-\rho=t(\alpha+1)$. Here $t\mid A'-B'$, say
$A'-B'=ts$ with $s\ge1$. The pair $d_2=d_3=t$ forces $\alpha=\gamma=0$, hence $A'=B'$, excluded.
The pair $d_1=d_3=t$ forces $s=1$ and $\alpha=0$, hence $A'<t$ and $A'=B'+t\ge t$, excluded. The pair
$d_1=d_2=t$ forces $s=1$ and $\gamma=0$, i.e. $A'=B'+t$ with $B'<t$; then $\alpha=1$ and the third
entry is $d_3=2t=2(A'-B')$, as required. So this case contributes exactly the $\lambda$ with
$A'=B'+t$, $B'<t$.

\emph{Case $\rho_A\ne\rho_B$.} Two classes have size two, namely $\{A'+t,\rho_A\}$ and
$\{B'+t,\rho_B\}$, and the three arguments are
\[
d_A=t(\alpha+1),\qquad d_B=t(\gamma+1),\qquad
d_3=\bigl|\,t(\alpha-\gamma)+2(\rho_A-\rho_B)\,\bigr| .
\]
If $d_A=d_B=t$ then $\alpha=\gamma=0$, so $A',B'<t$ and $d_3=2|\rho_A-\rho_B|=2(A'-B')$
automatically: every such $\lambda$ is a solution. Otherwise exactly one of $d_A,d_B$ equals $t$ and
$d_3=t$. If $\alpha=0$ and $\gamma\ge1$ then $|-t\gamma+2(\rho_A-\rho_B)|=t$ with
$|\rho_A-\rho_B|\le t-1$ forces $\gamma=2$ and $\rho_A-\rho_B=t/2$; but then
$\lambda_1=A'-1=\rho_A-1<t$ while $\lambda_2=B'\ge2t$, contradicting $\lambda_1\ge\lambda_2$. If
$\gamma=0$ and $\alpha\ge1$ then $|t\alpha+2(\rho_A-\rho_B)|=t$ forces, for $\alpha=1$,
$\rho_A=\rho_B$ (excluded), and for $\alpha=2$,
\[
\rho_B-\rho_A=\tfrac t2,
\]
which requires $t$ even; $\alpha\ge3$ gives $|2(\rho_A-\rho_B)|\ge2t$, impossible. So $A'=2t+\rho_A$,
$B'=\rho_B=\rho_A+\tfrac t2$, whence
\[
\lambda_1=2t+\rho_A-1,\qquad \lambda_2=\rho_A+\tfrac t2,\qquad
\lambda_1-\lambda_2=\tfrac{3t}{2}-1,
\]
and $\rho_B\le t-1$ gives $0\le\rho_A\le\tfrac t2-1$, i.e. $\tfrac t2\le\lambda_2\le t-1$. The
remaining entry is $d_A=t(\alpha+1)=3t=2(\lambda_1-\lambda_2+1)$, as required. This is
\eqref{eq:extra}.

It remains to identify the solutions collected in the first case of each analysis with the
$t$-cores. For $\ell(\lambda)\le2$ the beta set in two slots is $(A',B')$, and $\lambda$ is a
$t$-core if and only if neither beta number can be lowered by $t$ into an unoccupied nonnegative
position, i.e. if and only if $B'<t$ and either $A'<t$ or $A'=B'+t$. These are exactly the two
families found above. Finally, in case \eqref{eq:extra} the class $\rho_A$ carries
$\{2t+\rho_A+t,\rho_A\}$, so its quotient component is $(2)$, every other component is empty, and
the core is as stated.

\emph{The sign.} The two families get it for different reasons. On the $t$-cores the equality is
\eqref{eq:AK53} itself, which is an equality and not an equality up to sign, so there is nothing to
prove. On \eqref{eq:extra} write $t=2m$ and $\lambda_2=m+j$ with $0\le j\le m-1$, so that
$\rho_A=j$ and $\rho_B=m+j$ and
\[
\beta(\lambda)=\bigl(6m+j,\;3m+j,\;2m-1,2m-2,\dots,1,0\bigr).
\]
Reducing modulo $t=2m$, the residue word read in increasing column order is
\[
w=\bigl(j,\;m+j,\;2m-1,2m-2,\dots,1,0\bigr),
\]
since $6m+j\equiv j$ and $3m+j\equiv m+j$, the latter because $j\le m-1$. Its inversions are of
three kinds: the decreasing tail contributes $\binom{2m}{2}$; the first letter is above exactly the
$j$ tail letters $0,\dots,j-1$; the second is above exactly the $m+j$ tail letters $0,\dots,m+j-1$;
and the first two letters are in increasing order, so they contribute none. Hence
\begin{equation}\label{eq:extrainv}
\inv(w)=\binom{2m}{2}+j+(m+j)=2m^{2}+2j\;\equiv\;0\pmod2 ,
\end{equation}
so $\sgn(\sigma)=(-1)^{\inv(w)}=+1$ by the first line of the proof of Proposition \ref{rem:sign}.
The two distinguished classes are $r_A=j<r_B=m+j$, carrying $A=\{6m+j,\,j\}$ and
$B=\{3m+j,\,m+j\}$, so $\tilde d_3=(6m+2j)-(4m+2j)=2m>0$ and $\sgn(\tilde d_3)=+1$. The remaining
two factors of \eqref{eq:shortsign} are $(-1)^{\lfloor t/2\rfloor}=(-1)^{m}$ and
$(-1)^{r_A+r_B}=(-1)^{m+2j}=(-1)^{m}$, whose product is $+1$. Therefore
$\varepsilon_\lambda=+1$, and since $d_1=6m=3t$, $d_2=2m=t$ and $d_3=2m=t$, \eqref{eq:main} reads
\[
\Phi_t(\lambda;z)=\frac{\sinh(3t\theta/2)\sinh^{2}(t\theta/2)}{\sinh^{2}(t\theta/2)\sinh\theta}
=\chi_{\frac{3t}{2}-1}(z)=\chi_{\lambda_1-\lambda_2}(z)=s_\lambda(z,\zb),
\]
with no sign to carry.
\end{proof}

\noindent What \eqref{eq:extrainv} buys is not decoration. Stated up to sign, Theorem
\ref{thm:extra} says that the specialization creates a family of shapes on which the twist is
invisible \emph{to within a sign}, which is a weaker statement than the one \eqref{eq:AK53} makes on
the cores; with the sign settled the two families are solutions of the same equation, and
``the reciprocal locus acquires exactly one further family'' is literally true.

\begin{remark}\label{rem:why}
Lemma \ref{lem:single} controls the value only up to sign, which is why the sign of Theorem
\ref{thm:extra} needs the separate paragraph above rather than falling out of the classification.

The extra family belongs to the specialization rather than to the alphabet. The step is
\cite[Lemma 5.2]{AK25}, on which \cite[Theorem 5.3]{AK25} rests: it reduces the independence to the
vanishing of $s_{\lambda/\mu}$ at the twisted alphabet for all $\mu\subsetneq\lambda$, and the
reduction uses the linear independence of the universal characters $f_\mu$ in the free variables. On the reciprocal locus $s_\mu(z,\zb)=\chi_{\mu_1-\mu_2}(z)$
depends only on $\mu_1-\mu_2$, so $s_{(1)}$, $s_{(2,1)}$ and $s_{(3,2)}$ agree there and that
independence is not available; Theorem \ref{thm:extra} measures the difference. A control confirms
the reading: for $\lambda=(3,1)$ and $t=2$ one has $s_\lambda(\zt,z,\zb)=s_\lambda(z,\zb)$, while
$s_\lambda(\zt,x,y)\ne s_\lambda(x,y)$ for free $x,y$.
\end{remark}

So the criterion does not survive the specialization unchanged, and what it acquires is small and
nameable: one family, for two-row shapes, indexed by an order-two element of the residue lattice and
classified by its core and quotient. The extra solutions belong to the reciprocal locus, not to the
alphabet --- they exist because a specialization has removed a linear independence the original
argument used, and that missing step is exactly what Theorem \ref{thm:extra} measures.

\section{An enumeration in which a parameter survives}\label{sec:enum}

Expanding $s_\lambda$ over semistandard tableaux turns Theorem \ref{thm:main} into a product formula
for a weighted count. Let $m_k(T)$ be the number of entries equal to $k$ in $T$.

\begin{corollary}\label{cor:enum}
Expanding $s_\lambda$ over semistandard tableaux and applying Theorem \ref{thm:main}: for every
$t\ge2$ and every $\lambda\in\PP_{t+2}$,
\[
\sum_{T\in\mathrm{SSYT}(\lambda,\,t+2)}
\zeta^{\,\sum_{k=1}^{t}(k-1)m_k(T)}\;z^{\,m_{t+1}(T)-m_{t+2}(T)}
\;=\;\varepsilon_\lambda\,
\frac{\sinh(d_1\theta/2)\sinh(d_2\theta/2)\sinh(d_3\theta/2)}{\sinh^2(t\theta/2)\sinh\theta}.
\]
\end{corollary}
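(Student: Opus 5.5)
The plan is to combine the combinatorial definition of the Schur polynomial with Theorem \ref{thm:main}; nothing new is needed beyond matching the alphabet to the weight. Recall that for any alphabet $x_1,\dots,x_N$ one has $s_\lambda(x_1,\dots,x_N)=\sum_{T}\prod_{k=1}^{N}x_k^{m_k(T)}$, the sum over semistandard tableaux of shape $\lambda$ with entries in $\{1,\dots,N\}$. This agrees with the bialternant used to define $s_\lambda$ in \S\ref{sec:background} (the Jacobi--Trudi/bialternant equivalence is classical), and $\mathrm{SSYT}(\lambda,t+2)$ is nonempty only for $\lambda\in\PP_{t+2}$, which is the range of the corollary.

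First I specialise the letters in the order of \eqref{eq:object}: $x_k=\zeta^{k-1}$ for $1\le k\le t$, $x_{t+1}=z$, $x_{t+2}=z^{-1}$. The monomial of $T$ then becomes $\prod_{k=1}^t\zeta^{(k-1)m_k(T)}\cdot z^{m_{t+1}(T)}z^{-m_{t+2}(T)}$, which is exactly the summand on the left. Since $s_\lambda$ is symmetric, the order of the letters does not matter, and the left side equals $\Phi_t(\lambda;z)$.

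Next I apply Theorem \ref{thm:main}. It gives the right-hand side whenever the profile is non-degenerate and $\tilde d_3\ne0$. In the remaining cases, (i) or $\tilde d_3=0$, it gives $\Phi_t=0$, and I read the displayed formula there by the conventions of the invariant \eqref{eq:invariant}. With $d_3=0$ the factor $\sinh(d_3\theta/2)$ already vanishes. In the degenerate case I state that the right side is interpreted as $0$, since $d_i$ and $\varepsilon_\lambda$ are then undefined; this is the one point needing care in the write-up.

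The only subtlety, and the step I expect to need a sentence of justification, is the meaning of the identity at the poles $z=\pm1$ and $z^t=1$. Here I invoke the closing paragraph of Theorem \ref{thm:main}: the identity holds in $\CC(z)$ and the right side simplifies to a Laurent polynomial (explicitly via the $\chi$-form of Corollary \ref{cor:chars}). Both sides are then Laurent polynomials agreeing on a Zariski-dense set, hence equal identically, including at $z=1$ where \S\ref{sec:enum} will use it.
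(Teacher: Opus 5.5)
Your proof is correct and follows the paper's own route: expand $s_\lambda$ over semistandard tableaux, specialise the letters in the order of \eqref{eq:object} so that each monomial becomes the displayed weight, and apply Theorem \ref{thm:main}. Your two added remarks make explicit conventions the paper leaves implicit. The first is that on the degenerate profile the right-hand side has to be read as $0$, since there $d_i$ and $\varepsilon_\lambda$ are not defined. The second is that the identity at the poles is taken in the sense of the Laurent-polynomial continuation stated at the end of Theorem \ref{thm:main}.
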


For $t=2$ the weight is $(-1)^{m_2(T)}$, and if $\lambda=(c^a)$ is rectangular then
$\mathrm{SSYT}(\lambda,4)$ is in bijection with plane partitions in an $a\times(4-a)\times c$ box,
equivalently with lozenge tilings of a hexagon. Corollary \ref{cor:enum} is then a
$(-1)$-enumeration of those objects \emph{refined by a free parameter} $z$, which records the
difference between the numbers of entries $3$ and $4$. Setting $z=1$ recovers the plain signed
count, which by \eqref{eq:main} equals $\pm d_1d_2d_3/(2t^2)$: for $a=2$, $c=2$ it is $4$; for
$a=2$, $c=4$ it is $9$; for $a=3$, $c=3$ it is $-6$.

\begin{example}\label{ex:boxes}
Taking $\lambda=(c^{\,a})$ with $1\le a\le4$ and $t=2$, the shape has at most four rows, the tableaux
are the plane partitions in an $a\times(4-a)\times c$ box, and the closed form gives the signed
counts of Figure \ref{fig:signed}. Reading it off:
\[
\begin{array}{ll}
1\times3\times c: & \bigl\lfloor (c+2)^2/4\bigr\rfloor,\\[2pt]
2\times2\times c: & (c/2+1)^2 \text{ for } c \text{ even},\qquad 0 \text{ for } c \text{ odd},\\[2pt]
3\times1\times c: & (-1)^c\bigl\lfloor (c+2)^2/4\bigr\rfloor,\\[2pt]
4\times0\times c: & (-1)^c .
\end{array}
\]
The third row is the transpose of the first, which is why the magnitudes agree; the fourth is the
degenerate box, where $s_{(c^4)}(x)=(x_1x_2x_3x_4)^c$ and the alphabet has determinant $-1$. We do
not claim these particular counts as new --- the boxes are small and the sign here is the
specialization $x_2=-1$, not the complementation sign of Kuperberg's $(-1)$-enumerations --- and we
record them only to show what the theorem produces. What does appear to be new is that each of them
is the $z\mapsto1$ value of a one-parameter refinement with the same product formula.
\end{example}

The perfect squares in Example \ref{ex:boxes} are not an accident of small cases: they are what the
interval reading of \S\ref{sec:intervals} says a rectangle must produce.

\begin{proposition}\label{prop:rect}
Let $t=2$ and $\lambda=(c^{\,a})$ with $1\le a\le4$. Then one of $d_1,d_2,d_3$ equals $2$, and:
\begin{enumerate}
\item[\rm(i)] if $a=4$, all three equal $2$ for \emph{every} $c$ and the count is $(-1)^c$; this is
the full rectangle, where $s_{(c^4)}(x)=(x_1x_2x_3x_4)^{c}$ collapses to a monomial and the alphabet
contributes only its determinant;
\item[\rm(ii)] if $a\le3$ and $c$ is even, the other two are both equal to $c+2$, so the two
intervals have the \emph{same length} and the signed count is the perfect square
$\bigl(\tfrac{c+2}{2}\bigr)^2$;
\item[\rm(iii)] if $c$ is odd and $a=2$, the two intervals are \emph{concentric} and the count is $0$;
\item[\rm(iv)] if $c$ is odd and $a\in\{1,3\}$, the other two are $c+1$ and $c+3$, and the count is
$\pm\tfrac{(c+1)(c+3)}{4}=\pm\bigl\lfloor\tfrac{(c+2)^2}{4}\bigr\rfloor$.
\end{enumerate}
Case {\rm(i)} is not a degeneration of the interval triple but a collapse of the shape itself, and it
is the only case in which the parity of $c$ plays no role.
\end{proposition}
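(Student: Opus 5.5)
The plan is a direct computation from Theorem \ref{thm:main}: write down the beta set of $\lambda=(c^a)$ at $t=2$, read off the residue profile, the two distinguished intervals and the triple $(d_1,d_2,d_3)$, and then take the $z\to1$ limit of \eqref{eq:main}. Here $N=4$ and $\beta_j=\lambda_j+4-j$. So the beta set is the block of $a$ consecutive integers $c+3,c+2,\dots,c+4-a$ together with the block $3-a,\dots,1,0$. Only two residue classes exist, and each is occupied as soon as one of the two blocks has length at least two, so the degenerate branch never occurs.

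\emph{Case split by $a$ and the parity of $c$.}
\begin{itemize}
\item For $a=4$, $\beta=(c+3,c+2,c+1,c)$. The classes are $\{c+3,c+1\}$ and $\{c+2,c\}$, so $d_1=d_2=2$ and $|\tilde d_3|=2$, independently of $c$.
\item For $a=1$, $\beta=\{c+3,2,1,0\}$.
  \begin{itemize}
  \item If $c$ is even, the classes are $\{c+3,1\}$ and $\{2,0\}$, giving the triple $(c+2,2,c+2)$.
  \item If $c$ is odd, the even class $\{c+3,2,0\}$ has size three, giving $(c+1,2,c+3)$ by the size-three clause.
  \end{itemize}
\item For $a=2$, $\beta=\{c+3,c+2,1,0\}$.
  \begin{itemize}
  \item If $c$ is even, the classes are $\{c+3,1\}$ and $\{c+2,0\}$, with lengths $c+2,c+2$ and $|\tilde d_3|=2$.
  \item If $c$ is odd, they are $\{c+3,0\}$ and $\{c+2,1\}$, whose sums agree. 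This is the concentric locus, so Theorem \ref{thm:main}(ii) gives $0$, which is (iii).
  \end{itemize}
\item For $a=3$, $\beta=\{c+3,c+2,c+1,0\}$; this case mirrors $a=1$.
  \begin{itemize}
  \item If $c$ is even, the classes are $\{c+3,c+1\}$ and $\{c+2,0\}$, giving $(2,c+2,c+2)$.
  \item If $c$ is odd, the class $\{c+3,c+1,0\}$ has size three, giving $(2,c+1,c+3)$.
  \end{itemize}
\end{itemize}
In every non-vanishing case one entry is $2$, and the other two are as asserted in (i), (ii) and (iv).

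\emph{The value at $z=1$.} Since $\sinh(x\theta)\sim x\theta$, the ratio in \eqref{eq:main} tends to
\[
\frac{(d_1/2)(d_2/2)(d_3/2)}{(t/2)^2\cdot 1}=\frac{d_1d_2d_3}{2t^2}=\frac{d_1d_2d_3}{8}.
\]
With one entry equal to $2$, this gives:
\begin{itemize}
\item $(c+2)^2/4$ in case (ii);
\item $(c+1)(c+3)/4=\lfloor (c+2)^2/4\rfloor$ in case (iv);
\item $1$ in case (i).
\end{itemize}

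\emph{The sign.} For (i), I will avoid \eqref{eq:sign} entirely. Since $(c^4)=\varnothing+(c^4)$, the shift identity \eqref{eq:shift} gives $\Phi_2((c^4);z)=(-1)^{3c}\Phi_2(\varnothing;z)$. Also $\Phi_2(\varnothing)=1$, so the count is $(-1)^c$.

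For (ii) the statement asserts a positive square, so the sign must be pinned down; this is the one step that is not pure reading-off. I would use Proposition \ref{rem:sign}. At $t=2$ we have $r_A=0$, $r_B=1$, so \eqref{eq:shortsign} becomes $\varepsilon_\lambda=\sgn(\sigma)\,\sgn(\tilde d_3)$, with $(-1)^{\lfloor t/2\rfloor}(-1)^{r_A+r_B}=+1$. The residue word $w$ and the orientation $\tilde d_3=a_1+a_2-b_1-b_2$, where $A$ is the even class, then only need to be computed in the three even-$c$ cases $a=1,2,3$:
\begin{itemize}
\item for $a=1$, $w=(1,0,1,0)$ with $\inv=3$, and $\tilde d_3=2-(c+4)<0$;
\item for $a=2$, $w=(1,0,1,0)$ again, and $\tilde d_3=(c+2)-(c+4)<0$;
\item for $a=3$, $w=(1,0,1,0)$, and $\tilde d_3=(c+2)-(2c+4)<0$.
\end{itemize}
In each case $\varepsilon=(-1)^3\cdot(-1)=+1$, which gives (ii).

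In (iv) only $\pm$ is claimed, so no sign computation is needed there. The sign $(-1)^c$ in the $3\times1\times c$ row of Example \ref{ex:boxes} can be cross-checked by the same recipe. The final sentence of the proposition follows from the case list: only in (i) does the triple not depend on the parity of $c$.
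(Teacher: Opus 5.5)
Your argument is correct and is the paper's own: write the beta set as two blocks of consecutive integers, split by parity, read off the triple, and evaluate $d_1d_2d_3/(2t^2)$ at $z=1$. Where you go further than the paper is the sign, which its proof leaves implicit by computing only the magnitude. Both of your sign determinations are right: $(-1)^{3c}$ from \eqref{eq:shift} for (i), and $\varepsilon_\lambda=(-1)^{\inv(w)}\sgn(\tilde d_3)=(-1)^3(-1)=+1$ from \eqref{eq:shortsign} for the three even-$c$ cases of (ii).

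Your restriction of the opening clause to the non-vanishing cases is also necessary rather than merely cautious. In (iii) the triple is $(c+3,c+1,0)$, so for odd $c\ge3$ no entry equals $2$: for example $\lambda=(3,3)$ has $\beta=(6,5,1,0)$ and triple $(6,4,0)$. The statement's ``one of $d_1,d_2,d_3$ equals $2$'' therefore holds only outside (iii), and at $c=1$.
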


\begin{proof}
For $\lambda=(c^{\,a})$ the beta set consists of two blocks of consecutive integers, namely
$c+3,c+2,\dots,c+4-a$ and $3-a,2-a,\dots,0$; residues alternate along each block, which is what
forces the coincidences. Explicitly, for $a=1,2,3,4$ the beta set is $(c+3,2,1,0)$, $(c+3,c+2,1,0)$,
$(c+3,c+2,c+1,0)$ and $(c+3,c+2,c+1,c)$. Splitting each by parity gives, for $c$ even,
\[
\{c{+}3,1\},\{2,0\};\qquad \{c{+}3,1\},\{c{+}2,0\};\qquad \{c{+}3,c{+}1\},\{c{+}2,0\};\qquad
\{c{+}3,c{+}1\},\{c{+}2,c\},
\]
whence $(d_1,d_2,d_3)$ equals $(c{+}2,2,c{+}2)$, $(c{+}2,c{+}2,2)$, $(2,c{+}2,c{+}2)$ and
$(2,2,2)$ respectively; in each case the product over $2t^2=8$ is $\bigl(\tfrac{c+2}{2}\bigr)^2$,
resp. $1$. For $c$ odd and $a=2$ the split is $\{c{+}3,0\},\{c{+}2,1\}$, so
$d_3=|c{+}3+0-c{-}2-1|=0$: the intervals are concentric and Corollary \ref{cor:zero} applies. For
$c$ odd and $a=1,3$ the parity puts three beta numbers in one class, $\{c{+}3,2,0\}$ and
$\{c{+}3,c{+}1,0\}$ respectively, and the size-three profile gives $(c{+}1,2,c{+}3)$ and
$(2,c{+}1,c{+}3)$, with product over $8$ equal to $\tfrac{(c+1)(c+3)}{4}$. For $c$ odd and $a=4$ the
four beta numbers are again consecutive, so the split is $\{c{+}3,c{+}1\},\{c{+}2,c\}$ and the triple
is $(2,2,2)$ exactly as for $c$ even: at full height the answer does not see the parity of $c$, which
is why $a=4$ is separated out.
\end{proof}

So the square is the square of half the common interval length, the zeros are the concentric
configurations, the quarter-squares are the case where the two lengths differ by $2$, and at full
height all three intervals coincide, the shape becomes a monomial and only a sign survives. Which
pair of the three coincides rotates with $a$, which is why the same value appears in different
columns of Figure \ref{fig:signed}.

\begin{figure}[t]
\centering
\includegraphics[width=\textwidth]{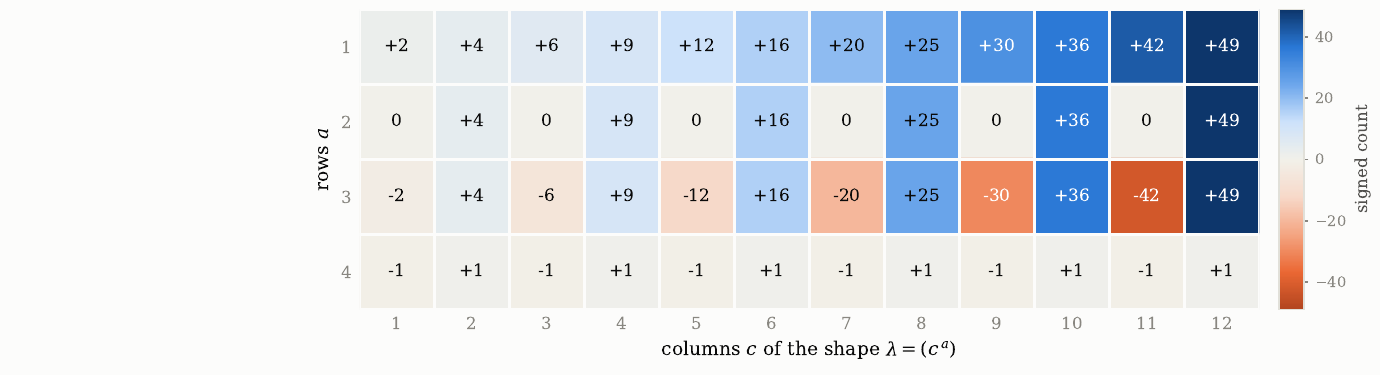}
\caption{The signed counts of Example \ref{ex:boxes}: the value of Corollary \ref{cor:enum} at
$z=1$ and $t=2$ for the rectangular shapes $\lambda=(c^{\,a})$, that is the $(-1)$-enumeration of
plane partitions in an $a\times(4-a)\times c$ box. Every entry is the closed form
$\varepsilon_\lambda\,d_1d_2d_3/(2t^2)$, cross-checked against a direct enumeration of the tableaux.
The row $a=2$ vanishes for odd $c$ and is a perfect square for even $c$.}
\label{fig:signed}
\end{figure}

The $(-1)$-enumeration literature computes such signed counts, and it is worth being exact about how,
because the difference here is narrow. Kuperberg \cite{Kup02} weights the objects by $\pm1$ directly.
Eisenk\"olbl \cite{Eis07} specializes $x_i=(-1)^i$ in a Schur function identity of the kind Stanley
\cite{Sta86} used --- an identity recorded there as also being \cite[Cor.~7.3]{IOTZ06} --- and gets the
$(-1)$-enumeration of self-complementary plane partitions with an odd side. Ciucu and
Krattenthaler evaluate the corresponding determinants. In each of these the alphabet is a number and
nothing is left free, which is the one point of difference: here the reciprocal pair keeps a variable
alive through the signed count. That is the alphabet's doing and not an argument of ours.

A refinement of a different kind is standard in the ribbon world, and it is worth saying which:
the LLT polynomials of Lascoux, Leclerc and Thibon \cite{LLT97} grade ribbon tableaux by a spin
statistic. That $q$ is an extra grading rather than a letter, and the object it grades is indexed by
a tuple of shapes; the parameter here is the alphabet's own and what carries it is a single
$s_\lambda$. The two refine different things, which is why we put the point narrowly: it is the
reciprocal pair, and not a statistic, that keeps a variable alive through the signed count.

A signed enumeration with a product formula asks for a sign-reversing involution, and the obstruction
is specific: the involution must preserve $m_{t+1}-m_{t+2}$, or the free parameter is not carried
across, and neither Bender--Knuth moves nor the single-cell toggle do. The construction in
\cite{KumariThesis} of a fixed-point-free sign-reversing involution on semistandard tableaux over a
$\pm$-paired alphabet, whose fixed set is the domino-tileable tableaux, does have the required
property in its $i=1$ instance: it touches only cells holding $1$ or $2$, hence preserves
$z^{m_3-m_4}$ and reverses $(-1)^{m_2}$. The triple product it would have to land on is the one of
Corollary \ref{cor:chars}, which at $t=2$ is a product of three $\mathfrak{sl}_2$ characters on the
nose, so the missing bijection is a Clebsch--Gordan statement. That is one half. The other is the
cancellation across the
splitting of the alphabet, and at $t=2$ it concerns a single one-parameter family. By
\eqref{eq:threestep} below,
\[
s_\lambda(1,-1,z,\zb)=\sum_{\ell(\nu)\le2}s_{\lambda/\nu}(1,-1)\,\chi_{\nu_1-\nu_2}(z),
\]
and an involution carrying the parameter must preserve $k=\nu_1-\nu_2$, since that is the index of
the character its term sits on. For fixed $k$ the admissible $\nu$ are exactly $(m+k,m)$ with
$m\ge0$, so that cancellation is a statement about one word.

The size of the terms is classical. Littlewood's theorem has a skew form --- $\varphi_t
s_{\lambda/\nu}$ vanishes unless $\lambda/\nu$ is tileable by $t$-ribbons, and is otherwise a ribbon
sign times a product over the quotient --- due to Macdonald \cite[p.~91]{Mac95}, and proved there by
the same Jacobi--Trudi route we take below; the case where $\nu$ is the $t$-core is Farahat's. At
$t=2$ on our alphabet it gives $\sigma_m\in\{0,\pm1\}$ at once. What is not classical is how the
sign moves with $m$, and that is what the involution needs.

\begin{figure}[t]
\centering
\includegraphics[width=\textwidth]{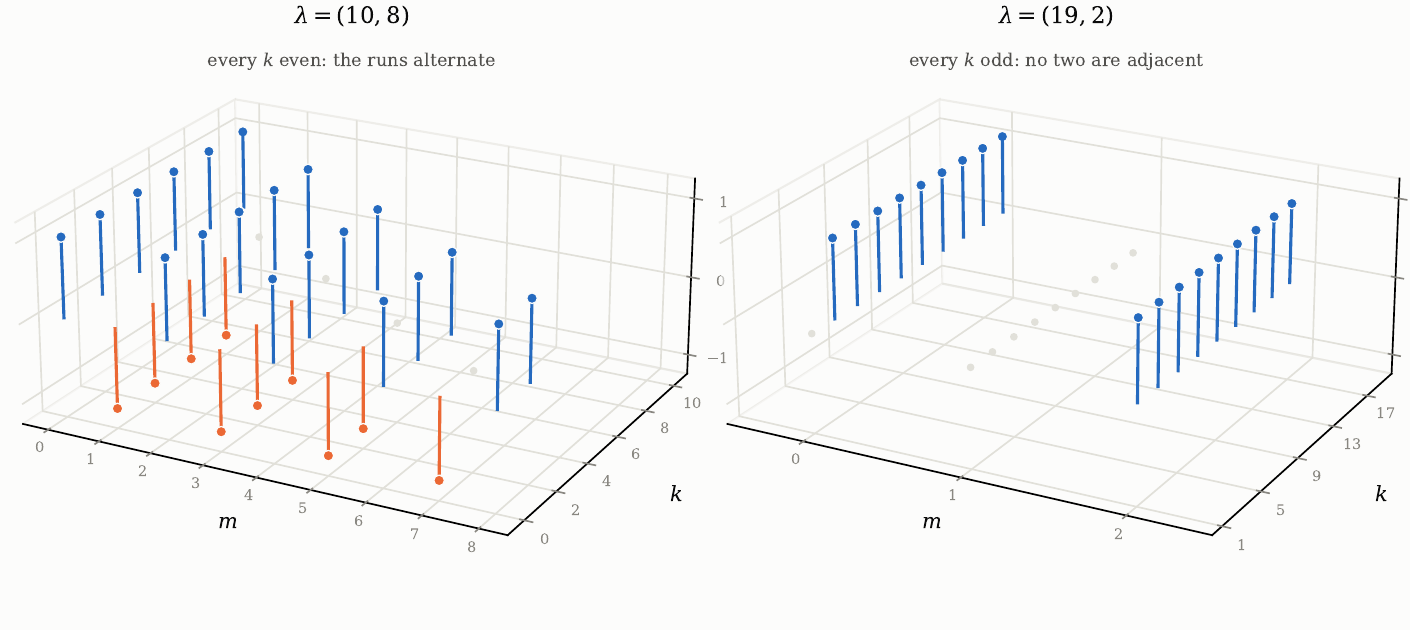}
\caption{Proposition \ref{rem:involution}, drawn. For each $k$ the admissible $\nu$ are the single
family $(m+k,m)$, and $\sigma_m=s_{\lambda/(m+k,m)}(1,-1)$ is plotted against $m$ and $k$; the grey
dots are the zeros. Every bar has height exactly $\pm1$, which is part (i): the terms are a set, so
the cancellation the enumeration asks for is a matching and not something carrying multiplicity. On
the left every $k$ is even and the colour alternates along each run, which is part (iii) and what
makes $m\leftrightarrow m+1$ sign-reversing; where a run ends, a zero ends it. On the right every
$k$ is odd and no two nonzero terms are adjacent, which is part (ii): the runs have length one and
there is nothing to cancel.}
\label{fig:runs}
\end{figure}

\begin{proposition}[the runs alternate]\label{rem:involution}
Let $t=2$, let $k\ge0$, and put $\sigma_m=s_{\lambda/(m+k,m)}(1,-1)$ for $m\ge0$. Then
\begin{enumerate}
\item[\rm(i)] $\sigma_m\in\{0,\pm1\}$ --- this part is Macdonald's, not ours;
\item[\rm(ii)] if $k$ is odd, no two consecutive $\sigma_m$ are nonzero;
\item[\rm(iii)] if $\sigma_m$ and $\sigma_{m+1}$ are both nonzero then $\sigma_{m+1}=-\sigma_m$.
\end{enumerate}
Consequently, pairing $m$ with $m+1$ inside each maximal run of consecutive nonzero $\sigma$ is a
sign-reversing involution, and $\sum_m\sigma_m$ is the signed count of the runs of odd length.
\end{proposition}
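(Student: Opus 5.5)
We reduce everything to Macdonald's skew form of Littlewood's theorem. At the alphabet $(1,-1)$ one has $h_r(1,-1)=1$ for $r$ even and $0$ for $r$ odd, that is, $h_r(1,-1)=h_{r/2}(1)$. Substituting this in the Jacobi--Trudi determinant gives
\[
\sigma_m=s_{\lambda/\nu}(1,-1)=(\varphi_2\,s_{\lambda/\nu})(1),\qquad \nu=(m+k,m),
\]
an evaluation of $\varphi_2 s_{\lambda/\nu}$ at the one-letter alphabet $\{1\}$. By \cite[p.~91]{Mac95}, $\varphi_2 s_{\lambda/\nu}$ vanishes unless $\lambda/\nu$ is domino-tileable. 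When it is tileable, it equals $\delta_2(\lambda/\nu)\,s_{\lambda^{(0)}/\nu^{(0)}}\,s_{\lambda^{(1)}/\nu^{(1)}}$, where $\delta_2\in\{\pm1\}$ is the $2$-ribbon sign. A skew Schur function in one variable equal to $1$ is $1$ or $0$, according as the skew shape is a horizontal strip or not. This gives (i) at once, and it identifies $\sigma_m\ne0$ with three conditions: $\nu\subseteq\lambda$, $\core_2(\nu)=\core_2(\lambda)$, and both quotient differences are horizontal strips. In the nonzero case $\sigma_m=\delta_2(\lambda/\nu)$. We pad $\nu$ to four parts, so that its beta set is $(m+k+3,\,m+2,\,1,\,0)$, and we work with beta sets of length $N=4$ throughout.

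For (ii), pass from $\nu$ to $\nu^+=(m+1+k,m+1)$. This raises $\beta_1$ and $\beta_2$ by one each and leaves $\beta_3=1$, $\beta_4=0$ alone. When $k$ is odd, $\beta_1-\beta_2=k+1$ is even, so $\beta_1,\beta_2$ have the same parity and both change parity. The residue profile $(n_0,n_1)$ therefore moves by $\pm2$ in each entry, and $\core_2(\nu^+)\ne\core_2(\nu)$. The cores of consecutive $\nu$ thus differ, so at most one of them can equal $\core_2(\lambda)$, and no two consecutive $\sigma_m$ are nonzero.

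For (iii) we may take $k$ even, since for $k$ odd the hypothesis is vacuous by (ii). Now $\beta_1,\beta_2$ have opposite parities, the shift swaps them, and the profile and hence the core are preserved. The whole content is then the sign. We use the standard formula expressing the $2$-sign through beta sets: $\delta_2(\lambda/\nu)=\sgn(\sigma_\lambda)\,\sgn(\sigma_\nu)$, up to a factor depending only on the core and on $N$, where $\sigma_\mu$ is the residue-sorting permutation of \S\ref{sec:background}. We use it in the parity form $\sgn(\sigma_\mu)=(-1)^{\inv(w_\mu)}$ from the proof of Proposition \ref{rem:sign}, with $w_\mu$ the residue word. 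The claim $\sigma_{m+1}=-\sigma_m$ then reduces to $\inv(w_{\nu^+})\not\equiv\inv(w_\nu)\pmod2$. This is a four-letter computation. The word $w_\nu=(\beta_1\bmod2,\ \beta_2\bmod2,\ 1,\ 0)$, with its first two letters distinct, becomes $w_{\nu^+}$ by exchanging those two letters. An exchange of two adjacent distinct letters changes $\inv$ by exactly one, so the parity flips.

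We expect the main obstacle to be this sign step. It requires making the identification of Macdonald's ribbon sign with the beta-set sorting sign precise, relative to the fixed $\lambda$ and the common core, in the skew setting. If quoting that identification proves awkward, the fallback is direct. We would compare the $4\times4$ Jacobi--Trudi determinants $\det\bigl(h_{\lambda_i-\nu_j-i+j}(1,-1)\bigr)$ for $\nu$ and $\nu^+$: these differ only in the first two columns, whose indices shift by $-1$ each. With $h_r(1,-1)=[r\ \text{even}]$, one checks that the two shifted columns of one determinant are those of the other with their roles interchanged, which produces the factor $-1$. Finally, the consequence is formal. By (ii) and (iii), inside every maximal run of consecutive nonzero $\sigma_m$ the signs alternate, so the pairing $m\leftrightarrow m+1$ within the run is sign-reversing. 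Each run of even length contributes $0$ to $\sum_m\sigma_m$, and each run of odd length contributes its single unmatched term, which is $\pm1$.
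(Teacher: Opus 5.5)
Your proof is correct, but it does not follow the paper's proof. It follows the reading the paper gives afterwards in Remark~\ref{rem:onesign}.

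The paper never quotes Macdonald's closed formula. It writes $\sigma_m$ as the $\beta$-form Jacobi--Trudi determinant with entries $[\beta_i\ge c_j]\,[\beta_i\equiv c_j \bmod 2]$ and block-diagonalizes it by parity into staircases. It gets (ii) by counting columns against rows in the class that columns $1$ and $2$ leave together. It gets (iii) by showing that, when $\sigma_m$ and $\sigma_{m+1}$ are both nonzero, every staircase keeps its segment lengths, so only the shuffle separating the two classes changes, by one inversion.

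You instead use Macdonald's formula as a black box. Then (ii) becomes a change of residue profile, hence of $2$-core, and (iii) becomes the identity $\delta_2(\lambda/\nu)=\sgn(\sigma_\lambda)\sgn(\sigma_\nu)$ plus an adjacent transposition in a four-letter residue word. The gain is that the blocks never need comparing: when nonzero they equal $s_{\lambda^{(r)}/\nu^{(r)}}(1)=1$, so only the global sign can move.

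The price is the sign identification you flag as the main obstacle, but it is not really one. Sorting the rows and columns of that same Jacobi--Trudi matrix by residue is how Macdonald's formula is proved. It gives $\sigma_m=\sgn(\sigma_\lambda)\sgn(\sigma_\nu)\prod_r s_{\lambda^{(r)}/\nu^{(r)}}(1)$ exactly, with no residual factor depending on the core or on $N$. Equivalently, the ribbon sign is the sorting sign, which Remark~\ref{rem:onesign} attributes to \cite[Lemma 6.3]{KumariThesis}.

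Two smaller points.

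First, padding $\nu$ to four parts presupposes $\ell(\lambda)\le4$. That is the setting of Corollary~\ref{cor:enum}, but the proposition does not state it. Padding to any length $n\ge\ell(\lambda)$ leaves your argument unchanged.

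Second, your fallback is false as written. With your beta numbers, column $1$ of the matrix for $\nu^{+}=(m+k+1,m+1)$ has threshold $m+k+4$. Column $2$ of the matrix for $\nu$ lies in the same parity class but has threshold $m+2$. These two columns coincide only when no part of $\beta(\lambda)$ of that parity lies in $[m+2,\,m+k+3]$. That condition is forced when both determinants are nonzero, because each class must then carry the same set of segment lengths before and after the shift. This is exactly the step the paper's proof of (iii) spells out, and your ``one checks'' should be replaced by it.
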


The three parts are visible at once in Figure \ref{fig:runs}: the bar heights are part {\rm(i)}, the
gaps on the odd panel are part {\rm(ii)}, and the alternation of colour along each run is part
{\rm(iii)}.

\begin{proof}
Write $\beta_i=\lambda_i-i$ and $c_j=\nu_j-j$, and take the Jacobi--Trudi matrix of
$s_{\lambda/\nu}$ in $n\ge\ell(\lambda)$ rows. Since $h_j(1,-1)=1$ for $j\ge0$ even and $0$
otherwise,
\begin{equation}\label{eq:betaJT}
M_{ij}=\bigl[\beta_i\ge c_j\bigr]\cdot\bigl[\beta_i\equiv c_j \bmod 2\bigr],\qquad
\sigma_m=\det M .
\end{equation}
The second factor says that column $j$ is supported on the rows of one parity class, the class of
$c_j$; and since $\beta_1>\dots>\beta_n$, inside that class the first factor cuts an \emph{initial
segment}. Order the rows by parity class: $M$ becomes block diagonal, so it is singular unless each
class carries as many columns as it has rows, and inside a class it is the staircase
$\bigl[\,\text{the rank of }i\text{ in its class}\le\ell_j\bigr]$ of the segment lengths $\ell_j$. That staircase is singular if two
lengths agree or one is $0$, and otherwise the lengths are a permutation of $1,\dots,e$ and its
determinant is the sign of that permutation. This gives (i), which is \cite[p.~91]{Mac95} in this
case; we re-derive it because the $\beta$-form \eqref{eq:betaJT} is what (ii) and (iii) rest on.

For $\nu=(m+k,m)$ we have $c_1=m+k-1$, $c_2=m-2$ and $c_j=-j$ for $j\ge3$. So $c_1-c_2=k+1$, and
passing from $m$ to $m+1$ raises $c_1$ and $c_2$ by one and leaves every other column alone: exactly
the first two columns change parity class.

If $k$ is odd then $c_1\equiv c_2$, so columns $1$ and $2$ lie in the same class and both leave it.
That class's column count changes by two while its row count does not, so at most one of $\sigma_m$,
$\sigma_{m+1}$ can be nonzero, which is (ii).

If $k$ is even then $c_1\not\equiv c_2$ and the two columns exchange classes. Suppose both are
nonzero. In the class of $c_1$ the columns are column $1$ together with a set of columns $j\ge3$
that does not move; by (i) the lengths are a permutation of $1,\dots,e$ before and after, and the
fixed columns contribute the same lengths to both, so the length column $2$ brings in equals the one
column $1$ took away --- and symmetrically in the other class. Each class therefore sees the same
lengths in the same order of columns, and both block determinants are unchanged. What does change is
the shuffle separating the two classes of columns: columns $1$ and $2$ are adjacent and exchange
membership, which alters the number of inversions of that shuffle by exactly one. Hence
$\sigma_{m+1}=-\sigma_m$, which is (iii).
\end{proof}

\begin{remark}[the sign here is the sign of Theorem \ref{thm:main}]\label{rem:onesign}
Write $\sigma_\nu$ for the permutation of \S\ref{sec:background} attached to a partition $\nu$: the
one sorting $\beta$ by residue class, which Proposition \ref{rem:sign} places inside
$\varepsilon_\lambda$. The terms of Proposition \ref{rem:involution} are values of that same sign.
Indeed $\sigma_m$ vanishes unless $\core_2(\lambda)=\core_2(\nu)$ and both components of the skew
$2$-quotient are horizontal strips, and equals $\sgn(\sigma_\lambda)\sgn(\sigma_\nu)$ when it does
not --- the identity $\sgn(\sigma_\lambda)\sgn(\sigma_\nu)=(-1)^{\sum_B\mathrm{ht}(B)}$ being the
ribbon sign of \cite[p.~91]{Mac95} written as a sorting sign, which is
\cite[Lemma 6.3]{KumariThesis}. Parts {\rm(ii)} and {\rm(iii)} are
then one statement about $\sigma_\nu$ alone: as $\nu=(m+k,m)$ moves to $(m+k+1,m+1)$,
$\sgn(\sigma_\nu)$ reverses for $k$ even and is constant for $k$ odd. So the permutation that
carries the sign of the main theorem is the permutation that makes the involution of this section
alternate; the two halves of the paper are signed by one object.
\end{remark}

The pairing is a bijection of tableaux and not only of terms, which is what carries the parameter. A
semistandard tableau of shape $(a,b)$ in the letters $3,4$ has second row $4^{\,b}$ and first row
$3^{\,p}4^{\,a-p}$ with $b\le p\le a$, of weight $z^{2p-a-b}$; adjoining a first column
$\left(\begin{smallmatrix}3\\4\end{smallmatrix}\right)$ sends it to the tableau of shape
$(a+1,b+1)$ with $p+1$ in place of $p$, and $2(p+1)-(a+1)-(b+1)=2p-a-b$, so $m_3-m_4$ is preserved
exactly. Composing that with Proposition \ref{rem:involution} pairs the terms of Corollary
\ref{cor:enum} in the way the enumeration asks for.

One half is not ours, and we say which. Proposition \ref{rem:involution} disposes of the cancellation
across $\nu$; the cancellation \emph{inside} each $s_{\lambda/\nu}(1,-1)$ is
\cite{KumariThesis}, and the two fit together. There, at $n=1$, the alphabet is $(x,-x)$ and the
weight of a two-letter filling is $(-1)^{c_2}$, which is $\sigma_m$; the involution
\cite[Lemma 2.18]{KumariThesis} is fixed-point-free on the fillings that are \emph{not coverable},
so the sum reduces to the coverable ones, which \cite[Lemma 2.16]{KumariThesis} matches with domino
tableaux. What makes the reduction usable here is \cite[Lemma 2.14]{KumariThesis}: the parity of the
number of vertical dominoes does not depend on the tableau, so every survivor carries the same sign
and $\sigma_m=\pm\#\{\text{coverable fillings}\}$. With part (i) that forces the count to be at most
one, so the fixed set is a single filling when $\sigma_m\ne0$ and empty otherwise --- checked
directly over the $1134$ skew shapes with at most four rows and $|\lambda|\le10$, where the two
counts agree without exception.

At $t=2$ the involution the enumeration asks for therefore exists: cancel inside each $\nu$ by
\cite[Lemma 2.18]{KumariThesis}, pair what is left across $\nu$ by Proposition \ref{rem:involution},
and carry the parameter by the column $\left(\begin{smallmatrix}3\\4\end{smallmatrix}\right)$. Its
fixed points are one filling for each run of odd length, and Corollary \ref{cor:enum} counts them.
The construction is Kumari's; the assembly is ours.

\subsection{Beyond \texorpdfstring{$t=2$}{t=2}: where such an involution can live}\label{sec:lift}

The assembly above is a $t=2$ statement, and Proposition \ref{rem:involution} is why: it is a
statement about one word, and that word is the $t=2$ one. What follows is not that construction for
general $t$. It is the smaller question of \emph{where} an involution of the required kind could
live at all --- which has an answer, and a negative one for the obvious candidate.

Fix $t$ and $\lambda$. Combining the branching identity \eqref{eq:composes} of Remark
\ref{rem:composes} with the two-letter expansion of $s_{\lambda/\mu}(z,\zb)$ gives
\begin{equation}\label{eq:Jlambda}
\Phi_t(\lambda;z)\;=\;\sum_{(\mu,\nu)\in J_\lambda}\sgn_t(\mu)\;z^{\,2|\nu|-|\mu|-|\lambda|},
\end{equation}
where $J_\lambda$ is the set of pairs $\mu\subseteq\nu\subseteq\lambda$ such that
$\core_t(\mu)=\varnothing$, every component of $\quot_t(\mu)$ has at most one row, and both
$\nu/\mu$ and $\lambda/\nu$ are horizontal strips. That is the term set the enumeration presents,
and the involution it asks for is one on $J_\lambda$: weight-preserving, sign-reversing off its
fixed set, and with the transversals of Lemma \ref{lem:L3} as the fixed set. The first two are
available; the third is not, and the obstruction is a count rather than a lack of ingenuity.

\begin{lemma}[the standard bound, weight space by weight space]\label{lem:fixbound}
Let $X$ be a finite signed set graded by weight, and write $n_\pm(e)$ for the number of its elements
of weight $e$ and sign $\pm$. If $\iota$ is an involution on $X$ preserving the weight and reversing
the sign off its fixed set, then
\[
\bigl|\Fix(\iota)\bigr|\;\ge\;\sum_e\bigl|n_+(e)-n_-(e)\bigr|,
\]
with equality if and only if, in every weight space, all the fixed points carry the same sign.
\end{lemma}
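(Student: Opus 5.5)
The argument is local to each weight space, so I would fix a weight $e$ and work inside $X_e$, the set of elements of $X$ of weight $e$. Since $\iota$ preserves the weight, it restricts to an involution of $X_e$. Its orbits there are of two kinds: fixed points, and two-element orbits $\{x,\iota x\}$. On a two-element orbit, $x$ lies off the fixed set, so $\iota$ reverses its sign; each such orbit therefore contains exactly one element of each sign.

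Write $f_\pm(e)$ for the number of fixed points of weight $e$ and sign $\pm$, and $p(e)$ for the number of two-element orbits of weight $e$. Then
\[
n_+(e)=f_+(e)+p(e),\qquad n_-(e)=f_-(e)+p(e),
\]
so $n_+(e)-n_-(e)=f_+(e)-f_-(e)$. The triangle inequality now gives
\[
\bigl|n_+(e)-n_-(e)\bigr|\le f_+(e)+f_-(e)=\bigl|\Fix(\iota)\cap X_e\bigr|,
\]
with equality exactly when one of $f_+(e)$ and $f_-(e)$ is zero. In other words, equality holds in weight $e$ exactly when all fixed points of that weight carry the same sign.

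Summing over $e$ gives the bound, because $\Fix(\iota)$ is the disjoint union of the sets $\Fix(\iota)\cap X_e$. For the equality clause, note that the total equality holds if and only if equality holds in every weight space, since each summand already satisfies the inequality. By the previous step, that is the stated condition.

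There is no real obstacle; the one point needing care is that weight preservation is exactly what lets us treat each $X_e$ as a separate signed set. Without it, the per-weight difference $f_+(e)-f_-(e)$ would not equal $n_+(e)-n_-(e)$, and the bound could fail.
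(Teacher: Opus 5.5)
Your proof is correct and is essentially the paper's own argument. Both fix a weight $e$ and note that each two-element orbit lies in one weight space with one element of each sign, so that $n_+(e)-n_-(e)=f_+(e)-f_-(e)$; both then apply the triangle inequality and sum over $e$, with equality exactly when one of $f_\pm(e)$ vanishes in every weight space.
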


\noindent Nothing here is new. That a sign-reversing involution reduces a signed sum to its fixed
points is the standard device, and the bound is that identity applied one weight space at a time
together with the triangle inequality; we state it because the rest of the subsection is about the
number it produces.

\begin{proof}
Fix $e$ and let $F_\pm$ be the number of fixed points of weight $e$ and sign $\pm$. Each $2$-cycle of
$\iota$ lies inside a weight space and removes one element of each sign, so
$n_+(e)-n_-(e)=F_+-F_-$, whence $F_++F_-\ge|F_+-F_-|=|n_+(e)-n_-(e)|$, with equality exactly when
$F_+$ or $F_-$ vanishes. Summing over $e$ gives both statements.
\end{proof}

Applied to $J_\lambda$ through \eqref{eq:Jlambda}, the bound reads
$|\Fix(\iota)|\ge\|\Phi_t(\lambda;z)\|_1$, the sum of the absolute values of the coefficients. So
$\|\Phi_t(\lambda;z)\|_1$ is the \emph{minimum} number of fixed points of any such involution on
$J_\lambda$, and the minimum is attained: pairing off $+$ with $-$ arbitrarily inside each weight
space leaves exactly that many. It is not a quantity a construction can economise on --- the first
crossing, the last, or any other.

\begin{proposition}[$J_\lambda$ is the wrong set]\label{prop:wrongset}
By Lemma \ref{lem:L3} the numerator $D_t(z)\Phi_t(\lambda;z)$, with
$D_t(z)=(z^t-1)(z^{-t}-1)(z-z^{-1})$, is a signed sum of $|\mathcal U|\le4$ terms
$f(\beta_{j_1}-\beta_{j_2})$, hence of at most eight monomials. But $\|\Phi_t(\lambda;z)\|_1$ is not
bounded by eight. At $t=2$ and $\lambda=(7,4)$ the beta set is $(10,6,1,0)$, the profile is
size-three with triple $(d_1,d_2,d_3)=(4,6,10)$, and by Corollary \ref{cor:chars}
$\Phi_2=\pm\chi_1\chi_2\chi_4$, a product of characters with all coefficients of one sign, so
\[
\|\Phi_2\|_1=\chi_1(1)\chi_2(1)\chi_4(1)=2\cdot3\cdot5=30 .
\]
That single shape settles it, and the sweep says how common it is: over $t=2,3$ and
$\lambda_1\le7$ the norm reaches $30$ and $21$, exceeding eight in $128$ of the $329$ shapes and in
$216$ of the $791$ \stverif. For those $\lambda$ no involution on $J_\lambda$ of the kind above can
have the objects of Lemma \ref{lem:L3} as its fixed set.
\end{proposition}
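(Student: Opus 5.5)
The plan has three steps: an upper count from Lemma \ref{lem:L3}, an exact lower count from Corollary \ref{cor:chars} at one shape, and a comparison through Lemma \ref{lem:fixbound}.

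\emph{The upper count.} I would read it directly off Lemma \ref{lem:L3} and the case analysis that follows it. There, $D_t(z)\Phi_t(\lambda;z)$ is, up to the global sign $(-1)^{t+\binom{N+1}{2}}$, the sum over $\mathcal U$ of $\pm f(\beta_{j_1}-\beta_{j_2})$. The count after that lemma gives $|\mathcal U|=0$, $4$ or $3$ according to the residue profile, so $|\mathcal U|\le4$. Each $f(u)=z^u-z^{-u}$ is two monomials, so the numerator involves at most eight monomials. I would note that this bounds the number of objects of Lemma \ref{lem:L3}, the transversals $U$, by four, and the monomials they carry by eight; either reading is what ``having them as fixed set'' has to be compared against.

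\emph{The exact witness.} Take $t=2$, $N=4$ and $\lambda=(7,4,0,0)$. Then $\beta_j=\lambda_j+4-j$ gives $(10,6,1,0)$, with residue word $(0,0,1,0)$. So class $0$ is $\{10>6>0\}$ and class $1$ is $\{1\}$: the profile is size-three. By Theorem \ref{thm:main}(ii), $A=\{10,6\}$ and $B=\{6,0\}$, whence $d_1=4$, $d_2=6$ and $\tilde d_3=16-6=10$. Corollary \ref{cor:chars} with $t=2$ has denominator $\chi_0^2=1$, so $\Phi_2=\varepsilon_\lambda\,\chi_1\chi_2\chi_4$. Each $\chi_k$ is a sum of $k+1$ monomials with coefficient $+1$, so the product has all coefficients nonnegative. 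Its $\ell^1$ norm is therefore its value at $z=1$, namely $2\cdot3\cdot5=30$, and the overall sign does not affect the norm.

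\emph{The conclusion.} By Lemma \ref{lem:fixbound} applied to $J_\lambda$ through \eqref{eq:Jlambda}, any weight-preserving involution on $J_\lambda$ that is sign-reversing off its fixed set has $|\Fix(\iota)|\ge\|\Phi_t(\lambda;z)\|_1$. For $(7,4)$ this is $30>8$. Hence the fixed set cannot be in bijection with the objects of Lemma \ref{lem:L3}, nor with their at most eight monomials. The same argument applies verbatim to every $\lambda$ whose norm exceeds eight.

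The quoted frequencies are a computational sweep, not part of the proof: the maxima $30$ and $21$, and the counts $128/329$ and $216/791$. I would attribute them to \S\ref{sec:verif}, computing $\|\Phi_t\|_1$ from \eqref{eq:chiratio}, or from the bialternant, over $t=2,3$ and $\lambda_1\le7$.

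The only delicate step is the interpretive one: making precise what ``the objects of Lemma \ref{lem:L3} as fixed set'' means, so that the bound $\le8$ genuinely applies. I would handle it by observing that both candidate readings, transversals ($\le4$) and monomials ($\le8$), are bounded by eight, so the inequality $30>8$ excludes each.
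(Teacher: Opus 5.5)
Your proposal is correct and follows the paper's own route: the bound of at most eight monomials (at most four transversals) from Lemma \ref{lem:L3}, the witness $\lambda=(7,4)$ with $\beta=(10,6,1,0)$, triple $(4,6,10)$ and $\Phi_2=\pm\chi_1\chi_2\chi_4$ of $\ell^1$ norm $30$, and the comparison through Lemma \ref{lem:fixbound}, with the sweep counts correctly treated as measurement and not as proof. Checking both readings of ``the objects of Lemma \ref{lem:L3}'' (transversals or their monomials) is a small clarification that the paper leaves implicit.
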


The two counts are of objects on opposite sides of the denominator, and that is the whole diagnosis.
$J_\lambda$ indexes the quotient; the transversals of Lemma \ref{lem:L3} index the numerator; and an
involution on $J_\lambda$ would have to conceal exactly the factor $D_t$ that separates them. The
remedy is to multiply the index set by that factor instead of dividing the objects by it, and to
keep the factor \emph{factored}, so that each term retains its provenance.

\begin{definition}\label{def:lift}
Let $\Omega_t$ be the signed set of the eight choices of one term from each factor of $D_t$,
\[
(z^t-1)\mapsto(t,+1)\ \text{or}\ (0,-1),\qquad
(z^{-t}-1)\mapsto(-t,+1)\ \text{or}\ (0,-1),\qquad
(z-z^{-1})\mapsto(1,+1)\ \text{or}\ (-1,-1),
\]
each pair being a weight and a sign, and write $d(\omega)$ and $\sgn(\omega)$ for them. The
\emph{numerator lift} is $\widehat J_\lambda:=J_\lambda\times\Omega_t$, with
\[
\wt\bigl((\mu,\nu),\omega\bigr)=2|\nu|-|\mu|-|\lambda|+d(\omega),
\qquad
\sgn\bigl((\mu,\nu),\omega\bigr)=\sgn_t(\mu)\,\sgn(\omega).
\]
\end{definition}

\begin{lemma}\label{lem:liftid}
$\sum_{x\in\widehat J_\lambda}\sgn(x)z^{\wt(x)}=D_t(z)\,\Phi_t(\lambda;z)$, and
$|\widehat J_\lambda|=8|J_\lambda|$ with the two signs equinumerous.
\end{lemma}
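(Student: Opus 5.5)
The generating function of $\widehat J_\lambda$ factors over the Cartesian product, and the plan rests on that. Since $\widehat J_\lambda=J_\lambda\times\Omega_t$ and both the weight and the sign of Definition~\ref{def:lift} are additive, respectively multiplicative, in the two coordinates, the generating function of the product splits. Concretely,
\[
\sum_{x\in\widehat J_\lambda}\sgn(x)z^{\wt(x)}
=\Bigl(\sum_{(\mu,\nu)\in J_\lambda}\sgn_t(\mu)\,z^{2|\nu|-|\mu|-|\lambda|}\Bigr)
\Bigl(\sum_{\omega\in\Omega_t}\sgn(\omega)\,z^{d(\omega)}\Bigr).
\]
By \eqref{eq:Jlambda}, which we take as given from Remark~\ref{rem:composes}, the first factor is $\Phi_t(\lambda;z)$. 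So the whole first claim reduces to identifying the second factor with $D_t(z)$.

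For that second factor we use that $\Omega_t$ is by construction the product of three two-element signed sets, one for each factor of $D_t$. Its generating function is therefore the product of the three two-term sums: $(z^t-1)$ from $\{(t,+1),(0,-1)\}$, $(z^{-t}-1)$ from $\{(-t,+1),(0,-1)\}$, and $(z-z^{-1})$ from $\{(1,+1),(-1,-1)\}$. This is expanding a product of binomials term by term, with no cancellation to track, because the definition keeps the factors separate. The product is $D_t(z)$, which proves the first assertion.

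The counting assertions follow just as directly. We have $|\Omega_t|=2^3=8$, so $|\widehat J_\lambda|=8|J_\lambda|$. For equinumerosity, the toggle of the third coordinate, $(1,+1)\leftrightarrow(-1,-1)$, is a fixed-point-free involution on $\Omega_t$ that reverses $\sgn(\omega)$. Lifting it to $\widehat J_\lambda$ by acting only on that coordinate gives a fixed-point-free, sign-reversing involution, so the two signs each number $4|J_\lambda|$. Equivalently, evaluate the identity at $z=1$: since $D_t(1)=0$ and $\Phi_t(\lambda;1)$ is finite as a Laurent polynomial, the signed count of $\widehat J_\lambda$ vanishes. I expect no genuine obstacle in this lemma. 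The only point to be careful about is that \eqref{eq:Jlambda} holds as an identity of Laurent polynomials with exactly the stated weights. This is why the toggle, rather than the evaluation at $z=1$, is the cleaner argument for the second claim: it uses only the definition of $\Omega_t$, and the identity \eqref{eq:Jlambda} never enters.
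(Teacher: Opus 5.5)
Your proof is correct and follows the paper's: the signed sum over $J_\lambda\times\Omega_t$ factors, the $J_\lambda$ factor is $\Phi_t(\lambda;z)$ by \eqref{eq:Jlambda}, and the $\Omega_t$ factor is $D_t(z)$, one binomial at a time. Your toggle on the $(z-z^{-1})$ coordinate is the paper's observation, recast as an involution, that each factor of $D_t$ contributes one element of each sign, so $\Omega_t$ splits four and four. Your evaluation at $z=1$ is an equally valid way to get the equinumerosity.
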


\begin{proof}
A signed sum over a product of signed sets is the product of the sums; the sum over $\Omega_t$ is
$D_t(z)$, one factor at a time, and the sum over $J_\lambda$ is $\Phi_t(\lambda;z)$ by
\eqref{eq:Jlambda}. Each factor of $D_t$ contributes one sign of each kind, so the eight elements of
$\Omega_t$ split four and four.
\end{proof}

\begin{proposition}[on the lift the count fits, and exactly]\label{prop:liftcount}
Write $\mathcal L(\lambda)$ for the \emph{signed} multiset obtained by expanding the terms of
Lemma \ref{lem:L3}, each monomial carrying the product of the sign $\varepsilon_U$ of its Laplace
term with the sign of the chosen term of $f(\beta_{j_1}-\beta_{j_2})$, so that
$|\mathcal L(\lambda)|=2|\mathcal U(\lambda)|$. If $\Phi_t(\lambda;z)\ne0$ then
\[
\bigl\|D_t(z)\,\Phi_t(\lambda;z)\bigr\|_1\;=\;2\,|\mathcal U(\lambda)| .
\]
\end{proposition}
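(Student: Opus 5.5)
The plan is to read $\|D_t\Phi_t\|_1$ directly off Lemma \ref{lem:L3}. That lemma says
\[
D_t(z)\,\Phi_t(\lambda;z)=\pm\sum_{U\in\mathcal U}\varepsilon_U\,f(\beta_{j_1}-\beta_{j_2}),
\]
a global sign times a signed sum of $|\mathcal U|$ terms. Each term $f(n)=z^{n}-z^{-n}$, with $n=\beta_{j_1}-\beta_{j_2}>0$, contributes exactly two monomials. So $\mathcal L(\lambda)$ consists of $2|\mathcal U|$ signed monomials, and the triangle inequality already gives $\|D_t\Phi_t\|_1\le2|\mathcal U|$. Equality holds if and only if no two elements of $\mathcal L(\lambda)$ with the same exponent carry opposite signs. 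This is the weight-space-by-weight-space condition of Lemma \ref{lem:fixbound}, applied to $\mathcal L(\lambda)$ with the identity pairing. The hypothesis $\Phi_t\ne0$ puts us, by Theorem \ref{thm:main}, in a non-degenerate profile with $\tilde d_3\ne0$. I would then split into the two-class and size-three cases, exactly as in \S\ref{sec:proof}.

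\emph{Two-class profile.} Here $|\mathcal U|=4$. By Lemma \ref{lem:L4}, the proof of Theorem \ref{thm:main} and the computation in Lemma \ref{lem:L5}(a), the eight monomials of $\mathcal L(\lambda)$ are, up to the common factor $\pm\kappa_{11}$, exactly the eight monomials of the expanded product
\[
f(c)f(p)f(q)=\sum_{s\in\{\pm1\}^3}s_1s_2s_3\,z^{s_1c+s_2p+s_3q},\qquad p=\tfrac{d_1}{2},\ q=\tfrac{d_2}{2},\ 2c=\tilde d_3 .
\]
I would check this matching term by term: the proof of Lemma \ref{lem:L5}(a) expands the sum without recombining anything. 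Two monomials indexed by $s$ and $s'$ have the same exponent exactly when $\sum_{i\in D}s_ix_i=0$, where $D$ is the set of coordinates on which $s$ and $s'$ differ and $x=(c,p,q)$. Their signs differ exactly when $|D|$ is odd.

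If $|D|=1$, one of $c,p,q$ vanishes. This is impossible, since $p,q>0$ and $c\ne0$ by non-concentricity. If $|D|=3$, then $s'=-s$ and the common exponent is $0$. But every exponent has the form $\pm(a_i-b_j)$, and $a_i\ne b_j$ because the two classes carry different residues. So every coincidence has $|D|=2$ and is same-signed, and the norm is $8=2|\mathcal U|$.

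\emph{Size-three profile.} Here $|\mathcal U|=3$, and the proof of Theorem \ref{thm:main} gives signs $(\varsigma,-\varsigma,\varsigma)$ on $f(p-q)$, $f(p-r)$, $f(q-r)$. The positive exponents $p-q$, $p-r$, $q-r$ are all positive, so a positive exponent never meets a negative one. Among the positive exponents, $p-r$ strictly exceeds the other two. The only possible coincidence is $p-q=q-r$, and there both monomials carry $\varsigma$; the mirror coincidence on the negative side carries $-\varsigma$ twice. So again nothing cancels, and the norm is $6=2|\mathcal U|$.

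The step I expect to need the most care is the identification, in the two-class case, of the Laplace monomials with the expansion of $f(c)f(p)f(q)$, including the sign flip $\eta\mapsto-\eta$ used in the proof of Theorem \ref{thm:main}. The vanishing of $0$-exponent coincidences rests entirely on that identification: the product form on its own would permit cancellation when $d_3=d_1+d_2$. So I would re-derive the dictionary $c+\epsilon p+\eta q\leftrightarrow a_i-b_j$ explicitly, rather than quote it.
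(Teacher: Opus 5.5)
Your proof is correct, and it reaches the equality by a slightly different route from the paper's.

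\emph{The paper's route.} The paper never goes back to the Laplace terms. It takes the closed form $D_t\Phi_t=-\varepsilon_\lambda f(d_1/2)f(d_2/2)f(d_3/2)$, expands its eight monomials, and checks cancellations in the collected product. In the two-class profile nothing cancels, by $d_i\ne0$ and the four identities $2(a_i-b_j)=\tilde d_3\pm d_1\pm d_2$. In the size-three profile, where $d_3=d_1+d_2$, exactly one pair cancels (the two monomials of exponent $0$), leaving six.

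\emph{Your route.} You work upstream on $\mathcal L(\lambda)$, prove it is sign-coherent in every weight, and conclude from equality in the triangle inequality.

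\emph{Two-class case.} Here the two arguments coincide: your $|D|=1$ and $|D|=3$ exclusions are exactly the paper's two facts. The term-by-term dictionary you flag as the delicate step is dispensable. Expanding the closed form and using the four identities already gives norm $8=|\mathcal L(\lambda)|$. Sign-coherence of $\mathcal L(\lambda)$ then follows from equality in the triangle inequality, which is how the paper obtains Corollary \ref{cor:local}. For the same reason, your worry that the product form ``would permit cancellation when $d_3=d_1+d_2$'' does not bite: in the two-class profile that relation is excluded by the same identities.

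\emph{Size-three case.} Here the routes genuinely differ. You check the six Laplace monomials directly. This needs only the alternating signs $(\varsigma,-\varsigma,\varsigma)$ and the inequality $p-r>\max(p-q,\,q-r)$, and there is nothing to cancel. The paper's product instead has eight monomials and one cancelling pair.

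\emph{What each buys.} Your version proves the local statement directly rather than as a consequence, and in the size-three case it does not need the closed form at all. The paper's version is shorter and uses one uniform expansion for both profiles, in which the drop from $8$ to $6$ appears as a single cancellation; the paper then reads that cancellation through the $A_2$ Weyl denominator.
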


\begin{proof}
Since $(z^t-1)(z^{-t}-1)=-f(t/2)^2$ and $D_t=(z^t-1)(z^{-t}-1)(z-z^{-1})=-f(t/2)^2f(1)$, equation
\eqref{eq:main} gives
\[
D_t(z)\,\Phi_t(\lambda;z)\;=\;-\varepsilon_\lambda\,f(d_1/2)\,f(d_2/2)\,f(d_3/2).
\]
Expanding, the eight monomials carry exponents $\tfrac12(\pm d_1\pm d_2\pm d_3)$ and sign the
product of the three choices. Two of them cancel exactly when their exponents agree and their signs
differ, and comparing the two cases --- the choices differing in all three signs, or in one --- that
happens only if some $d_i$ vanishes or one of $d_1,d_2,d_3$ is the sum of the other two. Monomials of
equal exponent and equal sign reinforce, which does not change the $\ell^1$ norm.

In the two-class profile $|\mathcal U|=4$ by Lemma \ref{lem:L3}, and nothing cancels. Indeed
\[
2(a_1-b_1)=\tilde d_3+d_1-d_2,\quad
2(a_1-b_2)=\tilde d_3+d_1+d_2,\quad
2(a_2-b_1)=\tilde d_3-d_1-d_2,\quad
2(a_2-b_2)=\tilde d_3-d_1+d_2,
\]
so each of the four sums $\pm d_1\pm d_2+\tilde d_3$ vanishes only if some $a_i$ equals some $b_j$,
impossible because $A$ and $B$ lie in different residue classes; the same four with $-\tilde d_3$ are
their negatives. And $d_1,d_2\ne0$ because each class carries two distinct parts, while $d_3\ne0$ is
the hypothesis $\Phi_t\ne0$ of Theorem \ref{thm:main}(ii). Hence the norm is $8=2|\mathcal U|$.

In the size-three profile $|\mathcal U|=3$, and $A=\{p,q\}$, $B=\{q,r\}$ with $p>q>r$, so
$d_3=p-r=d_1+d_2$. Exactly one pair cancels, the two monomials of exponent $0$, and
\[
f(d_1/2)f(d_2/2)f(d_3/2)=z^{d_1+d_2}-z^{d_1}-z^{d_2}+z^{-d_2}+z^{-d_1}-z^{-d_1-d_2},
\]
of norm $6=2|\mathcal U|$ whether or not $d_1=d_2$. The six is not an accident of the expansion.
Writing $x=z^{d_1/2}$ and $y=z^{d_2/2}$, the product is $(x-x^{-1})(y-y^{-1})(xy-x^{-1}y^{-1})$, the
product over the three positive roots of $A_2$; the Weyl denominator formula turns it into the
alternating sum over $W(A_2)=S_3$, and $6=|W(A_2)|$; that identity is the standard Weyl
denominator formula for $A_2$.
\end{proof}

The bound $\|D_t\Phi\|_1\le2|\mathcal U|$ also follows from Lemma \ref{lem:L3} and the triangle
inequality alone, which is how we first saw it; the content above is that it is an equality. Hence,
by Lemma \ref{lem:fixbound} applied to $\widehat J_\lambda$, an involution there of the same kind
leaves at least $2|\mathcal U(\lambda)|$ fixed points --- exactly the number of monomials Lemma
\ref{lem:L3} supplies, with no slack on either side. Over $t=2,3$ and $\lambda_1\le7$ the equality
was measured before it was proved, in $309$ of the $329$ shapes and in all $791$, the $20$ exceptions
being the concentric ones, where $\Phi_t\equiv0$ \stverif.

That is the half of the question the lift settles, and Figure \ref{fig:lift} is it. On $J_\lambda$ the requirement is arithmetically
impossible for a third of the shapes in range; on $\widehat J_\lambda$ it is impossible for none, and
not because the bound has become generous: for every nonzero $\Phi_t$ it has become tight, the
vanishing cases being the ones where slack remains. What the lift does not do is
exhibit the involution, and we return to that in Problem \ref{prob:toggle}.

\begin{figure}[t]
\centering
\includegraphics[width=\textwidth]{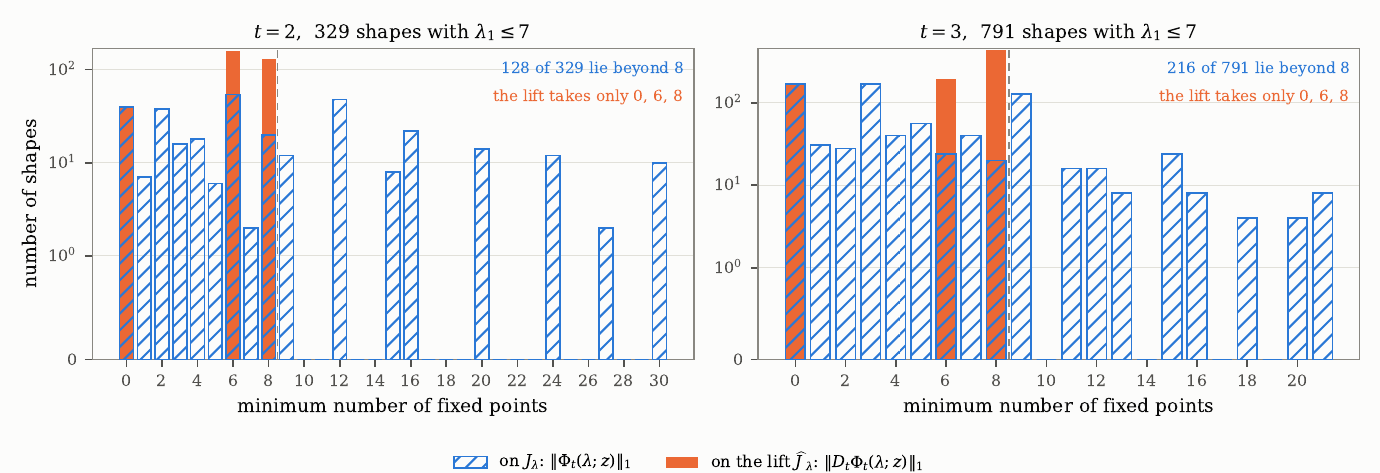}
\caption{What the lift does to the count. By Lemma \ref{lem:fixbound} an involution of the required
kind leaves at least $\|\Phi_t(\lambda;z)\|_1$ fixed points on $J_\lambda$ and at least
$\|D_t\Phi_t(\lambda;z)\|_1$ on $\widehat J_\lambda$; both are drawn as distributions over the same
shapes, on a logarithmic count. The dashed line sits at eight, the most monomials Lemma
\ref{lem:L3} can supply, so what to read is the horizontal extent and not the shape of either
histogram: the unlifted requirement runs past the line --- to $30$ at $t=2$ and $21$ at $t=3$, in
$128$ and $216$ of the shapes --- and the lifted one cannot, taking only the values $0$, $6$ and $8$.
For $\Phi_t\ne0$ the lifted minimum is $6$ or $8$, and Proposition \ref{prop:liftcount} says it is
exactly $2|\mathcal U(\lambda)|$; the value $0$ corresponds to the vanishing cases, where that
proposition does not apply. On the nonvanishing locus the lift does not move the requirement: it
collapses it exactly onto the Laplace count. The blue is drawn hollow over the orange so that
both are visible where they overlap, and the population excludes the empty partition, as the archived
run does. Computed by \texttt{anc/fig\_lift.py}, which rebuilds $J_\lambda$ from
\eqref{eq:Jlambda} and shares no code with the scripts behind \S\ref{sec:verif}.}
\label{fig:lift}
\end{figure}

It does, however, cut the problem into pieces small enough to attack one at a time.

\begin{corollary}[the requirement is local]\label{cor:local}
Suppose $\Phi_t(\lambda;z)\ne0$, so that Proposition \ref{prop:liftcount} applies. Then for every weight
$e$,
\[
\bigl|[z^e]\,D_t\Phi_t(\lambda;z)\bigr|
=\#\bigl\{\text{monomials of }\mathcal L(\lambda)\text{ of weight }e\bigr\},
\]
and those monomials all carry the same sign. So an involution on $\widehat J_\lambda$ may be sought
one weight space at a time: in the space of weight $e$ it must fix exactly the monomials of
$\mathcal L(\lambda)$ of that weight, and pair off everything else.
\end{corollary}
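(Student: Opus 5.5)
The plan is to compare, weight space by weight space, two expressions for the same Laurent polynomial. By Lemma \ref{lem:L3}, multiplied through by $D_t(z)$, we have
\[
D_t(z)\Phi_t(\lambda;z)=(-1)^{t+\binom{N+1}{2}}\sum_{U\in\mathcal U}\varepsilon_U f(\beta_{j_1}-\beta_{j_2}).
\]
Expanding each $f$ into its two monomials gives exactly the signed multiset $\mathcal L(\lambda)$, up to the global sign. Hence $[z^e]D_t\Phi_t$ is the signed count of monomials of $\mathcal L(\lambda)$ of weight $e$. By the triangle inequality,
\[
\bigl|[z^e]D_t\Phi_t\bigr|\le\#\{\text{monomials of weight }e\},
\]
with equality if and only if all those monomials carry the same sign.

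The key step is to sum these inequalities over $e$. The left side sums to $\|D_t\Phi_t\|_1$. The right side sums to $|\mathcal L(\lambda)|=2|\mathcal U(\lambda)|$. Proposition \ref{prop:liftcount} says these totals agree when $\Phi_t\ne0$. A sum of nonnegative slacks that totals zero forces every slack to vanish, so each weight space is an equality. This gives both the displayed identity and the sign-homogeneity claim at once. There is no case split by profile; the profile analysis is already absorbed into Proposition \ref{prop:liftcount}.

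The remaining sentence, the localization of the involution, should follow from Lemma \ref{lem:fixbound} applied to $\widehat J_\lambda$ together with Lemma \ref{lem:liftid}. In weight $e$, any weight-preserving sign-reversing involution satisfies $F_+-F_-=n_+(e)-n_-(e)=[z^e]D_t\Phi_t$. So the minimum number of fixed points in that space is $|[z^e]D_t\Phi_t|$, which is precisely the number of $\mathcal L$-monomials of weight $e$, all of the same sign.

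So requiring the fixed set to be $\mathcal L(\lambda)$ is consistent in each weight space separately: the right number of fixed points, with the right common sign. Since an involution decomposes over weight spaces, it may be built one space at a time. I expect no serious obstacle here. The main point of care is bookkeeping: making sure the global sign $(-1)^{t+\binom{N+1}{2}}$ and the sign convention of $\mathcal L$ match the sign of $[z^e]D_t\Phi_t$. The word "fix exactly the monomials" should be read as matching in count and sign, not as an identification of objects, since $\mathcal L$ is not literally a subset of $\widehat J_\lambda$.
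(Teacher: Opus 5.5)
Your proof is correct and is the paper's own argument. You apply the per-weight triangle inequality $\bigl|[z^e]D_t\Phi_t\bigr|\le\#\{\text{monomials of }\mathcal L(\lambda)\text{ of weight }e\}$, sum it over $e$, and use the equality of totals from Proposition~\ref{prop:liftcount}; this forces equality, and hence a common sign, in every weight space. Your derivation of the final sentence from Lemmas~\ref{lem:fixbound} and~\ref{lem:liftid} spells out what the paper leaves implicit, and so does your caveat that ``fix exactly'' means agreement in count and sign, via an injection as in Problem~\ref{prob:toggle}.
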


\begin{proof}
Termwise $|[z^e]D_t\Phi|\le\#\{\cdots\}$ by the triangle inequality, and by Proposition \ref{prop:liftcount}
the two sides have equal sums over $e$; so they agree for every $e$. Equality in a triangle inequality forces the
summands to share a sign.
\end{proof}

Measured over the same range, the local form holds in $1696$ of $1696$ unbalanced weight spaces at
$t=2$ and $4074$ of $4074$ at $t=3$ \stverif; the $80$ Laplace weights that carry no imbalance at
$t=2$ are the $20$ shapes with $\Phi_t\equiv0$ against their four weights, and at $t=3$ there are
none. The contrast is what makes the point: on $J_\lambda$ unlifted the coefficients of $\Phi$ reach
$7$ and $3$ in absolute value, so there the requirement is not local either \stverif.

\begin{remark}[what the skeleton already forces]\label{rem:skeleton}
Corollary \ref{cor:local} leaves a construction to make, but not everywhere: a weight space carrying
a single element admits no choice, and there the map $\mathcal L(\lambda)\to\widehat J_\lambda$ can
be read off rather than designed. Such spaces are $398$ of the $1696$ at $t=2$ and $1176$ of the
$4074$ at $t=3$, and on them two things hold without exception \stverif. First, the sign of the
forced element agrees with the sign of the Laplace monomial of its weight --- $398$ of $398$ and
$1176$ of $1176$ --- which is necessary for any involution meeting the requirement. Second, its
$\omega$ takes exactly one of the two torsion factors, never both and never neither: four of the
eight elements of $\Omega_t$ are excluded outright, again without exception. Three natural readings
fail on the same data, and we record them because each would have been a plausible ansatz: that
$\omega$ is one of the two elements of extreme weight $\pm(t+1)$ holds at $t=2$ but fails in $70$ of
$1176$ cases at $t=3$; that $\beta(\mu)$ is $\beta(\lambda)$ with the two columns of the transversal
removed fails in $384$ of $398$ and $1092$ of $1176$; and that $\nu$ is $\mu$ or $\lambda$ fails in
$260$ of $398$ and $860$ of $1176$, $\nu$ being strictly intermediate there.
\end{remark}

The construction of this subsection arose in the course of AI-assisted exploration, and the paper
would not contain it otherwise; the disclosure in \S\ref{sec:disclosure} says what that means here.
The choice between the two readings --- that the involution should live not on
$J_\lambda$ but on an index set for the \emph{numerator} --- came from there, and so did the reason
for it:
an involution on $J_\lambda$ would have to hide exactly the factor that separates the quotient from
the numerator. So is $\widehat J_\lambda$ itself, together with the instruction not to expand $D_t$
prematurely but to keep it factored, one signed choice per factor, so that terms of coincident weight
remain distinct as provenance-labelled objects. And so is the correction that makes Lemma \ref{lem:fixbound} true: we
had written the number of fixed points as an equality, and it is an inequality, with equality exactly
under the condition stated there. So, finally, is the observation that a transversal of Lemma
\ref{lem:L3} carries a $2\times2$ minor and therefore \emph{two} monomials, which is what turns a
count of transversals into Proposition \ref{prop:liftcount}. What is ours is the measurement, the
locality of Corollary \ref{cor:local}, and the forced skeleton. We are grateful for all of it.

We do not know of a name for the construction. Multiplying a signed set by a signed set representing
the terms of a denominator is the standard device for turning an identity between a quotient and a
sum into one between two signed sets, and composing the resulting involutions is the involution
principle of Garsia and Milne \cite{GM81}; what is particular here is that $D_t$ is kept factored, so
that provenance survives. The path formalism in which those choices become steps is Fulmek's
\cite{Fulmek}, where Cauchy--Binet and Laplace --- the two sides that would have to be matched ---
are obtained bijectively in the same picture. One control says which part of the construction is
doing the work. Collecting the eight provenance-labelled choices into the ordinary Laurent polynomial
$D_t$ merges those of equal weight and produces coefficients of modulus greater than one --- at
$t=2$, $D_2=-z^3+3z-3z^{-1}+z^{-3}$ --- so the collected object is not a signed set of six elements
with signs $\pm1$ at all. Collecting equal weights leaves the Laurent-polynomial identity of Lemma
\ref{lem:liftid} unchanged, and forgets the eight provenance-labelled choices that define
$\Omega_t$: the polynomial controls the identity, the factored lift controls the provenance. Two decoys confirm
that the signed set is the right one: dropping the factor $(z-z^{-1})$ breaks the identity in $289$
of $329$ shapes and in $623$ of $791$, and giving every element of $\Omega_t$ the sign $+1$ breaks it
in the same numbers.

\section{The factorization is isolated}\label{sec:sharp}

A factorization is only a result if it is not a specialization of a known one, and it is only a
\emph{sharp} result if the alphabet cannot be deformed. We do not prove that in general. What we do
is try four deformations of \eqref{eq:object}, and all four destroy the product.

The evidence in this section is computational, and we label it as such: none of (D1)--(D4) below is
a theorem, and in particular we prove no no-go statement. What we assert is that the identity
\eqref{eq:main} is false for each deformed alphabet, which a single counterexample settles, and we
give counts to indicate how far from true it is.

\begin{itemize}
\item[\rm(D1)] \emph{More reciprocal pairs.} For $s_\lambda(\zt,z_1,z_1^{-1},\dots,z_r,z_r^{-1})$
the product law of Theorem \ref{thm:main} already fails at $r=2$: in the computed range typical
values carry one large irreducible factor, and the only factors observed to split off are binomials
in the rotated variables $z_1z_2$ and $z_1z_2^{-1}$, that is,
the couplings between the pairs' $u(1)$ charges. What does survive is the \emph{zero locus}: at
$t=2$ it survives every $r$ and Theorem \ref{conj:crit} determines it completely, while for
arbitrary $t$ and $r$ Theorem \ref{thm:suff} gives a uniform sufficient criterion and Conjecture
\ref{conj:general} is the converse. On this evidence the product looks like a rank-one accident and
the vanishing like the part that generalizes; the first half of that sentence is Conjecture
\ref{conj:rank2} and only the second is a theorem.
\item[\rm(D2)] \emph{More orbit variables.} For $s_\lambda(X,\zeta X,\dots,\zeta^{t-1}X,z,z^{-1})$
the product law fails already at $|X|=n=2$. At $t=2$, $n=2$ the value for $\lambda=(2)$ is
$(x^2z^2+y^2z^2+z^4+z^2+1)/z^2$, irreducible over $\mathbb Q$; for $\lambda=(2,2)$ it is
$(x^2+y^2+z^2)(x^2z^2+y^2z^2+1)/z^2$, two factors rather than three; and for $\lambda=(3,1)$ it is
again irreducible. By contrast, with the pair deleted the same alphabet obeys \cite[Theorem
2.3]{AK25}: $s_{(2)}(x,y,-x,-y)=x^2+y^2$ and $s_{(2,2)}(x,y,-x,-y)=(x^2+y^2)^2$.
\item[\rm(D3)] \emph{The orbit replaced by a coset.} For $\zeta'\zt$ with $\zeta'=e^{\pi i/t}$, the
odd powers of a primitive $2t$-th root of unity --- the twist of \cite[\S3]{AK25}, and for $t$ even
the full set of primitive $2t$-th roots studied in \cite{HI24} --- the identity fails, vanishing
criterion included.
\item[\rm(D4)] \emph{The reciprocal pair replaced by a free pair.} $s_\lambda(\zt,x,y)$ was
irreducible in every case we computed. We make no claim beyond that range.
\end{itemize}

Counts for (D3) and (D4), over $t=2,\dots,6$ and $|\lambda|\le10$: the identity holds in all $600$
cases for \eqref{eq:object}, in $383$ of $600$ for (D3), and in $200$ of $600$ for (D4). The test is
therefore capable of failing, which is the point of quoting the first row.

One reading of the proof accounts for all four. Splitting the alphabet and applying Theorem
\ref{thm:LR} to the frozen half gives, for any free alphabet $B$,
\begin{equation}\label{eq:threestep}
s_\lambda(\zt\cup B)=\sum_{\nu}s_{\lambda/\nu}(\zt)\,s_\nu(B)=\sum_\nu\varepsilon_\nu\,s_\nu(B),
\qquad \varepsilon_\nu\in\{0,\pm1\},
\end{equation}
the coefficients being ribbon signs: that the skew evaluation $s_{\lambda/\nu}(\zt)$ is $0$ or a
ribbon sign is the skew form of Theorem \ref{thm:LR}, due to Macdonald \cite[p.~91]{Mac95}. Now
$B=(z,z^{-1})$ has exactly two letters and they are
reciprocal, so $s_\nu(B)=\chi_{\nu_1-\nu_2}(z)$ is a \emph{single} $\mathfrak{sl}_2$ character
depending on one difference, and the short signed sum collapses. Step one requires the frozen half
to be a full orbit, since Theorem \ref{thm:LR} needs the rows to meet every residue class exactly;
a coset does not, which is (D3). Step two requires the free half to be exactly one reciprocal pair:
a free pair has $\det=xy$ and couples to both central charges, which is (D4), and enlarging the free
half --- by more pairs, or by more orbit variables, which enlarges $\nu$ --- makes $s_\nu(B)$ cease
to be a single character and the sum cease to collapse, which is (D1) and (D2). \emph{The four
failures are one condition seen from four sides.}

\begin{remark}
Equivalently: the reciprocal pair contributes exactly \emph{one} coupling, the cross term $d_3$,
because $\det=1$ prevents it from seeing the two central charges separately. This is why there are
three factors and not two, and Ayyer--Kumari's factorizations, which are products over quotient
components with no coupling at all, are the case where that contribution is absent.
\end{remark}

\begin{remark}[a representation-theoretic reading]\label{rem:twining}
The alphabet \eqref{eq:object} is closed under inversion, hence a torus element of $O(N)$, and its
determinant is $(-1)^{t+1}$. For even $t$ it therefore lies in the non-identity component, where a
character is a \emph{twining} character; the identity computing such characters by folding is due to
Jantzen \cite{Jantzen} and, in the form closest to this one, to Kumar--Lusztig--Prasad \cite{KLP},
the relevant folded algebra being the orbit Lie algebra. The passage from their statement to ours
is not a single step, and it is worth naming the two: restricting the $GL_N$-representation to
$O(N)$ is a branching problem, and once it is decomposed into irreducible constituents the twining
formula applies to each of them on the non-identity component; Clifford theory is what governs the
$SO(N)/O(N)$ passage inside that, not the branching. Our restriction is reducible --- for a general
$\lambda$ it is a $\ZZ$-combination of symplectic characters and not one of them, as the last remark
of \S\ref{sec:zeros} records --- so what one reads off is a sum of twining characters rather than a
single one. We record this because Ciucu--Krattenthaler
\cite{CK09} wrote that they could not offer a representation-theoretic interpretation of their
factorizations, and \cite{AB19} and \cite{AK25} record that it is still missing; the reading here is
of that kind. The mechanism is entirely theirs, and the proof above does not use it. Lemma
\ref{lem:AtoSp} makes the reading concrete for the alphabet \eqref{eq:psir}, where the folded algebra
is $C_r$ and the twining character is a symplectic character on the nose.
\end{remark}

\section{The zero locus survives every $r$}\label{sec:zeros}

This section is past the factorization, and reads differently from the seven before it. Those prove
statements about an identity; this one asks what is left when the identity is gone, and the answer is
a locus rather than a formula. It has two halves. The first fixes $t=2$ and proves both directions
for every $r$ and every $\lambda$, the converse resting on one external input; that is Theorem
\ref{conj:crit}, and the route to it is what takes the section its length. The second lets $t$ go
free again and asks the same question of the whole family: there we prove one implication for every
$t$ and every $r$ --- Theorem \ref{thm:suff}, by an involution that pairs off every term of the
expansion --- record what the other direction forces, and state as Conjecture \ref{conj:general} that
the two conditions are the same one. The change of register is the subject changing, not the
standard.

By (D1) the product of Theorem \ref{thm:main} does not survive a second reciprocal pair. Its
\emph{zero locus} does, and for every $r$ it has a closed form. Write
\begin{equation}\label{eq:psir}
\Psi_r(\lambda)\;:=\;s_\lambda(1,-1,z_1,\bar z_1,\dots,z_r,\bar z_r),\qquad N=2r+2,
\end{equation}
so that $\Psi_1$ is \eqref{eq:object} at $t=2$. Throughout this section $\lambda$ is padded to
exactly $N$ parts, $\beta=\beta(\lambda)$ is its beta set, and $E$ and $O$ are its even and odd
entries. Call $\lambda$ \emph{self-complementary of width $w$} if $\lambda_i+\lambda_{N+1-i}=w$ for
every $i$, in which case $w=\lambda_1+\lambda_N$. The two conditions the section is about are
\begin{equation}\label{eq:branches}
\boxed{\;\;
\underbrace{|E|\in\{0,N\}}_{\text{branch (a)}}
\qquad\qquad
\underbrace{\lambda_i+\lambda_{N+1-i}=w\ \ \text{for all }i,\ \ w\ \text{odd}}_{\text{branch (b)}}
\;\;}
\end{equation}
Branch (a) says the beta set has constant parity; branch (b) says $\lambda$ is self-complementary of
odd width, the width being $w=\lambda_1+\lambda_N$.

We state the four claims separately, because they are not proved to the same extent and a single
biconditional would hide that.

\begin{theorem}[the two branches]\label{thm:zeros}
For every $r\ge1$ and every $\lambda\in\PP_N$: if branch {\rm(a)} or branch {\rm(b)} holds, then
$\Psi_r(\lambda)=0$.
\end{theorem}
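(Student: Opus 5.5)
The two branches have independent proofs. Branch (a) is a row dependence in the bialternant. Branch (b) is the complementation identity applied to an alphabet closed under inversion whose determinant is $-1$. In both cases we argue at generic $z_1,\dots,z_r$, where the Vandermonde denominator $\det(x_i^{N-j})$ is nonzero, and conclude that $\Psi_r(\lambda)$ vanishes identically as a Laurent polynomial.

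\emph{Branch (a).} Order the alphabet as $x=(1,-1,z_1,\bar z_1,\dots,z_r,\bar z_r)$. In the numerator $\det(x_i^{\beta_j})$ the first row is $(1,\dots,1)$ and the second is $((-1)^{\beta_1},\dots,(-1)^{\beta_N})$. If $|E|=N$, the second row equals the first; if $|E|=0$, it equals minus the first. Either way the numerator is zero. The denominator is the Vandermonde in distinct letters, so it is nonzero for generic $z$, and hence $\Psi_r(\lambda)=0$.

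\emph{Branch (b).} Let $w=\lambda_1+\lambda_N$, so that $w\ge\lambda_1$. The first step is the complementation identity
\[
s_\lambda(x_1^{-1},\dots,x_N^{-1})=(x_1\cdots x_N)^{-w}\,s_{\lambda^c}(x_1,\dots,x_N),\qquad \lambda^c_i=w-\lambda_{N+1-i}.
\]
We would prove it directly on the bialternant, to avoid quoting it.
\begin{itemize}
\item Substitute $x_i\mapsto x_i^{-1}$ and multiply row $i$ of both the numerator and the denominator by $x_i^{w+N-1}$. This produces the prefactor $(x_1\cdots x_N)^{-w}$ overall.
\item The numerator exponents become $w+N-1-\beta_j$, and these are the beta numbers of $\lambda^c$ listed in reverse column order.
\item The denominator exponents become $j-1$, which are the exponents $N-j$ listed in reverse order.
\item Both determinants need the same column reversal to restore the standard order, so the two reversal signs cancel.
\end{itemize}

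The second step applies this to our alphabet. The hypothesis of branch (b) says exactly that $\lambda^c=\lambda$. The alphabet is closed under inversion as a multiset, so the left side equals $s_\lambda(x)$. Its determinant is $1\cdot(-1)\cdot\prod_k z_k\bar z_k=-1$. Therefore
\[
\Psi_r(\lambda)=(-1)^{-w}\Psi_r(\lambda)=-\Psi_r(\lambda)
\]
because $w$ is odd, and $\Psi_r(\lambda)=0$. This is the index-family complementation of \cite{AB19} with the letter $-1$ changing the sign of the determinant. Without that letter the same computation gives $\Psi=+\Psi$, which carries no information; this is consistent with those shapes factorizing in \cite{AB19} rather than vanishing.

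The only point that needs care is the sign bookkeeping in the complementation identity. The two column reversals, and the fact that $w\ge\lambda_1$ keeps every $\lambda^c_i\ge0$, have to be checked. Neither is a real obstacle. The proof uses nothing about $r$ beyond the alphabet being closed under inversion, so it holds uniformly in $r\ge1$.
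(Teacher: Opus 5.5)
Your proof is correct and is essentially the paper's. Branch (a) is the same coincidence, up to sign, of the rows carrying $1$ and $-1$, which the paper phrases as factoring through the squared alphabet; branch (b) is the complementation identity \eqref{eq:compl} on an inversion-closed alphabet of determinant $-1$, which the paper gives alongside its representation-theoretic argument via $V_\lambda\cong V_\lambda^{*}\otimes\det^{\,w}$ and self-duality of $O(N)$-modules. One slip in your bullet list: the denominator rows should be scaled by $x_i^{N-1}$, not $x_i^{w+N-1}$ --- that scaling is what yields the exponents $j-1$ and the net prefactor $(x_1\cdots x_N)^{-w}$ that you state.
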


\begin{corollary}\label{cor:sizes}
If $\lambda$ is self-complementary of odd width $w$ then $|\lambda|=w(r+1)$. In particular branch
{\rm(b)} occurs only for $|\lambda|\equiv r+1 \pmod{2(r+1)}$, and a size outside that progression
needs no further test.
\end{corollary}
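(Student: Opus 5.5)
The statement is Corollary \ref{cor:sizes}, and the plan is a direct count: sum the defining relation of self-complementarity over all rows, then read off the congruence. No earlier result is needed beyond the definition fixed above, with $\lambda$ padded to exactly $N=2r+2$ parts.

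\emph{Step 1: the size.} Suppose $\lambda_i+\lambda_{N+1-i}=w$ for every $i$. Summing over $i=1,\dots,N$ counts every part exactly twice, since $i\mapsto N+1-i$ is a bijection of $\{1,\dots,N\}$. This gives $2|\lambda|=Nw=(2r+2)w$, hence $|\lambda|=w(r+1)$. Equivalently, the $N$ indices split into $N/2=r+1$ pairs $\{i,N+1-i\}$. There is no fixed middle index because $N$ is even, and each pair contributes $w$.

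\emph{Step 2: the congruence.} If $w$ is odd, write $w=2k+1$. Then $|\lambda|=(2k+1)(r+1)=r+1+2k(r+1)\equiv r+1\pmod{2(r+1)}$. The last clause is the contrapositive: if $|\lambda|\not\equiv r+1\pmod{2(r+1)}$, branch (b) cannot hold. Deciding whether Theorem \ref{thm:zeros} applies then reduces to the parity test of branch (a) alone.

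There is no real obstacle. The only point to check is the evenness of $N$, which ensures that the pairing $i\leftrightarrow N+1-i$ has no fixed point and that $N/2=r+1$ is an integer.
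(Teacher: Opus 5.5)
Your proof is correct and is the paper's argument: summing $\lambda_i+\lambda_{N+1-i}=w$ over all $i$ gives $2|\lambda|=wN=w(2r+2)$, and the congruence for odd $w$ follows at once. The double count does not need $N$ to be even; evenness is used only in your reformulation as $r+1$ fixed-point-free pairs.
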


\begin{proof}
Summing $\lambda_i+\lambda_{N+1-i}=w$ over $i$ counts $|\lambda|$ twice, so $2|\lambda|=wN=w(2r+2)$.
\end{proof}

The progression is visible in Figure \ref{fig:zeros}, and it is worth recording that the analogous
statement for the whole locus is false: branch (a) is not confined to it, which is why the condition
has to be read off branch (b) alone.

\begin{figure}[!ht]
\centering
\includegraphics[width=\textwidth]{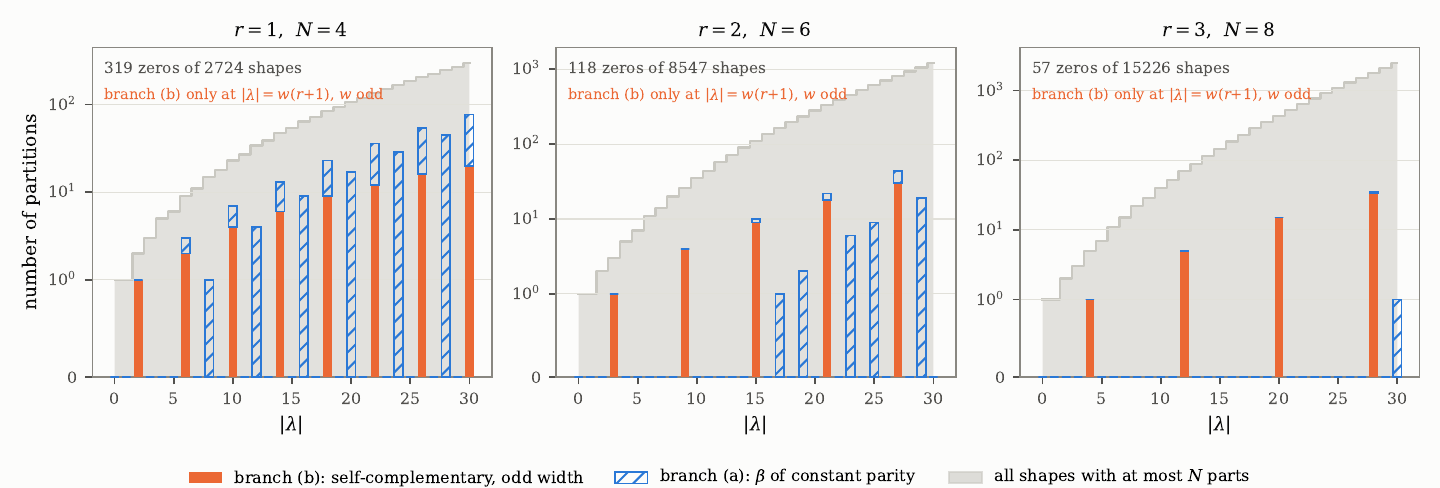}
\caption{The zero locus of Theorem \ref{conj:crit} --- the biconditional, of which Theorem
\ref{thm:zeros} is the sufficient half --- counted rather than plotted. For each size
$|\lambda|$ the bars give the number of partitions with at most $N$ parts that vanish, split by
branch; the shaded step is the total number of shapes of that size, on the same axis, so that the
rarity of the locus is visible and not asserted. The sizes carrying branch (b) are exactly the
multiples $w(r+1)$ with $w$ odd, by Corollary \ref{cor:sizes}; branch (a) occupies other sizes and
needs the beta set to miss one parity entirely, which is why it is absent at small $|\lambda|$ once
$N$ grows.}
\label{fig:zeros}
\end{figure}

\begin{theorem}[one pair]\label{thm:r1}
For $r=1$ the converse holds as well: $\Psi_1(\lambda)=0$ only if {\rm(a)} or {\rm(b)}.
\end{theorem}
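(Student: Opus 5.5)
Since $\Psi_1(\lambda)=\Phi_2(\lambda;z)$, the plan is to read the converse straight off Corollary \ref{cor:zero} at $t=2$ and then translate its two vanishing clauses into branches (a) and (b). At $t=2$, $N=4$, Corollary \ref{cor:zero} says $\Phi_2\equiv0$ exactly when some residue class mod $2$ is empty, or the profile is two-class with $a_1+a_2=b_1+b_2$. The size-three profile never vanishes, so nothing needs to be done there.

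\emph{The empty-class clause.} It is literally branch (a): $|E|\in\{0,4\}$ says one parity class of $\beta$ is empty.

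\emph{The concentric clause.} Here $\beta$ carries two even and two odd entries and $a_1+a_2=b_1+b_2$. Write $\beta_1>\beta_2>\beta_3>\beta_4$ with $\beta_j=\lambda_j+4-j$.

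First I show that $\beta_1$ and $\beta_4$ lie in the same class. If not, say $\beta_1\in A$ and $\beta_4\in B$. Then $a_1=\beta_1$ and $b_2=\beta_4$, and $\{a_2,b_1\}=\{\beta_2,\beta_3\}$. Now $a_1+a_2>b_1+b_2$ in either case. If $a_2=\beta_2$, the inequality is termwise. If $a_2=\beta_3$, it follows from $\beta_1>\beta_2$ and $\beta_3>\beta_4$.

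So $\{\beta_1,\beta_4\}$ is one class and $\{\beta_2,\beta_3\}$ the other, and concentricity reads $\beta_1+\beta_4=\beta_2+\beta_3$. In terms of $\lambda$ this is $\lambda_1+\lambda_4=\lambda_2+\lambda_3=:w$, which is self-complementarity at $N=4$. For the parity of $w$: $\beta_1+\beta_4=\lambda_1+\lambda_4+3$ is the sum of two numbers of the same parity, hence even, so $w$ is odd. That is branch (b).

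I expect no real obstacle: the only step needing care is the ordering argument, and it is a two-line inequality check. As a consistency check, Proposition \ref{rem:lattice} at $t=2$ requires $r_B-r_A=1$, which is automatic. Combined with Theorem \ref{thm:zeros}, which gives the other direction, this yields the biconditional at $r=1$.
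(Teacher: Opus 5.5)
Your argument is correct, but it is not the route the paper takes.

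\emph{Your route.} You obtain Theorem~\ref{thm:r1} as the $t=2$ instance of Corollary~\ref{cor:zero}, via $\Psi_1=\Phi_2$. All that remains is translation. The empty-class clause is branch~(a) verbatim. Your ordering inequality shows that concentricity forces the classes to be $\{\beta_1,\beta_4\}$ and $\{\beta_2,\beta_3\}$, which gives $\lambda_1+\lambda_4=\lambda_2+\lambda_3=w$ with $w+3$ even.

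\emph{The paper's route.} The paper proves Theorem~\ref{thm:r1} without invoking Theorem~\ref{thm:main}. It expands along the two parity rows and collapses each complementary $2\times2$ block to a single $S_{b-b'}$ by d'Ocagne's identity~\eqref{eq:docagne}. It then uses that the $S_n$ are linearly independent (distinct degrees in $c_\ast$), so any cancellation must be an exact pairing of equal indices. A finite check over the parity patterns then yields the same dichotomy you reach with inequalities.

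\emph{What each buys.} Your route is shorter. It also makes explicit that the self-complementary branch at $r=1$ is precisely the concentric locus of Proposition~\ref{rem:lattice}, at the corner $(t,r)=(2,1)$ where the two halves of the paper meet. Its cost is that it rests on the full weight of Theorem~\ref{thm:main}, whose proof needs the relative signs of Lemma~\ref{lem:L4} to assemble the four Laplace terms into a product. The paper's route is independent of the main theorem, so that corner is seen by two separate arguments. For the necessity direction it needs only the indices of the surviving terms, never their signs.
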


\begin{theorem}[inside Littlewood's range]\label{thm:stable}
For every $r$ and every $\lambda$ with $\ell(\lambda)\le N/2$, the converse holds. Equivalently, on
that range $\Psi_r(\lambda)=0$ if and only if $\lambda=(k^{\,N/2})$ with $k$ odd.
\end{theorem}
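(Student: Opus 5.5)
The sufficient direction is immediate. If $\lambda=(k^{\,N/2})$ with $k$ odd, pad it to $N=2r+2$ parts. Then $\lambda_i+\lambda_{N+1-i}=k+0=k$ for every $i$, so $\lambda$ is self-complementary of odd width, and Theorem \ref{thm:zeros}, branch {\rm(b)}, gives $\Psi_r(\lambda)=0$.

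For the converse I would argue by induction on $r$, using an extremal coefficient in the degree filtration in $z_1$. Order the alphabet with $z_1$ smallest, so that in a semistandard tableau the letter $z_1$ can occupy only the first row. The exponent of $z_1$ in the weight of $T$ is $m_{z_1}(T)-m_{\bar z_1}(T)$, which is at most $\lambda_1$. Equality forces the whole first row to be $z_1$ and forbids $\bar z_1$ everywhere. Hence
\[
[z_1^{\lambda_1}]\,\Psi_r(\lambda)\;=\;s_{\hat\lambda}\bigl(1,-1,z_2,\bar z_2,\dots,z_r,\bar z_r\bigr)\;=\;\Psi_{r-1}(\hat\lambda),\qquad \hat\lambda=(\lambda_2,\dots,\lambda_{r+1}),
\]
where $\ell(\hat\lambda)\le r=N'/2$ with $N'=2r$. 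So $\hat\lambda$ is again in Littlewood's range.

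The base case is $r=0$, that is $s_{(k)}(1,-1)=h_k(1,-1)$. This equals $1$ for $k$ even and $0$ for $k$ odd, which is exactly the statement. For $r=1$ the claim can also be read off Theorem \ref{thm:r1} as a cross-check. By induction, $\Psi_r(\lambda)\ne0$ whenever $\hat\lambda$ is not an odd rectangle $(k^r)$. This leaves a single family to treat by hand: $\lambda=(m,k^{\,r})$ with $k$ odd and $m>k$. Here the case $m=k$ is the excluded rectangle.

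For the residual family I would move one step down the filtration and compute $[z_1^{m-1}]\Psi_r(\lambda)$. The tableaux contributing to that coefficient are of two kinds.
\begin{enumerate}
\item[\rm(1)] The first row carries $m-1$ copies of $z_1$ followed by one other letter, and there is no $\bar z_1$. The rest of the filling is then governed by $\Psi_{r-1}$ or skew analogues of shapes obtained from $(k^r)$ by adding one box.
\item[\rm(2)] The first row is all $z_1$ and exactly one $\bar z_1$ appears elsewhere, together with one extra $z_1$ compensating. This is impossible, since $z_1$ is confined to the first row. So this kind contributes nothing.
\end{enumerate}
Thus the coefficient is a sum over the letter $x\in\{1,-1,z_j^{\pm1}\}$ ending the first row, each term a Schur polynomial of shape $(k^r)$ in the remaining letters. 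I would organise this as the branching $\sum_{\mu}s_{\lambda/\mu}(z_1)\,\Psi_{r-1}$-type terms. I then expect the answer to reduce to $\Psi_{r-1}$ evaluated at $(k+1,k^{r-1})$ plus a correction from the letter $-1$ or $1$ in that box. Since $(k+1,k^{r-1})$ is not an odd rectangle, it is nonzero by induction.

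The main obstacle is the possible cancellation between those terms. The letters $1$ and $-1$ contribute with opposite signs, exactly as in $h_j(1,-1)$. If the naive second coefficient does vanish, I would instead take the extremal coefficient at the other end. Concretely, I would use the $z_1\leftrightarrow\bar z_1$ symmetry together with the lex-leading monomial $z_1^{m}z_2^{k}\cdots z_r^{k}$, inspecting the top $z_2$-degree after fixing $z_1^{m}$, whose coefficient is $h_k(1,-1)=0$. From there I would descend one step in $z_r$, where only a single box is freed and the coefficient should be a single $\pm1$ term $s_{(k,1)}$-type evaluation at $(1,-1)$.
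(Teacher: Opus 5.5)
Your reduction is sound. The extraction $[z_1^{\lambda_1}]\Psi_r(\lambda)=\Psi_{r-1}(\hat\lambda)$ is correct, and with the base case $h_k(1,-1)$ it leaves exactly the family $\lambda=(m,k^{r})$ with $k$ odd and $m>k$. The gap is in that family, and it comes from a wrong case analysis, not from a possible cancellation.

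\emph{Case (2) is not impossible.} The condition $m_{z_1}-m_{\bar z_1}=m-1$ is satisfied by $(m_{z_1},m_{\bar z_1})=(m,1)$: a full first row of $z_1$, with one $\bar z_1$ somewhere in rows $2,\dots,r+1$. No extra $z_1$ is needed.

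\emph{Case (1) does not produce the shape $(k+1,k^{r-1})$.} The freed cell is $(1,m)$ with $m>k$, so nothing lies below it. Hence $\lambda/(m-1)$ is the disjoint union of that cell and the rectangle $(k^r)$, and case (1) contributes $s_{(1)}(X)\,s_{(k^r)}(X)$, where $X=(1,-1,z_2,\bar z_2,\dots,z_r,\bar z_r)$. That is $0$ by your own induction hypothesis, since $(k^r)$ is the odd rectangle at rank $r-1$. In Pieri form it is $s_{(k+1,k^{r-1})}(X)+s_{(k^r,1)}(X)$, so the ``correction'' you anticipated cancels exactly the term you were counting on. As written, the coefficient you propose is $0$ and the residual family is not handled.

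\emph{The fallback cannot help.} Every monomial divisible by $z_1^{m}$ has its coefficient inside $[z_1^{m}]\Psi_r(\lambda)=\Psi_{r-1}\bigl((k^r)\bigr)\equiv0$. By the symmetry $z_1\leftrightarrow\bar z_1$ the same holds at $z_1^{-m}$. So descending in $z_2$ or $z_r$ after fixing $z_1^{\pm m}$ only ever returns $0$.

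The repair is the term you discarded. Order the letters so that $z_1<\bar z_1$ and both precede the letters of $X$. Then the single $\bar z_1$ of case (2) is forced into cell $(2,1)$, and the contribution is
\[
s_{(k^r)/(1)}(X)=s_{(k^{r-1},k-1)}(X)=\Psi_{r-1}\bigl((k^{r-1},k-1)\bigr).
\]
So $[z_1^{m-1}]\Psi_r\bigl((m,k^r)\bigr)=\Psi_{r-1}\bigl((k^{r-1},k-1)\bigr)$, and this is nonzero by induction:
\begin{itemize}
\item at $r=1$ it is $h_{k-1}(1,-1)=1$, because $k-1$ is even;
\item for $r\ge2$ the shape $(k^{r-1},k-1)$ has at most $r$ parts and is not of the form $(j^{\,r})$ with $j$ odd.
\end{itemize}

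With that one correction your argument is complete, and it takes a genuinely different route from the paper's. The paper reads the vanishing through Littlewood's restriction rule and the associate involution, $\Psi_r=\sum_\nu(m_\nu-m_{\nu^{*}})\chi_{[\nu]}$, and exhibits a label $\mu\subseteq\lambda$ with $m_\mu\ge1$ and $m_{\mu^{*}}=0$. Your route needs neither the restriction rule nor associates, only the tableau expansion and the statement one rank down, and it produces an explicit nonzero coefficient.
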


\begin{remark}[rectangles are where the general arguments stop]
It is worth noticing that both places where a general argument in this paper fails are rectangles,
at two different heights and for two different reasons. At height $N/2$ an odd rectangle is
self-complementary of odd width, so it vanishes; by Theorem \ref{thm:stable} these are the only
shapes in Littlewood's range that do, and in the proof they are precisely the shapes for which the
associate witness cannot be built. At full height the object does not vanish but degenerates the
other way: $s_{(c^{\,N})}(x)=(x_1\cdots x_N)^{c}$ is a monomial, so on any alphabet $A$ the value is
$\det(A)^{c}$ and nothing of the shape survives --- which at $t=2$ is case {\rm(i)} of Proposition
\ref{prop:rect}, the one case there in which the parity of $c$ plays no part. Everywhere between the
two heights the generic argument applies unchanged. The rectangles are not a nuisance to be handled
case by case; they are the boundary of the shape, and each of the two extremes breaks a different
hypothesis --- the first the existence of a witness, the second the non-degeneracy of the interval
triple.
\end{remark}

\begin{theorem}[the criterion in general]\label{conj:crit}
For every $r$ and every $\lambda\in\PP_N$, $\Psi_r(\lambda)=0$ if and only if {\rm(a)} or
{\rm(b)}.
\end{theorem}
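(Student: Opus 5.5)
The sufficient direction is Theorem~\ref{thm:zeros}, so the plan concerns only the converse: if $\Psi_r(\lambda)=0$, then (a) or (b) holds.

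\emph{Expansion.} I would start from the splitting \eqref{eq:threestep} with the frozen half $\{1,-1\}$ and $B=(z_1,\bar z_1,\dots,z_r,\bar z_r)$:
\[
\Psi_r(\lambda)=\sum_{\nu}\sigma_\nu\,s_\nu(B),\qquad \sigma_\nu=s_{\lambda/\nu}(1,-1)\in\{0,\pm1\}.
\]
By the $\beta$-form \eqref{eq:betaJT} of the Jacobi--Trudi matrix used in Proposition~\ref{rem:involution}, $\sigma_\nu\ne0$ exactly when the matrix is square in each parity block with distinct segment lengths. This makes the support of the sum an explicit combinatorial set read off $E$ and $O$. Branch~(a) is then the case in which one parity block is empty, and the sum is empty for every $\nu$ of the right length.

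\emph{Extremal argument.} Assume (a) fails. I would filter by total degree in the $z_i^{\pm1}$ and pass to the leading component. On the reciprocal alphabet $s_\nu(B)$ has top-degree part governed by its highest weight restricted to the rank-$r$ torus. The key claim to establish is that the top component of $\Psi_r(\lambda)$ is a short signed sum, at most two terms, each a product $s_{\alpha}(z)\,s_{\beta}(\bar z)$. Here $\alpha$ should come from the even part $E$ and $\beta$ from the odd part $O$: the two parity blocks of \eqref{eq:betaJT} decouple at the extreme, because the extremal $\nu$ must take the longest available segment in each block. If only one extremal term survives, $\Psi_r\ne0$ immediately. If two survive with opposite signs, vanishing of the top component forces an equality of two products of Schur polynomials.

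\emph{Rigidity.} At this point I invoke the rigidity theorem for products of Schur polynomials (Theorem~\ref{thm:PvW}): an equality $s_\alpha s_\beta=s_{\alpha'}s_{\beta'}$ forces $\{\alpha,\beta\}=\{\alpha',\beta'\}$. Translated back through the beta set, the two extremal terms are exchanged by $\beta_j\mapsto c-\beta_{N+1-j}$. Equality of the multisets therefore says that $\beta$ is symmetric under this reflection, which is $\lambda_i+\lambda_{N+1-i}=w$ for all $i$. The parity of $w$ is then pinned down by the relative sign of the two terms, $(-1)^w$, coming from $\det=-1$ of the alphabet $\{1,-1\}$. For even $w$ the two terms add rather than cancel, so only odd $w$ survives, which is branch~(b).

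\emph{Main obstacle.} I expect the hardest step to be the claim that the top-degree part really is one or two products $s_\alpha s_\beta$, with no third extremal contribution. Cancellations among the $\nu$ in the same dominance-maximal layer could in principle hide the leading term, and one would then have to descend further in the filtration. To control this I would first use the involution of Proposition~\ref{rem:involution}, iterated over each pair $(z_i,\bar z_i)$, to pair off non-extremal $\nu$. I would check the reduction against Theorem~\ref{thm:r1} for $r=1$ and against Theorem~\ref{thm:stable} for $\ell(\lambda)\le N/2$, where the extremal layer should visibly have one term except for the odd rectangles $(k^{N/2})$.
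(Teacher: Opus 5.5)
Your skeleton (degree filtration, at most two extremal terms, Purbhoo--van Willigenburg rigidity, reflection of $\beta$, parity of the width) is the paper's. But you run it on the wrong object, and the step you flag as the main obstacle is where it breaks.

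\textbf{The quotient expansion does not give a short top layer.} In \eqref{eq:threestep} we have $\prod_{b\in B}b=1$, hence $s_{\nu+(1^{2r})}(B)=s_\nu(B)$. So $\nu,\nu+(1^{2r}),\nu+(2^{2r}),\dots$ all carry the same top component, $s_{(\nu_1,\dots,\nu_r)}(z)\,s_{(\nu_{r+1},\dots,\nu_{2r})}(\bar z)$. That component is governed by the top and bottom halves of $\nu$, not by $E$ and $O$, so the parity decoupling you describe does not occur. The extremal layer is a union of whole runs in the sense of Proposition \ref{rem:involution}, of any length, and it can cancel internally.

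Take $r=1$, $\lambda=(3,3)$. The largest $k=\nu_1-\nu_2$ with $\sigma_\nu\ne0$ is $k=2$. It is carried by $\nu=(2,0)$ with $\sigma_\nu=-1$ and $\nu=(3,1)$ with $\sigma_\nu=+1$, both with top component $z^2$. This is exactly your ``two terms of opposite sign with equal products.'' But the two labels differ by a column, so Theorem \ref{thm:PvW} returns a tautology and nothing about $\beta$ follows.

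Once a layer cancels you must descend, and there the lower-order parts of higher $s_\nu(B)$ interfere. For $r\ge2$, distinct labels can even share a whole value: $s_{(1,1,1)}(B)=s_{(1)}(B)$ at $r=2$. Proposition \ref{rem:involution} cannot repair this. It concerns two-row $\nu$ and the letters $(1,-1)$, and nothing in it separates the pairs $(z_i,\bar z_i)$.

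\textbf{The missing idea is to filter the numerator $\det(x_i^{\beta_j})$, not the quotient.}
\begin{itemize}
\item Laplace expansion along the two frozen rows (Lemma \ref{lem:L3gen}) indexes the terms by transversals $g$: one even and one odd beta value.
\item Each complementary minor has top component $\pm a_H(z)a_L(z^{-1})$, with $H,L$ the top and bottom halves of $\beta\setminus g$, by \eqref{eq:Amaxgen}.
\item The increment analysis of Theorem \ref{thm:Ggen} shows that at most two $g$ attain the top degree. Vanishing then forces $|G|=2$ immediately (Corollary \ref{cor:uniquegen}), with no descent.
\end{itemize}

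\textbf{Your rigidity step also needs more than you wrote.}
\begin{itemize}
\item Before Theorem \ref{thm:PvW} applies, $a_L(z^{-1})$ must be rewritten in the variables $z$ with trailing parts normalised (Lemma \ref{lem:dictgen}).
\item The theorem then allows a translation $T_{\mathrm b}=T_{\mathrm a}+m$ as well as a reflection, and you ignore the translation case. Proposition \ref{prop:translgen} reduces it to the reflection.
\item Proposition \ref{prop:centregen} identifies the centre with the tie value $\tau$. Since $g_{\mathrm{com}}$ is empty at $t=2$, all of $\beta$ is then symmetric (Corollaries \ref{cor:reflectgen} and \ref{cor:Ctaugen}).
\end{itemize}
Finally, the parity of $w$ needs no sign computation. $C=\tau$ is a sum of two beta values of the same parity, so it is even, and therefore $w=C-N+1$ is odd.
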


\noindent Proved at the end of \S\ref{sec:zeros}, modulo Theorem \ref{thm:PvW}.

Theorems \ref{thm:zeros}--\ref{thm:stable} are proved in \S\S\ref{sec:brancha}--\ref{sec:conv},
and Theorem \ref{conj:crit} --- the criterion for every $r$ --- at the end of this section, by a
route independent of all three. It is stated apart from them for two reasons. It is the only one of
our principal new results whose proof rests on a non-elementary external input, Theorem
\ref{thm:PvW}; and the two routes to it are not the same route. Littlewood's reduces it, in \S\ref{sec:unstable}, to one existence
statement, and that reduction is not completed: the range $\ell(\lambda)>N/2$ is exactly where the
restriction rule stops applying, and while the values there are settled by Lemma
\ref{lem:AtoSp}, the statement about the coefficients is not --- the certificate we proposed for
it fails, by Proposition \ref{conj:iso}. The extremal argument of \S\S\ref{sec:extremal} and
\ref{sec:rigidity} does not pass through that range at all: it works in the degree filtration of
the numerator and never meets a Littlewood coefficient.

Read on the group rather than on the alphabet, the same theorem describes a rigidity property, and we
state it separately because it is the form in which a representation theorist would want it. The
translation is not ours and is not new: that a representation of $O(N)$ is $\det$-stable exactly when
its character vanishes off the identity component is elementary, and that this is the condition
$m_\nu=m_{\nu^{*}}$ on Littlewood's associates \cite{Lit50} is classical. The corollary therefore
claims exactly what Theorem \ref{conj:crit} claims and nothing besides.

\begin{corollary}[determinant-twist rigidity]\label{cor:dettwist}
Let $N=2r+2$, let $\lambda\in\PP_N$ and let $V_\lambda$ be the irreducible polynomial $GL(N)$-module
of highest weight $\lambda$. Then
\[
V_\lambda\big|_{O(N)}\;\cong\;V_\lambda\big|_{O(N)}\otimes\det
\]
if and only if $\beta(\lambda)$ has constant parity, or $\lambda$ is self-complementary of odd
width. Equivalently: \eqref{eq:branches} classifies exactly the irreducible polynomial
$GL(N)$-modules whose restriction to $O(N)$ is stable under the character of the component group.
\end{corollary}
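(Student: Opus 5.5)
The plan is to reduce the corollary to Theorem \ref{conj:crit} through three elementary translations, none of which uses anything beyond character theory of the two-component group $O(N)=SO(N)\sqcup g\,SO(N)$. Write $\rho=V_\lambda|_{O(N)}$ and let $\chi_\rho$ be its character. First, $\rho$ and $\rho\otimes\det$ are finite-dimensional rational representations of the reductive group $O(N,\CC)$, so they are isomorphic if and only if their characters agree on $O(N)$. Since $\det\equiv1$ on $SO(N)$ and $\det\equiv-1$ on the other component, this agreement reduces to $\chi_\rho=-\chi_\rho$ on the non-identity component. So the isomorphism holds exactly when $\chi_\rho$ vanishes identically on $O(N)\setminus SO(N)$.

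Second, I will identify that component with the alphabet \eqref{eq:psir}. Every semisimple element of the non-identity component of $O(2r+2)$ is conjugate to an element of the form $\mathrm{diag}(1,-1)\oplus\bigoplus_{k}R(z_k)$, where $R(z_k)$ has eigenvalues $z_k,z_k^{-1}$. Such an element has eigenvalues $(1,-1,z_1^{\pm1},\dots,z_r^{\pm1})$, and its $GL(N)$-trace is exactly $\Psi_r(\lambda)$. Semisimple elements are Zariski dense in that component, and $\chi_\rho$ is a regular class function. Hence $\chi_\rho|_{O(N)\setminus SO(N)}\equiv0$ if and only if $\Psi_r(\lambda)\equiv0$ as a Laurent polynomial in $z_1,\dots,z_r$. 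The one point to check carefully is that conjugation inside $O(N,\CC)$ brings every semisimple element of determinant $-1$ to this normal form. I would argue this via eigenvalue pairing: the eigenvalues of an element of $O(N)$ are closed under inversion, and determinant $-1$ with $N$ even forces the self-paired eigenvalues $\pm1$ to include both $1$ and $-1$ with odd multiplicities. Any extra copies of $\pm1$ are absorbed into the pairs by allowing $z_k=\pm1$.

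Third, I apply Theorem \ref{conj:crit}, which gives $\Psi_r(\lambda)\equiv0$ if and only if branch (a) or branch (b) of \eqref{eq:branches} holds. These are exactly the two stated conditions: constant parity of $\beta(\lambda)$, and self-complementarity of odd width. The final "equivalently" sentence is a rewording, since $O(N)/SO(N)\cong\ZZ/2$ and its nontrivial character is $\det$.

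The main obstacle lies entirely in the input, Theorem \ref{conj:crit}, and specifically its converse, which itself rests on Theorem \ref{thm:PvW}. Within the corollary, the only delicate point is the density and normal-form step in the second paragraph. I would settle it first by the eigenvalue-pairing argument, without invoking Littlewood's associates. The identification with $m_\nu=m_{\nu^*}$ can be added afterwards as a remark rather than as part of the proof.
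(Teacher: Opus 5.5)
Your proposal is correct and follows the paper's own proof step for step: characters determine $O(N)$-modules, the twist by $\det$ negates the character on $O(N)^{-}$, density of semisimple elements reduces vanishing on that component to $\Psi_r(\lambda)\equiv0$, and Theorem \ref{conj:crit} then gives the criterion. Your eigenvalue-pairing justification of the normal form fills in a step the paper only asserts: both $1$ and $-1$ occur with odd multiplicity, and the surplus copies are absorbed as $z_k=\pm1$. Treating Littlewood's associates as a remark rather than as part of the argument also matches how the paper handles them.
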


\begin{proof}
Over characteristic $0$ a finite-dimensional $O(N)$-module is semisimple and is determined by its
character, so for $W=V_\lambda|_{O(N)}$ one has $W\cong W\otimes\det$ if and only if
$\chi_W=\chi_{W\otimes\det}$. Those two characters agree on the identity component automatically, and
on the other one $\chi_{W\otimes\det}=-\chi_W$; so the condition is that $\chi_W$ vanish on
$O(N)^{-}$. Every \emph{semisimple} element of $O(N)^{-}$ is conjugate to one with spectrum
$(1,-1,z_1^{\pm1},\dots,z_r^{\pm1})$, which is the alphabet of \eqref{eq:psir}; and since the
semisimple elements are dense in $O(N)^{-}$ and $\chi_W$ is regular, vanishing on that torus is
vanishing on the whole component. The condition is therefore $\Psi_r(\lambda)\equiv0$, and Theorem
\ref{conj:crit} is the criterion. The dictionary between the two readings is Littlewood's associate
expansion, recalled in \S\ref{sec:conv}.
\end{proof}

\noindent The credit splits exactly as it does for Theorem \ref{conj:crit}, and it is worth repeating
here because the group language can make a known statement look new. That the two families
\eqref{eq:branches} \emph{are} $\det$-stable is not ours: at the endpoint of the family the first is
Karmakar's \cite[Theorem 4.1(A)]{Kar24}, and the second is complementation over the index family of
Ayyer and Behrend \cite{AB19}, as \eqref{eq:compl} records. What Theorem \ref{conj:crit} adds, and
all it adds, is that there are no others.

\begin{remark}[what the core sees, and what it does not]\label{rem:core}
Branch (a) is a condition on the core, and a simple one: at a fixed number $N$ of beta numbers the
residue profile determines $\core_t(\lambda)$ --- it is the Garvan--Kim--Stanton coordinate
\cite{GKS90} --- so every condition on the profile is one on the core. It is not, however, the
vanishing of the core but its opposite extreme: at $t=2$ branch (a) says
\[
\core_2(\lambda)=(N-1,N-2,\dots,1)\qquad\text{or}\qquad\core_2(\lambda)=(N,N-1,\dots,1),
\]
the two largest $2$-cores that $N$ beta numbers admit, whereas $\core_2(\lambda)=\varnothing$ is the
balanced profile. Branch (b) moves neither the profile nor the core, and is therefore invisible to
it. Consequently $\core_2$ does \emph{not} determine whether $\Psi_r$ vanishes, and the smallest
witness is the empty core itself: among the shapes with $\core_2(\lambda)=\varnothing$ at $r=1$,
\[
(1,1),\ (3,3),\ (2,2,1,1),\ \dots\quad\text{vanish, while}\quad
\varnothing,\ (2),\ (1^4),\ \dots\quad\text{do not,}
\]
and the same split occurs at $r=2$ and $r=3$, there also in the classes with core $(1)$, $(2,1)$ and
$(3,2,1)$: over the ranges of \S\ref{sec:verif} five core classes carry both behaviours, and the
profile determines the core with no clash over $260$ profiles at $t\le5$. No restatement of Theorem
\ref{thm:zeros} in terms of $\core_2(\lambda)$ can exist: the
criterion needs the shape and not only its core. Read against Theorem \ref{thm:LR}, where the empty
core is exactly what decides, the two criteria disagree in both directions --- $(1,1)$ has empty
$2$-core and $\Psi_1=0$, while $(1)$ has non-empty $2$-core and $\Psi_1\ne0$.
\end{remark}

The index family in (b) is not new: it is exactly the family Ayyer and Behrend single out
\cite[remark after Thm.~3]{AB19}, together with the parity obstruction they record there --- a
partition cannot be self-complementary in an $a\times b$ rectangle with $a$ and $b$ both odd, and
here $a=N$ is even, so $b=w$ odd is the admissible case. What their theorems say about those shapes
is that the character \emph{factorizes}; the alphabet there carries the fixed point $+1$ and has
determinant $+1$. Ours carries $+1$ and $-1$ and has determinant $-1$.

\keybox{On the \emph{same} family of shapes, the alphabet of determinant $+1$ makes the character
factorize and the alphabet of determinant $-1$ makes it vanish. One letter separates a factorization
from an annihilation, and the determinant of the alphabet is what decides; \eqref{eq:compl} below is
where the separation happens.}

\subsection{Branch (a): the squared alphabet}\label{sec:brancha}

This branch is Karmakar's, at the endpoint. \cite[Theorem 4.1(A)]{Kar24} evaluates a
$\mathrm{GL}(n,\CC)$ character at $C_{n-k,k}=(1,\dots,1,-1,\dots,-1)$ and gives
$\chi_\lambda(C_{n-k,k})=0$ when $\#\eta_0(\lambda)>n-k$, where the $\eta_i(\lambda)=\{a\in
\lambda+\rho: a\equiv i \bmod 2\}$ is exactly the partition of $\beta(\lambda)$ into $E$ and $O$ used
here; at $k=1$, after the normalization by the determinant character, that is branch (a). The
one-line argument below is included because it is the one the rest of the section reuses, not because
it is new.

\begin{proof}[Proof of Theorem \ref{thm:zeros}, branch {\rm(a)}]
Let $A=\{1,-1,z_1,\bar z_1,\dots\}$. If $\beta=2\gamma$ then
$\det(a^{\beta_j})_{a\in A}=\det\big((a^2)^{\gamma_j}\big)_{a\in A}$, and if $\beta=2\gamma+1$ the
same holds after extracting $\prod_{a\in A}a$. Either way the bialternant factors through the
squared alphabet $A^2=\{1,1,z_1^{2},\bar z_1^{2},\dots\}$, in which the letter $1$ occurs twice
because $1^2=(-1)^2$. Two rows coincide.
\end{proof}

\subsection{Branch (b): a determinant twist, and self-duality}

\begin{proof}[Proof of Theorem \ref{thm:zeros}, branch {\rm(b)}]
Let $\hat\lambda$ be the complement of $\lambda$ in the $w\times N$ box, reversed. In beta sets
$\beta(\hat\lambda)=c-\beta(\lambda)$ read backwards, with $c=w+N-1$, and $\hat\lambda$ is the
highest weight of $V_\lambda^{*}\otimes\det^{\,w}$ as a $\mathrm{GL}(N)$-module. If
$\lambda=\hat\lambda$ then $V_\lambda\cong V_\lambda^{*}\otimes\det^{\,w}$. Restrict to $O(N)$: every
representation of $O(N)$ is self-dual, so $V_\lambda|_{O(N)}\cong V_\lambda^{*}|_{O(N)}\cong
V_\lambda|_{O(N)}$, whence
\[
V_\lambda|_{O(N)}\;\cong\;V_\lambda|_{O(N)}\otimes\det^{\,w}.
\]
The alphabet \eqref{eq:psir} is a torus element of the $\det=-1$ component, so for $w$ odd the twist
is by $\det$ itself and the character vanishes there. For $w$ even $\det^{\,w}$ is trivial and the
argument gives nothing, which is why the parity of $w$ is not decoration.
\end{proof}

Equivalently, and this is the computation rather than the representation, the standard
complementation identity gives
\begin{equation}\label{eq:compl}
s_{\hat\lambda}(A)\;=\;\Big(\textstyle\prod_{a\in A}a\Big)^{c}\,(-1)^{\binom{N}{2}}(-1)^{r}\,
s_\lambda(A)\;=\;(-1)^{c}\,(-1)\,s_\lambda(A),
\end{equation}
using that $A$ is closed under inversion --- $A^{-1}$ is $A$ with the $r$ reciprocal pairs
transposed, contributing $(-1)^r$ --- and that $\prod_{a\in A}a=-1$. With $c$ even this reads
$s_{\hat\lambda}=-s_\lambda$, and a self-complementary shape is annihilated. Branch (b) of Theorem
\ref{thm:zeros} is therefore a two-line corollary of a textbook identity over a known index family,
and we claim it as nothing more. Figure \ref{fig:involution} draws the pairing the identity performs,
and next to it the same shape with one box moved, where the pairing breaks; the content of the
section is the converse.

\subsection{The converse}\label{sec:conv}

\begin{figure}[t]
\centering
\includegraphics[width=\textwidth]{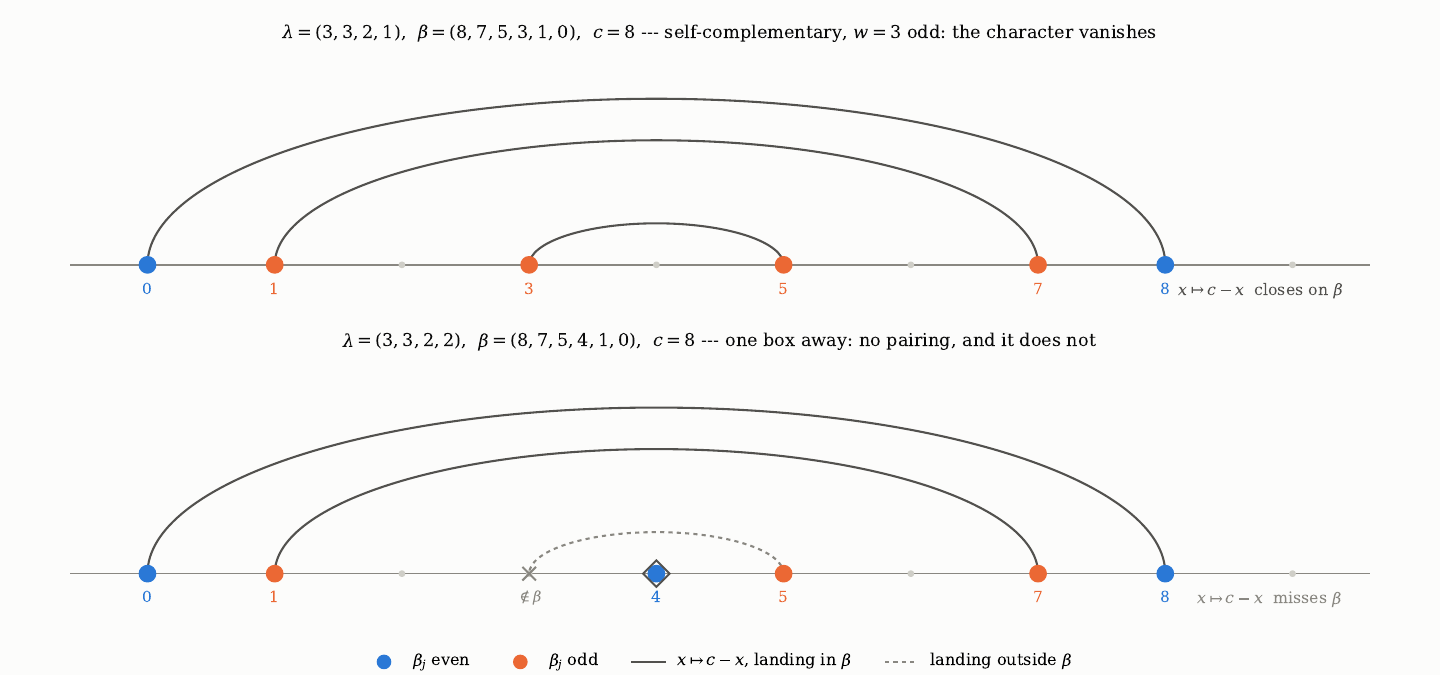}
\caption{Why self-complementarity kills the character. Expanding along the two frozen rows leaves
terms indexed by pairs $(e,o)$ with $e\in E$, $o\in O$, and the reflection $x\mapsto c-x$ acts on
them. Above, a self-complementary shape of odd width: every arc lands in $\beta$ and joins entries of
equal parity, because $c$ is even. Below, the same shape with one box moved: the arc $5\mapsto3$
leaves $\beta$, so there is no pairing and nothing cancels. The diamond marks the fixed point of the
reflection, which cannot occur inside an opposite-parity pair --- which is why the involution is
free.}
\label{fig:involution}
\end{figure}

\begin{proof}[Proof of Theorem \ref{thm:r1}]
Put $c_\ast=z+\bar z$ and let $S_n$ be defined by $S_0=0$, $S_1=1$,
$S_{n+1}=c_\ast S_n-S_{n-1}$, so $\deg_{c_\ast}S_n=n-1$ and $S_{-n}=-S_n$; the value $S_{-1}=-1$ is
used whenever $\beta_N=0$. Since $\Phi_A=(x^2-1)(x^2-c_\ast x+1)$, a polynomial $P$ supported on
$\beta$ is divisible by $\Phi_A$ exactly when $P(1)=P(-1)=0$ and $P\equiv0$ modulo $x^2-c_\ast x+1$.
Splitting $P(\pm1)$ by parity and reducing $x^n\equiv S_nx-S_{n-1}$, admissibility becomes the
singularity of the $4\times4$ matrix with rows $[\beta_j\ \mathrm{even}]$,
$[\beta_j\ \mathrm{odd}]$, $S_{\beta_j}$, $S_{\beta_j-1}$. Expanding along the two parity rows and
using d'Ocagne's identity \cite{Doc} for this recurrence,
\begin{equation}\label{eq:docagne}
S_bS_{b'-1}-S_{b'}S_{b-1}=-S_{b-b'},
\end{equation}
each $2\times2$ block collapses to a \emph{single} $S$, indexed by the difference of the two
surviving $\beta$'s. Distinct $S_n$ have distinct degrees in $c_\ast$ and are therefore linearly
independent, so the signed sum vanishes only by exact pairing of equal indices with opposite signs.
The case analysis is finite: $|E|\in\{0,4\}$ is branch (a); $|E|\in\{1,3\}$ leaves three terms whose
indices are the three pairwise differences of a $3$-set and hence always distinct, so no pairing
exists and the value is nonzero; and for $|E|=2$ there are four terms and three perfect matchings,
and one checks the six parity patterns directly. For $E=\{\beta_1,\beta_4\}$ and for
$E=\{\beta_2,\beta_3\}$ exactly one of the three matchings is available and it imposes the single
condition $\beta_1+\beta_4=\beta_2+\beta_3$, which is branch (b); for the four remaining patterns
every one of the three matchings forces two distinct entries of $\beta$ to coincide, so none is
available and the value is nonzero.
\end{proof}

Identity \eqref{eq:docagne} is classical. The collapse it produces is an accident of rank one: two
letters admit a single gap, so $s_{(m,n)}(z,\bar z)=S_{m-n+1}$ depends on that gap alone. There is no
rank-$r$ analogue. One would be a closed form for $s_\mu$ at a reciprocal alphabet for \emph{every}
$\mu$; \cite{CK09} supply that for rectangular shapes and \cite{AB19} for self-complementary ones,
and we know of none for arbitrary $\mu$.

For general $r$ we use associates, in Littlewood's sense \cite{Lit50}. On the $\det=-1$ coset $[\nu^{*}]=[\nu]\otimes\det$ gives
$\Psi_r=\sum_\nu\big(m_\nu-m_{\nu^{*}}\big)\chi_{[\nu]}$, so $\Psi_r=0$ if and only if
$m_\nu(\lambda)=m_{\nu^{*}}(\lambda)$ for every $\nu$; equivalently, $V_\lambda|_{O(N)}$ is
$\det$-invariant. That equivalence is the dictionary Corollary \ref{cor:dettwist} is read through,
and it is what turns the criterion into a statement about modules rather than about an alphabet. A
witness is therefore a single $\nu$ with $m_\nu\ne m_{\nu^{*}}$, and it can be
constructed.

Karmakar's \cite[Theorem 4.1]{Kar24} determines the value at the endpoint $z_i=1$, which is an element
of order two: case (A) is branch (a), case (B) is a nonzero dimension, and case (C) leaves an
alternating sum of products of dimensions with no vanishing criterion attached. Branch (b) lives in
that case (C), so the theorem below is not a consequence of it. Nor does case (B) help more as $r$
grows: at $k=1$ it asks that $N-1$ of the $N$ beta numbers share a parity, and the smallest shape
doing so is the staircase $(2r-1,2r-2,\dots,1)$, of size $r(2r-1)$, so on a fixed range of
$|\lambda|$ that nonvanishing case is visible only for small $r$ --- the result is sharpest exactly
where ours is easiest. What connects the endpoint to the
curve at all is rigidity, and rigidity is a statement about our object rather than about the order-two
element: over the ranges of \S\ref{sec:verif} it holds on $\ell(\lambda)\le N/2$ without exception
and fails outside, which is Remark \ref{rem:rankone}.

\begin{proof}[Proof of Theorem \ref{thm:stable}]
Branch (a) cannot occur: with $\ell(\lambda)\le N/2$ at least two trailing parts vanish, so $\beta$
contains two consecutive integers. In Littlewood's range
$m_\nu(\lambda)=\sum_{\beta'\ \mathrm{even}}c^{\lambda}_{\nu\beta'}$, and the term $\beta'=\emptyset$
gives $m_\mu\ge1$ for every $\mu\subseteq\lambda$. Take $\mu$ as follows. If
$\ell(\lambda)<N/2$, take $\mu=\lambda$: then $\ell(\lambda^{*})=N-\ell(\lambda)>\ell(\lambda)$, so
$\lambda^{*}\not\subseteq\lambda$ and $m_{\lambda^{*}}=0\ne m_\lambda$. If $\ell(\lambda)=N/2$ and
$\lambda_{N/2}$ is even, take $\mu=(\lambda_1,\dots,\lambda_{N/2-1})$; then
$\beta'=(\lambda_{N/2})$ is even, $\lambda/\mu$ is a horizontal strip and $m_\mu\ge1$ while
$\ell(\mu)<N/2$, so the previous computation applies to $\mu$. If $\ell(\lambda)=N/2$ and $\lambda$
is not a rectangle, let $j\le N/2-1$ be largest with $\lambda_j>\lambda_{j+1}$ and remove both the
last row and one box from row $j$; then $\beta'=(\lambda_{N/2}+1)$ is even and
$\lambda_j\ge\lambda_{N/2}+1$ keeps $\lambda/\mu$ a horizontal strip. An even rectangle
$\lambda=(k^{\,N/2})$ has already been covered by the second construction, so the only case left is
$k$ odd, where neither construction is available --- and that is branch {\rm(b)}.
\end{proof}

\subsection{Outside Littlewood's range}\label{sec:unstable}

For $\ell(\lambda)>N/2$ the restriction rule acquires non-standard labels, and the modification rules
that reduce them are King's \cite{King71}; Koike and Terada \cite{KT87} record that the
type-$D$ specialization sends a universal character to $0$ or to $\pm$ one other, and Fauser, Jarvis,
King and Wybourne observe that beyond the two classical cases \emph{``the general theory, having
infinitely many cases, needs a not yet available formalism''} \cite[\S4]{FJKW05}. Extensions of Littlewood's rule to all parameters do exist for the two classical
cases --- Newell's modification rules \cite{New51}, Sundaram \cite{Sun90} in the symplectic case,
Gavarini's Brauer-algebra proof \cite{Gav99}, and Enright and Willenbring \cite{EW04}, whose Theorem 4
gives the multiplicity as an alternating sum over the Weyl group which in favourable cases collapses
to a difference of two Littlewood coefficients \cite[\S7.11]{EW04}. That observation of
\cite{FJKW05} is from 2005, and it is worth recording that the gap it names has since been filled on
the side we need. For $GL_n\downarrow O_n$ the first combinatorial formula fully
extending Littlewood's restriction rule is Jang and Kwon's \cite{JK20}, and we take the attribution
from Frohmader, who states it in those words; it has besides a property worth separating out. They
write
\[
\bigl[V^{\lambda}_{GL_n}:V^{\mu}_{O_n}\bigr]
=\sum_{\delta\in\mathcal P(2)}\overline{c}^{\,\lambda}_{\delta\mu},
\]
their Theorem 1.1, where $\overline{c}^{\,\lambda}_{\delta\mu}$ \emph{counts} the
Littlewood--Richardson tableaux of shape $\lambda/\delta$ and content $\mu^{\pi}$ whose companion
tableau satisfies one flag condition, $\tau_j+n_j\le n+1$; their Corollary 4.12 restates the count
on the ordinary tableau. The phrase ``vanish in a stable range'' has a precise meaning here:
when $\ell(\lambda)\le n/2$ the two sequences become $m_i=2i-1$ and $n_j=2j$, the inequality holds
of itself, and Littlewood's rule is recovered --- their Corollary 4.13. What separates it from the
earlier extensions is that it is a count and not an alternating sum; they note that those were
obtained algebraically and are therefore not subtraction-free, and theirs is. Frohmader
\cite{Frohmader} then gives combinatorial branching rules for $GL_n\downarrow O_n$ and
$GL_{2n}\downarrow Sp_{2n}$ at once, generalizing Littlewood's restriction rules and, in its words,
\emph{``valid for all parameters, i.e.\ there are no stable range constraints''}: there the
multiplicity becomes a sum of generalized Littlewood--Richardson coefficients
$c^{\lambda}_{\mu\nu}(O_n)$ read off flag conditions on ordinary Littlewood--Richardson tableaux,
with no modification rule anywhere. We do not use
either rule below, and say why in a moment; but they are the natural instruments for the one
question this section leaves open, and we return to them in Problem \ref{prob:instrument}.

The route taken here is the dual
one: keep the coefficients universal, which they are in every range, and move the whole
range-dependence into the values, where one lemma disposes of it. On this alphabet no formalism for
the labels is needed at all, and the reason is that the values are not type-$D$ values.

Adding a letter $c$ to an alphabet acts on the ring of symmetric functions by $p_k\mapsto p_k+c^k$;
write $\iota_c$ for that homomorphism. From the generating function of the complete homogeneous
symmetric functions, $h_m(W,c)-c\,h_{m-1}(W,c)=h_m(W)$ for every $m$.

\begin{lemma}[the alphabet is symplectic]\label{lem:AtoSp}
$\iota_{-1}\iota_{1}\,o_\nu=sp_\nu$ for every partition $\nu$, as an identity between Koike--Terada
\emph{universal} characters in $\Lambda$. In particular, for the alphabet
\eqref{eq:psir},
\begin{equation}\label{eq:AtoSp}
o_\nu(A)\;=\;sp_\nu(z_1,\dots,z_r)
\end{equation}
with no hypothesis on $\ell(\nu)$ and with no sign. For $\ell(\nu)>r$ the right-hand side is
that universal character specialized at $r$ variables, not an irreducible $Sp(2r)$ character; it is
the modification rules below that fold it onto one.
\end{lemma}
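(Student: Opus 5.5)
The plan is to prove the identity in $\Lambda$ directly from the Jacobi--Trudi forms of the Koike--Terada universal characters, and then read \eqref{eq:AtoSp} off by specialization. We take the determinantal definitions in the convention of \cite{AK25,Alb23}:
\[
o_\nu=\det\bigl(h_{\nu_i-i+j}-h_{\nu_i-i-j}\bigr)_{i,j=1}^{\ell},\qquad
sp_\nu=\tfrac12\det\bigl(h_{\nu_i-i+j}+h_{\nu_i-i-j+2}\bigr)_{i,j=1}^{\ell},
\]
with $\ell\ge\ell(\nu)$ and $h_m=0$ for $m<0$. The $\tfrac12$ in $sp_\nu$ is harmless, because the first column of its matrix is $2h_{\nu_i-i+1}$. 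Since $\iota_{-1}\iota_1$ is a ring homomorphism, it acts entrywise on the determinant, so everything reduces to computing $\iota_{-1}\iota_1h_m$.

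The first step applies the recursion $h_m(W,c)-c\,h_{m-1}(W,c)=h_m(W)$ quoted before the lemma, once with $c=1$ and once with $c=-1$. In generating-function form the two letters multiply $\sum_m h_mT^m$ by $1/\bigl((1-T)(1+T)\bigr)=1/(1-T^2)$. Hence
\[
\iota_{-1}\iota_1h_m=h_m+h_{m-2}+h_{m-4}+\cdots .
\]
Put $x=\nu_i-i$. The $(i,j)$ entry of $\iota_{-1}\iota_1o_\nu$ is then a difference of two such tails whose indices $x+j$ and $x-j$ differ by the even number $2j$. It therefore telescopes to the finite sum
\[
C_j:=h_{x+j}+h_{x+j-2}+\cdots+h_{x-j+2},
\]
which has exactly $j$ terms.

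The second step is a unimodular column reduction. For $j\ge3$ one has $C_j-C_{j-2}=h_{x+j}+h_{x-j+2}$. In addition $C_1=h_{x+1}$, which is half of $h_{x+1}+h_{x+1}$, and $C_2=h_{x+2}+h_x$. Performing the operations "column $j$ minus column $j-2$" for $j=\ell,\ell-1,\dots,3$, in that order so that each uses the original column $j-2$, does not change the determinant. The result is exactly the matrix $\bigl(h_{x+j}+h_{x-j+2}\bigr)$ with its first column halved, which is $sp_\nu$ with the factor $\tfrac12$ absorbed. This gives $\iota_{-1}\iota_1o_\nu=sp_\nu$ in $\Lambda$, with no sign and no condition on $\ell(\nu)$, because only universal identities have been used. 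Independence of the padding $\ell$ is automatic: the added rows are unitriangular on both sides.

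The final step is the specialization. Evaluating the universal character $o_\nu$ at $A=(1,-1,z_1^{\pm1},\dots,z_r^{\pm1})$ is the same as first adjoining the letters $1$ and $-1$, which is $\iota_{-1}\iota_1$, and then specializing at the reciprocal alphabet $(z_1^{\pm1},\dots,z_r^{\pm1})$. By the first part this gives the universal $sp_\nu$ evaluated there, which is what $sp_\nu(z_1,\dots,z_r)$ means in the statement. For $\ell(\nu)>r$ this is not an irreducible character, as the lemma itself records.

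The main obstacle is not the computation but the conventions. One has to make sure that the determinants above are exactly the Koike--Terada universal characters used in \cite{AK25,Alb23}, rather than the $e$-form or a version with shifted indices $h_{\nu_i-i-j+1}$ or $h_{\nu_i-i-j+2}$. A mismatch there would turn the clean telescoping into a stray sign or an index shift. I would fix this first by checking $\nu=(1)$ and $\nu=(1,1)$. For $\nu=(1)$, $o_{(1)}=h_1$ becomes $h_1$ after adjoining $\pm1$, and $sp_{(1)}=h_1$. For $\nu=(1,1)$ the two sides should be $e_2-1$ against $e_2-1$. Only after those checks agree would I write the general column reduction.
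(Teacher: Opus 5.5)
Your proof is correct, and it reaches the identity by a different decomposition from the paper's. The paper never adjoins the pair at once. It factors $\iota_{-1}\iota_1$ through the odd orthogonal universal character. At each letter it uses the step-one column operations $C_j\to C_j-c\,C_{j-1}$ together with the recurrence $h_m(W,c)-c\,h_{m-1}(W,c)=h_m(W)$. At $c=1$ this carries $o_\nu$ to $so_\nu=\det(h_{\nu_i-i+j}+h_{\nu_i-i-j+1})$, which is Koike--Terada's dictionary as in \cite[(2.13)]{AK25}. At $c=-1$ it carries $so_\nu$ to $sp_\nu$, which is \cite[Lemma 3.2]{AK25}.

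You instead let the two letters act together through the single factor $1/(1-T^2)$. With $x=\nu_i-i$, the tails telescope exactly because $x+j$ and $x-j$ have the same parity. Step-two operations $C_j\to C_j-C_{j-2}$, taken right to left, then land directly on the symplectic Jacobi--Trudi matrix, and the halved first column absorbs the $\tfrac12$. The telescoping to $j$ terms, the identity $C_j-C_{j-2}=h_{x+j}+h_{x-j+2}$, the order of the column operations and your test cases $\nu=(1),(1,1)$ all hold as you state them.

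What your route buys is self-containedness and a little explanation. It needs no external lemma and no intermediate type. It also makes visible why the pair, and not either letter alone, turns $o_\nu$ into $sp_\nu$. The symplectic entries $h_{x+j}+h_{x-j+2}$ pair indices of equal parity, and that step-two structure is what the even series $1/(1-T^2)$ produces. This sits naturally beside the control recorded after the lemma, that $\iota_1o_\nu$ and $\iota_{-1}o_\nu$ separately agree with $sp_\nu$ only at $\nu=\varnothing$.

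What the paper's route buys is attribution and a by-product. Each letter separately performs a change of type already in print, $o\to so\to sp$, so the lemma is visibly a composition of two known statements. The odd orthogonal stage $\iota_1 o_\nu=so_\nu$ comes with it.
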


\begin{proof}
Both steps are Ayyer and Kumari's, and both are the same manipulation: the recurrence turns the
column operations $C_j\to C_j-c\,C_{j-1}$, applied for $j=n,\dots,2$ in that order, into a change of
character type, and operations of that shape leave a determinant unchanged. At $c=1$ they carry the
defining determinant of $o_\nu$ evaluated at $(W,1)$ to that of the odd orthogonal universal
character $so_\nu$ over $W$, which is Koike and Terada's dictionary in the form
$so_\nu(X)=o_\nu(X,\bar X,1,0,0,\dots)$ \cite[(2.13)]{AK25}. At $c=-1$, where the recurrence is
\cite[Proposition 2.1]{AK25} and the operations read $C_j\to C_j+C_{j-1}$, they carry $so_\nu$ to
$sp_\nu$, which is \cite[Lemma 3.2]{AK25}. Composing gives the first statement, and evaluating it on
the reciprocal alphabet $(z_1,\dots,z_r)$ gives \eqref{eq:AtoSp}.
\end{proof}

\noindent Since $\nu'_1=\ell(\nu)$, the trichotomy on $\nu'_1$ against $N/2=r+1$ is a trichotomy on
the length of $\nu$ against the rank of $Sp(2r)$, and the three facts we need are then the classical
symplectic modification rules --- one of the two cases \cite{FJKW05} exempts --- read through
\eqref{eq:AtoSp}: $o_\nu(A)=0$ when $\ell(\nu)=r+1$, the vanishing of a symplectic character one row
past its rank; $o_\nu(A)=\pm o_{\nu^{*}}(A)$ when $\ell(\nu)>r+1$, King's rule folding the label back
onto a dominant one; and the same for non-standard $\nu$, which by Lemma \ref{lem:nonstd} are exactly
the tall ones. The expansion $s_\lambda=\sum_\nu c_\nu o_\nu$, with
$c_\nu=\sum_{\beta'\ \mathrm{even}}c^{\lambda}_{\nu\beta'}$, is an identity of symmetric functions and
holds in every range; specializing it at \eqref{eq:psir} therefore lands every term on
$\{o_\mu(A):\ell(\mu)\le r\}$, which by \eqref{eq:AtoSp} is the set of irreducible characters of
$Sp(2r)$ and so is linearly independent. Figure \ref{fig:reduction} sorts the labels into the four
classes this paragraph describes and computes $o_\nu(A)$ on each, so that the reduction can be seen
to land where it is claimed to. Hence

\begin{figure}[!ht]
\centering
\includegraphics[width=\textwidth]{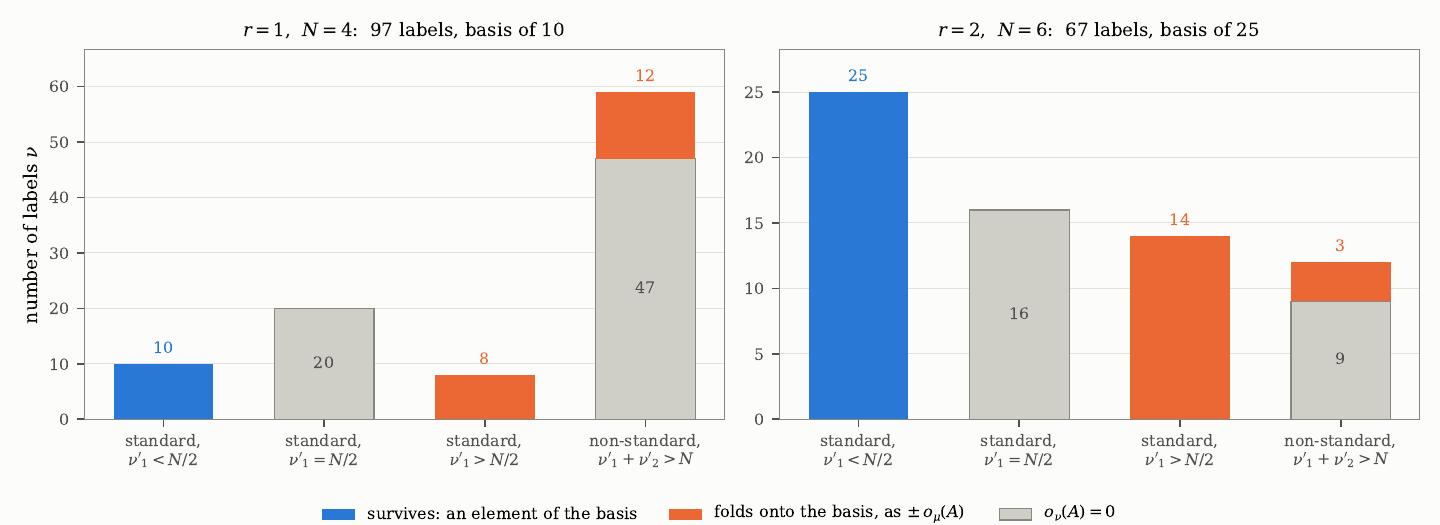}
\caption{The reduction of Lemma \ref{lem:AtoSp}, drawn. Each label $\nu$
occurring in $s_\lambda=\sum_\nu c_\nu o_\nu$ is placed in one of four classes and its value
$o_\nu(A)$ computed. Self-associate labels vanish, every one of them; labels standard with
$\nu'_1>N/2$ fold onto the basis as $\pm o_{\mu}(A)$; non-standard labels either vanish or fold onto
a single basis element with coefficient $\pm1$. By \eqref{eq:AtoSp} these three are the symplectic
modification rules in disguise, the classes being cut by $\ell(\nu)$ against the rank $r$; the figure
was computed before that was noticed, and is left as drawn. Everything lands on the independent
basis $\{o_\mu(A):\ell(\mu)\le r\}$, which is what turns the vanishing into the finite condition
\eqref{eq:Cmu}. The bar heights are label counts over the range each panel was computed on, which is
smaller than the range of the corresponding row of \S\ref{sec:verif}; the classification, not the
count, is what the picture is about.}
\label{fig:reduction}
\end{figure}
\begin{equation}\label{eq:Cmu}
\Psi_r(\lambda)=\sum_{\ell(\mu)\le r}C_\mu(\lambda)\,o_\mu(A),
\qquad
\Psi_r(\lambda)=0\iff C_\mu(\lambda)=0\ \text{for all }\mu,
\end{equation}
where $C_\mu=c_\mu\pm c_{\mu^{*}}+\sum\pm c_\nu$ over the non-standard $\nu$ reducing to $\mu$. The
reduction is carried out in two steps rather than one: most non-standard labels pair with an associate
inside the range, while $21$ of them --- $18$ at $r=1$ and $3$ at $r=2$ --- have no associate in range
and are reduced separately, each landing in the span of the standard values with integer coefficients.

We had computed the three facts before locating them, and record the computation because it is what
the figure below draws: \texttt{sec8\_derivation.sage} checks \eqref{eq:AtoSp} directly for all
$|\nu|\le8$, confirms that the labels of length $r+1$ vanish and that longer ones fold onto the basis
with a sign, and verifies the independence as a rank computation, at $r=1,2,3$. A control that had to
fire: $\iota_1 o_\nu$ and $\iota_{-1}o_\nu$ each agree with $sp_\nu$ only for $\nu=\varnothing$, so
both letters are needed and \eqref{eq:AtoSp} is a statement about the reciprocal pair, not about
either letter alone. \texttt{sec8\_formulas.sage} checks the proof rather than the statement --- the
recurrence at four values of $c$, the two defining determinants against an independent
implementation, and the column operations entry by entry. That last check earned its keep: an earlier
draft of the proof wrote $C_j\to C_j+C_{j-1}$ at $c=1$, the sign \cite[Lemma 3.2]{AK25} uses for its
own step at $c=-1$.

The non-standard labels, which are the whole obstruction here, cannot be small:

\begin{lemma}[non-standard labels are large]\label{lem:nonstd}
Every non-standard $\nu$ occurring in the expansion satisfies $|\nu|\ge N+1=2r+3$. Consequently, if
$|\lambda|\le2r+2$ then no non-standard label is contained in $\lambda$, the last sum in $C_\mu$ is
empty, and $C_\mu(\lambda)=c_\mu(\lambda)\pm c_{\mu^{*}}(\lambda)$ --- with no hypothesis on
$\ell(\lambda)$, hence also outside Littlewood's range.
\end{lemma}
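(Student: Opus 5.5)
The plan is to read the bound directly off the definition of standardness, with no use of the expansion beyond its support. A label $\nu$ for $O(N)$ is \emph{standard} when its first two columns satisfy $\nu'_1+\nu'_2\le N$, and non-standard otherwise. This is King's convention \cite{King71}, which the modification rules of \S\ref{sec:unstable} refer to. So a non-standard label has $\nu'_1+\nu'_2\ge N+1$. Since $|\nu|=\sum_k\nu'_k\ge\nu'_1+\nu'_2$, every non-standard label has $|\nu|\ge N+1=2r+3$, which is the first assertion.

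Two remarks will go alongside this step. First, $\nu'_2\le\nu'_1$ forces $\nu'_1\ge(N+1)/2$, so $\ell(\nu)\ge r+2>r+1$. This is the sense in which non-standard labels are ``tall'', and it is why they fold through \eqref{eq:AtoSp} like the long standard ones. Second, the bound is attained, for instance by $(2,1^{N-1})$, so it cannot be improved. The step I expect to need care is not arithmetic but the convention itself: if the paper's ``non-standard'' meant something other than $\nu'_1+\nu'_2>N$, the size bound would have to be rederived. I would check this against the labels counted in Figure \ref{fig:reduction} before relying on it.

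For the consequence, use that the coefficient $c_\nu=\sum_{\beta'\ \mathrm{even}}c^{\lambda}_{\nu\beta'}$ vanishes unless $\nu\subseteq\lambda$, by the support of Littlewood--Richardson coefficients. Hence a label occurring in $s_\lambda=\sum_\nu c_\nu o_\nu$ has $|\nu|\le|\lambda|$. If $|\lambda|\le2r+2$, no non-standard $\nu$ can occur, because each has size at least $2r+3$. The sum over non-standard labels reducing to $\mu$ in the definition of $C_\mu$ below \eqref{eq:Cmu} is therefore empty, leaving $C_\mu(\lambda)=c_\mu(\lambda)\pm c_{\mu^{*}}(\lambda)$.

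Finally, none of this uses $\ell(\lambda)$. The expansion holds as an identity of symmetric functions in every range, and the only input is the size comparison, so the conclusion holds outside Littlewood's range as stated.
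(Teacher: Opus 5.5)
Your proof is correct and is essentially the paper's. Both arguments read $|\nu|\ge\nu'_1+\nu'_2>N$ off the definition of non-standard, then use that $c_\nu=0$ unless $\nu\subseteq\lambda$, so $|\nu|\le|\lambda|$; yours is slightly more direct, since $|\nu|\ge\nu'_1+\nu'_2$ holds whether or not $\nu'_2=0$, so the paper's step excluding single columns via $\ell(\nu)\le N$ is not needed for the bound.
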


\begin{proof}
Since $c^{\lambda}_{\nu\beta'}=0$ unless $\nu\subseteq\lambda$, only labels with
$\ell(\nu)\le\ell(\lambda)\le N$ occur. Non-standard means $\nu'_1+\nu'_2>N$. Were $\nu$ a single
column we would have $\nu'_2=0$ and hence $\nu'_1=\ell(\nu)>N$, which is excluded; so $\nu'_2\ge1$ and
$|\nu|\ge\nu'_1+\nu'_2>N=2r+2$. A non-standard $\nu$ can therefore contribute only when
$|\lambda|\ge|\nu|\ge2r+3$.
\end{proof}

\noindent Since $\ell(\lambda)>N/2$ forces $|\lambda|\ge r+2$, Lemma \ref{lem:nonstd} settles the
band $r+2\le|\lambda|\le2r+2$ of the unstable range outright: there Littlewood's rule applies
verbatim and the converse follows by the argument of Theorem \ref{thm:stable}. What this route still
has to cross is $|\lambda|>2r+2$.

Two exact facts bound the competitors. First $\ell(\mu^{*})=N-\ell(\mu)$, so $\ell(\mu)\le
N-1-\ell(\lambda)$ forces $\mu^{*}\not\subseteq\lambda$ and $c_{\mu^{*}}=0$. Second, a non-standard
$\nu$ has $\nu'_1+\nu'_2>N$ with $\nu'_2\le\nu'_1$, hence $2\ell(\nu)>N$ and $\ell(\nu)\ge r+2$; in
particular no non-standard label fits inside $\lambda$ in Littlewood's range. Call $\mu$
\emph{isolating} for $\lambda$ if exactly one of the labels feeding $C_\mu$ is contained in
$\lambda$; then $C_\mu=\pm c_\nu\ne0$ and $\Psi_r(\lambda)\ne0$. What the route asks for there is one
existence statement --- open as an existence statement, though not as a theorem, since Theorem
\ref{conj:crit} does not go through it.

\begin{proposition}[the isolating witness does not always exist]\label{conj:iso}
At $r=2$ the shape $\lambda=(5,5,2,1)$ has $\Psi_2(\lambda)\ne0$ and is of neither type {\rm(a)} nor
type {\rm(b)}, and no $\mu$ is isolating for it: every $\mu$ whose $C_\mu$ has a nonzero contributor
inside $\lambda$ has exactly two, namely $\mu$ and its associate $\mu^{*}$.
\end{proposition}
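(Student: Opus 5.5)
The proposition is a finite check on one shape, and I would run it as three verifications in order. Here $r=2$, $N=6$, and $\lambda=(5,5,2,1,0,0)$.

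First, the beta set is $\beta=(10,9,5,3,1,0)$. It has even entries $\{10,0\}$, so $|E|=2$ and branch (a) fails. For branch (b), $\lambda_1+\lambda_6=5$ but $\lambda_2+\lambda_5=5$ and $\lambda_3+\lambda_4=3$, so $\lambda$ is not self-complementary, and $|\lambda|=13\not\equiv3\pmod6$ already rules it out by Corollary \ref{cor:sizes}.

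Second, for $\Psi_2(\lambda)\ne0$ I would not expand the character. Instead I would use the criterion of \eqref{eq:Cmu}: exhibit one $\mu$ with $\ell(\mu)\le2$ and $C_\mu(\lambda)\ne0$. Concretely, I would compute $c_\nu(\lambda)=\sum_{\beta'\text{ even}}c^{\lambda}_{\nu\beta'}$ for all $\nu\subseteq\lambda$, fold them by the associate and modification rules recorded after Lemma \ref{lem:AtoSp}, and look for a $\mu$ where $c_\mu$ and $\pm c_{\mu^*}$ fail to cancel. As a cheaper alternative certificate, I would specialize $z_1,z_2$ to generic integers, or to roots of unity of small order, and evaluate the bialternant exactly; a single nonzero value suffices.

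Third, for the non-isolation claim I would enumerate the labels feeding each $C_\mu$.

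\begin{itemize}
\item The standard label $\mu$ and its associate $\mu^*$ satisfy $\ell(\mu^*)=6-\ell(\mu)\ge4$. Since $\ell(\lambda)=4$, the associate $\mu^*$ fits inside $\lambda$ only when $\ell(\mu)=2$, and then it needs $\mu^*=(\mu_1,\mu_2,1,1)\subseteq\lambda$, that is $\mu_1,\mu_2\le5$ and $\mu_2\ge\ldots$ compatible with rows $3,4$ of $\lambda$.
\item The non-standard labels $\nu$ have $\nu'_1+\nu'_2>6$ and $\ell(\nu)\ge4$, so $\nu\subseteq\lambda$ forces $\nu'_1=4$ and $\nu'_2\ge3$. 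Since $\lambda_4=1$ and $\lambda_3=2$, we get $\nu'_2\le3$ and $|\nu|\ge7$.
\end{itemize}

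For each such $\nu$ I would compute, via the modification rule, which $\mu$ it reduces to (or whether it vanishes). I would then check that no $\mu$ receives exactly one contained contributor with nonzero coefficient, and that every $\mu$ with a nonzero contained contributor receives exactly $\mu$ and $\mu^*$.

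The main obstacle is this last bookkeeping: the modification rules for the tall non-standard labels at $r=2$, and making "contributor" precise (contained label versus nonzero coefficient). I would settle it by direct computation of $o_\nu(A)$ in the spirit of \texttt{sec8\_derivation.sage}, and confirm that every non-standard $\nu\subseteq\lambda$ either vanishes or folds onto a $\mu$ whose pair $\{\mu,\mu^*\}$ is already both inside $\lambda$.
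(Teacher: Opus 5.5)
Your route is the paper's: Proposition~\ref{conj:iso} is a finite check, which the paper records from its computation together with the one-line reason that $(5,5,2,1)$ is wide enough to contain every associate. Your branch checks are correct, and so is your reduction of the non-standard labels inside $\lambda$ to $\nu=(\nu_1,\nu_2,2,1)$.

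The bookkeeping you defer is, however, the whole content of the statement, and it is not a containment question. On containment alone every one-row $\mu=(m)$ with $0\le m\le5$ would be isolating, since $\mu\subseteq\lambda$ while $\ell(\mu^{*})\ge5$. The statement survives only in its ``nonzero contributor'' form, because $c_{(m)}(\lambda)=0$: by Pieri, an even $\delta$ with $\lambda/\delta$ a horizontal strip would need $\delta_1=\lambda_1=\lambda_2=5$.

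For $\ell(\mu)=2$ the containment $\mu^{*}=(\mu_1,\mu_2,1,1)\subseteq\lambda$ is automatic. What has to be shown is $c_\mu(\lambda)\ne0\iff c_{\mu^{*}}(\lambda)\ne0$. This holds, with both nonzero exactly when $|\mu|$ is odd:
\begin{enumerate}
\item[(i)] for $|\mu|$ even, parity kills both;
\item[(ii)] for $\mu_2\ge2$ the skew shapes split as $s_{\lambda/\mu}=s_{(a,b)}s_{(2,1)}$ and $s_{\lambda/\mu^{*}}=s_{(a,b)}s_{(1)}$, with $(a,b)=(5-\mu_2,5-\mu_1)$, and each of the four odd cases $(a,b)\in\{(1,0),(2,1),(3,0),(3,2)\}$ meets an even $\delta$ on both sides;
\item[(iii)] for $\mu_2=1$ one checks $(2,1)$ and $(4,1)$ directly.
\end{enumerate}

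Your closing test for the non-standard labels is too weak for the statement. You ask that each one ``vanishes or folds onto a $\mu$ whose pair is already inside $\lambda$''. But such a fold with $c_\nu(\lambda)\ne0$ would be a third contributor and would falsify ``exactly two''. This is a live risk, since $c_\nu(\lambda)=1$ for $\nu=(3,3,2,1),(5,3,2,1),(5,5,2,1)$.

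What must be shown, and is true, is that all of them vanish. By Lemma~\ref{lem:AtoSp},
\[
o_\nu(A)=sp_\nu(z_1,z_2)=\det\bigl(e_{\nu'_i-i+j}-e_{\nu'_i-i-j}\bigr)
\]
over the alphabet $(z_1^{\pm1},z_2^{\pm1})$, where $e_k=e_{4-k}$ for every integer $k$. With $\nu'_1=4$ and $\nu'_2=3$ this makes the first row the negative of the second. Equivalently, King's rule with $h=2\ell(\nu)-N=2$ removes the strip $\{(4,1),(3,1)\}$ and leaves no partition.

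Finally, $\Psi_2(\lambda)\ne0$ needs no numerical evaluation. The increments are $14,10,8,4$, with no tie at rank $r=2$, so $|G|=1$ and Corollary~\ref{cor:uniquegen} applies. Consistently, $C_{(3,2)}=2\pm1\ne0$.
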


We had proposed the isolating witness as the route to Theorem \ref{conj:crit}, and it does not
reach. The reason is legible in the isolation lemma: it needs $\ell(\mu)\le N-1-\ell(\lambda)$, which
at $\ell(\lambda)=4$ and $N=6$ leaves only one-row $\mu$, and a shape as wide as $(5,5,2,1)$ contains
every associate. Over $r=2$ and $|\lambda|\le14$ there are eleven such shapes, and none at all below
$|\lambda|=13$, which is why the certificate looks sound on small ranges. Theorem \ref{conj:crit}
itself is untouched --- it asserts that the $C_\mu$ vanish, not that a certificate exists --- but
this route to it is closed. The one that reaches it, \S\ref{sec:rigidity}, does not go through
Littlewood's range at all, which is why the failure here costs the explanation and not the theorem.

The mechanism does reach a great deal: at $r=2$ and $|\lambda|\le14$, $227$ of the $238$ non-vanishing
unstable shapes carry a proved isolating witness. It also reaches further than fact (ii) allows on
its own --- the true minimum of $\ell(\nu)$ over non-standard $\nu$ with $o_\nu(A)\ne0$ is $5$ there,
against the $r+2=4$ the bound gives. What defeats it is width rather than height: the basis in
\eqref{eq:Cmu} grows with $r$ and the competing labels are constrained to be tall, so we had read
isolation as getting easier with $r$, and along the height axis it does; the residue lives where
$\lambda$ is wide enough to swallow the associates. That reading predicts where the residue is
\emph{not}: at $r=3$ the isolation lemma allows $\ell(\mu)\le7-\ell(\lambda)$, so a shape must be
wider than at $r=2$ before it can swallow every associate, and the same sizes should come out clean.
They do --- over $r=3$ and $|\lambda|\le13$ all $149$ non-vanishing unstable shapes carry a proved
witness and the residue is empty --- which is a control on the explanation rather than a step toward
Theorem \ref{conj:crit}. At $r=1$ the question does not arise, since that case is settled independently by
Theorem \ref{thm:r1}.

The tool one reaches for first does not fit. Saturation, in the honeycomb form of Knutson and Tao
\cite{KT99}, decides when a single Littlewood--Richardson coefficient is positive, and every
competitor feeding $C_\mu$ is positive as soon as its label sits inside $\lambda$. What an isolating
$\mu$ asks for is not positivity but \emph{isolation}: that exactly one of them does. The positivity
is what makes the question hard rather than what answers it, and we know of no tool aimed at the
second question.

\begin{figure}[tb]
\centering
\includegraphics[width=\textwidth]{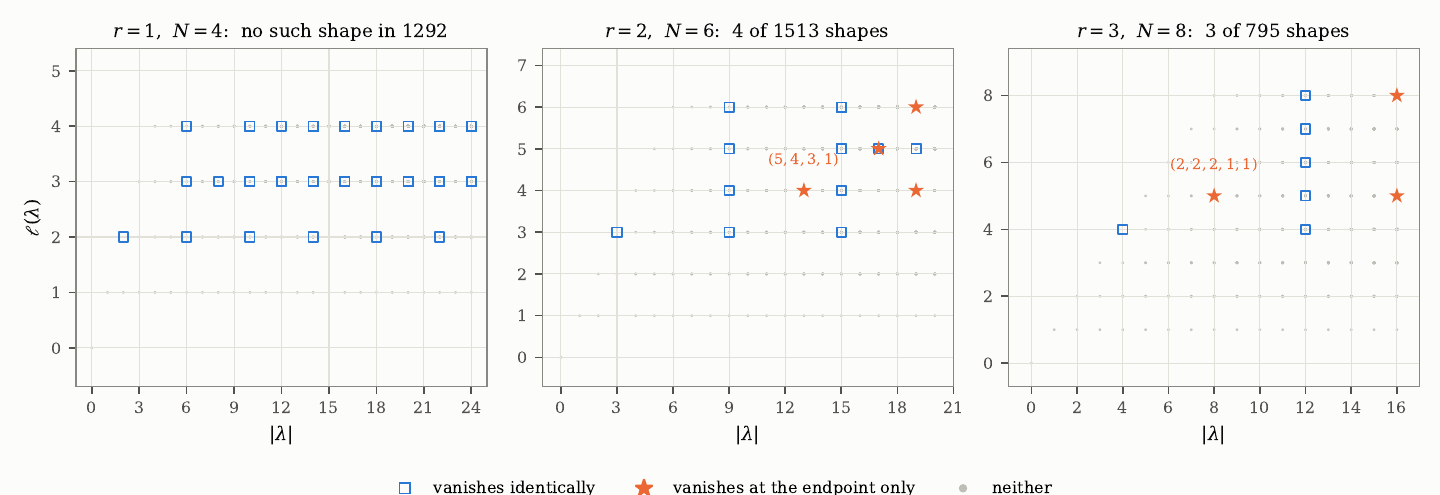}
\caption{Where one reciprocal pair stops being typical. Each partition with at most $N$ parts is a
point at $(|\lambda|,\ell(\lambda))$; the class drawn as a star is the one that vanishes at the
endpoint $z_i=1$ without vanishing identically, and it is the whole content of the picture; each
panel names its smallest member and Remark \ref{rem:rankone} lists the rest. The panels cover
$|\lambda|\le24$, $20$ and $16$ respectively. At $r=1$ the class is
empty over all $1292$ shapes in range, because the object there is $\pm$ a genuine character and its
value at the identity is a dimension. From $r=2$ on properly virtual examples occur and the class is
not empty --- not that every shape is one: Table \ref{tab:virtual} counts both. See Remark
\ref{rem:rankone}.}
\label{fig:phase}
\end{figure}

\begin{remark}[the rank-one accident, and where the two ranges part]\label{rem:rankone}
At $r=1$ the object is $\pm$ a genuine character, so its value at the identity is a dimension and
$\Psi_1(\lambda)\equiv0$ if and only if $\Psi_1(\lambda)|_{z=1}=0$. At $r\ge2$ the folded expansion
\emph{can} be properly virtual --- not that it always is, and Table \ref{tab:virtual} counts both
columns --- and one shape where it is suffices to break that equivalence:
$\lambda=(5,4,3,1)$ at $r=2$ has $\Psi_2(\lambda)\ne0$ and
$\Psi_2(\lambda)|_{z_1=z_2=1}=0$. Figure \ref{fig:virtual} shows the two regimes side by side,
on a pair of shapes differing in a single box. Over the ranges of Figure \ref{fig:phase} the class of such shapes
is empty at $r=1$ across $1292$ partitions; at $r=2$ it consists of the four shapes $(5,4,3,1)$,
$(5,5,4,2,1)$, $(10,5,3,1)$ and $(6,5,4,2,1,1)$, and at $r=3$ of the three shapes $(2,2,2,1,1)$,
$(5,5,4,1,1)$ and $(3,3,3,2,2,1,1,1)$; it is thin, but it is not empty, and one counterexample is all
the equivalence can survive. The surviving implication, exact vanishing $\Rightarrow$ vanishing
at the endpoint, is what makes the endpoint a valid sieve and no more. This is why results computed
at elements of order two --- Karmakar \cite{Kar24} along the whole family of them, Eisenk\"olbl
\cite{Eis07} at the balanced one $x_i=(-1)^i$ --- bear on a point and not on the curve, and why
Theorem \ref{thm:zeros} is not a corollary of them for $r\ge2$.
\end{remark}

It is worth putting a number on what the endpoint sieve loses, since the qualitative statement is
easy to misread as a small correction. Over the ranges of Figure \ref{fig:phase} the test
$\Psi_r(\lambda)|_{z_i=1}=0$ flags $143$ shapes at $r=1$, $22$ at $r=2$ and $9$ at $r=3$; of those,
$0$, $4$ and $3$ respectively do not vanish identically. So the sieve is exact at one pair, and the
fraction of its verdicts that are spurious grows with the number of pairs --- $0\%$, $18\%$, $33\%$
over these ranges. An endpoint computation is a filter that never misses a genuine zero, and past one
pair it is nothing more than that. The rank-one case at $t=2$, where the sieve happens to be exact
because the object is a genuine character, was treated separately in \cite{Mar26}; the alphabet
\eqref{eq:object} arose there, in a Wilson-line computation on an orbifold, which is why $-1$ appears
as a letter rather than as a choice.

The reason the two ranges of $\ell(\lambda)$ behave so differently is worth separating from the
mechanism that proves it. Littlewood's rule is a statement about which labels $\nu$ can occur, and
$\ell(\lambda)\le N/2$ is exactly the condition under which the non-standard ones cannot fit inside
$\lambda$ at all. Lemma \ref{lem:nonstd} says that the obstruction is governed by $|\lambda|$ as well
as by $\ell(\lambda)$, so the honest picture is not two ranges but a region: \emph{this route}
reaches the converse wherever either $\ell(\lambda)\le N/2$ or $|\lambda|\le2r+2$, and outside their
union it does not --- the isolating witness covers most of what is left but provably not all of it,
by Proposition \ref{conj:iso}. The converse is not open there. Theorem \ref{conj:crit} settles it
for every $r$ and every $\lambda$ by the extremal argument of
\S\S\ref{sec:extremal}--\ref{sec:rigidity}, which never enters this range at all. What the region
marks is where Littlewood's reduction stops explaining, not where the theorem stops holding.

\subsection{The criterion at \texorpdfstring{$t=2$}{t=2}: the extremal argument}\label{sec:extremal}

Throughout this subsection and the next two, $t\ge2$ and $r\ge1$ are arbitrary and
\[
  N:=t+2r,\qquad
  \Phi_{t,r}(\lambda;z):=s_\lambda\bigl(\zt,\,z_1,\zb_1,\dots,z_r,\zb_r\bigr),
\]
so that $\Phi_{t,1}$ is \eqref{eq:object} and $\Phi_{2,r}=\Psi_r$. The beta set, the residue counts
$n_i$ and the column indexing are those of \S\ref{sec:background}; $\lambda\in\PP_N$ and
$\beta_1>\dots>\beta_N\ge0$. Write
\[
  \mathcal{E}=\{i:n_i(\lambda)\ge2\},\qquad e=|\mathcal{E}|,\qquad
  \mathcal{S}=\{\beta_j:\beta_j\equiv i\ (t)\ \text{for some } i\in\mathcal{E}\}
\]
for the \emph{excess classes} and the \emph{excess values}. Since the $t-e$ remaining classes are
singletons, \eqref{eq:excess} becomes
\begin{equation}\label{eq:excessgen}
  \sum_{i\in\mathcal{E}}\bigl(n_i(\lambda)-1\bigr)=2r,\qquad |\mathcal{S}|=2r+e,\qquad 1\le e\le2r .
\end{equation}
We assume throughout the \emph{occupancy hypothesis} $n_i\ge1$ for every $i$; by Lemma \ref{lem:L1}
its failure gives $\Phi_{t,r}\equiv0$ outright, which is branch (a).

\begin{lemma}[the Laplace decomposition, general $r$]\label{lem:L3gen}
Let $\mathcal T$ be the set of $t$-subsets $S$ of columns carrying all $t$ residues, and for
$S\in\mathcal T$ let $A(S^{c})$ be the minor of $\det(x_i^{\beta_j})$ on the $2r$ rows carrying
$z_1,\zb_1,\dots,z_r,\zb_r$ and the columns $S^{c}$. Then
\begin{equation}\label{eq:laplacegen}
  \det\bigl(x_i^{\beta_j}\bigr)_{i,j=1}^{N}
  \;=\;(-1)^{\binom{t+1}{2}}\,V\sum_{S\in\mathcal T}w(S)\,A(S^{c}),
  \qquad
  w(S)=(-1)^{\,\sum_{j\in S}j+\inv(b_S)} .
\end{equation}
\end{lemma}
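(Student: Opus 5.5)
This is the proof of Lemma \ref{lem:L3} with the two free rows replaced by $2r$ of them, and I will follow it line by line. First fix the row order: the $t$ frozen rows $x_1,\dots,x_t=1,\zeta,\dots,\zeta^{t-1}$ come first, then the $2r$ rows $z_1,\zb_1,\dots,z_r,\zb_r$. Then expand the numerator $\det(x_i^{\beta_j})_{i,j=1}^N$ by the generalized Laplace expansion along the first $t$ rows:
\[
\det\bigl(x_i^{\beta_j}\bigr)=\sum_{|S|=t}(-1)^{\sum_{i=1}^{t}i+\sum_{j\in S}j}\,M_S\,A(S^{c}),
\]
where $M_S=\det(\zeta^{k\beta_j})_{0\le k<t,\,j\in S}$ is the frozen minor on the columns $S$ (rows and columns kept in their natural order) and $A(S^c)$ is the complementary minor on the free rows. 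The row part of the Laplace sign is $\sum_{i=1}^t i=\binom{t+1}{2}$, which is independent of $S$ and can be pulled out.

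Next apply Lemma \ref{lem:L1} verbatim. Its statement and proof concern only the $t\times t$ Vandermonde in the values $\zeta^{\beta_j}$, so they never see $r$. By that lemma, $M_S=0$ unless the residues on $S$ are pairwise distinct. Because $|S|=t$, this is exactly the condition that $S$ carries all $t$ residues, that is $S\in\mathcal T$, and in that case $M_S=(-1)^{\inv(b_S)}V$. Substituting gives
\[
(-1)^{\binom{t+1}{2}}V\sum_{S\in\mathcal T}(-1)^{\sum_{j\in S}j+\inv(b_S)}A(S^c),
\]
which is \eqref{eq:laplacegen} with $w(S)$ as stated.

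The only place care is needed is the sign convention, and this is the step I expect to be the real obstacle. Two things must be checked. First, $A(S^c)$ must be taken with rows in the order $z_1,\zb_1,\dots,z_r,\zb_r$ and columns in increasing column order, so that no hidden reordering sign appears. Second, $b_S$ must be read in increasing column order, matching the column order inside $M_S$. I would make this consistent by reading $M_S$ with columns in the same increasing order used to define $\inv(b_S)$, exactly as in Lemma \ref{lem:L1}.

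Unlike Lemma \ref{lem:L3}, the statement does not divide by the denominator. So no analogue of Lemma \ref{lem:L2} is needed, and nothing depends on how the free minor $A(S^c)$ evaluates; that is deferred to what follows.
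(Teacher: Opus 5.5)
Your proposal is correct and is the paper's own proof: a Laplace expansion along the $t$ frozen rows, with the row part $\binom{t+1}{2}$ of the sign pulled out, followed by Lemma \ref{lem:L1}, which discards every $S\notin\mathcal T$ and gives $M_S=(-1)^{\inv(b_S)}V$ on the rest. Your remarks on reading $b_S$, $M_S$ and $A(S^c)$ in natural column order are exactly the conventions the paper uses implicitly.
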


\begin{proof}
Laplace along the first $t$ rows gives
\[
\det(x_i^{\beta_j})=\sum_{|S|=t}(-1)^{\binom{t+1}{2}+\sum_{j\in S}j}M_S\,A(S^{c}).
\]
By Lemma
\ref{lem:L1}, $M_S=0$ unless the residues $b_S$ are pairwise distinct --- that is, unless
$S\in\mathcal T$ --- and then $M_S=(-1)^{\inv(b_S)}V$.
\end{proof}

At $r=1$ the complementary minor is the $2\times2$ determinant $f(\beta_{j_1}-\beta_{j_2})$ and
\eqref{eq:laplacegen} is Lemma \ref{lem:L3}; the content below is what replaces that $2\times2$
minor when $r>1$. Only the relative signs $w(S)$ matter, and since a member of $\mathcal T$ is
determined by which excess value it keeps in each excess class --- the singleton classes are kept by
every $S$ --- we index the terms by that choice, written $g$, and set $T_g=\mathcal{S}\setminus g$,
a set of $2r$ values listed decreasingly.

One question is worth settling here, because everything below moves $g$ about and the sign has to be
carried along: what happens to $w$ when the choice in a single class changes. At $r=1$ that is Lemma
\ref{lem:L4}, where a class has two elements and there is nothing between them. In general there is.

\begin{lemma}[the column move, any class size]\label{lem:step}
Let $g$ and $g'$ agree outside one class and choose there the values $u>v$ respectively. Then
\begin{equation}\label{eq:step}
\frac{w(g')}{w(g)}\;=\;(-1)^{\,1+B+M},
\end{equation}
where $B=\#\{\beta_j:v<\beta_j<u\}$ and $M$ is the number of \emph{frozen} values lying strictly
between $v$ and $u$ --- the $t$ values a transversal keeps, one in each residue class, so the $e$
values of $g$ together with the $t-e$ singleton classes. It may be counted in either $g$ or $g'$,
which agree strictly between $v$ and $u$.
\end{lemma}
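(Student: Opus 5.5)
The plan is to compare the two factors of $w(S)=(-1)^{\sum_{j\in S}j+\inv(b_S)}$ from Lemma \ref{lem:L3gen} separately: the column sum and the inversion count of the residue word. This is the argument of Lemma \ref{lem:L4}, redone without the assumption that nothing of the class lies between the two values. Let $S$ and $S'$ be the transversals indexed by $g$ and $g'$, and let $j_u$ and $j_v$ be the columns carrying $u$ and $v$. Because $\beta$ is strictly decreasing in the column index and $u>v$, we have $j_u<j_v$. Then $S'=(S\setminus\{j_u\})\cup\{j_v\}$, and every other column is kept by both or by neither.

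\emph{The column sum.} We have $\sum_{j\in S'}j-\sum_{j\in S}j=j_v-j_u$. The columns strictly between $j_u$ and $j_v$ are exactly those carrying the values $\beta_j$ with $v<\beta_j<u$. There are $B$ of them, so $j_v-j_u=B+1$. This accounts for the factor $(-1)^{1+B}$.

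\emph{The inversions.} The word $b_{S'}$ is obtained from $b_S$ by moving the letter $\rho$ (the residue of the class) from the slot of column $j_u$ to the slot of column $j_v$, with no other change. It therefore moves across exactly the letters of the kept columns strictly between $j_u$ and $j_v$. These kept columns are the same for $S$ and $S'$, since the two transversals differ only at $j_u$ and $j_v$. Hence the count of them may be taken in either $g$ or $g'$, which is the last sentence of the statement.

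A kept column carries a frozen value, so the number of letters crossed is $M$. A transversal carries each residue exactly once, so none of the crossed letters equals $\rho$. Each crossing therefore creates or destroys exactly one inversion, and the relative order of all other pairs is unchanged. It follows that $\inv(b_{S'})-\inv(b_S)\equiv M\pmod 2$.

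Multiplying the two contributions gives \eqref{eq:step}.

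The only point needing care is the distinction between $B$ and $M$. At $r=1$, in Lemma \ref{lem:L4}, the two are tied together, because no other member of the class can sit between $u$ and $v$. In general the values strictly between $u$ and $v$ include non-kept values: other members of the same class, and the unchosen excess values of other classes. These count toward $B$ through the column indices but not toward $M$, since they do not appear in the word. With that bookkeeping fixed, I expect no genuine obstacle, and the proof should be a few lines.
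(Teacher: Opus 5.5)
Your proof is correct and is essentially the paper's own argument: the column sum changes by $j_v-j_u=B+1$, and the letter $\rho$ crosses exactly the $M$ frozen letters lying between the two columns, none of which equals $\rho$, so each crossing changes $\inv$ by one. Your closing remark about $r=1$ is slightly loose, though this does not affect the proof: even at $r=1$ the values strictly between $u$ and $v$ need not all be frozen, because the dropped element $b_j$ of the other class can lie there, and that is exactly the indicator $[a_2<b_j<a_1]$ tracked in the proof of Lemma~\ref{lem:L4}.
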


\begin{proof}
The two frozen sets differ by replacing the column of $u$ with that of $v$. Since $\beta$ is strictly
decreasing, that column index increases by $B+1$, which is the change in $\sum_{j\in S}j$. In the
word $b_S$ the letter carried by that column travels past exactly the $M$ other frozen letters lying
strictly between, and each crossing changes $\inv$ by one; the letter itself is unchanged, and the
$B-M$ intervening columns that are \emph{not} frozen contribute no letter at all. So
$\inv(b_S)$ changes by $M$ modulo $2$, and \eqref{eq:step} follows from the definition of $w$ in
Lemma \ref{lem:L3gen}.
\end{proof}

\noindent Lemma \ref{lem:L4} is the case $B=M=0$ of this, read twice. The bookkeeping is the one
Albion \cite{Alb23,Alb25} and Ayyer--Behrend \cite{AB19} carry out whenever they move or reverse a
block of columns and collect the sign, and we follow it because it is the accounting this kind of
determinant asks for; we set it out in this form only because \S\ref{sec:rigidity} moves a single
column many times over and the count then has to be exact rather than absorbed into a constant. The
two terms must both be there: dropping either gives a rule that is right about half the time, and
$B-M$ is exactly the number of intervening columns that no transversal freezes --- which is why a
class of size two, with nothing between its elements, hides the distinction. The singleton classes
must be counted in $M$ as well, and that is not a formality: reading $M$ over the choices $g$ alone
already fails at $t=3$, $r=1$ on $\beta=(4,3,2,1,0)$, where moving the class of $1$ from $u=4$ to
$v=1$ has $B=2$ and one intervening frozen value that lies in a singleton class, so the two readings
differ by a sign.

\subsubsection*{The degree filtration}

For a monomial $\prod_j z_j^{c_j}$ we use the \emph{total degree} $\sum_j c_j$. Every monomial of
$A(S^{c})$ has the form $\prod_j z_j^{\,u-u'}$ for a matching of $T_g$ into ordered pairs assigned
to the variables, so writing $T=(u_1>\dots>u_{2r})$,
\begin{equation}\label{eq:deggen}
  \deg(T):=\sum_{a\le r}u_a-\sum_{a>r}u_a
\end{equation}
is the largest total degree occurring in $A$, attained exactly by the matchings joining the top half
$H=(u_1,\dots,u_r)$ to the bottom half $L=(u_{r+1},\dots,u_{2r})$. Writing
$a_X(z)=\det(z_j^{X_i})_{r\times r}$ and $P(T)=a_H(z)a_L(z^{-1})$, which is nonzero, being a product
of two $GL(r)$ alternants, the component of maximal total degree of $A$ is
\begin{equation}\label{eq:Amaxgen}
  [A]_{\deg(T)}\;=\;(-1)^{\binom r2}\,P(T).
\end{equation}
Indeed in maximal degree each variable receives one row of $H$ with a positive exponent and one row
of $L$ with a negative one, the two assignments being independent, so the sum factorises into the two
determinants; the sign is that of the shuffle sending the rows of $H$ to the columns
$z_1,\dots,z_r$ and those of $L$ to the columns $z_1^{-1},\dots,z_r^{-1}$, which is
$(-1)^{\binom r2}$. Under the
grading by $\sum_j|c_j|$ the maximal component splits into $2^{r}$ orientation sectors --- one for each subset of the
variables that is inverted --- with disjoint
monomials, permuted by $z_j\mapsto z_j^{-1}$ with a uniform sign, so they vanish together; all
statements below use the total degree.

\begin{lemma}[reflection]\label{lem:reflgen}
$P(c-T)=P(T)$ for every $c\in\ZZ$ and every $T$.
\end{lemma}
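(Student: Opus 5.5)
The plan is to compute both alternant factors of $P(c-T)$ directly from the definition $P(T)=a_H(z)\,a_L(z^{-1})$, using only two facts: reversing the rows of an $r\times r$ determinant costs the sign $(-1)^{\binom r2}$, and a common exponent shift in each column pulls out as a monomial factor.

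First identify the halves of $c-T$. Listed decreasingly, $c-T=(c-u_{2r}>\dots>c-u_1)$. So its top half is $H'=(c-u_{2r},\dots,c-u_{r+1})$, which is $c-L$ read backwards. Its bottom half is $L'=(c-u_r,\dots,c-u_1)$, which is $c-H$ read backwards. The reflection therefore exchanges the roles of the two halves, and the proof consists in checking that the inversion $z\mapsto z^{-1}$ built into $P$ exchanges them back.

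For the top factor, $a_{H'}(z)=\det\bigl(z_j^{\,c-u_{2r+1-i}}\bigr)_{i,j\le r}$. Reversing the order of the rows gives $\det\bigl(z_j^{\,c-u_{r+i}}\bigr)$ at the cost of $(-1)^{\binom r2}$. Writing $z_j^{\,c-u}=z_j^{\,c}\,(z_j^{-1})^{u}$ and extracting $z_j^{\,c}$ from column $j$ gives
\[
a_{H'}(z)=(-1)^{\binom r2}\Bigl(\prod_j z_j\Bigr)^{c}\,a_L(z^{-1}).
\]
The same manipulation on the bottom factor, with $z$ replaced by $z^{-1}$, gives
\[
a_{L'}(z^{-1})=(-1)^{\binom r2}\Bigl(\prod_j z_j\Bigr)^{-c}\,a_H(z).
\]

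Multiplying the two displays, the two signs $(-1)^{\binom r2}$ cancel and so do the two monomials $(\prod_j z_j)^{\pm c}$. What remains is $a_L(z^{-1})\,a_H(z)=P(T)$.

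No step is a real obstacle. The only place that needs care is the bookkeeping: one must check that the reflected set is re-sorted decreasingly, so that top and bottom genuinely swap, and that each half is reversed exactly once, so that the row-reversal signs appear in a pair and cancel. It is this cancellation that makes the identity hold with no sign for every $r$, and not only up to $(-1)^{\binom r2}$.
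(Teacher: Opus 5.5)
Your proposal is correct and takes essentially the paper's route. You identify the top half of $c-T$ as $c-L$ reversed and the bottom half as $c-H$ reversed. From each alternant you extract a monomial $\bigl(\prod_j z_j\bigr)^{\pm c}$ and a row-reversal sign $(-1)^{\binom r2}$, and both cancel in the product since $(-1)^{r(r-1)}=+1$. The only difference is that you write out the index bookkeeping that the paper compresses into ``symmetrically for the bottom half.''
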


\begin{proof}
The top half of $c-T$ is $c-L$ listed decreasingly, so
$a_{c-L}(z)=(\prod_j z_j^{c})(-1)^{\binom r2}a_L(z^{-1})$, and symmetrically for the bottom half.
The two powers of $\prod_j z_j^{c}$ cancel and $(-1)^{r(r-1)}=+1$.
\end{proof}

Let $G$ be the set of $g$ maximising $\deg(T_g)$ and $D_1$ that maximum. Since the numerator
$\det(x_i^{\beta_j})$ and $\Phi_{t,r}$ differ by the nonzero factor $\det(x_i^{N-j})$, we have
$\Phi_{t,r}\equiv0$ if and only if the numerator vanishes, and \emph{every degree and stratum below
refers to the numerator}; no comparison between its homogeneous parts and those of the quotient is
made. By \eqref{eq:laplacegen},
\begin{equation}\label{eq:topgen}
  \bigl[\det(x_i^{\beta_j})\bigr]_{D_1}
  =(-1)^{\binom{t+1}{2}+\binom r2}\,V\sum_{g\in G}w(g)\,P(T_g),
\end{equation}
the second sign coming from \eqref{eq:Amaxgen}. Only the relative signs $w(g)$ matter below, so the
global constant is recorded once and not carried.

\begin{remark}[a warning about that global sign]
It is tempting to drop $(-1)^{\binom r2}$ on the ground that nothing below depends on it, and we
record it for the opposite reason: an identity that is stated is an identity that gets checked, and
this one is \emph{false} without it as soon as $r\ge2$. The sign is not decoration. It is the
signature of the shuffle in \eqref{eq:Amaxgen} --- $+1$ for $r=1$ and $r\equiv0,1 \pmod 4$, $-1$
otherwise --- and it is invisible at $r=1$, which is exactly the rank at which one first writes the
formula down.
\end{remark}

\subsubsection*{The extremal set has at most two elements}

For $i\in\mathcal{E}$ list the excess values of the class decreasingly,
$c_{i,1}>c_{i,2}>\dots>c_{i,n_i}$ with $n_i=n_i(\lambda)$, and define its \emph{increments}
\begin{equation}\label{eq:incrgen}
  \Delta_i(k)\;:=\;c_{i,k}+c_{i,k+1},\qquad 1\le k\le n_i-1 .
\end{equation}
By \eqref{eq:excessgen} there are exactly $2r$ of them in all; write
$\Delta_{(1)}\ge\dots\ge\Delta_{(2r)}$ for the list in decreasing order and put
$\tau:=\Delta_{(r)}$.

\begin{theorem}[the extremal set]\label{thm:Ggen}
Assume $n_i\ge1$ for every $i$. Then
\begin{enumerate}
\item[\rm(a)] $g\in G$ if and only if the multiset of increments it selects, in the sense of Step 2
below, consists of all increments $>\tau$ together with enough increments equal to $\tau$;
\item[\rm(b)] $|G|\le2$, and $|G|=2$ if and only if $\Delta_{(r)}=\Delta_{(r+1)}$, in which case $t$
is even, the two tied increments belong to classes $i$ and $i+t/2$ --- both necessarily in
$\mathcal{E}$ --- and the two maximisers differ exactly in those two classes;
\item[\rm(c)] if $t$ is odd then $|G|=1$.
\end{enumerate}
\end{theorem}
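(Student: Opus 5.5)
The plan is to turn the maximisation of $\deg(T_g)$ over choices $g$ into a greedy selection of $r$ increments out of the $2r$ available in \eqref{eq:incrgen}. Parts (a)--(c) then become statements about ties in a sorted list.

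\emph{Step 1 (a linear form of the degree).} For a $2r$-set $T$ I first record that
\[
\deg(T)=\max_{\epsilon}\sum_{u\in T}\epsilon(u)\,u ,
\]
the maximum taken over the sign assignments $\epsilon:T\to\{\pm1\}$ with exactly $r$ plus signs. The maximum is attained exactly when the plus signs sit on the top half $H$, and the maximiser is unique unless $u_r=u_{r+1}$, which cannot happen in a set. Hence
\[
D_1=\max_{g}\ \max_{\epsilon}\ \sum_{u\in\mathcal S\setminus g}\epsilon(u)\,u .
\]
This is a joint maximisation over pairs $(g,\epsilon)$. The point is that it decomposes class by class, subject to one global constraint: the total number of plus signs is $r$.

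\emph{Step 2 (one class at a time).} Fix an excess class $i$ with values $c_{i,1}>\dots>c_{i,n_i}$. Suppose it carries $p$ plus signs and one excluded value. Because the values are distinct, the contribution is maximised only if the plus signs are on $c_{i,1},\dots,c_{i,p}$, the excluded value is $c_{i,p+1}$, and the minus signs are on the rest. Any other placement loses a strictly positive amount, which I will check by a single exchange argument. The optimal contribution is therefore
\[
\sum_{k\le p}c_{i,k}-\sum_{k\ge p+2}c_{i,k}.
\]
Passing from $p$ to $p+1$ adds exactly $c_{i,p+1}+c_{i,p+2}=\Delta_i(p+1)$. So, up to the constant $-\sum_i\sum_{k\ge2}c_{i,k}$, the optimum is the sum of a prefix of length $p_i$ of the increments of class $i$. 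These increments are strictly decreasing in $k$, since each is a sum of two consecutive values of a strictly decreasing list. Consequently a global choice of the $r$ largest increments is automatically a union of prefixes. Summing over classes, with $\sum_ip_i=r$, gives $D_1=\text{const}+\Delta_{(1)}+\dots+\Delta_{(r)}$. Moreover, $g\in G$ exactly when the prefix lengths $(p_i)$ it induces select all increments $>\tau$ together with the right number equal to $\tau$; this is (a), and it is the Step 2 that statement (a) refers to. The map from $g$ to $(p_i)$ is injective on $G$, because the strictness in Step 2 forces the excluded value of class $i$ to be $c_{i,p_i+1}$.

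\emph{Step 3 (ties).} By the above, $|G|$ equals the number of ways of completing the top $r$ increments at the value $\tau$. Equal increments cannot occur within one class, by strict decrease. Increments from different classes satisfy $\Delta_i(k)\equiv 2i\pmod t$, so $\Delta_i(k)=\Delta_{i'}(k')$ with $i\ne i'$ forces $t\mid 2(i-i')$. For odd $t$ this is impossible, which gives (c). For even $t$ it forces $i'=i+t/2$, and each residue has only one such partner. Hence the value $\tau$ occurs at most twice, and only in a pair of classes $\{i,i+t/2\}$, both of which lie in $\mathcal E$ since they carry increments. There are two completions exactly when $\tau$ occurs twice and only one occurrence is taken, that is, when $\Delta_{(r)}=\Delta_{(r+1)}$. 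The two maximisers then differ by moving one unit of prefix length between these two classes, so they differ exactly in those two classes. This is (b).

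\emph{Main obstacle.} The delicate point is the strictness in Step 2. A maximiser $g$ whose excluded value is not adjacent to the plus/minus boundary, or whose signs are not sorted, must give a strictly smaller degree; otherwise $G$ could contain choices not of prefix form and the bound $|G|\le2$ would fail. I would prove this with a single exchange: swapping a misplaced plus/minus pair, or moving the excluded value to the boundary, changes the sum by twice a difference of two distinct values of the class, hence by a nonzero amount of the right sign.
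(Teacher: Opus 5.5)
Your proposal is correct and follows the paper's proof essentially step for step. Your sign assignments $\epsilon$ are the paper's top-half set $A$, since $\sum_{u\in T}\epsilon(u)u=2\Sigma A+\Sigma g-\Sigma\mathcal{S}$; your per-class exchange and prefix increments $\Delta_i(p+1)$ are its Steps 2--4; and your tie count via $\Delta_i(k)\equiv 2i \pmod t$ is its Steps 5--6. The one slip is minor: moving the excluded value past a larger value of its class changes the sum by a single difference of distinct values, not twice one, but that is still strictly positive, so the strictness you rely on survives.
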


\begin{proof}
\emph{Step 1 (reformulation).} For any $2r$-set $T$ we have
\[
\deg(T)=\max_{A\subseteq T,\,|A|=r}\bigl(2\Sigma A-\Sigma T\bigr),
\]
the maximum being at the top
half. With $T=\mathcal{S}\setminus g$ this reads
$\deg(\mathcal{S}\setminus g)=\max_A\bigl[2\Sigma A+\Sigma g\bigr]-\Sigma\mathcal{S}$, so we
maximise $\Theta(A,g)=2\Sigma A+\Sigma g$ over disjoint pairs with $|A|=r$ and $g$ a choice of one
excess value in each excess class.

\emph{Step 2 (shape of an optimum).} Let $(A,g)$ be optimal and let $x>y$ lie in the same class $i$.
If $y\in A$ and $x=g_i$, exchanging them leaves $g$ admissible and changes $\Theta$ by $x-y>0$; if
$y=g_i$ and $x$ lies in neither $A$ nor $g$, the same exchange gives $x-y>0$. Both are impossible,
so within each class, read decreasingly, the elements of $A$ come first, then $g_i$, then the
unchosen ones. Hence an optimum is parametrised by one integer per class,
$k_i=|A\cap\text{class }i|\in\{0,\dots,n_i-1\}$ with $\sum_i k_i=r$, via
$A\cap\text{class }i=\{c_{i,1},\dots,c_{i,k_i}\}$ and $g_i=c_{i,k_i+1}$. The map $(k_i)\mapsto g$ is
injective and for fixed $g$ the optimal $A$ is the top half of $T_g$, so optimal pairs project
bijectively onto $G$.

\emph{Step 3 (separability).} With that parametrisation $\Theta=\sum_{i\in\mathcal{E}}f_i(k_i)$,
where $f_i(k)=2(c_{i,1}+\dots+c_{i,k})+c_{i,k+1}$: the objective splits over the classes and the
only coupling is $\sum_i k_i=r$.

\emph{Step 4 (the exchange).} $f_i(k)-f_i(k-1)=\Delta_i(k)$, which is strictly decreasing in $k$
because the $c_{i,k}$ are, so
\[
\Theta=\sum_{i\in\mathcal{E}}f_i(0)+\Sigma,\qquad
\Sigma=\sum_{i\in\mathcal{E}}\sum_{k\le k_i}\Delta_i(k),
\]
and maximising $\Theta$ is maximising $\Sigma$. A feasible $(k_i)$ selects, in each class, a
\emph{prefix} of that class's increment list, with $r$ increments in all; call such a selection
\emph{admissible}. Now take any $r$ increments of maximal total, admissible or not. That selection
\emph{is} admissible: if it contained $\Delta_i(k)$ without $\Delta_i(k-1)$, exchanging the two
would raise the total, since $\Delta_i(k-1)>\Delta_i(k)$ --- and the exchange is legitimate because
$\Delta_i(k-1)$ was not already selected. And no admissible selection can beat the total of the $r$
largest increments, being itself a selection of $r$ of them. Hence the maximisers of $\Theta$ are
exactly the admissible selections whose multiset of increments is a multiset of $r$ largest ones,
which is the assertion of {\rm(a)}. (This is the marginal-allocation greedy for a separable
concave objective under one cardinality constraint; see \cite{FG86,IK88}. We have given the argument
because it is four lines, so that the only external input to Theorem \ref{conj:crit} is
Theorem \ref{thm:PvW}.)

\emph{Step 5 (residues).} $c_{i,k}\equiv c_{i,k+1}\equiv i$, hence
\begin{equation}\label{eq:congrgen}
  \Delta_i(k)\;\equiv\;2i \pmod t .
\end{equation}
Within a class the increments are pairwise distinct by Step 4. Across classes,
$\Delta_i(k)=\Delta_{i'}(k')$ forces $2i\equiv2i'$, that is $i'=i$ or --- only if $t$ is even ---
$i'=i+t/2$. Therefore \emph{no value is shared by more than two increments}, and when it is, the two
come from classes $i$ and $i+t/2$.

\emph{Step 6.} The optima are the selections containing every increment $>\tau$ together with $s$ of
those equal to $\tau$, with $s\ge1$. At most two increments equal $\tau$, so either both fit
($|G|=1$) or exactly one does ($|G|=2$), the latter precisely when $\Delta_{(r)}=\Delta_{(r+1)}$.
For $t$ odd, $x\mapsto2x$ is injective on $\ZZ/t$, so no tie is possible and $|G|=1$.
\end{proof}

\begin{remark}
The content of Theorem \ref{thm:Ggen} is \eqref{eq:congrgen} together with the fact that
$x\mapsto2x$ on $\ZZ/t$ has kernel of order $\gcd(2,t)$: the multiplicity of an extremal value is
controlled by the $2$-torsion of the residue group, which is why the parity of $t$ governs
everything here and in \S\ref{sec:main}. Steps 1--4 use only that the $\beta_j$ are distinct
integers; neither $\beta_j\ge0$ nor $\lambda$ being a partition nor $N=t+2r$ enters beyond
\eqref{eq:excessgen}. Figure \ref{fig:increments} is the theorem carried out on one shape, with the
tie and the prefix form of Step 2 both visible.
\end{remark}

\begin{remark}[where this shape of argument comes from]
Steps 1--4 are an old story told in a new alphabet. Maximising a separable concave objective under one
cardinality constraint is the marginal-allocation problem of resource allocation, solved by the greedy
in \cite{FG86,IK88}; we gave the exchange in four lines only so that the reader need not leave the
paper. Step 6 is also familiar, from a different direction: in tropical linear algebra the initial
form of a determinant is the signed sum over the \emph{optimal} assignments, and the matrix is called
tropically singular exactly when two or more optima attain the optimum \cite{TropCramer}. Read that
way, $|G|=2$ is a tie at the outer, \emph{transversal} level: by \eqref{eq:topgen} the component of
the numerator in top degree is the signed sum of the two optimal transversal contributions, and the
whole question of \S\ref{sec:rigidity} is whether they cancel. We put it no more strongly than that.
Tropical singularity of the full determinant is a statement about assignments, and inside a single
$A(T_g)$ many assignments already attain the maximum --- precisely the ones producing the two
alternants of \eqref{eq:Amaxgen} --- which is why $[A]_{\deg(T)}$ is a nonzero product rather than a
cancellation. The tie that matters here is between the $T_g$, not between assignments. What is \emph{not} standard,
and is the reason the parity of $t$ governs everything, is \eqref{eq:congrgen}: it is a congruence,
not an inequality, and it is what bounds the number of optima by two.
\end{remark}

\begin{figure}[!ht]
\centering
\includegraphics[width=\textwidth]{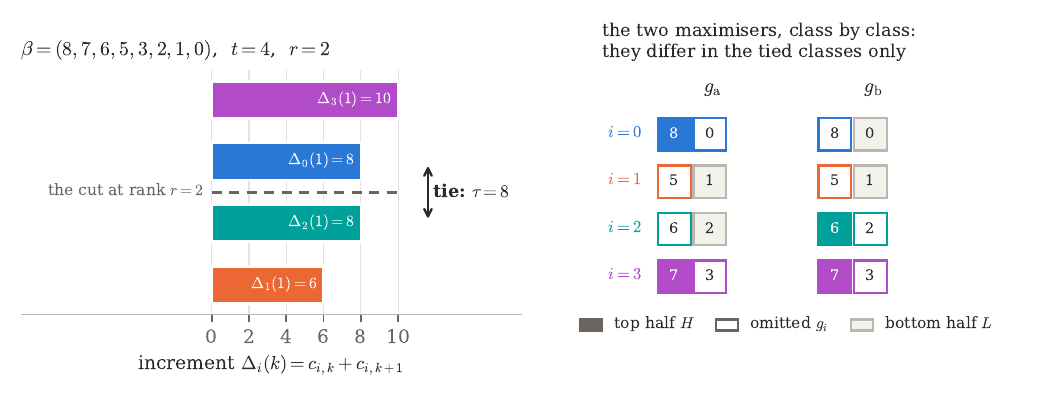}
\caption{Theorem \ref{thm:Ggen} computed on one shape rather than stated. For
$\beta=(8,7,6,5,3,2,1,0)$ at $t=4$, $r=2$ the four excess classes are $\{8,0\}$, $\{5,1\}$,
$\{6,2\}$, $\{7,3\}$ and the four increments are $\Delta_3(1)=10$, $\Delta_0(1)=8$, $\Delta_2(1)=8$,
$\Delta_1(1)=6$. A maximiser takes the $r=2$ largest; the second and third are equal, so
$\tau=\Delta_{(2)}=\Delta_{(3)}=8$ and there are exactly two of them. The tied pair belongs to the
classes $0$ and $2=0+t/2$, which is \eqref{eq:congrgen} and not a coincidence: no value is shared by
increments of classes other than $i$ and $i+t/2$. On the right the two maximisers are drawn class by
class in the prefix form of Step 2 --- reading a class decreasingly, first the elements of the top
half, then the omitted $g_i$, then those of the bottom half --- which makes visible that they differ
in the tied classes and nowhere else. Colour encodes the class, not the rank. Computed by
\texttt{anc/fig\_v2.py}.}
\label{fig:increments}
\end{figure}

\begin{corollary}[a unique maximiser]\label{cor:uniquegen}
If $|G|=1$ then $\Phi_{t,r}\not\equiv0$.
\end{corollary}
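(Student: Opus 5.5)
The proof reads off the component of top total degree of the numerator and shows it is nonzero. Since $\Phi_{t,r}$ and $\det(x_i^{\beta_j})$ differ by the nonzero factor $\det(x_i^{N-j})$, it suffices to show $\det(x_i^{\beta_j})\not\equiv0$. For that it is enough that a single homogeneous component, in the total degree $\sum_j c_j$ of the $z$-variables, is nonzero. The natural choice is the component of degree $D_1$, the maximum of $\deg(T_g)$ over all admissible $g$.

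First I would check that no other term of the expansion \eqref{eq:laplacegen} reaches degree $D_1$. Every monomial of $A(T_g)$ has total degree at most $\deg(T_g)$, by the matching description preceding \eqref{eq:deggen}. Every $g\notin G$ has $\deg(T_g)<D_1$ by definition of $G$. Hence only the terms indexed by $G$ contribute in degree $D_1$, and each of them contributes only its own top-degree part. This is formula \eqref{eq:topgen}:
\[
\bigl[\det(x_i^{\beta_j})\bigr]_{D_1}=\pm V\sum_{g\in G}w(g)\,P(T_g).
\]

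Next I would use the hypothesis $|G|=1$, say $G=\{g_0\}$. The sum then has a single term, $\pm V\,w(g_0)\,P(T_{g_0})$. Here $V\ne0$, since it is a Vandermonde in the distinct $t$-th roots of unity, and $w(g_0)=\pm1$. Also $P(T_{g_0})=a_H(z)\,a_L(z^{-1})$ is a product of two $GL(r)$ alternants taken on strictly decreasing exponent sequences, so each alternant is a nonzero Laurent polynomial and so is their product. Finally I would confirm that $G$ is nonempty: the occupancy hypothesis makes $\mathcal T\ne\varnothing$, and $G$ is the set of maximisers over this finite nonempty set.

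The component of degree $D_1$ is therefore nonzero, and so the numerator does not vanish identically. The only step needing care is the claim that $[A(T)]_{\deg(T)}$ is exactly $(-1)^{\binom r2}P(T)$, with no hidden cancellation inside a single minor. That is \eqref{eq:Amaxgen}, which I take as given. Its content is that the monomials of maximal degree come only from matchings of the top half $H$ against the bottom half $L$, and that these factor into the two independent determinants.
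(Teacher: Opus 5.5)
Your proposal is correct and follows the paper's own proof. Both isolate the degree-$D_1$ component of the numerator via \eqref{eq:topgen}, note that when $|G|=1$ it is the single term $\pm V\,P(T_g)\neq0$, and conclude that the numerator, and hence $\Phi_{t,r}$, is not identically zero. Your extra checks (lower transversals cannot reach degree $D_1$, $V\neq0$, and $P(T_g)$ is a nonzero product of alternants) are the facts the paper has already packed into \eqref{eq:Amaxgen} and \eqref{eq:topgen}.
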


\begin{proof}
By \eqref{eq:topgen} the component of the numerator in degree $D_1$ is $\pm V\,P(T_g)\ne0$ for the
unique $g$. A Laurent polynomial with a nonzero homogeneous component is nonzero, so the numerator
is nonzero and hence so is $\Phi_{t,r}$.
\end{proof}

\begin{corollary}[the middle block]\label{cor:midgen}
Write $\mathcal{S}=\{s_1>\dots>s_n\}$, $n=2r+e$. If the block $\{s_{r+1},\dots,s_{r+e}\}$ contains
exactly one element of each excess class, then $\Phi_{t,r}\not\equiv0$.
\end{corollary}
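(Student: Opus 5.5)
The plan is to reduce the statement to Corollary \ref{cor:uniquegen}: it suffices to show that under the hypothesis the extremal set $G$ has exactly one element. Rather than pass through the increment description of Theorem \ref{thm:Ggen}, I would work directly with the reformulation in Step~1 of its proof, which involves no ties to analyse. That step gives
\[
\deg(\mathcal S\setminus g)+\Sigma\mathcal S=\max_{A}\bigl(2\Sigma A+\Sigma g\bigr),
\]
with $A$ ranging over $r$-subsets of $\mathcal S\setminus g$. So the task is to show that the pairs $(A,g)$ maximising $\Theta(A,g)=2\Sigma A+\Sigma g$ all share the same $g$.

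\emph{Step 1: an a priori bound.} I would write $\Theta(A,g)=\Sigma A+\Sigma(A\cup g)$. Here $A$ is an $r$-subset of $\mathcal S$, and $A\cup g$ is an $(r+e)$-subset of $\mathcal S$, because $A$ and $g$ are disjoint and $g$ has one element in each of the $e$ excess classes. Since the $s_k$ are distinct, this gives
\[
\Theta(A,g)\le\sum_{k\le r}s_k+\sum_{k\le r+e}s_k .
\]
Equality holds only if $A=\{s_1,\dots,s_r\}$ and $A\cup g=\{s_1,\dots,s_{r+e}\}$. Those two conditions together force $g=\{s_{r+1},\dots,s_{r+e}\}$, the middle block.

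\emph{Step 2: the bound is attained.} By hypothesis the middle block contains exactly one element of each excess class, so it is an admissible choice $g_0$. Taking $A=\{s_1,\dots,s_r\}$, which is disjoint from $g_0$, attains the bound. Hence $D_1=\deg(\mathcal S\setminus g_0)$. By the equality case in Step~1, every maximiser of $\Theta$ has $g=g_0$, so $G=\{g_0\}$.

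\emph{Step 3: conclusion.} With $|G|=1$ and the standing occupancy hypothesis (whose failure is already branch (a)), Corollary \ref{cor:uniquegen} gives $\Phi_{t,r}\not\equiv0$. One can check that $T_{g_0}$ has top half $\{s_1,\dots,s_r\}$ and bottom half $\{s_{r+e+1},\dots,s_n\}$. Its top-degree component is then $\pm V\,P(T_{g_0})$, a product of two nonzero alternants, as in \eqref{eq:topgen}.

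There is no real obstacle here. The only point needing care is the equality analysis in Step~1. It uses that $\mathcal S$ consists of distinct integers, so the top-$k$ sums are attained uniquely. It also uses that the identification of $G$ with the projections of the optimal pairs, established in Step~2 of the proof of Theorem \ref{thm:Ggen}, applies. Alternatively, I could check that the increment selection determined by the middle block consists of the $r$ strictly largest increments, but the direct bound above avoids any discussion of ties.
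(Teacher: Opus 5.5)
Your proposal is correct and is essentially the paper's proof. The paper bounds $\deg(\mathcal S\setminus g)\le(s_1+\dots+s_r)-(s_{n-r+1}+\dots+s_n)$ by termwise domination of the two halves, which is exactly your bound on $\Theta$ shifted by $\Sigma\mathcal S$ (since $\Sigma\mathcal S$ minus the bottom $r$ elements is the sum of the top $r+e$); the same equality analysis then forces $g$ to be the middle block, and Corollary \ref{cor:uniquegen} is applied.

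The only superfluous ingredient is your appeal to the bijection of Step~2 of Theorem \ref{thm:Ggen}. That $g\in G$ exactly when $\max_A\Theta(A,g)$ reaches the global maximum already follows from the definition of $G$ and the Step~1 identity.
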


\begin{proof}
$\deg(\mathcal{S}\setminus g)\le(s_1+\dots+s_r)-(s_{n-r+1}+\dots+s_n)$, since the top $r$ elements
of $\mathcal{S}\setminus g$ are dominated termwise by $s_1,\dots,s_r$ and its bottom $r$ dominate
$s_{n-r+1},\dots,s_n$. Equality forces $\mathcal{S}\setminus g$ to contain both extreme blocks, that
is $g\subseteq\{s_{r+1},\dots,s_{r+e}\}$, and $|g|=e$ forces equality of the two sets. So the bound
is attained by exactly one term and Corollary \ref{cor:uniquegen} applies.
\end{proof}

\begin{corollary}[odd $t$]\label{cor:oddgen}
Let $t$ be odd and $r\ge1$. Then $\Phi_{t,r}(\lambda;z)\equiv0$ if and only if some residue class
modulo $t$ is absent from $\beta(\lambda)$.
\end{corollary}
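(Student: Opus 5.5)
The two directions are of very different weight, and I would prove them separately.

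For the implication from an absent residue class to vanishing, I would use only Lemma~\ref{lem:L1} through Lemma~\ref{lem:L3gen}. If some $n_i=0$, then no $t$-subset $S$ of columns carries all $t$ residues. Hence every frozen minor $M_S$ vanishes and the family $\mathcal T$ of \eqref{eq:laplacegen} is empty, so the numerator $\det(x_i^{\beta_j})$ is identically zero. Since the denominator $\det(x_i^{N-j})$ is a nonzero Vandermonde product for generic $z_1,\dots,z_r$, this gives $\Phi_{t,r}\equiv0$. This part is the degenerate branch and holds for every $t$, not only odd $t$.

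For the converse, assume every residue class is occupied, so the occupancy hypothesis of \S\ref{sec:extremal} holds. Then the excess bookkeeping \eqref{eq:excessgen} is available, with $2r$ increments $\Delta_i(k)$ defined. I would invoke Theorem~\ref{thm:Ggen}(c) directly. Its proof shows that any two increments of equal value satisfy $2i\equiv2i'\pmod t$ by \eqref{eq:congrgen}. For $t$ odd, doubling is injective on $\ZZ/t$, so $i=i'$, and within one class the increments are strictly decreasing. Hence the increments are pairwise distinct, there is no tie $\Delta_{(r)}=\Delta_{(r+1)}$, and the maximal-degree set $G$ is a singleton. Corollary~\ref{cor:uniquegen} then gives $\Phi_{t,r}\not\equiv0$. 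The reason is that the top-degree component \eqref{eq:topgen} of the numerator is $\pm V\,P(T_g)$, and $P(T_g)=a_H(z)\,a_{L}(z^{-1})$ is a product of two nonzero $GL(r)$ alternants. These alternants are nonzero because $H$ and $L$ consist of distinct integers.

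I expect no step to be a real obstacle, because the content sits in Theorem~\ref{thm:Ggen}. The one point to check with care is that the argument really works with the numerator, as the paper stipulates after \eqref{eq:topgen}. Otherwise a nonzero top-degree part might seem to be cancelled on division by the denominator. It cannot be: the numerator is a nonzero Laurent polynomial, and dividing by a nonzero element of the fraction field keeps it nonzero. A second thing to confirm is that Step~5 of Theorem~\ref{thm:Ggen} covers the case where all excess sits in one class ($e=1$), as in the size-three profile at $r=1$. There the increments are distinct simply because the class values are strictly decreasing, so the conclusion is unchanged.

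As a consistency check I would compare with Proposition~\ref{rem:lattice} at $r=1$. There the concentric clause is vacuous for $t$ odd, which matches this corollary on the row $r=1$.
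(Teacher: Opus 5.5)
Your proposal is correct and is the paper's own argument: an absent residue class empties $\mathcal T$ in Lemma~\ref{lem:L3gen}, so the numerator vanishes. Under occupancy, Theorem~\ref{thm:Ggen}(c) gives $|G|=1$, and Corollary~\ref{cor:uniquegen} then gives $\Phi_{t,r}\not\equiv0$; your extra checks (injectivity of doubling on $\ZZ/t$, nonvanishing of $P(T_g)$, working with the numerator rather than the quotient) unpack those two cited results rather than adding a new step.
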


\begin{proof}
If some $n_i=0$ then $\mathcal T=\varnothing$ in Lemma \ref{lem:L3gen} and the numerator vanishes.
Conversely, if all $n_i\ge1$ then $|G|=1$ by Theorem \ref{thm:Ggen}(c) and Corollary
\ref{cor:uniquegen} applies.
\end{proof}

\begin{remark}
For $r=1$ Corollary \ref{cor:oddgen} is Proposition \ref{rem:lattice}, proved there by showing that
the concentric clause is vacuous for odd $t$ --- an argument internal to the two-class profile that a
single reciprocal pair forces. What is new is that the statement survives for every $r\ge1$, and by
a route that does not mention the profile at all: the congruence \eqref{eq:congrgen}. At $r=0$ it is
false, since Littlewood's rule gives $s_\lambda(\zt)=0$ whenever the $t$-core is nonempty, odd $t$
included; and the observation that $\prod\zt=+1$ for odd $t$ does not by itself deliver it, because
the alphabet is a subtorus of dimension $r$ with the remaining coordinates frozen.
\end{remark}

\subsection{The criterion at \texorpdfstring{$t=2$}{t=2}: rigidity and the reflection}\label{sec:rigidity}

Corollary \ref{cor:uniquegen} settles every shape with a unique maximiser. What follows is the other
half: when $|G|=2$ the two terms of \eqref{eq:topgen} can only cancel if the two transversals are
reflections of one another, and that rigidity comes from a single external input.

\begin{theorem}[Purbhoo--van Willigenburg {\cite[Thm.~2.5]{PvW}}]\label{thm:PvW}
For partitions with at most $n$ parts, writing $\lambda^{-}:=\lambda/(\lambda_n)^{n}$,
\[
  s_\lambda(x_1,\dots,x_n)\,s_\mu(x_1,\dots,x_n)=s_\nu(x_1,\dots,x_n)\,s_\rho(x_1,\dots,x_n)
\]
if and only if $\lambda_n+\mu_n=\nu_n+\rho_n$ and $\{\lambda^{-},\mu^{-}\}=\{\nu^{-},\rho^{-}\}$ as
multisets.
\end{theorem}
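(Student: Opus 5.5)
The \emph{if} direction is immediate, and I would dispose of it first. For a partition with at most $n$ parts, $s_\lambda(x_1,\dots,x_n)=(x_1\cdots x_n)^{\lambda_n}s_{\lambda^-}(x_1,\dots,x_n)$. This is the same determinant shift used for \eqref{eq:shift}: it multiplies every column of the bialternant by the same monomial. So both sides of the asserted equality equal $(x_1\cdots x_n)^{\lambda_n+\mu_n}s_{\lambda^-}s_{\mu^-}$.

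For \emph{only if}, the same identity strips the determinant powers. The $x_n$-adic valuation, or equivalently the largest power of $x_1\cdots x_n$ dividing a symmetric polynomial, is additive under products. A Schur polynomial whose last part is $0$ is not divisible by $x_1\cdots x_n$, since it has a monomial free of $x_n$. Hence $\lambda_n+\mu_n=\nu_n+\rho_n$, and it remains to show that
\[
s_\lambda s_\mu=s_\nu s_\rho\quad\text{with}\quad \lambda_n=\mu_n=\nu_n=\rho_n=0
\]
forces $\{\lambda,\mu\}=\{\nu,\rho\}$. Multiplying through by $a_\delta^2$ turns this into the alternant identity $a_{\lambda+\delta}a_{\mu+\delta}=a_{\nu+\delta}a_{\rho+\delta}$, which is the form I would work with.

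I would extract information from extreme terms under two orders.
\begin{enumerate}
\item[\rm(i)] Lex-leading monomials give $\lambda+\mu=\nu+\rho$ as vectors. Equivalently, the Littlewood--Richardson expansion has the unique dominance-maximal constituent $s_{\lambda+\mu}$.
\item[\rm(ii)] The constituent $s_{\lambda\cup\mu}$, the rows of both shapes merged and sorted, is the dominance-minimal one among those with at most $n$ parts. Here the hypothesis that the last parts vanish is exactly what lets the minimal term be read cleanly inside $n$ rows.
\end{enumerate}
For the second invariant I would rather specialise $x_n=0$, which sends each $s_\kappa$ with $\kappa_n=0$ to $s_\kappa(x_1,\dots,x_{n-1})$. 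I would then induct on $n$: the inductive hypothesis applied in $n-1$ variables gives equality of $\{\lambda^{\flat-},\mu^{\flat-}\}$ and $\{\nu^{\flat-},\rho^{\flat-}\}$, where $\flat$ deletes the zero last row and ${}^-$ then strips the new last column, together with $\lambda_{n-1}+\mu_{n-1}=\nu_{n-1}+\rho_{n-1}$. The base case $n=1$ is trivial.

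The main obstacle is recombining these pieces. The induction determines each partition only up to its column of height $n-1$, and the leading-term identity (i) gives only the \emph{sum} of the two vectors. So the remaining ambiguity is a transfer of a block of columns of height $n-1$ from one factor to the other: $\nu=\lambda+(k^{n-1})$ and $\rho=\mu-(k^{n-1})$, or the same with $\lambda$ and $\mu$ exchanged.

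To rule out $k\neq0$ I would try first to compare a second extreme coefficient. The natural choice is the coefficient of the lex-leading monomial among those with $x_1$-degree exactly $\lambda_1+\mu_1-1$. Alternatively, I would evaluate at $x=(q^{n-1},\dots,q,1)$, where both sides become products of $q$-hook-content formulas, and compare the multisets of cyclotomic factors as in Proposition~\ref{prop:minimal}. The principal specialisation is the step I expect to need the most care: the $q$-content factors of a column block transfer need not cancel symmetrically. If they do not separate the two possibilities cleanly, I would fall back on the specialisation $x_1=0$ in place of $x_n=0$, which strips rows from the other end and should pin down $k$ independently.
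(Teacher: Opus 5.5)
\textbf{The decisive step is missing.} Your two reductions are sound. The $x_n$-adic valuation gives $\lambda_n+\mu_n=\nu_n+\rho_n$ and lets you cancel down to vanishing last parts. Then $x_n=0$ with induction leaves, after relabelling $\nu\leftrightarrow\rho$, exactly $\nu=\lambda+(k^{n-1})$ and $\rho=\mu-(k^{n-1})$. But the one step where $n$ variables must say more than $n-1$ is left as a menu, and two of its three items cannot work.

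\emph{Extreme coefficients.} Already at $n=2$,
$h_2h_2=x_1^4+2x_1^3x_2+3x_1^2x_2^2+\cdots$ and
$h_3h_1=x_1^4+2x_1^3x_2+2x_1^2x_2^2+\cdots$.
The coefficient you name, that of $x_1^3x_2$, is $2$ on both sides. In general $h_ah_b$ and $h_{a+k}h_{b-k}$ agree to depth $\min(a,b,a+k,b-k)$, so no extreme coefficient of bounded depth separates them.

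\emph{The fallback $x_1=0$.} This is not a new specialisation: for a symmetric polynomial it coincides with $x_n=0$ after renaming the variables. Stripping the first row would need $x_i\mapsto x_i^{-1}$. That too sees nothing at $n=2$, where it fixes every symmetric form once $(x_1x_2)^{\deg}$ is cleared.

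\emph{The target.} You should not aim to rule out $k\neq0$. For $n=2$, $\lambda=(1,0)$, $\mu=(2,0)$, $k=1$, the relabelling returns $\nu=\mu$ and $\rho=\lambda$. What must be shown is that $k\neq0$ forces $\mu=\lambda+(k^{n-1})$.

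\emph{Your (ii).} It is wrong as stated: two shapes of length at most $n-1$ can have $\lambda\cup\mu$ of length up to $2n-2$, where $s_{\lambda\cup\mu}$ vanishes in $n$ variables. You discard it anyway.

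\textbf{How to close it.} The principal specialisation works, in a few lines, once you use the bialternant instead of hook-contents. With $l_i=\kappa_i+n-i$, $s_\kappa(1,q,\dots,q^{n-1})$ is a power of $q$ times $\prod_{i<j}(q^{l_i-l_j}-1)$, divided by a factor depending on $n$ alone. Both products have constant term $1$. So comparing lowest terms, and then cyclotomic factors as in Proposition~\ref{prop:minimal}, gives $M(\lambda)\uplus M(\mu)=M(\nu)\uplus M(\rho)$ with $M(\kappa)=\{l_i-l_j\}_{i<j}$.

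The transfer leaves every difference with $j<n$ unchanged and shifts those with $j=n$ by $\pm k$. After cancelling, $A\uplus B=(A+k)\uplus(B-k)$ for $A=\{\lambda_i+n-i\}_{i<n}$ and $B=\{\mu_i+n-i\}_{i<n}$. Writing $F_X=\sum_{x\in X}q^x$, this reads $(1-q^k)\bigl(F_A-q^{-k}F_B\bigr)=0$ in $\ZZ[q,q^{-1}]$. So $k\neq0$ gives $B=A+k$, that is $\mu=\nu$ and $\rho=\lambda$. With that step written out, your induction is a complete proof.

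There is no proof in the paper to compare it with: the statement is imported from \cite{PvW} as the single external input to Theorem~\ref{conj:crit}.
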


\begin{remark}
Theorem 3 of \cite{Rajan} gives the statement for $n$ factors, but under the hypothesis that all the
highest weights be nonzero. Theorem \ref{thm:PvW} needs no such hypothesis, and that is why it is the
one used here: the degenerate case, one factor trivial, is exactly what Lemma \ref{lem:dictgen}
produces whenever $\alpha^{*}$ or $\tilde\alpha$ is empty. Neither statement can be shortcut by unique
factorisation, since Schur polynomials are not irreducible: $s_{(3)}(x,y)=(x+y)(x^2+y^2)$.
\end{remark}

\subsubsection*{The dictionary}

Write $T=(u_1>\dots>u_{2r})$ with halves $H=(h_1>\dots>h_r)$ and $L=(l_1>\dots>l_r)$, let
$\delta=(r-1,r-2,\dots,0)$, so that $a_\delta(z)=\det(z_j^{\,r-i})$ is the Vandermonde determinant in
the notation of \S\ref{sec:extremal}, and put
\[
  \alpha=H-\delta,\qquad \tilde\alpha=\alpha-(\alpha_r)^{r},\qquad
  L^{*}=(l_1-l_r>\dots>l_1-l_1),\qquad \alpha^{*}=L^{*}-\delta .
\]

\begin{lemma}[the dictionary]\label{lem:dictgen}
$\alpha^{*}_r=0$ and
\begin{equation}\label{eq:dictgen}
  P(T)\;=\;(-1)^{\binom r2}\Bigl(\textstyle\prod_j z_j\Bigr)^{h_r-l_1}
  a_\delta(z)^2\,s_{\tilde\alpha}(z)\,s_{\alpha^{*}}(z).
\end{equation}
In particular $P$ is unchanged by $T\mapsto T+m$ and, by Lemma \ref{lem:reflgen}, by $T\mapsto c-T$.
\end{lemma}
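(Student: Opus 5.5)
The plan is to compute the two alternants in $P(T)=a_H(z)\,a_L(z^{-1})$ separately, write each as a monomial times $a_\delta(z)$ times a Schur polynomial via the bialternant formula, and multiply. The one point needing care is the sign from reordering rows in the second factor, which is where $(-1)^{\binom r2}$ in \eqref{eq:dictgen} comes from.

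\emph{First factor.} Since $h_1>\dots>h_r$ are strictly decreasing integers, $\alpha=H-\delta$ is weakly decreasing. Subtracting its last part gives $\tilde\alpha=\alpha-(\alpha_r)^r$, a partition with at most $r-1$ nonzero parts. Here $\alpha_r=h_r$, because $\delta_r=0$. Pulling $z_j^{h_r}$ out of column $j$ of $\det(z_j^{h_i})$ gives
\[
a_H(z)=\Bigl(\textstyle\prod_j z_j\Bigr)^{h_r}a_{\tilde\alpha+\delta}(z).
\]
The bialternant formula then gives $a_{\tilde\alpha+\delta}(z)=s_{\tilde\alpha}(z)\,a_\delta(z)$. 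This step uses no nonnegativity of $h_r$: the monomial prefactor is a Laurent monomial in any case.

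\emph{Second factor.} Write $-l_i=(l_1-l_i)-l_1$ and extract $z_j^{-l_1}$ from each column, so that
\[
a_L(z^{-1})=\Bigl(\textstyle\prod_j z_j\Bigr)^{-l_1}\det\bigl(z_j^{\,l_1-l_i}\bigr).
\]
The row exponents $l_1-l_i$ increase with $i$, running from $0$ up to $l_1-l_r$. Reversing the row order rewrites them as $L^{*}$ listed decreasingly, and the reversal costs $(-1)^{\binom r2}$. Then $a_{L^{*}}=s_{\alpha^{*}}\,a_\delta$ with $\alpha^{*}=L^{*}-\delta$. This is again a partition, since $L^{*}$ is strictly decreasing, and its last entry is $\alpha^{*}_r=L^{*}_r-\delta_r=(l_1-l_1)-0=0$, which is the first assertion of the lemma. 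Multiplying the two factors produces the monomial $(\prod_j z_j)^{h_r-l_1}$, the square $a_\delta(z)^2$, the sign $(-1)^{\binom r2}$ and the product $s_{\tilde\alpha}\,s_{\alpha^{*}}$, which is \eqref{eq:dictgen}.

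\emph{Invariances.} Under $T\mapsto T+m$ both $h_r$ and $l_1$ increase by $m$, so $h_r-l_1$ is unchanged. The vector $\tilde\alpha$ is unchanged because it is normalised by its last entry, and $L^{*}$ involves only differences of the $l_i$, so it is unchanged too; hence $P(T+m)=P(T)$ directly from \eqref{eq:dictgen}. Invariance under $T\mapsto c-T$ is Lemma \ref{lem:reflgen}, quoted as stated.

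The only real obstacle is the sign bookkeeping: the row reversal in the second factor, and checking that no further sign enters from the column extractions, which contribute none. I would confirm the sign against Lemma \ref{lem:reflgen} at $r=2$ as a control.
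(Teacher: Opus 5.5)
Your proof is correct and is the paper's own argument: the bialternant formula for $a_H$, extraction of $z_j^{l_1}$ from each column of $a_L(z^{-1})$ followed by a row reversal costing $(-1)^{\binom r2}$, and $\alpha^{*}_r=l_1-l_1=0$. The differences are cosmetic. You pull out $(\prod_j z_j)^{h_r}$ before invoking the bialternant rather than after, which avoids ever writing $s_\alpha$ for a possibly negative $\alpha$. You also check translation invariance directly from \eqref{eq:dictgen}, where the paper leaves it implicit.
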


\begin{proof}
$a_H(z)=s_\alpha(z)a_\delta(z)$ is the bialternant formula. For the second factor, multiplying column
$j$ of $\det(z_j^{-l_i})$ by $z_j^{l_1}$ gives $(\prod_j z_j^{l_1})a_L(z^{-1})=\det(z_j^{\,l_1-l_i})$,
whose row exponents increase with $i$; reversing the $r$ rows costs $(-1)^{\binom r2}$ and produces
$a_{L^{*}}(z)=s_{\alpha^{*}}(z)a_\delta(z)$. Finally $s_\alpha=(\prod_j z_j)^{\alpha_r}s_{\tilde\alpha}$
and $\alpha_r=h_r$.
\end{proof}

\begin{proposition}[the orbit of $P$]\label{prop:orbitgen}
Let $T,\widetilde T$ be $2r$-sets of distinct integers with $P(T)=\pm P(\widetilde T)$. Then
$\widetilde T=T+m$ for some $m\in\ZZ$, or $\widetilde T=c-T$ for some $c\in\ZZ$.
\end{proposition}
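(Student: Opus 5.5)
Using Lemma \ref{lem:dictgen}, write $P(T)=\pm(\prod_j z_j)^{h_r-l_1}a_\delta(z)^2 s_{\tilde\alpha}(z)s_{\alpha^*}(z)$, and likewise for $\widetilde T$ with data $\widetilde H,\widetilde L,\widetilde\alpha,\widetilde\alpha^{*}$. The equation $P(T)=\pm P(\widetilde T)$ is an identity of Laurent polynomials in $r$ variables. The common factor $a_\delta(z)^2$ is nonzero, so we cancel it. Comparing lowest-degree terms, or simply degrees, fixes the global sign. What remains is
\[
\Bigl(\textstyle\prod_j z_j\Bigr)^{h_r-l_1}s_{\tilde\alpha}(z)s_{\alpha^*}(z)
=\Bigl(\textstyle\prod_j z_j\Bigr)^{\tilde h_r-\tilde l_1}s_{\widetilde{\tilde\alpha}}(z)s_{\widetilde\alpha^{*}}(z),
\]
where each Schur polynomial is indexed by a partition with last part $0$. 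This holds because $\tilde\alpha_r=0$ by construction and $\alpha^*_r=0$ by the lemma. Multiply both sides by a large enough power of $\prod z_j$ to clear negative exponents. Since $s_\mu(z)\prod z_j=s_{\mu+(1^r)}(z)$, each side becomes a product of two Schur polynomials in $r$ variables. We then apply Theorem \ref{thm:PvW}. Its conclusion gives $h_r-l_1=\tilde h_r-\tilde l_1$, once the last parts are accounted for. It also gives the multiset equality $\{\tilde\alpha,\alpha^*\}=\{\widetilde{\tilde\alpha},\widetilde\alpha^{*}\}$, because stripping the last part returns exactly $\tilde\alpha$ and $\alpha^*$.

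We then split into the two matchings. \emph{Case 1:} $\tilde\alpha=\widetilde{\tilde\alpha}$ and $\alpha^*=\widetilde\alpha^*$. Then $H$ and $\widetilde H$ have the same gap vector, so $\widetilde H=H+m$ with $m=\tilde h_r-h_r$. Likewise $L^*=\widetilde L^*$ gives $\widetilde L=L+m'$ with $m'=\tilde l_1-l_1$. The degree equation $h_r-l_1=\tilde h_r-\tilde l_1$ forces $m=m'$, hence $\widetilde T=T+m$. \emph{Case 2:} $\tilde\alpha=\widetilde\alpha^*$ and $\alpha^*=\widetilde{\tilde\alpha}$. Here we compare with the reflected set $c-T$. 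By the proof of Lemma \ref{lem:reflgen}, the top half of $c-T$ is $c-L$ and its bottom half is $c-H$. The gap vector of $c-L$ in decreasing order is that of $L^*$, so $\widetilde{\tilde\alpha}(c-T)=\alpha^*(T)$, and symmetrically $(c-T)$'s $\alpha^*$ equals $\tilde\alpha(T)$. Thus $\widetilde T$ and $c_0-T$, for any fixed $c_0$, fall under Case 1 relative to each other. Since $P(c_0-T)=P(T)$ by Lemma \ref{lem:reflgen}, Case 1 applied to the pair $(c_0-T,\widetilde T)$ gives $\widetilde T=c_0-T+m$, which is $c-T$ with $c=c_0+m$.

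Two points need care. First, reconstructing a half from its data: the strictly decreasing $r$-tuple $H$ is determined up to translation by $\tilde\alpha$, since $\alpha=H-\delta$ and $\tilde\alpha$ records $\alpha$ modulo a constant shift. The same holds for $L$ via $L^*$, which is already normalized to have minimum $0$. Second, matching the exponent of $\prod z_j$. Theorem \ref{thm:PvW} is stated for genuine partitions, so we apply it to $\tilde\alpha+(k^r)$ and $\alpha^*$ after multiplying through. Its conclusion $\lambda_n+\mu_n=\nu_n+\rho_n$ then yields exactly the equality of the monomial exponents. The degenerate cases, where $\tilde\alpha$ or $\alpha^*$ is empty, are covered because Theorem \ref{thm:PvW} has no nonvanishing hypothesis; this is the reason it is used instead of \cite{Rajan}. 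I expect the main obstacle to be the bookkeeping in Case 2, namely checking that the top and bottom halves of $c-T$ really produce the swapped pair $(\alpha^*,\tilde\alpha)$ and not a translate of it. I would verify this on the gap vectors directly.
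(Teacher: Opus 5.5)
Your proposal is correct and follows the paper's own route. You cancel $(-1)^{\binom r2}a_\delta(z)^2$, fix the sign by the nonnegativity of what remains (nonnegative coefficients, not a comparison of degrees, is what does this), reabsorb the monomial into $s_{\tilde\alpha}$, and apply Theorem \ref{thm:PvW}; the same-order matching gives a common translate and the swapped matching gives a reflection, and your reduction of the swapped case via Lemma \ref{lem:reflgen} just makes explicit the use of the equality of the exponents $h_r-l_1$ that the paper's one-line treatment leaves implicit.
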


\begin{proof}
Two remarks first. By \eqref{eq:dictgen} both sides carry the same nonzero factor
$(-1)^{\binom r2}a_\delta(z)^{2}$; cancel it before comparing anything. What is left on each side is
a monomial times $s_{\tilde\alpha}s_{\alpha^{*}}$, a nonzero polynomial with nonnegative
coefficients, so the sign is $+$. The order matters: $a_\delta^{2}$ does \emph{not} have
nonnegative coefficients, and reading the sign off before cancelling it proves nothing. And the
monomial must be reabsorbed: with $d=h_r-l_1\ge1$, which holds because
$T$ is strictly decreasing, $(\prod_j z_j)^{d}s_{\tilde\alpha}=s_{\tilde\alpha+(d^{r})}$, and it is
this $d$ that plays the role of $\lambda_n+\mu_n$ in Theorem \ref{thm:PvW}, $\alpha^{*}$ being already
normalised. Cancelling $a_\delta^2$ and applying Theorem \ref{thm:PvW} with $n=r$ to
$(\lambda,\mu)=(\tilde\alpha+(d^{r}),\alpha^{*})$ and to the corresponding pair $(\nu,\rho)$ for
$\widetilde T$, the criterion says that the integer $d$ agrees and that
$\{\tilde\alpha,\alpha^{*}\}$ agrees as a multiset. Agreement in the same order says that $H,\widetilde
H$ and $L,\widetilde L$ are translates, and the equality of the two integers $d$ forces the same
translation on both halves, that is $\widetilde T=T+m$. Agreement after swapping says that $H$ is, up
to translation, the reverse of $\widetilde L$ and $L$ that of $\widetilde H$, that is $\widetilde
T=c-T$.
\end{proof}

\subsubsection*{The two maximisers reflect one another}

From here to the end of the subsection assume $|G|=2$ and write $G=\{g_{\mathrm a},g_{\mathrm b}\}$,
$T_{\mathrm a}=\mathcal{S}\setminus g_{\mathrm a}$, $T_{\mathrm b}=\mathcal{S}\setminus g_{\mathrm b}$,
with halves $T_{\mathrm a}=H_{\mathrm a}\sqcup L_{\mathrm a}$ and
$T_{\mathrm b}=H_{\mathrm b}\sqcup L_{\mathrm b}$, and
\[
  \mathcal{K}=T_{\mathrm a}\cap T_{\mathrm b}.
\]
By Theorem \ref{thm:Ggen}(b) the two transversals differ exactly in the tied classes $i$ and
$i'=i+t/2$, say
\begin{equation}\label{eq:swapgen}
  g_{\mathrm a}\ni p_1=c_{i,k+1},\ q_1=c_{i',k'},\qquad
  g_{\mathrm b}\ni p_2=c_{i,k},\ q_2=c_{i',k'+1},
\end{equation}
so that $p_1+p_2=q_1+q_2=\tau$ by \eqref{eq:incrgen}: the tie value is the sum of each swapped pair.

\begin{proposition}[translation forces reflection]\label{prop:translgen}
If $T_{\mathrm b}=T_{\mathrm a}+m$ with $m\ne0$, then $T_{\mathrm b}=\tau-T_{\mathrm a}$.
\end{proposition}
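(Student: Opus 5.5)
The plan is to use only the set identity forced by \eqref{eq:swapgen}, together with the two tie relations $p_1+p_2=q_1+q_2=\tau$. Theorem \ref{thm:Ggen}(b) says the maximisers differ exactly in the classes $i$ and $i'$. Since $g_{\mathrm a}\ni p_1,q_1$ and $g_{\mathrm b}\ni p_2,q_2$, we have
\[
T_{\mathrm b}=\bigl(T_{\mathrm a}\setminus\{p_2,q_2\}\bigr)\cup\{p_1,q_1\},\qquad \mathcal K=T_{\mathrm a}\setminus\{p_2,q_2\},
\]
with $p_2,q_2\in T_{\mathrm a}\setminus T_{\mathrm b}$ and $p_1,q_1\in T_{\mathrm b}\setminus T_{\mathrm a}$. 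By the symmetry $m\mapsto-m$, which exchanges the roles of $\mathrm a$ and $\mathrm b$, it suffices to treat $m>0$.

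First I will decompose $X:=T_{\mathrm a}$ into maximal arithmetic chains of step $m$. These are the classes of the relation $x\sim x+m$ restricted to $X$, and each is a set $\{x,x+m,\dots,x+km\}$. The set $X\setminus(X+m)$ consists exactly of the chain minima, and $(X+m)\setminus X$ consists exactly of the chain maxima plus $m$. The hypothesis $X+m=T_{\mathrm b}$ gives $X\setminus(X+m)=\{p_2,q_2\}$ and $(X+m)\setminus X=\{p_1,q_1\}$. So $X$ is the disjoint union of exactly two chains $C,C'$. Their minima are $p_2$ and $q_2$, and their maxima are $p_1-m$ and $q_1-m$ in one of two pairings.

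Second, I will check that the map $\rho(x)=\tau-m-x$ preserves $X$, which is equivalent to $\tau-T_{\mathrm a}=T_{\mathrm a}+m=T_{\mathrm b}$. This splits into two cases.

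In the \emph{parallel} pairing, the chain from $p_2$ ends at $p_1-m$ and the chain from $q_2$ ends at $q_1-m$. The endpoints of each chain then sum to $p_1+p_2-m=\tau-m$, respectively $q_1+q_2-m=\tau-m$. So each chain, being an arithmetic progression, is individually $\rho$-invariant.

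In the \emph{crossed} pairing, $C$ runs from $p_2$ to $q_1-m$ and $C'$ runs from $q_2$ to $p_1-m$. The tie relations give $q_1-p_2=(\tau-q_2)-(\tau-p_1)=p_1-q_2$, so the two chains have the same length. Moreover $\rho(p_2)=p_1-m$ and $\rho(q_1-m)=q_2$, so $\rho$ maps $C$ onto $C'$ with its order reversed, and likewise maps $C'$ onto $C$. In both cases $\rho(X)=X$, which proves the proposition.

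The only step needing care is the chain lemma: that $|X\setminus(X+m)|$ counts the maximal $m$-chains, and that the removed and added elements are exactly the chain ends. The chains of $X$ may interleave as sets of integers, but this does not matter, since the argument uses only the $m$-step structure. I expect no other obstacle. I also note that $m\ne0$ is used precisely to make the chains finite progressions with well-defined ends.
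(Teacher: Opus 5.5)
Your proposal is correct and is essentially the paper's proof: $T_{\mathrm a}$ splits into exactly two maximal step-$m$ progressions whose starting points form $T_{\mathrm a}\setminus T_{\mathrm b}=\{p_2,q_2\}$ and whose endpoints shifted by $m$ form $T_{\mathrm b}\setminus T_{\mathrm a}=\{p_1,q_1\}$; the parallel and crossed matchings are then settled by the tie relations, and your equal-length observation is the paper's $u=v$. The differences are cosmetic: you count the chains by their minima rather than by the paper's edge count $|\mathcal K|=2r-2$, and you check that $T_{\mathrm a}$ is invariant under $x\mapsto\tau-m-x$ instead of computing $\tau-T_{\mathrm a}$ directly.
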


\begin{proof}
$|T_{\mathrm a}\cap(T_{\mathrm a}+m)|=|\mathcal{K}|=2r-2$. Put an edge $x\to x+m$ whenever both $x$ and
$x+m$ lie in $T_{\mathrm a}$; since $m\ne0$ each vertex has at most one outgoing and one incoming
edge, so the graph is a disjoint union of directed paths. It has $2r$ vertices and, by the displayed
cardinality, exactly $2r-2$ edges, hence exactly two components. So $T_{\mathrm a}$ is a union of
exactly two maximal arithmetic progressions of step $m$, say
$T_{\mathrm a}=\{p,p+m,\dots,p+(u-1)m\}\cup\{q,\dots,q+(v-1)m\}$ with $u+v=2r$. Then
$T_{\mathrm a}\setminus T_{\mathrm b}=\{p,q\}$ and $T_{\mathrm b}\setminus T_{\mathrm a}=\{p+um,q+vm\}$.
The tie says that $x\mapsto\tau-x$ carries the first pair onto the second, and there are two
matchings. If $\tau-p=p+um$ and $\tau-q=q+vm$ then
$\tau-T_{\mathrm a}=\{p+m,\dots,p+um\}\cup\{q+m,\dots,q+vm\}=T_{\mathrm b}$. If $\tau-p=q+vm$ and
$\tau-q=p+um$ then subtracting gives $um=vm$, hence $u=v$, and again
$\tau-T_{\mathrm a}=T_{\mathrm b}$.
\end{proof}

\begin{proposition}[the centre is the tie value]\label{prop:centregen}
If $T_{\mathrm b}=c-T_{\mathrm a}$ then $c=\tau$.
\end{proposition}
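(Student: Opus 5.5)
The plan is to compare coordinate sums, using only that $x\mapsto c-x$ is an involution. No degree or character information should be needed: the proposition is a statement about two finite sets of integers related by a reflection. We already know from \eqref{eq:swapgen} exactly how the two sets $T_{\mathrm a}$ and $T_{\mathrm b}$ differ.

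First, record the set-theoretic relation between the two transversals. By Theorem \ref{thm:Ggen}(b) and \eqref{eq:swapgen}, $g_{\mathrm a}$ and $g_{\mathrm b}$ agree outside the tied classes $i,i'$. Hence
\[
T_{\mathrm a}\setminus T_{\mathrm b}=\{p_2,q_2\},\qquad T_{\mathrm b}\setminus T_{\mathrm a}=\{p_1,q_1\},
\]
and $T_{\mathrm a}=\mathcal K\sqcup\{p_2,q_2\}$, $T_{\mathrm b}=\mathcal K\sqcup\{p_1,q_1\}$, with $|\mathcal K|=2r-2$. The tie gives $p_1+p_2=q_1+q_2=\tau$.

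Second, show that $\mathcal K$ is itself stable under the reflection. Since $x\mapsto c-x$ is an involution, $T_{\mathrm b}=c-T_{\mathrm a}$ implies $c-T_{\mathrm b}=T_{\mathrm a}$. Therefore
\[
c-\mathcal K=(c-T_{\mathrm a})\cap(c-T_{\mathrm b})=T_{\mathrm b}\cap T_{\mathrm a}=\mathcal K .
\]
Summing over a reflection-stable set gives $2\,\Sigma\mathcal K=|\mathcal K|\,c=(2r-2)c$, that is $\Sigma\mathcal K=(r-1)c$. This also holds trivially when $r=1$ and $\mathcal K=\varnothing$.

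Third, sum the whole reflection. Pairing each $x\in T_{\mathrm a}$ with $c-x\in T_{\mathrm b}$ gives $\Sigma T_{\mathrm a}+\Sigma T_{\mathrm b}=2rc$. By the first step the left side equals
\[
2\,\Sigma\mathcal K+(p_1+p_2)+(q_1+q_2)=2(r-1)c+2\tau .
\]
Comparing the two expressions gives $2rc=2(r-1)c+2\tau$, hence $c=\tau$.

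I expect no real obstacle. The only point needing care is the first step: that the two maximisers differ in exactly the four displayed values and nowhere else. This is supplied by Theorem \ref{thm:Ggen}(b) together with the prefix form of Step 2, which fixes which neighbouring pair in each tied class is swapped. Once that is in place, the rest is the two-line sum computation above.
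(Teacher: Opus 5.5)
Your proof is correct and follows the paper's route: the involution $x\mapsto c-x$ fixes $\mathcal K=T_{\mathrm a}\cap T_{\mathrm b}$, hence carries $\{p_2,q_2\}$ onto $\{p_1,q_1\}$, and the tie $p_1+p_2=q_1+q_2=\tau$ finishes the argument. Where the paper splits into the two possible matchings, you sum instead; summing the bijection $\{p_2,q_2\}\to\{p_1,q_1\}$ alone already gives $2c=2\tau$ without the detour through $\Sigma\mathcal K$, and like your version it does not recover the paper's side remark (via \eqref{eq:congrgen}) that the crossed matching never occurs, which the statement does not need.
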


\begin{proof}
$x\mapsto c-x$ is an involution carrying $T_{\mathrm a}$ to $T_{\mathrm b}$, hence $T_{\mathrm b}$ to
$T_{\mathrm a}$, hence $\mathcal{K}$ to $\mathcal{K}$; therefore it carries
$T_{\mathrm a}\setminus\mathcal{K}$ onto $T_{\mathrm b}\setminus\mathcal{K}$. By \eqref{eq:swapgen}
these two sets are $T_{\mathrm a}\setminus\mathcal{K}=\{p_2,q_2\}$ and
$T_{\mathrm b}\setminus\mathcal{K}=\{p_1,q_1\}$, since $T_{\mathrm a}$ omits exactly $p_1,q_1$ and
$T_{\mathrm b}$ omits exactly $p_2,q_2$. Either $c-p_2=p_1$, and then $c-q_2=q_1$, whence
$c=p_1+p_2=\tau$; or $c-p_2=q_1$ and $c-q_2=p_1$, and adding gives $2c=p_1+p_2+q_1+q_2=2\tau$, again
$c=\tau$. In the second case moreover $c=p_2+q_1\equiv i+i'\equiv2i+t/2$ while $\tau\equiv2i$ by
\eqref{eq:congrgen}, so $c=\tau$ would force $t/2\equiv0\pmod t$, impossible for $t\ge2$: the crossed
matching does not occur.
\end{proof}

\begin{corollary}[the reflection]\label{cor:reflectgen}
If $\bigl[\det(x_i^{\beta_j})\bigr]_{D_1}=0$ then $|G|=2$ and $T_{\mathrm b}=\tau-T_{\mathrm a}$.
Consequently $\tau-\mathcal{K}=\mathcal{K}$ and, since $x\mapsto\tau-x$ also fixes $\{p_1,p_2\}$ and
$\{q_1,q_2\}$ setwise,
\begin{equation}\label{eq:Ssymgen}
  \tau-\bigl(\mathcal{K}\cup\{p_1,p_2,q_1,q_2\}\bigr)=\mathcal{K}\cup\{p_1,p_2,q_1,q_2\}.
\end{equation}
\end{corollary}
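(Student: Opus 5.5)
The plan is to chain the results just established, starting from the hypothesis that the top-degree component of the numerator vanishes.

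By Corollary \ref{cor:uniquegen}, if $|G|=1$ the component in degree $D_1$ is $\pm V P(T_g)\ne0$. Its vanishing therefore forces $|G|\ne1$, and since $|G|\le2$ by Theorem \ref{thm:Ggen}(b), and $G\neq\varnothing$ because it is the set of maximisers of a finite family, we get $|G|=2$. Then \eqref{eq:topgen} reads
\[
0=\pm V\bigl(w(g_{\mathrm a})P(T_{\mathrm a})+w(g_{\mathrm b})P(T_{\mathrm b})\bigr),
\]
and since $V\ne0$ and the $w$ are signs, $P(T_{\mathrm a})=\pm P(T_{\mathrm b})$.

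Next I apply Proposition \ref{prop:orbitgen}. It gives two possibilities: $T_{\mathrm b}=T_{\mathrm a}+m$ for some $m$, or $T_{\mathrm b}=c-T_{\mathrm a}$ for some $c$.
\begin{enumerate}
\item[(i)] In the translation case, $m=0$ is impossible, because then $T_{\mathrm a}=T_{\mathrm b}$, so $g_{\mathrm a}=g_{\mathrm b}$, contradicting $|G|=2$. For $m\ne0$, Proposition \ref{prop:translgen} turns the translation into the reflection $T_{\mathrm b}=\tau-T_{\mathrm a}$.
\item[(ii)] In the reflection case, Proposition \ref{prop:centregen} pins the centre to $c=\tau$.
\end{enumerate}
Either way $T_{\mathrm b}=\tau-T_{\mathrm a}$, which is the first assertion.

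For the consequences, write $\rho(x)=\tau-x$. It is an involution, so $\rho$ maps $T_{\mathrm a}$ onto $T_{\mathrm b}$ and $T_{\mathrm b}$ onto $T_{\mathrm a}$, and hence preserves $\mathcal K=T_{\mathrm a}\cap T_{\mathrm b}$. By \eqref{eq:swapgen}, $p_1+p_2=\tau=q_1+q_2$, so $\rho$ swaps $p_1\leftrightarrow p_2$ and $q_1\leftrightarrow q_2$. It therefore fixes $\{p_1,p_2\}$ and $\{q_1,q_2\}$ setwise, and the union $\mathcal K\cup\{p_1,p_2,q_1,q_2\}$ is $\rho$-stable. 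That union equals $T_{\mathrm a}\cup T_{\mathrm b}$, which gives \eqref{eq:Ssymgen}.

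The step needing care is the first: concluding $P(T_{\mathrm a})=\pm P(T_{\mathrm b})$ from the vanishing. This uses that each $P(T_g)$ is a nonzero Laurent polynomial, which holds because it is a product of alternants, and that there are exactly two terms. The rigidity input, Theorem \ref{thm:PvW}, is already absorbed into Proposition \ref{prop:orbitgen}, so the rest is bookkeeping.
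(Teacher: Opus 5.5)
Your proof is correct and follows the paper's own route. You exclude $|G|=1$ by Corollary~\ref{cor:uniquegen}, read $P(T_{\mathrm a})=\pm P(T_{\mathrm b})$ off \eqref{eq:topgen}, then apply Proposition~\ref{prop:orbitgen} followed by Propositions~\ref{prop:translgen} and~\ref{prop:centregen}, and obtain the consequences from the involution $x\mapsto\tau-x$ and the tie $p_1+p_2=q_1+q_2=\tau$. Your explicit handling of $m=0$, which would force $g_{\mathrm a}=g_{\mathrm b}$, is a detail the paper leaves implicit.
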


\begin{proof}
$|G|=1$ is excluded by Corollary \ref{cor:uniquegen}. With $|G|=2$, \eqref{eq:topgen} and
$w(g)=\pm1$ give $P(T_{\mathrm a})=\pm P(T_{\mathrm b})$; apply Proposition \ref{prop:orbitgen} and
then Propositions \ref{prop:translgen} and \ref{prop:centregen}. The identity
$\tau-\mathcal{K}=\mathcal{K}$ is the first line of the proof of Proposition \ref{prop:centregen}, and
$p_1+p_2=q_1+q_2=\tau$ is the tie.
\end{proof}

\begin{remark}[three constants that are easy to confuse]
$\tau=\Delta_{(r)}$ is the tie value, a datum of the greedy with nothing to do with symmetry; $c$ is a
centre of reflection, produced by rigidity with nothing to do with residues; $C=\min\mathcal{S}+\max\mathcal{S}$
is a statistic of the beta set, and it is the one condition \textup{(ii)} uses. That $c=\tau$
\textup{(}Proposition \ref{prop:centregen}\textup{)} and $C=\tau$ \textup{(}Corollary
\ref{cor:Ctaugen}\textup{)} are theorems, not notation, and both need the vanishing hypothesis:
writing $C$ where $\tau$ belongs proves less than it appears to. We made that slip; hence the three
symbols.
\end{remark}

\subsubsection*{The centre of the beta set}

The two maximisers agree outside the tied classes, so their common omitted part
\begin{equation}\label{eq:gcomgen}
  g_{\mathrm{com}}:=g_{\mathrm a}\cap g_{\mathrm b},\qquad
  \mathcal{S}=\mathcal{K}\ \sqcup\ g_{\mathrm{com}}\ \sqcup\ \{p_1,p_2,q_1,q_2\}
\end{equation}
is empty exactly when $e=2$. By \eqref{eq:Ssymgen} the set $\mathcal{S}\setminus g_{\mathrm{com}}$ is
stable under $x\mapsto\tau-x$, but $g_{\mathrm{com}}$ need not be, so $\mathcal{S}$ itself need not be.
That is more than is needed: only the two \emph{extremes} of $\mathcal{S}$ have to avoid
$g_{\mathrm{com}}$, and they do. Figure \ref{fig:reflection} draws both the reflection and the
shape that has $|G|=2$ without it, which is what the next proposition must exclude.

\begin{figure}[!ht]
\centering
\includegraphics[width=\textwidth]{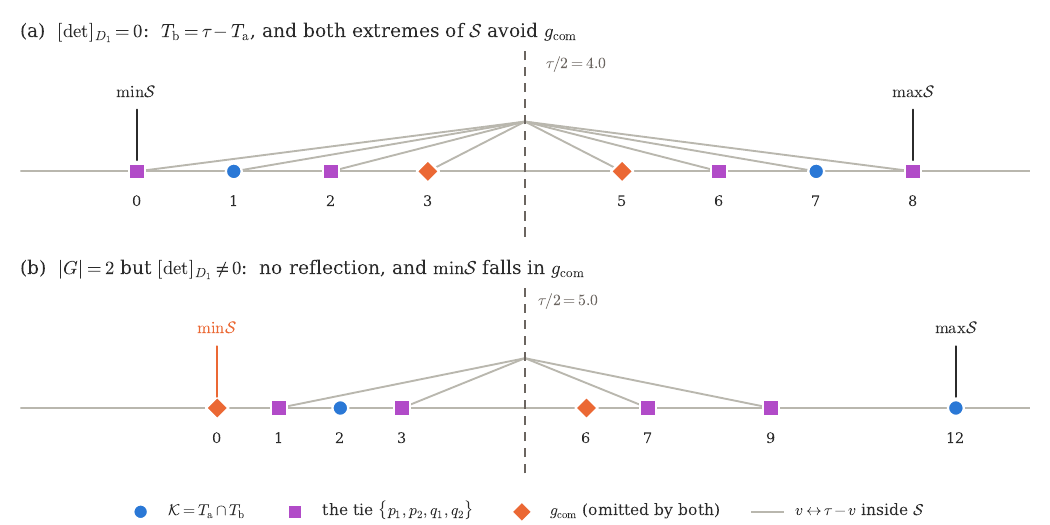}
\caption{Corollary \ref{cor:reflectgen} and Proposition \ref{prop:A1gen} on two shapes, both with
$t=4$, $r=2$ and $|G|=2$. Each panel draws $\mathcal{S}$ on a line, coloured by which part of
\eqref{eq:gcomgen} the value belongs to; an arc joins $v$ to $\tau-v$ whenever both lie in
$\mathcal{S}$, and the dashed line is the centre $\tau/2$. \emph{(a)} $\beta=(8,7,6,5,3,2,1,0)$:
$\tau=8$, $\mathcal{K}=\{1,7\}$, $g_{\mathrm{com}}=\{3,5\}$, tie $\{0,2,6,8\}$. Here
$T_{\mathrm b}=\tau-T_{\mathrm a}$, so $\mathcal{S}\setminus g_{\mathrm{com}}$ is symmetric about
$\tau/2$, and both extremes of $\mathcal{S}$ avoid $g_{\mathrm{com}}$, so $C=0+8=\tau$. Here
$g_{\mathrm{com}}=\{3,5\}$ turns out to be symmetric as well --- the arc joining its two members is
drawn like any other --- which is what Conjecture \ref{conj:gcom} asserts in general and what
\S\ref{sec:rigidity} does not prove: the reflection is established on $\mathcal{S}\setminus
g_{\mathrm{com}}$ and observed on the rest. \emph{(b)}
$\beta=(12,9,7,6,3,2,1,0)$: $\tau=10$ and $|G|=2$ again, but the two maximisers are not reflections
of one another; $\min\mathcal{S}=0$ falls in $g_{\mathrm{com}}$, and $C=0+12=12\neq\tau$. Panel (b)
is the reason Proposition \ref{prop:A1gen} cannot be proved from $|G|=2$ alone. Computed by
\texttt{anc/fig\_v2.py}.}
\label{fig:reflection}
\end{figure}

\begin{proposition}[the extremes avoid the common part]\label{prop:A1gen}
Suppose $\bigl[\det(x_i^{\beta_j})\bigr]_{D_1}=0$. Then
$\max\mathcal{S}\notin g_{\mathrm{com}}$ and $\min\mathcal{S}\notin g_{\mathrm{com}}$.
\end{proposition}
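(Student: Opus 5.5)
The plan is to argue by contradiction, separately for $\max\mathcal{S}$ and $\min\mathcal{S}$. The proof combines three earlier facts: the prefix form of Step 2 in the proof of Theorem \ref{thm:Ggen}, the description of the optima in Step 6, and the symmetry \eqref{eq:Ssymgen}. Put $X:=\mathcal{S}\setminus g_{\mathrm{com}}=\mathcal{K}\cup\{p_1,p_2,q_1,q_2\}$. By Corollary \ref{cor:reflectgen}, the vanishing hypothesis gives $|G|=2$ and $\tau-X=X$, hence
\[
\max X+\min X=\tau .
\]
Trivially $\min\mathcal{S}\le\min X$ and $\max X\le\max\mathcal{S}$.

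First I will record where $g_{\mathrm{com}}$ lives. By Theorem \ref{thm:Ggen}(b) the two maximisers differ exactly in the tied classes $i$ and $i'=i+t/2$, so $g_{\mathrm{com}}$ meets only the other excess classes. In such a class $j$ the two maximisers use the same $k_j$, and no increment of class $j$ equals $\tau$, because by Step 5 only the classes $i,i'$ carry increments equal to $\tau$. Step 6 then says that in class $j$ the increments $\Delta_j(1),\dots,\Delta_j(k_j)$ are selected and are all $>\tau$, and the remaining ones are unselected and all $<\tau$.

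\emph{The maximum.} Suppose $M=\max\mathcal{S}$ lies in $g_{\mathrm{com}}$, in class $j$. Then $M=c_{j,1}$. The prefix form puts $g_j=c_{j,k_j+1}$, so $g_j=M$ forces $k_j=0$. Hence $\Delta_j(1)=M+c_{j,2}$ is unselected, so $M+c_{j,2}<\tau$ (and $n_j\ge2$ because $j\in\mathcal{E}$). The element $c_{j,2}$ is not chosen in class $j$, so it lies in both $T_{\mathrm a}$ and $T_{\mathrm b}$, that is in $\mathcal{K}\subseteq X$. Therefore $\min X\le c_{j,2}$, and so
\[
\max X=\tau-\min X\ \ge\ \tau-c_{j,2}\ >\ M=\max\mathcal{S},
\]
which contradicts $\max X\le\max\mathcal{S}$.

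\emph{The minimum.} This case is the mirror image. Suppose $m=\min\mathcal{S}=c_{j,n_j}$ lies in $g_{\mathrm{com}}$. Then $k_j=n_j-1$, so $\Delta_j(n_j-1)=c_{j,n_j-1}+m$ is selected and therefore $>\tau$. The element $c_{j,n_j-1}$ lies in the top half $A$ of both transversals, hence in $\mathcal{K}\subseteq X$. So
\[
\min X=\tau-\max X\ \le\ \tau-c_{j,n_j-1}\ <\ m=\min\mathcal{S},
\]
which is again impossible.

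The only step needing care, and the one I expect to be the real obstacle, is the strictness of the two inequalities against $\tau$. This depends on showing that a class meeting $g_{\mathrm{com}}$ is never one of the tied classes, so that none of its increments equals $\tau$. That comes from Theorem \ref{thm:Ggen}(b) together with the congruence \eqref{eq:congrgen}.

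I also need to confirm that the witnesses $c_{j,2}$ and $c_{j,n_j-1}$ really lie in $\mathcal{K}$ rather than in $g_{\mathrm{com}}$. This follows because a transversal omits exactly one value per class. Panel (b) of Figure \ref{fig:reflection} shows why the vanishing hypothesis cannot be dropped: without it the symmetry of $X$ fails, and the argument has nothing to push against.
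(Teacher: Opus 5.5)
Your proof is correct and is essentially the paper's argument: assume an extreme of $\mathcal{S}$ lies in $g_{\mathrm{com}}$, observe that its class is untied, so the relevant increment lies strictly on the wrong side of $\tau$, and reflect the neighbouring element $c_{j,2}$ (resp.\ $c_{j,n_j-1}$) through $\tau$ to obtain a point of $\mathcal{S}$ beyond that extreme. The only difference is bookkeeping: the paper reflects via the half-exchange $H_{\mathrm b}=\tau-L_{\mathrm a}$ of \eqref{eq:halvesgen}, whereas you use the symmetry \eqref{eq:Ssymgen} of $\mathcal{S}\setminus g_{\mathrm{com}}$ together with the fact that the witness lies in $\mathcal{K}$. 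Your route is slightly more economical, since the argument only ever uses that the reflected point lies in $\mathcal{S}$, not which half it falls in.
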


\begin{proof}
By Corollary \ref{cor:reflectgen}, $T_{\mathrm b}=\tau-T_{\mathrm a}$. Since $x\mapsto\tau-x$ reverses
order, it carries the bottom half of $T_{\mathrm a}$ to the top half of $T_{\mathrm b}$ and back:
\begin{equation}\label{eq:halvesgen}
  H_{\mathrm b}=\tau-L_{\mathrm a},\qquad L_{\mathrm b}=\tau-H_{\mathrm a}.
\end{equation}
Recall from Step 2 of the proof of Theorem \ref{thm:Ggen} that a maximiser is described by the
integers $k_i$: inside class $i$, read decreasingly, the first $k_i$ excess values lie in the top half,
the next one is the omitted $g_i$, and the remaining ones lie in the bottom half. Recall also from
Theorem \ref{thm:Ggen}(a) that a maximiser selects every increment $>\tau$, that the increments of a
class are selected in prefix order, and that the only increments equal to $\tau$ are the two tied ones,
which lie in the two tied classes.

\emph{The maximum.} Put $m=\max\mathcal{S}$ and let $i$ be its class, so $m=c_{i,1}$ and $n_i\ge2$;
write $c=c_{i,2}$. Then $m\in g_{\mathrm a}\cap g_{\mathrm b}$ if and only if $k_i=0$ in both
maximisers. Assume it. Then class $i$ is not tied: a maximiser taking the tied increment of a tied
class has $k_i\ge1$ there. Hence no increment of class $i$ is $\ge\tau$, and in particular
\[
  \Delta_i(1)=m+c<\tau .
\]
On the other hand $k_i=0$ places $c$ in the bottom half of both maximisers, so $c\in L_{\mathrm b}$,
and \eqref{eq:halvesgen} gives $\tau-c\in H_{\mathrm a}\subseteq\mathcal{S}$, whence
$\tau-c\le\max\mathcal{S}=m$, that is $\tau\le m+c$. The two displayed inequalities contradict each
other.

\emph{The minimum} is the mirror image. Put $m^{-}=\min\mathcal{S}$, let $i^{-}$ be its class,
$n=n_{i^{-}}\ge2$ and $c'=c_{i^{-},n-1}$, so that $m^{-}=c_{i^{-},n}$. Then
$m^{-}\in g_{\mathrm a}\cap g_{\mathrm b}$ if and only if $k_{i^{-}}=n-1$ in both, that is if and only
if both maximisers select \emph{every} increment of class $i^{-}$; again class $i^{-}$ cannot be tied,
since the maximiser that does not take the tied increment of a tied class omits one there. Hence every
increment of class $i^{-}$ exceeds $\tau$, in particular $\Delta_{i^{-}}(n-1)=m^{-}+c'>\tau$. And
$k_{i^{-}}=n-1$ places $c'$ in the top half of both, so $c'\in H_{\mathrm b}$, and
\eqref{eq:halvesgen} gives $\tau-c'\in L_{\mathrm a}\subseteq\mathcal{S}$, whence
$\tau-c'\ge\min\mathcal{S}=m^{-}$, that is $\tau\ge m^{-}+c'$. Contradiction.
\end{proof}

\begin{corollary}[the centre is the tie value]\label{cor:Ctaugen}
If $\bigl[\det(x_i^{\beta_j})\bigr]_{D_1}=0$ then
$C:=\min\mathcal{S}+\max\mathcal{S}=\tau$. Moreover $t$ is even and the two tied classes are exactly
the two solutions of $2\kappa\equiv C\pmod t$, both lying in $\mathcal{E}$.
\end{corollary}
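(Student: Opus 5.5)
The plan is to read Corollary \ref{cor:Ctaugen} off the reflection already established, using Proposition \ref{prop:A1gen} to place the two extremes of $\mathcal{S}$ inside the part on which the reflection acts.

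First, assume the top-degree component vanishes. By Corollary \ref{cor:reflectgen} we have $|G|=2$, and the set $\mathcal{S}\setminus g_{\mathrm{com}}=\mathcal{K}\cup\{p_1,p_2,q_1,q_2\}$ is stable under $x\mapsto\tau-x$, which is \eqref{eq:Ssymgen}. Proposition \ref{prop:A1gen} says that $M=\max\mathcal{S}$ and $m^-=\min\mathcal{S}$ both lie outside $g_{\mathrm{com}}$, so both lie in that stable set. The reflection reverses order on a finite set it preserves, so it must send the maximum of the set to its minimum. Since $M$ and $m^-$ are also the maximum and minimum of the smaller set $\mathcal{S}\setminus g_{\mathrm{com}}$, we get $\tau-M=m^-$, that is, $C=\tau$.

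Second, for the residue statement we use Theorem \ref{thm:Ggen}(b): $|G|=2$ forces $t$ even, and the two tied increments lie in classes $i$ and $i+t/2$, both in $\mathcal{E}$. By \eqref{eq:congrgen}, $\tau\equiv 2i\pmod t$, so $C\equiv 2i$. The congruence $2\kappa\equiv C\pmod t$ has, for even $t$, exactly the two solutions $\kappa=i$ and $\kappa=i+t/2$, because the kernel of doubling on $\ZZ/t$ has order two. These two solutions are the tied classes.

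The main obstacle is already handled upstream, in Proposition \ref{prop:A1gen}. Figure \ref{fig:reflection}(b) shows that without it an extreme can fall into $g_{\mathrm{com}}$, and then the order-reversal step fails. The remaining check here is that "maximum of the subset equals maximum of $\mathcal{S}$", which holds because the subset contains $M$.
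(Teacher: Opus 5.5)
Your proposal is correct and is the paper's own argument. Proposition~\ref{prop:A1gen} places both extremes of $\mathcal{S}$ in the reflection-stable set $\mathcal{S}\setminus g_{\mathrm{com}}$ of \eqref{eq:Ssymgen}, which gives $C=\tau$, and Theorem~\ref{thm:Ggen}(b) together with \eqref{eq:congrgen} gives the residue statement, exactly as in the paper.
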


\begin{proof}
By \eqref{eq:Ssymgen} the set $\mathcal{S}\setminus g_{\mathrm{com}}$ is stable under
$x\mapsto\tau-x$, so its largest and smallest elements add up to $\tau$; by Proposition
\ref{prop:A1gen} the two extremes of $\mathcal{S}$ lie in that set, hence they \emph{are} its largest
and smallest elements, and $C=\tau$. Theorem \ref{thm:Ggen}(b) gives $t$ even and the two tied classes
$i,i+t/2$, both in $\mathcal{E}$; by \eqref{eq:congrgen} $\tau\equiv2i\pmod t$, so
$2\kappa\equiv\tau=C$ has exactly the two solutions $\kappa=i$ and $\kappa=i+t/2$.
\end{proof}

\begin{corollary}[a necessary condition, for every even $t$ and every $r$]\label{cor:necgen}
Let $t\ge2$ and $r\ge1$, and suppose $\Phi_{t,r}(\lambda;z)\equiv0$ with every $n_i\ge1$. Then $t$
is even, $|G|=2$, $\min\mathcal{S}+\max\mathcal{S}=\tau$, and $\mathcal{S}\setminus g_{\mathrm{com}}$
is symmetric about $\tau/2$.
\end{corollary}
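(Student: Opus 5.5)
This corollary is an assembly of the preceding results, and I would prove it by chaining them in the order the subsection set them up. The hypothesis $\Phi_{t,r}\equiv0$ is equivalent to the vanishing of the numerator $\det(x_i^{\beta_j})$, because the denominator $\det(x_i^{N-j})$ is a nonzero Laurent polynomial. A Laurent polynomial vanishes only if each of its homogeneous components does. In particular the component of maximal total degree $D_1$ vanishes:
\[
\bigl[\det(x_i^{\beta_j})\bigr]_{D_1}=0 .
\]
The occupancy hypothesis $n_i\ge1$ ensures that $\mathcal T\ne\varnothing$. So Lemma \ref{lem:L3gen}, the expansion \eqref{eq:topgen} and Theorem \ref{thm:Ggen} all apply, and $G$ is nonempty.

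The first step gives $|G|=2$. If instead $|G|=1$, Corollary \ref{cor:uniquegen} gives $\Phi_{t,r}\not\equiv0$, which contradicts the hypothesis. Equivalently, I would quote the first clause of Corollary \ref{cor:reflectgen} directly.

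The second step gives the parity of $t$. Theorem \ref{thm:Ggen}(b) says that $|G|=2$ forces a tie $\Delta_{(r)}=\Delta_{(r+1)}$ between increments from classes $i$ and $i+t/2$. That is possible only for $t$ even. Alternatively, Theorem \ref{thm:Ggen}(c) rules out odd $t$ outright, and this is just the contrapositive of Corollary \ref{cor:oddgen}.

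The third step gives the centre identity $\min\mathcal S+\max\mathcal S=\tau$. This is Corollary \ref{cor:Ctaugen}, which takes the vanishing of the top component as its hypothesis. Its proof combines two facts:
\begin{enumerate}
\item[(1)] the reflection $T_{\mathrm b}=\tau-T_{\mathrm a}$ of Corollary \ref{cor:reflectgen}, obtained by applying Purbhoo--van Willigenburg rigidity (Theorem \ref{thm:PvW}) through Propositions \ref{prop:orbitgen}, \ref{prop:translgen} and \ref{prop:centregen};
\item[(2)] Proposition \ref{prop:A1gen}, which shows that the two extremes of $\mathcal S$ avoid $g_{\mathrm{com}}$.
\end{enumerate}

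The last step gives the symmetry of $\mathcal S\setminus g_{\mathrm{com}}$ about $\tau/2$. By the decomposition \eqref{eq:gcomgen}, $\mathcal S\setminus g_{\mathrm{com}}=\mathcal K\cup\{p_1,p_2,q_1,q_2\}$, and by \eqref{eq:Ssymgen} this set is invariant under $x\mapsto\tau-x$. That is exactly the stated symmetry.

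I expect no real obstacle here, since the substantive work is already done in Corollaries \ref{cor:reflectgen} and \ref{cor:Ctaugen}. The one point that needs care is the passage from $\Phi_{t,r}\equiv0$ to the vanishing of the single component $[\cdot]_{D_1}$ of the numerator. The grading has to be the total degree, so that \eqref{eq:topgen} is the genuine top component, as the text stresses.

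I would also record the dependence on the external input. Conditions (i) and (ii) (that $t$ is even and $|G|=2$) use only the greedy Theorem \ref{thm:Ggen}. The centre identity and the symmetry statement rest on Theorem \ref{thm:PvW}, through Proposition \ref{prop:orbitgen}.
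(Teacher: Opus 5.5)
Your proposal is correct and is the paper's proof: from $\Phi_{t,r}\equiv0$ the numerator vanishes, hence so does its degree-$D_1$ component. Corollaries \ref{cor:reflectgen} and \ref{cor:Ctaugen} together with \eqref{eq:Ssymgen} then give $|G|=2$, $t$ even, $\min\mathcal{S}+\max\mathcal{S}=\tau$ and the symmetry of $\mathcal{S}\setminus g_{\mathrm{com}}$; your separate derivation of the parity of $t$ from Theorem \ref{thm:Ggen}(b), and your note that only the last two conclusions pass through Theorem \ref{thm:PvW}, are accurate refinements of that same chain rather than a different route.
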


\begin{proof}
$\Phi_{t,r}$ and the numerator differ by a nonzero factor, so the numerator vanishes identically and
in particular $\bigl[\det(x_i^{\beta_j})\bigr]_{D_1}=0$. Apply Corollaries \ref{cor:reflectgen} and
\ref{cor:Ctaugen} and \eqref{eq:Ssymgen}.
\end{proof}

\noindent This is the part of \S\S\ref{sec:extremal}--\ref{sec:rigidity} that survives outside
$t=2$. What it does \emph{not} give is the step that closes $t=2$, where $e=2$ forces
$g_{\mathrm{com}}=\varnothing$ and the symmetry of $\mathcal{S}\setminus g_{\mathrm{com}}$ becomes
the symmetry of $\beta$ itself. For $t\ge4$ the common part $g_{\mathrm{com}}$ can be non-empty, and
the argument above says nothing about it: the reflection is established on
$\mathcal{S}\setminus g_{\mathrm{com}}$ only, so it constrains $\beta$ without determining it. That
$g_{\mathrm{com}}$ is symmetric too is what the measurements show and what Conjecture
\ref{conj:gcom} states; it is not proved here.

\subsubsection*{The condition is sufficient, for every \texorpdfstring{$t$}{t} and every
\texorpdfstring{$r$}{r}}

A necessary condition invites its converse, and half of it can be had outright. The reflection that
Corollary \ref{cor:necgen} produces from the top stratum turns out, when it holds on all of
$\mathcal{S}$, to be a symmetry of \emph{every} term of the Laplace sum and not only of the two
maximisers; and an involution with no fixed point cancels what it pairs. Write
\[
C:=\min\mathcal{S}+\max\mathcal{S},\qquad\text{and write } x\mapsto C-x
\text{ for the reflection it centres.}
\]

\begin{theorem}[the sufficient direction, all $t$ and $r$]\label{thm:suff}
Let $t\ge2$, $r\ge1$, $N=t+2r$, and let every residue class be occupied. Suppose
\begin{equation}\label{eq:suffcond}
C-\mathcal{S}=\mathcal{S}
\qquad\text{and}\qquad
\Delta_i(k)=C\ \text{for some } i\in\mathcal{E},\ k .
\end{equation}
Then $\Phi_{t,r}(\lambda;z)\equiv0$.
\end{theorem}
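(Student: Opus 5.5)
The plan is to pair off every term of the Laplace expansion of Lemma \ref{lem:L3gen} by the reflection $x\mapsto C-x$, with no reference to the degree filtration. The first step is to check that the reflection acts on the index set. By hypothesis $C-\mathcal{S}=\mathcal{S}$, and $x\mapsto C-x$ sends residue $i$ to residue $C-i$ modulo $t$. It therefore permutes the excess classes, preserving the sizes $n_i$, since each excess class is the part of $\mathcal{S}$ in one residue. A choice $g$ of one excess value per excess class is sent to $\bar g:=C-g$, which is again such a choice. Since $\mathcal{S}$ is symmetric, $T_{\bar g}=\mathcal{S}\setminus\bar g=C-T_g$.

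Next I show that $g\mapsto\bar g$ has no fixed point; this is where the second half of \eqref{eq:suffcond} enters. Let $i\in\mathcal{E}$ and $k$ be such that $\Delta_i(k)=c_{i,k}+c_{i,k+1}=C$. Then $2i\equiv C$, so class $i$ is mapped to itself. Because the reflection reverses order within the class and sends $c_{i,k}$ to $c_{i,k+1}$, it acts on the class $c_{i,1}>\dots>c_{i,n_i}$ as $c_{i,m}\mapsto c_{i,n_i+1-m}$ with $n_i=2k$. Hence no element of class $i$ is fixed, since $C/2$ would have to lie strictly between $c_{i,k}$ and $c_{i,k+1}$. It follows that $\bar g_i\neq g_i$ for every $g$, so the involution is free.

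The third step compares the complementary minors. The minor $A(T)$ is $\det(x^{u})$ over the rows $z_1,\zb_1,\dots,z_r,\zb_r$ and the values $u\in T$. Replacing $T$ by $C-T$ multiplies each row by $x^{C}$, and the product of these $2r$ factors is $1$. Each $x^{-u}$ is the entry of the partner row, so the replacement also swaps the rows $z_j\leftrightarrow\zb_j$, contributing $(-1)^r$. Finally, the columns come out in reversed order, contributing $(-1)^{\binom{2r}{2}}=(-1)^{r}$. Hence $A(C-T)=A(T)$, exactly as in Lemma \ref{lem:reflgen} but now for the whole minor rather than its top-degree part. It therefore suffices to prove $w(\bar g)=-w(g)$ for every $g$; the terms then cancel in pairs and the numerator, hence $\Phi_{t,r}$, vanishes.

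The sign identity $w(\bar g)=-w(g)$ is the main obstacle. The difficulty is that the frozen singleton values, which every transversal keeps, need not be symmetric about $C/2$. My first approach is to connect $g$ to $\bar g$ by single-class moves and multiply the ratios \eqref{eq:step} of Lemma \ref{lem:step}. Classes come in pairs $\{i,C-i\}$ with $i\neq C-i$, together with self-paired classes such as the distinguished class $i$. On a pair the move is $g_i\mapsto C-g_{C-i}$ and $g_{C-i}\mapsto C-g_i$, and I expect the two resulting factors $(-1)^{1+B+M}$ to combine to an even exponent. This should follow because $\mathcal{S}$ is symmetric and the intervals $(v,u)$ and $(C-u,C-v)$ are mirror images, so the $\mathcal{S}$-part of $B$ matches; the non-$\mathcal{S}$ columns (singletons) enter both $B$ and $M$ equally and so contribute the same parity to $B+M$. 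On a self-paired class the move is $g_i\mapsto C-g_i$, and its parity depends only on the position of $g_i$ within the class. The distinguished class is where the single net factor $-1$ should arise, because there the number of class elements strictly between $g_i$ and $C-g_i$ has fixed parity.

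I would verify this bookkeeping first on the configuration of Figure \ref{fig:reflection}(a) before writing the general parity count. If the pairwise composition becomes messy, the fallback is to compute $w(g)$ in closed form, in the manner of the proof of Proposition \ref{rem:sign}. One would write $\sum_{j\in S}j$ and $\inv(b_S)$ through the counts \eqref{eq:parity}, and then compare directly under $x\mapsto C-x$.
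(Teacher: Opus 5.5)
\textbf{There is a genuine gap: the sign identity $w(\bar g)=-w(g)$ is not proved.} Your first three steps are correct and coincide with the paper's proof:
\begin{itemize}
\item the involution $g\mapsto C-g$ with $T_{\bar g}=C-T_g$ is Lemma~\ref{lem:reflact};
\item freeness holds because the class with $\Delta_i(k)=C$ is reflection-stable of even size $2k$, so it misses $C/2$;
\item $A(C-T)=A(T)$ follows from the row scaling and column reversal of Lemma~\ref{lem:Arefl}.
\end{itemize}
The sign identity is the whole content of the theorem. The bookkeeping you sketch for it also rests on expectations that are false.

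Your remark that singleton columns enter $B$ and $M$ of Lemma~\ref{lem:step} alike is right. But $M$ also counts the \emph{current} choices of the other excess classes, and these do not cancel within a pair $\{i,C-i\}$. Nor is a self-paired move $u\mapsto C-u$ governed by the position of $g_i$ in its class. Once $C/2\notin\mathcal S$, its parity is $1+\#\bigl(g_{\mathrm{other}}\cap(C-u,u)\bigr)$, which depends on the other classes.

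A concrete check uses the shape of Figure~\ref{fig:pairing}(a): $t=4$, $r=2$, $\beta=(12,11,10,9,3,2,1,0)$, $C=12$. Start from the choices $12,9,10,11$ in the classes $0,1,2,3$.
\begin{itemize}
\item Moving class $1$ from $9$ to $1$ gives $(-1)^{1+2+0}=-1$.
\item Then moving class $3$ from $11$ to $3$ gives $(-1)^{1+2+1}=+1$.
\item Then class $0$ from $12$ to $0$ gives $(-1)^{1+6+3}=+1$.
\item Then class $2$ from $10$ to $2$ gives $(-1)^{1+2+1}=+1$.
\end{itemize}
The total is $-1$, as it must be. But the odd exponent sits in the pair, not in a self-paired class. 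Moreover there are two self-paired classes here, $0$ and $2$, not one distinguished class.

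\textbf{The missing idea is that the ratio is global.} $w(\bar g)/w(g)$ is the same for every $g$. It is governed by $\nu=\#\{x\in\mathcal S:2x=C\}$ and $\eta=\#\{i\in\mathcal E:2i\equiv C\}$, and equals $(-1)^{e-(\nu+\eta)/2}$ (Lemma~\ref{lem:signformula}). This is $-1$ exactly when $\eta=2$ (Corollary~\ref{cor:whenminus}). With one self-paired excess class, or none, it is $+1$ and nothing cancels.

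So the proof must show that the hypothesis forces $t$ even and forces the second solution $i_0+t/2$ of $2i\equiv C$ to be an excess class as well. This is a parity argument (Lemma~\ref{lem:signrefl}):
\begin{itemize}
\item Counting fixed points of involutions gives $\nu\equiv|\mathcal S|=2r+e\equiv e\equiv\eta\pmod 2$.
\item For odd $t$ the excess classes other than $i_0$ pair off, so $|\mathcal S|$ is odd and $C/2\in\mathcal S$. Its residue must then be $i_0$, whose class misses $C/2$: a contradiction.
\item For even $t$, either $C/2\in\mathcal S$ lies in the class $i_0+t/2$, or $e$ is even and parity forces that class into $\mathcal E$.
\end{itemize}

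For the formula itself your fallback is the right instinct, but the efficient form is a single permutation-sign computation rather than a chain of single-class moves. Up to a factor depending on $t$ alone, $w(g)$ is the sign of the permutation listing the frozen columns in residue order and then $T_g$ decreasingly. Apply the reflection on $\mathcal S$ (the identity on singletons) and restore both orders. This multiplies the sign by $\sgn(\mathcal R)\,\sgn(\mathcal R_t)\,(-1)^{\binom{2r}{2}}$, which visibly does not involve $g$.
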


The two clauses do two different jobs, and it is worth saying which before the proof: the first makes
the reflection act on the transversals, and the second makes that action free. Figure
\ref{fig:pairing} is the whole mechanism on one shape, with a shape that satisfies the first clause
and not the second beside it.

\begin{figure}[!ht]
\centering
\includegraphics[width=\textwidth]{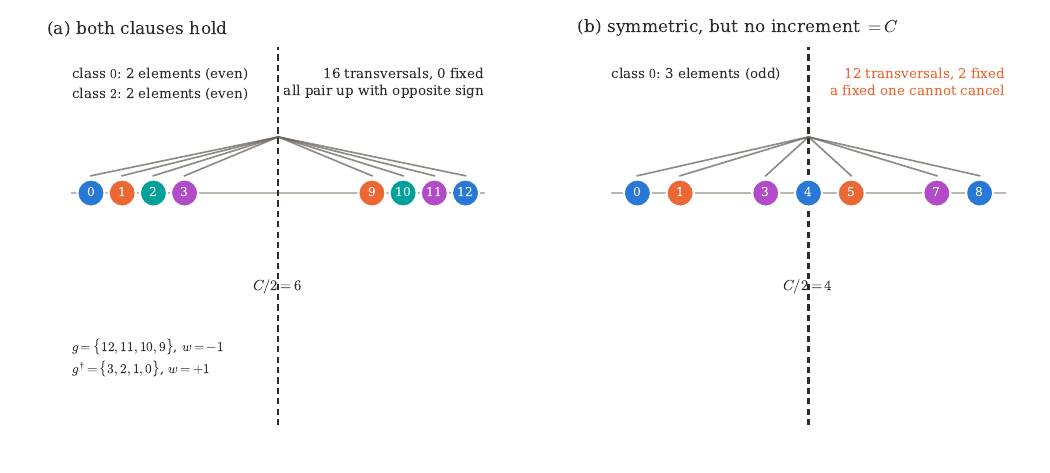}
\caption{Why the sum cancels, and what the second clause is for. Each panel draws $\mathcal{S}$ on a
line, coloured by residue class, with an arc joining $v$ to $C-v$ and the dashed axis at $C/2$.
\emph{(a)} $\beta=(12,11,10,9,3,2,1,0)$ at $t=4$, $r=2$: both classes fixed by the reflection have
\emph{even} size, so neither contains $C/2$; the $16$ transversals pair off with no fixed point, and
every pair carries opposite signs, so every term of \eqref{eq:laplacegen} is cancelled by another.
One such pair is printed with its two signs, computed and not asserted. \emph{(b)} a shape with
$C-\mathcal{S}=\mathcal{S}$ but no increment equal to $C$, found by search rather than chosen: there
the fixed class has \emph{odd} size, so it contains $C/2$, and two transversals are their own
reflections. A term paired with itself cannot cancel, and the sum survives. That is the whole of
what the second clause buys. Computed by \texttt{anc/fig\_involution2.py}.}
\label{fig:pairing}
\end{figure}

\begin{lemma}[the reflected transversal]\label{lem:reflact}
Assume $C-\mathcal{S}=\mathcal{S}$. Then $x\mapsto C-x$ carries the class $i$ onto the class
$i^{*}:=C-i \bmod t$, both in $\mathcal{E}$ or neither; so
\[
g^{\dagger}:=\bigl(C-(g\cap\mathcal{S})\bigr)\cup\bigl(g\setminus\mathcal{S}\bigr)
\]
is again a transversal, $g\mapsto g^{\dagger}$ is an involution on them, and $T_{g^{\dagger}}=C-T_g$.
\end{lemma}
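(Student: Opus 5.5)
The plan is to read the statement as a fact about residues, about set membership and about the count of the excess, verified in that order. First I would show that $x\mapsto C-x$ carries a residue class of $\mathcal{S}$ onto a residue class of $\mathcal{S}$. Every element of $\mathcal{S}$ lies in some class $i\in\mathcal{E}$, and $C-x\equiv C-i\pmod t$. So the image of $\mathcal{S}\cap\text{class }i$ is a subset of $\mathcal{S}$ lying in the single class $i^{*}=C-i \bmod t$. Since the reflection is an involution on $\mathcal{S}$ (this is the hypothesis $C-\mathcal{S}=\mathcal{S}$), applying it twice shows that it restricts to a bijection between $\mathcal{S}\cap\text{class }i$ and $\mathcal{S}\cap\text{class }i^{*}$.

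Next I would check that $i^{*}\in\mathcal{E}$ whenever $i\in\mathcal{E}$. This holds because $\mathcal{S}\cap\text{class }i^{*}$ is nonempty, and by definition $\mathcal{S}$ contains only values from excess classes, so $i^{*}$ must be an excess class; the two classes then have the same size $n_i=n_{i^{*}}$. In the other direction, a non-excess class meets $\mathcal{S}$ trivially and so is simply not moved; this is the content of the phrase ``both in $\mathcal{E}$ or neither'', read as a statement about the induced permutation $i\mapsto i^{*}$ of $\mathcal{E}$.

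For the transversal claim, recall from Lemma \ref{lem:L3gen} and the indexing that follows it that a transversal is recorded by $g$, one chosen excess value in each excess class, with the singleton classes kept by every transversal. The set $g\cap\mathcal{S}$ has exactly one element in each class $i\in\mathcal{E}$, so its reflection has exactly one element in each class $i^{*}$. Because $i\mapsto i^{*}$ is a bijection of $\mathcal{E}$, the reflection again has exactly one element in each excess class, and the untouched part $g\setminus\mathcal{S}$ lies in the singleton classes. Hence $g^{\dagger}$ is again a transversal. That $g\mapsto g^{\dagger}$ is an involution follows from $C-(C-x)=x$.

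Finally, for $T_{g^{\dagger}}=\mathcal{S}\setminus g^{\dagger}$ I would compute $\mathcal{S}\setminus(C-g_{\mathcal{S}})$ with $g_{\mathcal{S}}=g\cap\mathcal{S}$. Since $C-\mathcal{S}=\mathcal{S}$, this equals $(C-\mathcal{S})\setminus(C-g_{\mathcal{S}})=C-(\mathcal{S}\setminus g)=C-T_g$. The only delicate step is the bookkeeping of $g\setminus\mathcal{S}$: under the paper's convention $g\subseteq\mathcal{S}$, so that piece is empty. I would note this explicitly and check that the formula is harmless in either reading, since the singleton classes contribute nothing to $T_g$.
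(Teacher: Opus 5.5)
Your proposal is correct and follows the paper's own proof. In both, residues are sent to $C-i$, the hypothesis $C-\mathcal{S}=\mathcal{S}$ makes the reflection a bijection of class $i$ onto class $i^{*}$ for $i\in\mathcal{E}$ (forcing $i^{*}\in\mathcal{E}$ and $n_i=n_{i^{*}}$), the singleton classes are left untouched, and $T_{g^{\dagger}}=C-T_g$ comes from the same one-line set computation. Your involution argument for surjectivity, and your remark that $g\setminus\mathcal{S}$ is empty under the indexing convention but harmless in either reading, only make explicit two points the paper leaves compressed.
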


\begin{proof}
Every element of the class $i$ is $\equiv i$, so $x\mapsto C-x$ sends it into the class $i^{*}$; since
$C-\mathcal{S}=\mathcal{S}$ and the map is injective, it carries the class $i$ \emph{onto} the
class $i^{*}$ whenever $i\in\mathcal{E}$, which forces $i^{*}\in\mathcal{E}$ and $n_i=n_{i^{*}}$. The
singleton classes are not touched: their single element is frozen in every transversal. Finally
$T_{g^{\dagger}}=\mathcal{S}\setminus g^{\dagger}=C-(\mathcal{S}\setminus g)=C-T_g$.
\end{proof}

\begin{lemma}[the reflected minor]\label{lem:Arefl}
$A(C-T)=A(T)$ for every $2r$-set $T$ and every $C$.
\end{lemma}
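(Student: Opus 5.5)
The plan is a direct computation on the $2r\times2r$ minor, with no appeal to the structure of $\mathcal S$. I would write $A(T)=\det\bigl(x^{u}\bigr)_{x,u}$. Here the rows are indexed by the letters $x\in(z_1,\zb_1,\dots,z_r,\zb_r)$ in that order, and the columns by the elements of $T=(u_1>\dots>u_{2r})$ in decreasing order. That is the column order inherited from $\det(x_i^{\beta_j})$, since $\beta$ decreases with the column index. The set $C-T$, listed decreasingly, is $(C-u_{2r}>\dots>C-u_1)$, so $A(C-T)$ is the determinant with entries $x^{C-u}$ and with the columns taken in the \emph{reversed} order of $T$.

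First I would factor $x^{C}$ out of each row. The product of these factors over the $2r$ letters is $\prod_j(z_j\zb_j)^{C}=1$, which uses that the free letters come in reciprocal pairs. What is left is $\det\bigl((x^{-1})^{u}\bigr)$ with reversed columns. Next I would undo the column reversal. On $2r$ columns the reversal permutation has sign $(-1)^{\binom{2r}{2}}=(-1)^{r(2r-1)}=(-1)^{r}$.

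Then I would undo the inversion of the letters. The substitution $x\mapsto x^{-1}$ sends the row list $(z_1,\zb_1,\dots,z_r,\zb_r)$ to $(\zb_1,z_1,\dots,\zb_r,z_r)$. This is $r$ disjoint adjacent transpositions of rows, so it contributes a further $(-1)^{r}$. After both steps we recover $\det\bigl(x^{u}\bigr)$ with columns in the order of $T$, which is $A(T)$, and the two signs cancel: $(-1)^{r}(-1)^{r}=+1$. This is the same sign bookkeeping as in Lemma \ref{lem:reflgen} and the complementation identity \eqref{eq:compl}, now carried out on the full minor rather than on its top-degree part $P$.

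The only real obstacle is keeping the orientation conventions straight, so that no stray sign survives. The points to watch are that the minor inherits the decreasing column order and that the rows pair $z_j$ with $\zb_j$ adjacently. If the paper's row order instead lists all the $z_j$ before all the $\zb_j$, the row permutation becomes the block swap, whose sign is $(-1)^{r^2}=(-1)^r$. The conclusion is therefore the same for either convention, and I would note this so the lemma does not depend on the ordering chosen in Lemma \ref{lem:L3gen}. As a sanity check, at $r=1$ the identity reads $f(u_1-u_2)=f\bigl((C-u_2)-(C-u_1)\bigr)$, which is immediate.
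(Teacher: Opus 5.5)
Your proposal is correct and follows essentially the same route as the paper's proof. You extract $x^{C}$ from each row, getting $\prod_j(z_j\zb_j)^{C}=1$; exchanging the two rows of each reciprocal pair gives $(-1)^{r}$, and the column reversal gives $(-1)^{\binom{2r}{2}}=(-1)^{r}$, so the signs cancel. Your side remark that a block ordering of the rows would give $(-1)^{r^2}=(-1)^{r}$ is a harmless extra the paper does not need, since its alphabet lists each pair adjacently.
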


\begin{proof}
Extracting $x_i^{C}$ from each of the $2r$ rows multiplies the determinant by
$\prod_j z_j^{C}z_j^{-C}=1$ and leaves the matrix of $T$ with $x_i$ replaced by $x_i^{-1}$, which is
the matrix of $T$ with the two rows of each reciprocal pair exchanged: a sign $(-1)^{r}$. And
$C-T$ listed decreasingly reverses the column order of $T$: a sign $(-1)^{\binom{2r}{2}}=(-1)^{r}$,
since $2r-1$ is odd. The two cancel.
\end{proof}

\noindent The two moves in that proof are not ours, and it is worth saying whose they are, because
they are the right instrument and we chose them for that reason. Scaling each row by a power so as
to replace $x_i$ by $\bar x_i$, and then reversing a block of columns to collect
$(-1)^{n(n-1)/2}$, is the manoeuvre Ayyer and Behrend use on the bialternant of a
self-complementary shape \cite[\S3]{AB19}; they describe their proofs as relying only on the
determinant expressions ``and the application of standard determinant operations'', and this is one
of those operations. Albion performs the same reversal on the root-of-unity side
\cite{Alb23,Alb25}, and at the level of characters the same fact is the invariance of $sp_\lambda$
under $x_i\mapsto\bar x_i$, recorded in \cite{AB19} and put to work in \cite{AF20}.

Its usefulness here is worth spelling out, since it is what makes the whole section short. A
reflected minor could be attacked by expanding it, and then the singleton classes and the reflected
ones would have to be tracked term by term --- which is how we first tried, and it does not end.
The Ayyer--Behrend manoeuvre instead disposes of the reflection in \emph{two} sign computations that
do not look at the entries at all, and it is robust in exactly the way we need: it never uses that
the whole alphabet is being reflected, so it survives being applied to a \emph{part} of the beta
set, with the rest held fixed. That robustness is the reason Lemma \ref{lem:Arefl} is four lines
instead of a section, and it belongs to the tool rather than to us. It is also a small piece of
evidence for the thesis of \S\ref{sec:background}: the two lines this paper sits between share more
machinery than their statements suggest.

The next statement is the combinatorial heart of the argument, and it needs only the first clause of
\eqref{eq:suffcond}. Reflecting a beta set \emph{entirely} is classical --- it is the
complementation identity \eqref{eq:compl}, and the determinant steps behind it are the standard ones
just cited. What is asked here is a reflection of the excess part alone, with the singleton classes
held fixed and interleaved among the reflected values; the two involutions it produces, one of
$\beta$ and one of $\ZZ/t$, are what the sign turns out to depend on.

\begin{lemma}[the partial-reflection sign formula]\label{lem:signformula}
Assume $C-\mathcal{S}=\mathcal{S}$, and put
\[
\nu=\#\{x\in\mathcal{S}:2x=C\},\qquad \eta=\#\{i\in\mathcal{E}:2i\equiv C \bmod t\}.
\]
Then for every transversal $g$
\begin{equation}\label{eq:wrefl}
\frac{w(g^{\dagger})}{w(g)}\;=\;(-1)^{\,e-\frac{\nu+\eta}{2}} .
\end{equation}
In particular the ratio does not depend on $g$, nor on $r$, nor on $N$.
\end{lemma}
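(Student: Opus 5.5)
Write $S_g$ for the frozen column set of $g$: the columns of the $e$ values of $g$ together with the $t-e$ singleton columns. By Lemma \ref{lem:L3gen}, $w(g)=(-1)^{\Sigma(S_g)+\inv(b_{S_g})}$, where $\Sigma(S_g)=\sum_{j\in S_g}j$. So we have to compare the column sums and the inversion counts of $S_g$ and $S_{g^\dagger}$.

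\emph{Step 1: reduce to a fixed reference transversal.} It suffices to show that the ratio is independent of $g$. Let $g'$ differ from $g$ in one class $i$, with $u\mapsto v$. Then $(g')^\dagger$ differs from $g^\dagger$ only in class $i^{*}$, with $C-u\mapsto C-v$ when $i\in\mathcal E$ (Lemma \ref{lem:reflact}).

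Apply Lemma \ref{lem:step} to both moves. The parity of $B$ is the same for both, because $x\mapsto C-x$ maps $\beta\cap(v,u)$ bijectively onto $\beta\cap(C-u,C-v)$ on the excess part. The singleton part is not reflected, however, so the two counts of $B$ and of $M$ differ by the same number of singleton values. That number is the number of singletons in $(v,u)$ minus the number in $(C-u,C-v)$. It enters both $B$ and $M$, so it cancels in $B+M$ modulo $2$. The excess frozen values in between correspond bijectively under the reflection, since $g^\dagger\cap\mathcal S=C-(g\cap\mathcal S)$.

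Hence the two step factors agree, and $w(g^\dagger)/w(g)$ is constant in $g$. This is the part where I must be careful about which values are frozen. I would verify the singleton bookkeeping explicitly, because the paper's warning after Lemma \ref{lem:step} says exactly this distinction bites.

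\emph{Step 2: compute the constant on a convenient $g$.} Choose $g$ so that the computation decomposes. For each reflected pair of classes $\{i,i^{*}\}$ with $i\ne i^{*}$, pick $g_i$ and set $g_{i^*}=C-g_i$. Then the $\dagger$ action just swaps which class carries which value, and $g^\dagger=g$ as a set on that pair.

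Reflection-fixed classes, those with $2i\equiv C$, number $\eta$. Such a class is mapped to itself. If it contains $C/2$, choose $g_i=C/2$, which is fixed. Otherwise choose $g_i$ and note that $C-g_i\ne g_i$.

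With this choice $S_{g^\dagger}$ differs from $S_g$ only in the fixed classes, and only in those not meeting $C/2$. There are $\eta-\nu$ of them, and $\nu\le\eta$ with $\nu\equiv\eta$ parity bookkeeping to be checked. In the swapped pairs the columns are unchanged but the residue letters at them are exchanged, $i\leftrightarrow i^*$.

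So I would compute the ratio in two parts. The first part is the contribution of the transposition of letters in $b_S$ across each of the $(e-\eta)/2$ swapped pairs. Each such transposition should contribute a factor $-1$ plus an even correction, and I expect the even correction to follow from symmetry of the intervening letters. The second part is the contribution of each fixed class without a centre, computed by Lemma \ref{lem:step}, giving a factor $-1$ per class. The ratio is then $(-1)^{(e-\eta)/2+(\eta-\nu)}$, which equals $(-1)^{e-(\nu+\eta)/2}$ modulo the parity identities. The identity $e-\eta$ is even is immediate from the pairing.

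\emph{Main obstacle.} The main obstacle is the swapped pairs. Exchanging two letters $i,i^*$ at fixed positions changes $\inv$ by $1+2\cdot\#$(letters strictly between the positions with value strictly between $i$ and $i^*$) only under a cyclic-order caveat. Here residues are compared as integers in $\{0,\dots,t-1\}$, not cyclically, and the intervening letters include unreflected singletons. I would handle this by combining all pairs at once: compare $b_{S_g}$ with $b_{S_{g^\dagger}}$ as the same word composed with the letter involution $i\mapsto C-i \bmod t$ on excess letters only. I would then count the inversions that change, using that $i\mapsto C-i \bmod t$ is order-reversing on each of at most two arcs of $\ZZ/t$. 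If this becomes unwieldy, I would fall back on induction: pick a reference $g$ where all swapped pairs are adjacent in column order, which is allowed since Step 1 frees the choice, so that each swap is an adjacent transposition contributing exactly $-1$.
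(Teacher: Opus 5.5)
Your Step~1 is sound. For a single-class move and its reflected move, the $\mathcal S$-parts of the two $B$'s and the $g$-parts of the two $M$'s correspond under $x\mapsto C-x$, and the singleton discrepancy enters $B$ and $M$ alike, so $w(g^{\dagger})/w(g)$ is independent of $g$.

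The gap is Step~2. Its bookkeeping is wrong in two places, and the final formula is false.

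\emph{First}, $w(g)$ depends only on the set $S_g$ of frozen columns, and the letter at column $j$ is $\beta_j \bmod t$. On a swapped pair your $g$ and $g^{\dagger}$ freeze the \emph{same} two columns. So nothing is exchanged, and those pairs contribute $+1$, not $(-1)^{(e-\eta)/2}$.

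\emph{Second}, a move inside a centre-free fixed class is not worth $-1$ each. Take $t=2$, $r=1$, $\beta=(4,3,1,0)$, i.e.\ $\lambda=(1,1)$: here $C=4$, $e=\eta=2$, $\nu=0$, and there are no swapped pairs.
\begin{itemize}
\item By Lemma~\ref{lem:step}, moving class $0$ from $4$ to $0$ (with $B=2$, $M=1$) gives $+1$.
\item Then moving class $1$ from $3$ to $1$ (with $B=M=0$) gives $-1$.
\item Directly, $w(g)=-1$ for $g=\{4,3\}$ and $w(g^{\dagger})=+1$ for $g^{\dagger}=\{0,1\}$.
\end{itemize}
So the ratio is $-1=(-1)^{e-(\nu+\eta)/2}$, while your $(-1)^{(e-\eta)/2+(\eta-\nu)}$ gives $+1$. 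The two exponents differ by $(e+\nu)/2-\eta$, which no parity identity makes even. Moreover $e=\eta=2$, $\nu=0$ is the $e=2$ instance of the $\eta=2$ case that Lemma~\ref{lem:signrefl} and Theorem~\ref{thm:suff} rely on, so the error is not cosmetic.

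Your framework can be repaired. Since $x\mapsto C-x$ on $\mathcal S$ and $i\mapsto i^{*}$ on $\mathcal E$ are involutions, $\nu\equiv|\mathcal S|=2r+e\equiv e\equiv\eta\pmod 2$. Also $\nu\le\eta\le2$, because a value with $2x=C$ lies in a class with $2i\equiv C$. This leaves two cases.
\begin{itemize}
\item If $\eta=\nu$, your reference $g$ satisfies $g^{\dagger}=g$, and the ratio is $+1=(-1)^{e-\eta}$.
\item If $\eta=2$ and $\nu=0$, then $e$ is even and the target is $-1$. Choose $u_0,u_1>C/2$ in the two fixed classes and make both moves. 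For $I=(C-u,u)$, the set $\mathcal S\cap I$ is reflection-stable with no fixed point, hence of even size. So $B$ reduces modulo $2$ to the singletons in $I$, which cancel against the singleton part of $M$. The frozen values of the swapped pairs come in reflected pairs. What survives is the current value in the other fixed class, so $B_0+M_0\equiv[u_1<u_0]$ and $B_1+M_1\equiv[u_0<u_1]$. The total exponent $2+1$ is odd.
\end{itemize}

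The paper avoids this case split. It writes $w(g)$, up to a factor depending on $t$, as the sign of the permutation listing the frozen columns by residue and then the columns of $T_g$ by decreasing $\beta$. Passing to $g^{\dagger}$ then multiplies that sign by three factors, none of which mentions $g$:
\begin{itemize}
\item the sign of the partial reflection of $\beta$, namely $(-1)^{(|\mathcal S|-\nu)/2}$;
\item the sign $(-1)^{(e-\eta)/2}$ of the induced involution of residues;
\item $(-1)^{\binom{2r}{2}}$ for reversing $T_g$.
\end{itemize}
The factor $(-1)^{(e-\eta)/2}$ you were reaching for is the second of these, and it is correct only together with the other two.
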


\begin{corollary}\label{cor:whenminus}
Under $C-\mathcal{S}=\mathcal{S}$, the ratio \eqref{eq:wrefl} equals $-1$ if and only if $\eta=2$.
\end{corollary}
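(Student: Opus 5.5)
The plan is to read the corollary off the exponent in \eqref{eq:wrefl}: pin down the possible values of $\nu$ and $\eta$ and the parities of $e$ and $\nu$ relative to $\eta$, then check the few surviving cases. Everything we use is in the proof of Lemma \ref{lem:reflact} and in the count \eqref{eq:excessgen}; no new determinant work is needed.

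\emph{Step 1 (ranges).} Since $\mathcal{S}$ is a set, $\nu\in\{0,1\}$: at most the single value $C/2$ can satisfy $2x=C$. By the kernel argument of Theorem \ref{thm:Ggen}, the congruence $2i\equiv C\pmod t$ has at most $\gcd(2,t)$ solutions, so $\eta\le2$, and $\eta\le1$ when $t$ is odd.

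\emph{Step 2 (parity of $e-\eta$).} By Lemma \ref{lem:reflact}, $i\mapsto i^{*}=C-i$ is an involution on $\mathcal{E}$. Its fixed points are exactly the $\eta$ classes with $2i\equiv C$, and the remaining classes pair off. Hence $e-\eta=2p$ for some integer $p\ge0$.

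\emph{Step 3 (parity of $\nu$).} Again by Lemma \ref{lem:reflact}, paired classes satisfy $n_i=n_{i^{*}}$. So in $\sum_{i\in\mathcal{E}}(n_i-1)=2r$ from \eqref{eq:excessgen}, the paired classes contribute an even amount. It follows that $\sum_{\text{fixed}}(n_i-1)$ is even, that is, $\sum_{\text{fixed}}n_i\equiv\eta\pmod2$. On the other hand, $x\mapsto C-x$ restricts to an involution of each fixed class. Its only possible fixed point is $C/2$, so $\sum_{\text{fixed}}n_i\equiv\nu\pmod 2$. Together these give $\nu\equiv\eta\pmod2$. This confirms that $(\nu+\eta)/2$ is an integer, and it is the one step that needs care: one must use that the singleton classes are untouched and that the excess is exactly $2r$.

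\emph{Step 4 (cases).} Write the exponent of \eqref{eq:wrefl} as $\eta+2p-(\nu+\eta)/2$. Steps 1 and 3 leave only three possibilities for $(\eta,\nu)$.
\begin{itemize}
\item $(\eta,\nu)=(0,0)$: the exponent is $2p$, so the ratio is $+1$.
\item $(\eta,\nu)=(1,1)$: the exponent is $1+2p-1=2p$, so the ratio is $+1$.
\item $(\eta,\nu)=(2,0)$: here $\nu=2$ is impossible by Step 1, so $\nu=0$. The exponent is $2+2p-1=2p+1$, so the ratio is $-1$.
\end{itemize}
Hence the ratio is $-1$ exactly when $\eta=2$. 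Incidentally, this case can only occur for $t$ even.
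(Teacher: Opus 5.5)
Your proof is correct and follows essentially the paper's route: parity of fixed points of the involutions $x\mapsto C-x$ on $\mathcal{S}$ and $i\mapsto i^{*}$ on $\mathcal{E}$ gives $\nu\equiv\eta\equiv e \pmod 2$, and a short case check on the exponent $e-\tfrac{\nu+\eta}{2}$ finishes. The only difference is cosmetic. You derive $\nu\equiv\eta$ class by class, splitting $\mathcal{E}$ into paired and fixed classes. The paper reads it off directly from $\nu\equiv|\mathcal{S}|=2r+e\equiv e\equiv\eta$.
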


\begin{proof}
An involution has as many fixed points as its set has elements, modulo two, so $\nu\equiv|\mathcal{S}|$
and $\eta\equiv e$; and $|\mathcal{S}|=2r+e$ by \eqref{eq:excessgen}, so $\nu\equiv e$ as well. If $e$
is odd then $\nu=\eta=1$ and the exponent is $e-1$, even. If $e$ is even then $\nu=0$ and $\eta$ is
$0$ or $2$, giving the exponent $e$ or $e-1$; the first is even and the second odd.
\end{proof}

\noindent That independence is the part that looks unlikely and is the point: the singleton classes
sit at arbitrary positions among the reflected values, and moving a transversal moves them past one
another --- but each crossing they contribute to the column sum is undone by the crossing they
contribute to the residue word, so they leave no trace in the ratio.

\begin{lemma}[the reflected sign]\label{lem:signrefl}
Assume \eqref{eq:suffcond}. Then $\eta=2$, the number $e$ is even, $\nu=0$, and
$w(g^{\dagger})=-w(g)$ for every transversal $g$.
\end{lemma}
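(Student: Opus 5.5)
Under \eqref{eq:suffcond} we show $\eta=2$ and read off the rest from Lemma \ref{lem:signformula} and Corollary \ref{cor:whenminus}. The second clause supplies a class $i\in\mathcal{E}$ and an index $k$ with $c_{i,k}+c_{i,k+1}=C$. Since both summands are $\equiv i\pmod t$, this gives $2i\equiv C\pmod t$, so $i$ is counted in $\eta$ and $\eta\ge1$.

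Next we rule out $\eta=1$. By Lemma \ref{lem:reflact} the reflection $x\mapsto C-x$ carries class $i$ onto class $C-i\equiv i$. It therefore preserves class $i$ and reverses its order, so $c_{i,j}\mapsto c_{i,n_i+1-j}$. The relation $c_{i,k}+c_{i,k+1}=C$ says that $c_{i,k}$ and $c_{i,k+1}$ are swapped, so $k+1=n_i+1-k$, i.e.\ $n_i=2k$ is even. Then the order reversal has no fixed element in class $i$, and class $i$ contains no $x$ with $2x=C$.

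Now the parity bookkeeping from the proof of Corollary \ref{cor:whenminus} applies: $\nu\equiv|\mathcal{S}|\equiv e$ and $\eta\equiv e\pmod 2$. If $e$ were odd, then $\nu=\eta=1$. The single fixed point $C/2\in\mathcal{S}$ would lie in a class $j$ with $2j\equiv C$, and $j$ would be the unique class counted in $\eta$. That class is $i$, but we have just shown class $i$ has no fixed element. This is a contradiction, so $e$ is even. Then $\eta\equiv0$ with $\eta\ge1$, and since at most two classes satisfy $2j\equiv C$ modulo $t$, we get $\eta=2$. This also forces $t$ even. With $e$ even, $\nu\equiv 0$.

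It remains to get $\nu=0$ exactly. Any $x$ with $2x=C$ lies in a class $j$ with $2j\equiv C$, and only the single value $C/2$ can occur, so $\nu\le1$; with $\nu\equiv0$ this gives $\nu=0$. Corollary \ref{cor:whenminus} then yields $w(g^\dagger)/w(g)=-1$ for every transversal $g$.

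The only delicate point is the parity argument showing that class $i$ has no fixed point. Everything else is counting, resting on Lemma \ref{lem:signformula}, which we take as given.
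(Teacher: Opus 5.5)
Your proof is correct and uses the same two ingredients as the paper's: the parity relations $\nu\equiv|\mathcal{S}|\equiv e\equiv\eta \pmod 2$ coming from the two involutions, and the observation that the self-reflected class $i_0$ has even size $2k$ and therefore misses $C/2$. The only difference is in how the contradiction is organised. The paper first rules out odd $t$ and then splits on $\nu\in\{0,1\}$, whereas you run the contradiction directly on the parity of $e$ and obtain $t$ even as a by-product, which is slightly shorter.
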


\begin{proof}[Proof of Lemma \ref{lem:signformula}]
Let $\mathcal{R}$ be $x\mapsto C-x$ on $\mathcal{S}$ and the identity on
$\beta\setminus\mathcal{S}$, and let $\mathcal{R}_t$ be $i\mapsto i^{*}$ on $\mathcal{E}$ and the
identity elsewhere; both are involutions, of $\beta$ and of $\ZZ/t$ respectively, with $\nu$ and
$\eta$ fixed points, so $\sgn(\mathcal{R})=(-1)^{(|\mathcal{S}|-\nu)/2}$ and
$\sgn(\mathcal{R}_t)=(-1)^{(e-\eta)/2}$. Order the columns of a transversal $g$ by residue and the
remaining ones by decreasing $\beta$; the sign of the resulting permutation of $\{1,\dots,N\}$ is
$w(g)$ up to a factor depending on $t$ alone. Applying $\mathcal{R}$ to that one-line notation
contributes $\sgn(\mathcal{R})$; restoring the residue order of the first block contributes
$\sgn(\mathcal{R}_t)$; and $\mathcal{R}$ reverses $T_g$ entirely, so restoring its decreasing order
contributes $(-1)^{\binom{2r}{2}}$. Multiplying the three gives \eqref{eq:wrefl}, in which no term
refers to $g$.
\end{proof}

\begin{proof}[Proof of Lemma \ref{lem:signrefl}]
Counting fixed points of an involution gives
$|\mathcal{S}|\equiv\nu$ and $e\equiv\eta \pmod 2$, and $|\mathcal{S}|=2r+e$ by
\eqref{eq:excessgen}, so $e\equiv\nu$. Now the second clause of \eqref{eq:suffcond} says that two
\emph{adjacent} elements of some class $i_0$ sum to $C$; they are exchanged by the reflection, so
$i_0=i_0^{*}$, and writing the class decreasingly the exchange sends its $k$-th entry to its
$(n_{i_0}+1-k)$-th, whence $n_{i_0}=2k$ is even. The reflection $x\mapsto C-x$ has at most one fixed
point, namely $C/2$; since this class is stable under it and has even cardinality, its elements pair
off and it contains none, so $C/2\notin$ class $i_0$.

Before speaking of a second solution we must know there is one, and that is where the parity of $t$
is decided rather than assumed. Suppose $t$ were odd. Then $2$ is invertible modulo $t$, so
$i\mapsto i^{*}$ has $i_0$ as its \emph{only} fixed residue; the remaining classes of $\mathcal{E}$
pair off, so $e$ is odd, and $|\mathcal{S}|=2r+e$ is odd too. An involution on a set of odd size has
a fixed point, so $\nu=1$ and $C/2\in\mathcal{S}$; its residue $\kappa$ satisfies $2\kappa\equiv C$,
hence $\kappa=i_0$, putting $C/2$ in the class $i_0$ --- which we have just shown it misses. So $t$
is even, and $2i\equiv C$ has exactly the two solutions $i_0$ and $i_1=i_0+t/2$.

If $\nu=1$ then
$C/2\in\mathcal{S}$, and its residue is $i_0$ or $i_1$; not $i_0$, so $i_1\in\mathcal{E}$. If $\nu=0$
then $e$ is even, and $\mathcal{E}=\{i_0\}\sqcup\{\text{pairs}\}$ would make $e$ odd unless
$i_1\in\mathcal{E}$. Either way $\eta=2$, hence $e$ is even and $\nu=0$, and Corollary
\ref{cor:whenminus} gives the sign at once. Read off \eqref{eq:wrefl} directly, the exponent is
$e-\tfrac{0+2}2=e-1\equiv1 \pmod 2$.
\end{proof}

\begin{corollary}[the hypothesis, read on the residues]\label{cor:suffgeom}
Assume $C-\mathcal{S}=\mathcal{S}$. Then some $\Delta_i(k)=C$ if and only if $t$ is even and
\emph{both} solutions of $2i\equiv C \pmod t$ are excess classes. So \eqref{eq:suffcond} says: the
excess part of the beta set is symmetric about $C/2$, and the two residue classes that the
reflection fixes are both excess.
\end{corollary}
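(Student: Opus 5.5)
The forward implication is already contained in the proof of Lemma \ref{lem:signrefl}, and the converse is a middle-of-the-class argument, so I would prove the two directions separately.

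\emph{Forward direction.} Suppose $C-\mathcal{S}=\mathcal{S}$ and $\Delta_{i_0}(k)=C$ for some $i_0\in\mathcal{E}$ and some $k$. Then \eqref{eq:suffcond} holds, so Lemma \ref{lem:signrefl} applies. Its proof uses exactly these two hypotheses and derives three things: $t$ is even, the two solutions of $2i\equiv C\pmod t$ are $i_0$ and $i_1=i_0+t/2$, and $\eta=2$. Since $\eta$ counts the $i\in\mathcal{E}$ with $2i\equiv C$, the equality $\eta=2$ says precisely that both solutions lie in $\mathcal{E}$. So I would quote that lemma rather than repeat its argument.

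\emph{Converse.} Assume $C-\mathcal{S}=\mathcal{S}$, $t$ even, and both solutions $i_0$, $i_1=i_0+t/2$ of $2i\equiv C$ in $\mathcal{E}$.

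\begin{enumerate}
\item By Lemma \ref{lem:reflact}, $x\mapsto C-x$ carries the class $i$ onto the class $C-i \bmod t$. For $i=i_0$ and $i=i_1$ that class is $i$ itself, so both classes are stable under the reflection.
\item The reflection reverses order. Hence on a stable class $c_{i,1}>\dots>c_{i,n_i}$ it sends $c_{i,k}$ to $c_{i,n_i+1-k}$.
\item If $n_i$ is even, take $k=n_i/2$. The adjacent entries $c_{i,k}$ and $c_{i,k+1}$ are swapped by the reflection, so they sum to $C$, that is $\Delta_i(n_i/2)=C$. Here $n_i\ge2$ because $i\in\mathcal{E}$, so this increment exists.
\item If $n_i$ is odd, the middle entry is fixed, so it equals $C/2$ and lies in class $i$.
\item Both $n_{i_0}$ and $n_{i_1}$ cannot be odd. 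Otherwise $C/2$ would lie in class $i_0$ and in class $i_1$, which are distinct residues since $t/2\not\equiv0\pmod t$.
\item So at least one of the two classes has even size, and step 3 produces an increment equal to $C$.
\end{enumerate}

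\emph{Reformulation.} The last sentence of the statement is then the combination of the first clause of \eqref{eq:suffcond} with the equivalence just proved. Its phrase ``the two residue classes that the reflection fixes'' refers to the $i$ with $C-i\equiv i$, which are exactly the solutions of $2i\equiv C$.

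There is no real obstacle. The only care needed is in steps 4 and 5, to make sure the fixed point $C/2$ is charged to a single residue class.
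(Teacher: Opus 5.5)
Your proof is correct. The forward direction is the paper's own: the paper also just points to the proof of Lemma \ref{lem:signrefl}. You could even cite only its statement, since $\eta=2$ already supplies two distinct solutions of $2i\equiv C$, which forces $t$ even.

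The converse takes a different route from the paper's.
\begin{itemize}
\item The paper argues globally, by parity. From $\eta=2$ it gets $e\equiv\eta\equiv0$, so $|\mathcal{S}|=2r+e$ is even by \eqref{eq:excessgen}. Hence the reflection, which has at most one fixed point, has none ($\nu=0$). Every reflection-stable class is then of even size, and its two middle entries sum to $C$.
\item You argue locally. A stable class of odd size must contain $C/2$, and $C/2$ has only one residue. So at most one of the two stable classes $i_0$ and $i_0+t/2$ can have odd size, and the other gives the increment.
\end{itemize}

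Your version never uses the excess count \eqref{eq:excessgen}, so it does not depend on the ambient bookkeeping $N=t+2r$ at all. It charges the fixed point to a single class, which is exactly what the corollary needs.

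The paper's version proves more. It gives $\nu=0$, so \emph{both} fixed classes have even size and $C/2\notin\mathcal{S}$. It also reuses the fixed-point bookkeeping ($|\mathcal{S}|\equiv\nu$, $e\equiv\eta$) that Lemmas \ref{lem:signformula} and \ref{lem:signrefl} already rely on. Your local argument cannot by itself rule out $n_{i_0}$ odd with $n_{i_1}$ even; only the global count does. That does not matter for this corollary.
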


\begin{proof}
One direction is the first paragraph of the proof of Lemma \ref{lem:signrefl}. For the other, if
both fixed classes lie in $\mathcal{E}$ then $\eta=2$, so $e$ is even by $e\equiv\eta$, so
$|\mathcal{S}|=2r+e$ is even and $\nu=0$: no element of $\mathcal{S}$ is fixed. A fixed class is
therefore reflection-stable with no fixed point, hence of even size $2k$, and its $k$-th and
$(k+1)$-st entries are exchanged, so $\Delta_i(k)=C$.
\end{proof}

\noindent Read that way the hypothesis is the same shape as the conclusion of Corollary
\ref{cor:Ctaugen}, which is what makes the two halves of the section look at one another: there the
two tied classes \emph{turn out} to be the two solutions of $2\kappa\equiv C$; here we \emph{ask}
that they be excess.

\begin{proof}[Proof of Theorem \ref{thm:suff}]
By Lemma \ref{lem:reflact}, $g\mapsto g^{\dagger}$ is an involution on transversals with $T_{g^{\dagger}}=C-T_g$.
It has no fixed point: a fixed transversal would need $C-g_i=g_i$, that is $g_i=C/2$, in every
class with $i=i^{*}$, and the class $i_0$ of the proof above is such a class and misses $C/2$. By
Lemmas \ref{lem:Arefl} and \ref{lem:signrefl},
\[
w(g)A(T_g)+w(g^{\dagger})A(T_{g^{\dagger}})=w(g)A(T_g)-w(g)A(C-T_g)=0 ,
\]
so the sum of \eqref{eq:laplacegen} cancels in pairs and the numerator vanishes identically.
\end{proof}

\begin{remark}[what this is, and what it is not]\label{rem:suffscope}
Three things are worth separating. First, at $t=2$ the hypothesis \eqref{eq:suffcond} \emph{is}
branch {\rm(b)}: there $e=2$ forces $\mathcal{S}=\beta$, so $C-\mathcal{S}=\mathcal{S}$ is
self-complementarity, and the increment clause is the parity of the width --- the two statements
agree on every shape in the ranges of \S\ref{sec:verif}. So Theorem \ref{thm:suff} contains that
half of Theorem \ref{conj:crit}, and proves it again by a different mechanism. Second, for $t\ge4$
it is \emph{not} complementation: there $\mathcal{S}\subsetneq\beta$, $\lambda$ is not
self-complementary, and \eqref{eq:compl} says nothing. Third, the involution is global --- it pairs
every term of \eqref{eq:laplacegen}, not just the two maximisers, which is why it survives where the
extremal argument only produces a necessary condition.
\end{remark}

\begin{conjecture}[the criterion, in general]\label{conj:general}
Conversely, if $\Phi_{t,r}(\lambda;z)\equiv0$ and every residue class is occupied, then
\eqref{eq:suffcond} holds. Equivalently, for every $t$ and $r$,
\[
\Phi_{t,r}(\lambda;z)\equiv0
\iff
\text{some class is empty, or } C-\mathcal{S}=\mathcal{S}\text{ and some }\Delta_i(k)=C .
\]
\end{conjecture}

\noindent Under the vanishing hypothesis, most of what \eqref{eq:suffcond} asks is already known,
and saying which turns the conjecture into a single sentence about a single set. Corollary
\ref{cor:Ctaugen} gives $C=\tau$, so the tied increments equal $C$ and the second clause holds;
and \eqref{eq:Ssymgen} gives $C-(\mathcal{S}\setminus g_{\mathrm{com}})=\mathcal{S}\setminus
g_{\mathrm{com}}$. Since
$\mathcal{S}=(\mathcal{S}\setminus g_{\mathrm{com}})\sqcup g_{\mathrm{com}}$, all that is left is:

\begin{conjecture}[the common part is symmetric]\label{conj:gcom}
Under occupancy, if $\Phi_{t,r}(\lambda;z)\equiv0$ then $C-g_{\mathrm{com}}=g_{\mathrm{com}}$.
\end{conjecture}

\noindent Conjectures \ref{conj:general} and \ref{conj:gcom} are the same statement, and the second
is the useful form: it names the $e-2$ elements the argument has to reach, and no others. It also
says at once why $t=2$ closes and $t\ge4$ does not. At $t=2$ a tie is between the classes $i$ and
$i+1$, so both of them are excess and $e=2$ --- not because $e=2$ always holds there, since
\eqref{eq:excessgen} allows $e=1$ and it is common, but because $|G|=2$ forces it, and $g_{\mathrm{com}}$
is only defined when $|G|=2$. Hence $g_{\mathrm{com}}=\varnothing$ and there is nothing left to be
symmetric. The theorem there is not easier; there is simply no common part.

We state these as conjectures and not as theorems because only the direction proved above is
proved. The evidence is in \S\ref{sec:verif}: over $190\,443$ shapes in twenty-four configurations,
$t\le12$ and $r\le4$, the criterion has no false positive and no false negative; $43\,010$ of
them are redone by an evaluation sharing no code with this section, and a further sweep of
$12\,937$ shapes, with $131$ zeros, is redone in Sage, which shares neither code nor library with
either; on the ranges where a theorem
already decides it agrees with Theorem \ref{thm:main} at $r=1$ and with Theorem \ref{conj:crit} at
$t=2$ without a single disagreement, over $9913$ and $29\,508$ shapes; and a decoy that reads the
condition on $\beta$ instead of on $\mathcal{S}$ --- that is, self-complementarity of the shape
rather than of its excess part --- fails in every configuration of both sweeps. What is missing is one implication:
Corollary \ref{cor:necgen} gives the reflection on $\mathcal{S}\setminus g_{\mathrm{com}}$, and the
top stratum cannot give more --- there are shapes sharing $\tau$, $\mathcal{S}\setminus
g_{\mathrm{com}}$ and the tied classes, of which some vanish and some do not, so the missing
equation is not a refinement of \S\ref{sec:rigidity} but a statement about a lower stratum.

\begin{remark}[why not the obvious argument]
One tries first to exchange inside a \emph{single} maximiser: if $\max\mathcal{S}$ is omitted,
re-choose its class and hope the degree rises. That cannot work. A maximiser may omit
$\max\mathcal{S}$ --- precisely when $\max\mathcal{S}$ is one of the tied values $p_1,p_2$, so that
one maximiser omits it and the other keeps it --- so no argument reading one transversal at a time
concludes. Proposition \ref{prop:A1gen} is a statement about the \emph{pair}, which is why the proof
runs on \eqref{eq:halvesgen} and on nothing else. Table \ref{tab:extremal} records both the steps of
that proof and what happens to its conclusion when the hypothesis is dropped.
\end{remark}

\begin{table}[!ht]
\centering\small
\caption{The machinery of \S\S\ref{sec:extremal}--\ref{sec:rigidity}, measured. The population is
every shape with $|G|=2$ over $(t,r)\in\{4,6,8,10\}\times\{2,3\}$ within the widths swept. The last
row is the one that matters for Proposition \ref{prop:A1gen}: drop the hypothesis
$[\det]_{D_1}=0$ and the conclusion fails, so it is not carried for free.}
\label{tab:extremal}
\begin{tabular}{@{}llrl@{}}
\toprule
statement & population & result & script \\
\midrule
\rowcolor{hband} greedy parametrisation $=$ brute force & $4049$ shapes & \textcolor{hproved}{$0$ fail} & \texttt{a1\_proof} \\
$A$ is the top half of $T_g$ (Step 2) & same & \textcolor{hproved}{$0$ fail} & \texttt{a1\_proof} \\
\rowcolor{hband} $T_{\mathrm b}=\tau-T_{\mathrm a}$ & $2522$ shapes & \textcolor{hproved}{$0$ fail} & \texttt{a1\_proof} \\
$v\in L_{\mathrm b}\Rightarrow\tau-v\in H_{\mathrm a}$ & $3034$ values & \textcolor{hproved}{$0$ fail} & \texttt{a1\_proof} \\
\rowcolor{hband} the extremes avoid $g_{\mathrm{com}}$ & $2522$ shapes & \textcolor{hproved}{$0$ fail} & \texttt{a1\_proof} \\
the displayed identities of these subsections & $11$ identities & \textcolor{hproved}{$0$ fail} & \texttt{v2\_formulas} \\
\midrule
\rowcolor{hband} the same, with $[\det]_{D_1}=0$ dropped & $24718$ shapes & \textcolor{hverif}{$414$ fail} & \texttt{a1\_proof} \\
\bottomrule
\end{tabular}
\end{table}

\begin{remark}
Corollary \ref{cor:reflectgen} is doing real work in Proposition \ref{prop:A1gen}, and $|G|=2$ alone
does not suffice. For $t=4$, $r=2$ and $\beta=(200,199,198,197,194,193,191,4)$ the excess classes are
$\{200,4\}$, $\{197,193\}$, $\{198,194\}$, $\{199,191\}$ and the increments are $392,390,390,204$, so
$\tau=390$, the tie is between the classes $1$ and $3$, and $|G|=2$. But $\Delta_0(1)=204<\tau$, so
$k_0=0$ in both maximisers and $\max\mathcal{S}=200$ is omitted by both: the conclusion of Proposition
\ref{prop:A1gen} fails. Here $T_{\mathrm a}=(198,197,191,4)$ and $T_{\mathrm b}=(199,198,193,4)$ are
not reflections of one another --- $h_r-l_1$ is $6$ for the first and $5$ for the second --- and
accordingly the component of degree $D_1$ does not vanish.
\end{remark}

\begin{figure}[!ht]
\centering
\includegraphics[width=\textwidth]{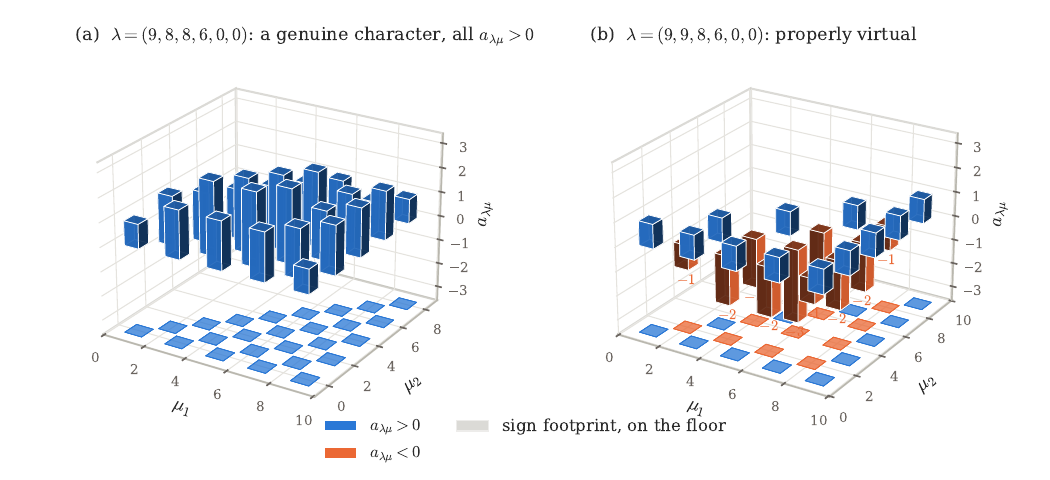}
\caption{Why one reciprocal pair is not typical, at $r=2$: the expansion
$\Psi_r(\lambda)=\sum_\mu a_{\lambda\mu}\,sp_\mu(z)$ over the dominant weights of $Sp(4)$. The two
shapes differ in a single box. For $\lambda=(9,8,8,6,0,0)$ every $a_{\lambda\mu}$ is positive: the
object is a genuine character, as it is for \emph{every} $\lambda$ when $r=1$, where Theorem
\ref{thm:main} writes it as a product of three $\mathfrak{sl}_2$ characters. For
$\lambda=(9,9,8,6,0,0)$ the expansion has $12$ positive and $10$ negative coefficients, the most
negative being $-3$: it is properly virtual, which is the phenomenon Remark \ref{rem:rankone}
records and the reason an endpoint computation stops being an exact test past one pair. The two
panels share the vertical scale, and the flat squares on the floor repeat the sign of each
coefficient, because a coefficient of $-1$ is invisible beside one of $+3$. Computed by
\texttt{anc/fig\_v2.py}.}
\label{fig:virtual}
\end{figure}

\subsubsection*{The criterion at \texorpdfstring{$t=2$}{t=2}}

\begin{proof}[Proof of Theorem \ref{conj:crit}]
The implication from {\rm(a)} or {\rm(b)} to $\Psi_r(\lambda)=0$ is Theorem \ref{thm:zeros}. For the
converse, assume $\Psi_r(\lambda)=\Phi_{2,r}(\lambda;z)\equiv0$ and that no residue class modulo $2$ is
absent from $\beta(\lambda)$, since otherwise branch {\rm(a)} holds and there is nothing to prove.
Then the numerator vanishes identically, so in particular its component of degree $D_1$ vanishes and
$|G|=2$ by Corollary \ref{cor:uniquegen}.

By Theorem \ref{thm:Ggen}(b) there is a tie at rank $r$ between classes $i$ and $i+t/2=i+1$; at $t=2$
these are the two classes, and both lie in $\mathcal{E}$, so $e=2$. (With $e=1$ all increments lie in a
single class and are pairwise distinct by Step 4, so no tie is possible.) Since $e=2$, neither class
is a singleton and $\mathcal{S}=\{\beta_1,\dots,\beta_N\}$ is the whole beta set.

By Corollary \ref{cor:Ctaugen}, $C=\min\mathcal{S}+\max\mathcal{S}=\beta_N+\beta_1=\tau$, and since
both classes change, $g_{\mathrm{com}}=\varnothing$ in \eqref{eq:gcomgen}, so \eqref{eq:Ssymgen} reads
$\tau-\mathcal{S}=\mathcal{S}$: the beta set is symmetric about $C$, that is
\[
  \beta_j+\beta_{N+1-j}=C\qquad\text{for every }j .
\]
Substituting $\beta_j=\lambda_j+N-j$ turns this into $\lambda_j+\lambda_{N+1-j}=C-N+1=:w$ for every
$j$, which is self-complementarity of width $w=\lambda_1+\lambda_N$. Finally, Corollary
\ref{cor:Ctaugen} says that the two solutions of $2\kappa\equiv C\pmod2$ are the two classes, which
holds exactly when $C$ is even; and $C=w+N-1$ with $N=2r+2$ even, so $C$ even is exactly $w$ odd.
Thus $\lambda$ is self-complementary of odd width: branch {\rm(b)}.
\end{proof}

\begin{remark}
At $t=2$ the congruence \eqref{eq:congrgen} is vacuous --- every increment is even --- and
$|G|\le2$ holds for the weaker reason that only two classes exist. What is \emph{not} vacuous at
$t=2$ is that $\tau$ is even, and it is used twice: it forbids the crossed matching in Proposition
\ref{prop:centregen}, and it \emph{is} the parity that becomes branch {\rm(b)}.
\end{remark}

\begin{remark}[\emph{genuine} does not mean \emph{irreducible}]
In Remark \ref{rem:rankone}, \emph{$\pm$ a genuine character} means a \emph{nonnegative} combination
of irreducibles, not a single one. It is not a single one: already
$s_{(2)}(1,-1,z,z^{-1})=z^{2}+2+z^{-2}=sp_{(2)}+sp_{(0)}$. At $t=2$, $r=1$ Theorem \ref{thm:main}
gives a product of three $\mathfrak{sl}_2$ characters, and a product of characters is a character ---
that is the whole claim. From $r=2$ negative coefficients appear and no representation exists, which
is what \emph{properly virtual} means; Table \ref{tab:virtual} counts both and Figure
\ref{fig:virtual} draws one of each.
\end{remark}

\begin{table}[!ht]
\centering\small
\caption{Genuine against properly virtual, over all $\lambda$ with $\lambda_1$ up to the bound shown.
The expansion is $\Psi_r(\lambda)=\sum_\mu a_{\lambda\mu}\,sp_\mu(z)$; \emph{one sign} means every
$a_{\lambda\mu}$ has the same sign, so that $\pm\Psi_r$ is a character, and \emph{mixed} means it is
properly virtual. At $r=1$ the mixed column is empty, as Theorem \ref{thm:main} forces; it is not
empty for any $r\ge2$ we can reach. Computed by \texttt{anc/folding\_t2.py}, whose archived run
prints each row as \emph{cero} / \emph{no negativa} / \emph{no positiva} / \emph{MEZCLADA}; the
\emph{one sign} column here is the sum of the two single-sign classes, which that run now also
prints. The figure alongside is \texttt{anc/fig\_v2.py}, which selects its two examples from the
same population but does not produce this table.}
\label{tab:virtual}
\begin{tabular}{@{}lrrrr@{}}
\toprule
 & \multicolumn{1}{c}{shapes} & \multicolumn{1}{c}{$\Psi_r=0$} & \multicolumn{1}{c}{one sign}
 & \multicolumn{1}{c}{mixed} \\
\midrule
\rowcolor{hband} $r=1$, $\lambda_1\le14$ & $3060$ & $352$ & $2708$ & \textcolor{hproved}{$\mathbf{0}$} \\
$r=2$, $\lambda_1\le8$  & $3003$ & $78$ & $2531$ & \textcolor{hverif}{$\mathbf{394}$} \\
$r=3$, $\lambda_1\le5$  & $1287$ & $27$ & $1008$ & \textcolor{hverif}{$\mathbf{252}$} \\
$r=4$, $\lambda_1\le2$  & $66$   & $2$  & $58$   & \textcolor{hverif}{$\mathbf{6}$} \\
\bottomrule
\end{tabular}
\end{table}

\begin{remark}[where the alphabet sits]
For $r=1$ the alphabet $(1,-1,z,\bar z)$ of \eqref{eq:psir} is literally the eigenvalue list of a
matrix in the non-identity component $O(4)^{-}$, tabulated as such by Hanany and Kalveks
\cite[Table 7]{HK16}, who also record the rank drop
$[\mathrm{vec}]_{O(2n)^{-}}\cong[\mathrm{vec}]_{O(2)^{-}}\oplus[\mathrm{fund}]_{C_{n-1}}$ for the
\emph{vector} representation. So $\Psi_r$ is a $GL(2r+2)$ character evaluated on that component, and
the folding $D_{r+1}\to C_r$ is the reason a symplectic group appears at all; we make no use of this
beyond the placement, since for a general $\lambda$ the restriction is a $\ZZ$-combination of
symplectic characters and not a single one.
\end{remark}

\section{Verification}\label{sec:verif}

Every statement above was checked numerically, by three implementations along different code paths:
one in exact Laurent arithmetic, one a from-scratch bialternant evaluation written from the
statements alone, and one in Sage, which shares no line of code and no library with the other two.
The ranges and counts are collected here once.

\begingroup\footnotesize\setlength{\tabcolsep}{4.0pt}
\rowcolors{2}{hband}{white}
\begin{longtable}{@{}>{\raggedright\arraybackslash}p{0.360\linewidth}%
                  >{\raggedright\arraybackslash}p{0.228\linewidth}%
                  >{\raggedleft\arraybackslash}p{0.278\linewidth}@{\hspace{5pt}}l@{}}
\toprule
statement & range & result & status\\
\midrule
\endfirsthead
\multicolumn{4}{@{}l}{\footnotesize\itshape Verification, continued}\\[1pt]
\toprule
statement & range & result & status\\
\midrule
\endhead
\midrule
\multicolumn{4}{r@{}}{\footnotesize\itshape continues on the next page}\\
\endfoot
\bottomrule
\endlastfoot
Thm \ref{thm:main}, sign included & $t\le7$, $|\lambda|\le14,\dots,10$ &
$959$ exact, $476$ zeros, $0$ fail & \stproved\\
\quad the same, an implementation written from the statement & $t\le6$, $|\lambda|\le12,\dots,9$ &
$749$ shapes, $0$ fail; sign decoy fails $35/133$ & \stproved\\
Lemma \ref{lem:L1} & $t\le6$ & $454$ nonzero minors & \stproved\\
Lemma \ref{lem:L4} & $t\le7$ & $724/724$ & \stproved\\
Proposition \ref{prop:quotient} & $t\le7$, $|\lambda|\le19$ & $2970/2970$ & \stproved\\
Prop. \ref{rem:lattice} (concentric locus) & $t\le9$, $|\lambda|\le16$ & $1331/1331$;
$0$ at odd $t$, $43$ at even & \stproved\\
Prop. \ref{rem:sign} (short form of the sign) & $t\le8$, $|\lambda|\le14$ & $1496/1496$;
proof steps $0$ fail & \stproved\\
\quad the shift law \eqref{eq:shift} and its sign & $t\le7$, $|\lambda|\le12$ &
$826$ shapes, $0$ fail & \stproved\\
Prop. \ref{prop:minimal} (the multiset separates) & $d_i\le60$; $t\le5$, $|\lambda|\le16$ &
$75640$ invariants, $0$ collisions & \stproved\\
\quad the same, both directions, from the statement & $t\le5$, $|\lambda|\le14,\dots,11$ &
$676$ shapes, $0$ collisions; decoy $104$ & \stproved\\
Prop. \ref{prop:fibrelattice} (the lattice) & $t\le5$, $|\lambda|\le14,12,10,10$ &
$84$, $35$, $20$, $14$ fibres, both ways & \stproved\\
Cor. \ref{cor:fibregf} (generating function) & same range &
$153/153$; wrong denominator $101$ & \stproved\\
Remark \ref{rem:signsees} (the sign sees the $m_i$) & $t\le4$ &
$107/107$, $43/62$, $26/36$ & \stverif\\
Lemma \ref{lem:single} & $t\le7$, $|\lambda|\le16$ & $2113/2113$ & \stproved\\
Thm \ref{thm:extra} & $t\le12$, $\lambda_1\le33$ & $119+21$, no others & \stproved\\
\quad the sign is $+1$, \eqref{eq:extrainv} & $t\le14$ even; cores $t\le10$ &
$28$ extra $+$ $219$ cores, all $+1$ & \stproved\\
Proposition \ref{prop:rect} & $t=2$, $r\le4$, $c\le60$ & $240/240$ & \stproved\\
Cor. \ref{cor:enum} vs direct enumeration & $t\le5$ & $14/14$; boxes $16/16$ &
\stproved\\
Prop. \ref{rem:involution} (runs alternate) & $t=2$, $|\lambda|\le14$, $\le4$ rows &
$4823$ minors; $217$ pairs, $0$ fail & \stproved\\
(D3), (D4) of \S\ref{sec:sharp} & $t\le6$, $|\lambda|\le10$ &
$600$ / $383$ / $200$ of $600$ & \stverif\\
\midrule
Thm \ref{conj:crit}, the criterion, every $r$ & --- & proof, \S\ref{sec:rigidity} & \stproved\\
\quad the same, read on $O(N)$, Cor.~\ref{cor:dettwist} & --- &
restatement, \S\ref{sec:conv} & \stproved\\
\quad Thm \ref{thm:PvW}, the external input & --- & \cite{PvW} & \stext\\
\quad Theorem \ref{thm:Ggen}, $|G|\le2$ & $t\le10$, $r\le3$ & greedy vs.\ brute force, $0/4049$ & \stproved\\
\quad Prop.~\ref{prop:A1gen}, the extremes & $t\le10$, $r\le3$ & $2522$ shapes, $0$ fail & \stproved\\
\quad \quad the hypothesis is needed & same & $414$ shapes fail without it & \stproved\\
\quad the displayed identities, \S\S\ref{sec:extremal}--\ref{sec:rigidity} & $t=2$, $r\le3$ & $11$ identities, $0$ fail & \stverif\\
Lemma \ref{lem:step}, the column move, any class & $t\le6$, $r\le3$ &
$285\,600/285\,600$; one-count decoy $149\,652$ & \stproved\\
Thm \ref{thm:suff}, the sufficient direction & $t\le8$, $r\le4$ &
$438$ shapes, $0$ fail; \eqref{eq:wrefl} $438/438$ & \stproved\\
Conj. \ref{conj:general}, both directions & $t\le12$, $r\le4$ &
$190\,443$ shapes, $24$ configurations, $0$ fail & \stverif\\
\quad the same, independent evaluation & $t\le8$, $r\le4$ &
$43\,010$ shapes, $409$ zeros, $0$ fail & \stverif\\
\quad the same again, in Sage & $t\le8$, $r\le3$ &
$12\,937$ shapes, $131$ zeros, $0$ fail & \stverif\\
\quad against Theorems \ref{thm:main} and \ref{conj:crit} & $r=1$; $t=2$ &
$9913$ and $29\,508$ shapes, $0$ disagreements & \stproved\\
\quad the decoy reading $\beta$ instead of $\mathcal{S}$ & same &
fails in $24$ of $24$ and $14$ of $14$ & \stproved\\
\midrule
Thm \ref{conj:crit}, both directions, direct check & $r\le3$, $|\lambda|\le28,26,22$ &
$9961$ shapes, $318$ zeros, $0$ fail & \stproved\\
\quad branch (a), Thm.~\ref{thm:zeros} & $r\le3$, $|\lambda|\le15$ &
$16/16$ & \stproved\\
\quad branch (b), Thm.~\ref{thm:zeros} & $r\le3$ & $24/24$ & \stproved\\
\quad even-width control for (b) & $r\le3$ & $56/56$ nonzero & \stproved\\
\quad converse at $r=1$, Thm.~\ref{thm:r1} & $\beta_1\le25$ &
$14950$ beta sets, $0$ fail & \stproved\\
\quad converse, $\ell(\lambda)\le N/2$, Thm.~\ref{thm:stable} & $r\le3$ &
$372/372$ witnesses & \stproved\\
\quad the isolating witness, Prop.~\ref{conj:iso} & $r=2$, $|\lambda|\le14$ &
$227$ witnesses, $11$ residue & \stproved\\
\quad the same at $r=3$ & $r=3$, $|\lambda|\le13$ &
$149$ witnesses, $0$ residue & \stproved\\
\midrule
Identity \eqref{eq:compl} & $r\le2$ & $474/474$ & \stproved\\
Identity \eqref{eq:docagne}, and $\deg_{c_\ast}S_n=n-1$ & $b\le31$ & $496/496$; $39/39$ &
\stproved\\
Lemma \ref{lem:AtoSp}, the \S\ref{sec:unstable} reduction & $r\le3$, $|\nu|\le8$ &
identity $0$ fail; folds $0$ fail & \stproved\\
\quad the reduction carried out & $r\le2$ & $236$ labels, $0$ unresolved & \stproved\\
\eqref{eq:Cmu} against the exact object & $r\le2$, both ranges & $184$ shapes, $0$ fail &
\stverif\\
\eqref{eq:Jlambda}, the term set of \S\ref{sec:lift} & $t=2,3$; $\lambda_1\le7$ &
$329/329$ and $791/791$ & \stproved\\
Prop. \ref{prop:wrongset}: $\|\Phi\|_1$ against the Laplace count & same &
$\lambda=(7,4)$ gives $30$ exactly; over $8$ in $128$ and $216$ & \stproved\\
Lemma \ref{lem:liftid}, the lift & same &
$329/329$, $791/791$; two decoys break $289$ and $623$ & \stproved\\
Prop. \ref{prop:liftcount}, equality when $\Phi\ne0$ & same &
proof, \S\ref{sec:lift}; control $309/329$ and $791/791$ & \stproved\\
Cor. \ref{cor:local}, the local form & same &
proof, \S\ref{sec:lift}; control $1696/1696$ and $4074/4074$ & \stproved\\
Rem. \ref{rem:skeleton}, sign and torsion on the forced points & same &
$398/398$ and $1176/1176$; three ansatzes fail & \stverif\\
Prob. \ref{prob:instrument}: the locus inside Littlewood's range &
$(t,r)=(2,2),(2,3),(4,2)$ & $1$ of $164$, $494$, $494$, all three
Thm \ref{thm:stable}(b) & \stverif\\
\end{longtable}
\endgroup

\noindent The two groups of rows for \S\ref{sec:zeros} are the only place in the paper where a
row's status is not obvious from its statement. The first is the machinery and what it reaches:
Theorem \ref{conj:crit} closes $t=2$ for every $r$, and the one row in \stext\ is the external
input it rests on; below it the extremal steps, then Theorem \ref{thm:suff}, which reaches every
$t$ and every $r$ and needs no external input at all; and last Conjecture \ref{conj:general}, in
\stverif\ because it is the one statement here that is measured and not proved. Its two sub-rows
are what makes the measurement worth quoting: the criterion is compared against the two theorems
that already decide their own ranges, and a decoy that reads the condition on $\beta$ instead of on
$\mathcal{S}$ is run alongside and fails everywhere. The second group is the same locus checked
directly against the definitions, branch by branch, which is a different code path and not a
restatement. The rows below both are the identities and the reduction the section rests on. In the
column, \stproved\ means a proof is given here or cited, and \stverif\ means the statement is
checked exhaustively over the stated range and not proved.

\noindent The (D3), (D4) row is the one that matters most in the first group: the identity holds in
all $600$ cases for \eqref{eq:object} and fails in $217$ and $400$ of them for the two deformed
alphabets, so the test is capable of failing. In the second group the even-width control plays the
same role for Theorem \ref{thm:zeros}: $56$ self-complementary shapes of even width, none of which
vanishes, so the parity hypothesis is doing work rather than decorating. A separate audit rebuilds
all of the second group from the definitions in a single pass, independently of the scripts that
produced them: $3414$ checks, $0$ failures. A second one takes the \emph{displayed formulas} rather
than the counts --- Theorem \ref{thm:LR}, the interval reading \eqref{eq:intervals}, Lemmas
\ref{lem:L2} and \ref{lem:L5}, the splitting \eqref{eq:threestep}, the complementation
\eqref{eq:compl}, d'Ocagne \eqref{eq:docagne} and the scalar caveat \eqref{eq:scalar} --- and
evaluates both sides at numeric points from the definitions, using neither the closed forms of this
paper nor the scripts behind the table: $3297$ evaluations over nine displayed formulas, $0$ failures. The scripts are ancillary to this paper, and so is the
saved output of each one, so every count in the table above can be located in an archived run rather
than taken on trust.

\section{Attribution}\label{sec:attr}

The paper cedes ground in a dozen separate places --- inside a proof, after a lemma, in a remark ---
and a reader who wants to know what is borrowed should not have to find those places one by one.
This section is that ledger, in two lists rather than one, because the two are read for different
reasons. Nothing here is new; every row points at a statement above, and where the body already says
whose an argument is, this table repeats it rather than replacing it.

\subsection*{What we use}
\begingroup\rowcolors{2}{hband}{white}\footnotesize\setlength{\tabcolsep}{4.0pt}
\begin{longtable}{@{}>{\raggedright\arraybackslash}p{0.60\linewidth}%
                   >{\raggedright\arraybackslash}p{0.34\linewidth}@{}}
\toprule
statement & due to\\
\midrule
\endfirsthead
\multicolumn{2}{@{}l}{\footnotesize\itshape What we use, continued}\\[1pt]
\toprule
statement & due to\\
\midrule
\endhead
$t$-quotient factorisation at a root-of-unity orbit, and the $\beta$-set bookkeeping the whole line
rests on & Littlewood \cite{Lit50}; Ayyer--Kumari \cite{AK22} \stext\\
factorisation at a purely reciprocal alphabet, for rectangular shapes & Ciucu--Krattenthaler
\cite{CK09} \stext\\
the same for self-complementary shapes, with the fixed point $+1$, and the parity obstruction in an
$a\times b$ rectangle & Ayyer--Behrend \cite{AB19} \stext\\
$o_\nu(A)=sp_\nu(z)$ --- \emph{the whole of} Lemma \ref{lem:AtoSp} & Ayyer--Kumari
\cite{AK25}: the composition of their (2.13) with their Lemma 3.2. We found it by computing and
gated it afterwards \stext\\
universal characters, and the shape of the type-$D$ specialisation & Koike--Terada \cite{KT87}
\stext\\
the symplectic modification rule used in \S\ref{sec:unstable} & King \cite{King71} \stext\\
twining: the character of a coset of a disconnected group is a character of the folded algebra &
Jantzen; Kumar--Lusztig--Prasad \cite{Jantzen,KLP} \stext\\
Littlewood's restriction rule, and the associate involution $[\nu^*]=[\nu]\otimes\det$ &
Littlewood \cite{Lit50}; classical \stext\\
the extension of the restriction rule covering all parameters --- so the relation space among the
$s_\nu$ at a reciprocal alphabet is \emph{not} undescribed & King \cite{King71};
Koike--Terada \cite{KT87}; Enright--Willenbring \cite{EW04} \stext\\
$S_bS_{b'-1}-S_{b'}S_{b-1}=-S_{b-b'}$ & d'Ocagne \cite{Doc}, for Chebyshev of the second kind;
classical \stext\\
that the initial form of a determinant is the signed sum over the optima of an assignment problem,
and that the matrix is singular exactly when two optima tie & tropical linear algebra, standard
\cite{TropCramer} \stext\\
the numerator lift of Definition \ref{def:lift}, the reason for preferring it to an
involution on $J_\lambda$, the correction of the fixed-point count from an equality to an
inequality, and the observation that a transversal carries two monomials & AI-assisted
exploration; see \S\ref{sec:disclosure} \stext\\
character values at an element of order two & Karmakar \cite{Kar24} \stext\\
that a sign-reversing involution reduces a signed sum to its fixed points, hence Lemma
\ref{lem:fixbound} & standard \stext\\
the last link of $\Phi_t=0\Rightarrow$ (ii), and its $n$-factor form & Purbhoo--van Willigenburg;
Rajan \cite{PvW,Rajan} \stext\\
\bottomrule
\end{longtable}
\endgroup

\subsection*{What is ours}

One thing has to be said before the table rather than inside it. The alphabet is what this paper
contributes before any single statement does: the root-of-unity extensions surveyed above add more root-of-unity
structure, and the reciprocal-pair line needs self-dual shapes; adjoining a \emph{free} reciprocal
pair to a full orbit is the step that, to our knowledge, does not appear in the factorisation
literature we survey here, and everything below is what that step makes answerable.

\begingroup\rowcolors{2}{hband}{white}\footnotesize\setlength{\tabcolsep}{4.0pt}
\begin{longtable}{@{}>{\raggedright\arraybackslash}p{0.44\linewidth}%
                   >{\raggedright\arraybackslash}p{0.50\linewidth}@{}}
\toprule
statement & status\\
\midrule
\endfirsthead
\multicolumn{2}{@{}l}{\footnotesize\itshape What is ours, continued}\\[1pt]
\toprule
statement & status\\
\midrule
\endhead
Theorem \ref{thm:main} and its proof, Lemmas \ref{lem:L1}--\ref{lem:L5} & \stproved; the Laplace
expansion along the frozen rows is elementary, and what is ours is the excess-two residue count, the
collapse and the sign\\
Lemma \ref{lem:L2} & \stproved; the determinant evaluated there is of a kind Ciucu and Krattenthaler
treat, and the evaluation is not the content --- the frozen block it is applied to is\\
Corollary \ref{cor:zero} & \stproved; it answers the question in the form Prasad posed it, which is
why it is stated separately\\
Propositions \ref{rem:lattice}, \ref{prop:fibrelattice} and Corollary \ref{cor:fibregf} --- the
two-class stratum is a lattice, with an explicit generating function & \stproved; ours\\
Proposition \ref{prop:minimal}, Proposition \ref{rem:sign} --- the multiset and the sign are exactly
what is seen & \stproved; ours\\
Proposition \ref{rem:involution}: the runs alternate & \stproved; \emph{part (i) is Macdonald's \cite[p.~91]{Mac95}, not
ours}, and the paper says so where the proposition stands\\
Theorem \ref{thm:extra}, Lemma \ref{lem:single} --- independence on the reciprocal locus &
\stproved; it extends a criterion of Ayyer--Kumari, and bounds where their theorem applies\\
Theorem \ref{thm:zeros}: the two branches & \stproved; branch (a) is one line --- constant-parity
$\beta$ makes the bialternant factor through $A^2$ --- and \emph{the self-complementary half is a
short corollary} of the standard complementation identity over the index family Ayyer and Behrend
single out. What this row does \emph{not} carry is the converse; that nothing else vanishes is proved in the
rows below, and finally for every $r$ in Theorem \ref{conj:crit}\\
Theorem \ref{thm:r1}, Theorem \ref{thm:stable} --- the converse at $r=1$, and inside Littlewood's
range for every $r$ & \stproved; ours, and we have not found this half stated in the sources surveyed\\
Corollary \ref{cor:dettwist}: determinant-twist rigidity, a biconditional for every $r$; and
that from $r\ge2$ the order-two endpoint stops deciding the curve &
\stproved; ours. That the endpoint stops deciding the curve is the finding, and
$\lambda=(5,4,3,1)$ is the first witness\\
Lemma \ref{lem:AtoSp} & \stext; \emph{not ours} --- see the first table; it is stated here because the
argument needs it, and its proof is theirs\\
Lemma \ref{lem:Arefl}: the reflected minor & \stproved; the reflection used is the one Ayyer and
Behrend already work with\\
Lemma \ref{lem:L3gen}, Lemma \ref{lem:step}, Theorem \ref{thm:Ggen} --- the Laplace decomposition and
the extremal set for general $r$ & \stproved; ours. The bound $|G|\le2$ is the tropical statement
above specialised to our matrix, and what we claim is only the \emph{reason} for it --- that a tie
forces $t$ even and puts the two tied increments in classes $i$ and $i+t/2$\\
that the top-degree factorisation is $[A(T)]_{\mathrm{top}}=a_H(z)a_L(1/z)$ & \stproved, and
\emph{elementary}: one line of Laplace plus a degree count. We claim nothing for it\\
that consecutive pairing is the unique minimum-weight matching on a line & \stext; folklore; a one-line
exchange argument, and we found no citable lemma\\
Theorem \ref{thm:suff}, Corollary \ref{cor:necgen} --- the sufficient direction for all $t$ and $r$,
and a necessary condition at even $t$ & \stproved; ours\\
Lemma \ref{lem:signformula}, Lemma \ref{lem:signrefl} --- the partial-reflection signs &
\stproved; ours\\
Conjectures \ref{conj:general}, \ref{conj:gcom}, \ref{conj:rank2} & \stverif; open, and measured; the ranges
are in \S\ref{sec:verif}\\
Proposition \ref{conj:rank}: the rank is the wrong invariant & \stproved; ours, and it is a negative
--- it removes a candidate invariant rather than supplying one\\
Proposition \ref{prop:wrongset}, Lemma \ref{lem:liftid}, Proposition \ref{prop:liftcount}
and Corollary \ref{cor:local} --- where such an involution cannot live, and where it can
& \stproved; \emph{the construction they are stated on is not ours} ---
see the first table --- and ours is the measurement\\
Remark \ref{rem:skeleton} --- what the size-one weight spaces force & \stverif; ours\\
the measurement inside Problem \ref{prob:instrument}: the vanishing locus meets
Littlewood's range in one shape, and that shape is a theorem already & \stverif;
ours, and it is a negative --- it says which kind of instrument the question needs,
not which one answers it\\
the remaining statements --- Corollaries \ref{cor:chars}, \ref{cor:enum}, \ref{cor:sizes},
\ref{cor:uniquegen}, \ref{cor:midgen}, \ref{cor:oddgen}, \ref{cor:reflectgen}, \ref{cor:Ctaugen},
\ref{cor:whenminus}, \ref{cor:suffgeom}; Propositions \ref{prop:quotient}, \ref{prop:rect},
\ref{conj:iso}, \ref{prop:orbitgen}, \ref{prop:translgen}, \ref{prop:centregen}, \ref{prop:A1gen};
Lemmas \ref{lem:L3}, \ref{lem:L4}, \ref{lem:nonstd}, \ref{lem:reflgen}, \ref{lem:dictgen},
\ref{lem:reflact}; and Theorem \ref{conj:crit} & \stproved; ours. They are the internal machinery of
the proofs above, and are listed by name so that this table covers every statement in the paper
rather than the memorable ones. The classical facts they invoke --- Littlewood's restriction rule and
its range, the modification rules --- are the ones ceded in the first table\\
\bottomrule
\end{longtable}
\endgroup

\noindent
Three negatives are worth stating as negatives, because each cost a search. Kumari's
$(z_1,z_2,k)$-asymmetry does \emph{not} classify our vanishing locus. We know of no rank-$r$ analogue
of d'Ocagne, which would be a closed form for $s_\mu$ at a reciprocal alphabet for every $\mu$, and
the literature supplies one only for special shapes. And a vanishing criterion on
$O(N)\setminus SO(N)$ --- which is what \S\ref{sec:zeros} is about --- we could not find under either
of two searches with different vocabularies.

\section{Open problems}\label{sec:open}

Figure \ref{fig:map} places what follows against what the paper proves and what it borrows;
the problems below are the boxes it leaves open.

\begin{figure}[!ht]
\centering
\includegraphics[width=\textwidth]{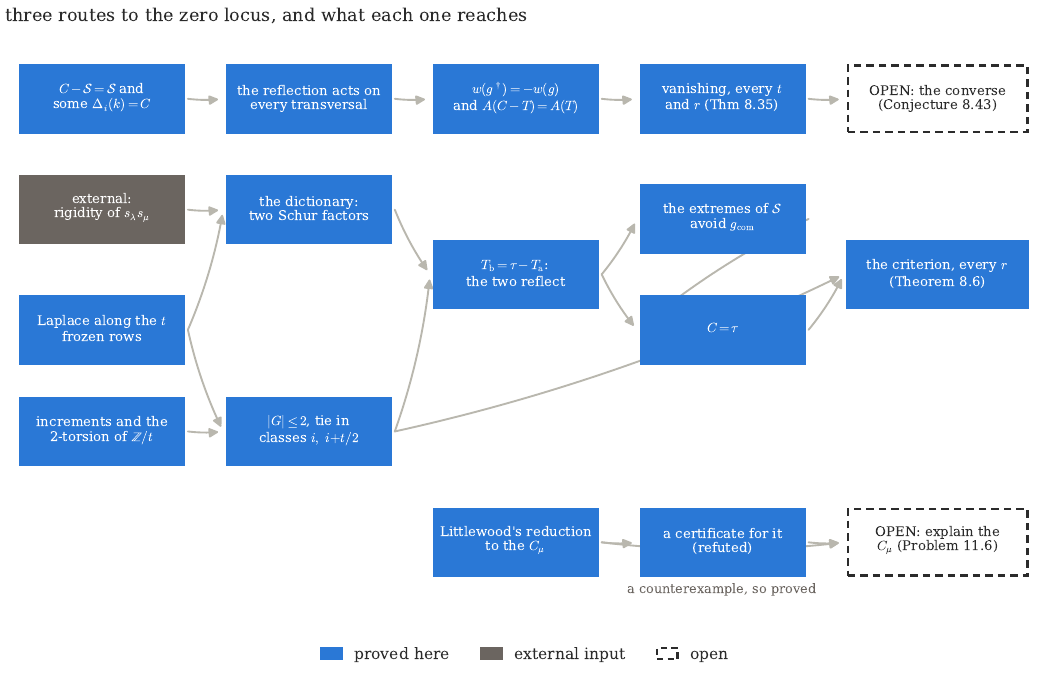}
\caption{Three routes to the zero locus, and what each one reaches. Boxes are statements and
arrows are dependencies; the colour is the status column of \S\ref{sec:verif} --- filled for what
is proved here, grey for the single external input, dashed for what is open. The \emph{middle}
path is the extremal argument of \S\S\ref{sec:extremal} and \ref{sec:rigidity}, which closes the
criterion at $t=2$ for every $r$. The \emph{lower} one is Littlewood's reduction, whose
certificate is refuted by Proposition \ref{conj:iso} --- a counterexample, hence itself a proof
--- so that it no longer bears on the criterion but on the separate question of explaining the
$C_\mu$, which is Problem \ref{prob:instrument}. The \emph{upper} one starts from a hypothesis
rather than from the expansion, and is the only one that reaches every $t$ and every $r$: there
the reflection pairs off all the terms at once, not just the two maximisers. The three meet at the
one implication that is missing, which is what makes Conjecture \ref{conj:general} a conjecture.
The boxes name their statements, and the four numbers they do carry are read from the
\texttt{.aux} when the figure is drawn, so that they cannot drift when the numbering does;
\texttt{check\_figrefs.py} checks that each one still resolves, and to a statement of the right
kind. Computed by \texttt{anc/fig\_proof.py}.}
\label{fig:map}
\end{figure}

\begin{problem}[the other classical types]\label{prob:types}
Everything above is type $A$. What do $sp_\lambda(\zt,z,\zb)$, $so_\lambda(\zt,z,\zb)$ and
$o_\lambda(\zt,z,\zb)$ do? The root-of-unity factorizations were carried from type $A$ to $B$, $C$,
$D$ by a uniform argument in \cite{AK22} and to the universal characters in \cite{Alb23}; the frozen
rows of the bialternant are the same in every type, so Lemma \ref{lem:L1} and the excess-two count
should survive verbatim, while the collapse, Lemmas \ref{lem:L4} and \ref{lem:L5}, would need
re-deriving once per type. The alphabet \eqref{eq:object} is already a torus element of $O(t+2)$, so
the other types are not the same function at a new point but new objects, and choosing the right ones
is a matter of taste inside a programme we did not build.

\emph{The classical symplectic analogue is not open, and we should have said so here rather than
only at Theorem 2.2.} Remark \ref{rem:notacase} above compares our alphabet with \cite[Theorem 2.2]{Kum24}, which is
the $GL$ statement; but the same paper's \cite[Theorem 2.8]{Kum24} evaluates
$sp_\lambda(X,\omega X,\dots,\omega^{t-1}X,\,y)$, and in a symplectic character adjoining $y$ is
adjoining the reciprocal pair $\{y,y^{-1}\}$. That answers the classical symplectic analogue of this problem,
with a vanishing criterion --- her $(z_1,z_2,k)$-asymmetry of the $t$-core --- and we have checked
that criterion against the vanishing of the universal symplectic character on
$\mu_t\cup\{z,z^{-1}\}$: it agrees on $285$ of $285$ shapes at $t=3,5,7$, in all four cells
\stverif. The distinction that survives is not of range but of object: her $sp_\lambda(W)$ is the
classical character, whose eigenvalue alphabet is $W$ together with its inverses, so at $X=(1)$ the
orbit is \emph{doubled}, and the universal character over the orbit taken once is a different
function --- the values differ on $155$, $96$ and $51$ of $219$ shapes, while the zero locus
coincides on $219$ of $219$ \stverif. So what this problem still asks is the orthogonal cases, and
the doubled-versus-undoubled gap in the symplectic one. \emph{The rest of this problem is a
measurement, and reported as one only.} Expanding the Koike--Terada universal characters in the Schur basis and evaluating
term by term with Theorem \ref{thm:main}, over $|\lambda|\le12$ for $t\le3$ and $|\lambda|\le10$ for $t=4,5$, the fraction of
$\ell(\lambda)\ge2$ shapes on which the value vanishes is markedly larger for $o_\lambda$ on the
non-identity component than on the identity one --- $83.8\%$ at $t=2$ against $41.8\%$ at $t=3$, and
$58.8\%$ at $t=4$ against $42.1\%$ at $t=5$ --- with $sp_\lambda$ in between and the Schur case
lowest. The effect is real and it tracks the component, which is what one would expect of an
associate cancellation; but it is quantitative and we found no law covering the whole table. A first
attempt at one, that
$o_\lambda$ vanishes for all $\ell(\lambda)\ge2$ when $t$ is even, is false: it was read off a
sample and the systematic sweep refutes it.

The $t=2$ column of that comparison is the one case which does have a law, and it is Lemma
\ref{lem:AtoSp}: there the alphabet is \eqref{eq:psir} at $r=1$, so $o_\nu$ equals $sp_\nu(z)$, a
\emph{rank-one} symplectic character. That vanishes identically for $\ell(\nu)=2$ --- all $36$ such
shapes in range, with no exception --- and folds for $\ell(\nu)>2$, which is why the $t=2$ figure is
the outlier. What remains without a law is odd $t$, where the alphabet is not of the form
\eqref{eq:psir} and the lemma does not apply.

The $sp_\lambda$ column at $t=2$, on the other hand, has an answer, and it explains the asymmetry of
the table rather than adding a row to it. Adjoining the same two letters to a universal
\emph{symplectic} character does not collapse it. Over $|\nu|\le8$,
\begin{equation}\label{eq:spbranch}
sp_\nu(W,1,-1)\;=\;\sum_{\gamma\subseteq\nu}c_{\nu/\gamma}\;sp_\gamma(W),
\end{equation}
where the coefficient depends on $\nu$ and $\gamma$ only through the skew shape $\nu/\gamma$ ---
$247$ coefficients over $74$ distinct skew shapes, with no clash --- and takes only the values $0$
and $\pm1$, vanishing whenever $|\nu/\gamma|$ is odd. That is the shape a branching rule for
universal symplectic functions produces, and such rules are available \cite{JJLW24}. Read against it,
Lemma \ref{lem:AtoSp} says that for $o_\nu$ the corresponding coefficients are concentrated at
$\gamma=\nu$: the two letters collapse the orthogonal character to a single symplectic one, and leave
the symplectic character spread over the whole interval below $\nu$. The asymmetry of the measurement
is that, and not an artefact of the ranges. What we have not done is identify $c_{\nu/\gamma}$, which
\eqref{eq:spbranch} makes a question about skew universal symplectic characters at $(1,-1)$ and
nothing more.

The third column resists the same route, and for a structural reason rather than an accidental one.
What answers the other two is that adjoining a letter is the ring map $p_k\mapsto p_k+c^k$, so
$\iota_{1}$ and $\iota_{-1}$ act on $\Lambda$ and compose. The identity that would bring $so$ into
the picture, $so_\nu(X)=o_\nu(X,\bar X,1,0,\dots)$ of \cite[(2.13)]{AK25}, relates the two on a
\emph{doubled} alphabet instead, and a doubling is not an adjunction: the $\iota$'s do not reach it.
So the $so$ column is not a gap of effort but of mechanism, and we leave it stated as such.
\end{problem}

\begin{problem}[a Verschiebung that sees a reciprocal pair]\label{prob:versch}
The operator $\varphi_t$ has nothing to say about a letter outside a $\zeta$-orbit, which is the
structural reason Theorem \ref{thm:main} is not a corollary of anything in that line, and the reason
our proof descends to the alternant. Is there an operator on universal characters, or a
factorization of $\varphi_t$ through a larger algebra, acting on alphabets of the form (union of
$\zeta$-orbits) $\cup$ (reciprocal pairs)? By \S\ref{sec:sharp} such an operator would decide (D1)
and (D2) at once. The discussion there suggests a quantitative form.

What makes the question well posed rather than vague is that each half already has a formalism, and
they are not the same one. The $\zeta$-orbits are the province of $\varphi_t$, in Albion's
formulation \cite{Alb23}. Alphabets closed under inversion are the province of the universal
symplectic and orthogonal functions, which have vertex operator realizations from which their
branching rules follow \cite{JJLW24}. Our alphabet is one of each, and Lemma \ref{lem:AtoSp} is the
one place where the two meet: there the $\zeta$-orbit is $\{1,-1\}$, small enough that adjoining it
is a pair of letter maps rather than an operator.
\emph{The companion paper \cite{PaperII} answers this in the weak sense, and says so.} It exhibits a
map taking the object with all pairs free to the object with the orbit frozen --- one construction
for even $t$ and a different one for odd $t$, a single construction producing both being left open
there. Its two disclaimers are what leave the question above standing: the map is not a Verschiebung
and is not claimed to factor one, and it is assembled after the fact from a branching followed by an
evaluation rather than from an operation on symmetric functions. What is asked here is the latter.
\end{problem}

\begin{remark}[what of Problem \ref{prob:versch} is already settled, and a correction]\label{rem:composes}
An earlier version of this problem asserted that we knew of nothing acting on both halves at once.
That was wrong, and the counterexample is in the reference the problem itself names for the twisted
half: Albion \cite[\S6.1]{Alb23} defines the Verschiebung with one free letter adjoined,
$\varphi^q_t h_{at+b}:=q^b\sum_k q^{kt}h_{a-k}$, and his Proposition 6.2 evaluates it on $s_\lambda$.
The single-letter case was in print. What was missing is the \emph{pair}, and the two formalisms do
compose there:
\begin{equation}\label{eq:composes}
s_\lambda\bigl(\mu_t\cup\{z,z^{-1}\}\bigr)=\sum_{\mu}\operatorname{sgn}_t(\mu)\,
s_{\lambda/\mu}(z,z^{-1}),
\end{equation}
the sum over $\mu\subseteq\lambda$ with empty $t$-core and every component of the $t$-quotient of at
most one row. This is not a new theorem: it is Albion's Theorem 3.1 evaluated at $1$, where
$s_\nu(1)=1$ for $\ell(\nu)\le1$ and $0$ otherwise. It is recorded because it answers the last
sentence of the problem --- the two halves compose, and the composition is explicit --- and because
a question should not be left standing on a premise its own reference refutes. Checked against the
direct value, $714/714$ at $t=2$ and $2001/2001$ at $t=3$; the decoy without the empty-core condition
fails $709$ and $2001$ times, the decoy with $(z,z)$ in place of $(z,z^{-1})$ fails $664$ and $1496$,
and the decoy without $\operatorname{sgn}_t(\mu)$ fails $665$ and $1884$ \stverif. What
\eqref{eq:composes} does \emph{not} give is an operator: it is a branching identity, not a
factorisation of $\varphi_t$ through a larger algebra, and the problem above asks for the latter.
\end{remark}

\begin{proposition}[the rank is the wrong invariant]\label{conj:rank}
Let $A=\zt\cup B$ with $B$ closed under inversion and disjoint from $\zt$, and let $\rho(B)$ be the
rank of the subgroup of $\CC^{\times}$ generated by $B$. Then $\rho(B)=1$ does not imply that
$s_\lambda(A)$ is a product of characters. At $t=2$ and $B=\{z,z^{-1},z^{2},z^{-2}\}$, which has
$\rho(B)=1$, the value at $\lambda=(2)$ has a root off the unit circle, and so has no such
factorization; the same happens for $B=\{z,z^{-1},z^{3},z^{-3}\}$.
\end{proposition}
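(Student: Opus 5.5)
The plan is a direct computation followed by a root-location argument. First I would make precise what ``a product of characters'' means here, in the sense of Theorem \ref{thm:main} and Corollary \ref{cor:chars}: a signed product (or ratio) of factors $\chi_k(z^m)=(z^{m(k+1)}-z^{-m(k+1)})/(z^m-z^{-m})$, possibly times a monomial. Every zero of such a factor is a root of unity. So any Laurent polynomial of that form has all its zeros in $\CC^\times$ on the unit circle. It therefore suffices to produce a single zero $z_0$ of $s_{(2)}(A)$ with $|z_0|\ne1$.

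Next I would compute $s_{(2)}=h_2=(p_1^2+p_2)/2$ at $A=\{1,-1,z,z^{-1},z^2,z^{-2}\}$, written in the variable $a=z+z^{-1}$. Put $b=z^2+z^{-2}=a^2-2$. Then
\[
p_1=a+b=(a+2)(a-1),\qquad p_2=2+b+(b^2-2)=b(b+1)=(a^2-2)(a^2-1).
\]
Substituting gives
\[
h_2=a^4+a^3-3a^2-2a+3=(a-1)(a^3+2a^2-a-3).
\]
The key step is that the cubic has a non-real root. Its derivative $3a^2+4a-1$ vanishes at $a\approx0.215$ and $a\approx-1.55$. The cubic is negative at both of these critical points (at $a\approx-1.55$ it is about $-0.37$). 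Hence it has exactly one real root, which lies in $(1,2)$, and two complex-conjugate non-real roots $a_0,\bar a_0$.

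To conclude, observe that if $|z|=1$ then $a=z+z^{-1}=2\operatorname{Re}z$ is real. So any $z_0$ solving $z_0+z_0^{-1}=a_0$ lies off the unit circle, and it is a zero of $s_{(2)}(A)$. This contradicts every factorization of the form above.

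I would then repeat the computation for $B=\{z^{\pm1},z^{\pm3}\}$. There $z^3+z^{-3}=a^3-3a$, and $p_2$ becomes a polynomial in $a$ via $z^6+z^{-6}=(a^3-3a)^2-2$. The task is again to show that the resulting quartic or sextic in $a$ has a non-real root. I would do this by locating its real roots and comparing their number with the degree, using sign changes and the critical points. Alternatively, it may be cleaner to exhibit a real root $a$ with $|a|>2$, which also gives $|z|\ne1$.

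The main obstacle is this second case and, more conceptually, pinning down the class of ``products of characters'' so that the unit-circle criterion genuinely applies. I would state that class explicitly as products of $\chi_k(z^m)^{\pm1}$ and monomials, matching the form of Theorem \ref{thm:main}. The rank hypothesis $\rho(B)=1$ is immediate in both cases, since $B\subset z^{\ZZ}$.
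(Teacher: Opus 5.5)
Your proof is correct and is essentially the paper's: the same value $x^4+x^3-3x^2-2x+3=(x-1)(x^3+2x^2-x-3)$ in $x=z+z^{-1}$, and the same observation that $|z|=1$ forces $x$ to be real. The only difference in the first case is that you count real roots by critical points where the paper uses the discriminant $-31$; made exact, the local maximum of your cubic is $(14\sqrt7-47)/27<0$. For the second alphabet, which the paper asserts without computing, your first route succeeds --- the value is $y^3+y^2-y+1$ in $y=z^2+z^{-2}$, with discriminant $-44<0$ --- whereas your alternative (a real root with $|a|>2$) does not exist, since the only real roots in $a$ lie in $(-1,1)$.
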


\begin{proof}
A product of characters $\chi_k(z)=(z^{k+1}-z^{-k-1})/(z-z^{-1})$ has all of its roots on the unit
circle, so it is enough to exhibit one root off it. For $B=\{z,z^{-1},z^{2},z^{-2}\}$ and
$\lambda=(2)$ the value is
$z^{-4}+z^{-3}+z^{-2}+z^{-1}+3+z+z^{2}+z^{3}+z^{4}$. The Laurent polynomial is invariant under
$z\mapsto z^{-1}$, so writing $x=z+z^{-1}$ and using $z^{k}+z^{-k}\in\ZZ[x]$ it becomes
\[
x^{4}+x^{3}-3x^{2}-2x+3\;=\;(x-1)\bigl(x^{3}+2x^{2}-x-3\bigr).
\]
The cubic has discriminant
$18\cdot2\cdot(-1)(-3)-4\cdot2^{3}(-3)+2^{2}(-1)^{2}-4(-1)^{3}-27\cdot(-3)^{2}=-31<0$, so two of its
roots are not real. If $|z|=1$ then $x=z+z^{-1}=2\cos\theta$ is real; hence for a non-real root
$x_0$, neither root of $z^{2}-x_0z+1=0$ lies on the unit circle, and both are roots of the value.
So the factorization is impossible, and the statement is exact rather than numerical.

Over $|\lambda|\le8$ the same phenomenon occurs for $17$ of the $62$ nonvanishing shapes, and for
$B=\{z,z^{-1},z^{3},z^{-3}\}$ for $18$; on the same range $B=\{z,z^{-1}\}$ has every root on the
circle without exception, as Theorem \ref{thm:main} requires.
\end{proof}

What the rank does not see is how many free directions $B$ has. A single reciprocal pair is one
direction; $\{z,z^{-1},z^{2},z^{-2}\}$ generates a group of rank one and is two, and the two see each
other. So the invariant to conjecture on is not $\rho(B)$ but the number of independent letters,
and with that reading the statement becomes the one \S\ref{sec:sharp} already makes by other means:

\begin{conjecture}[one free direction]\label{conj:rank2}
Let $B$ be closed under inversion and disjoint from $\zt$, as in Proposition \ref{conj:rank}. Then
$s_\lambda(\zt\cup B)$ is zero or a product of characters, for every $\lambda$, if and only if $B$ is
a single reciprocal pair. (The alternative is not decoration: by Corollary \ref{cor:zero} the value
does vanish for some $\lambda$ already at one pair, and $0$ is not a product of characters.)
\end{conjecture}

The evidence for it is Theorem \ref{thm:main} in one direction and (D1)--(D4) together with
Proposition \ref{conj:rank} in the other: enlarging $B$ destroys the product whether the enlargement
raises the rank or not. We have no proof and no proposed mechanism, and this is precisely the
statement a $\varphi_t$-style argument ought to produce.

That hypothesis is what the conjecture is stated under, and it is also what a general form would have
to drop. Inversion-closure of $B$ is sufficient in the evidence above, but it is not necessary for a
product. Write $w=-iy$. Then the second block of
the alphabet $\{1,-1,y,-y^{-1}\}$ is not closed under inversion, yet
\begin{equation}\label{eq:scalar}
s_\lambda\big(1,-1,y,-y^{-1}\big)\;=\;i^{\,|\lambda|}\,s_\lambda\big(-i,\,i,\,w,\,w^{-1}\big),
\end{equation}
and the alphabet on the right is. The hypothesis a projectively invariant form would carry is therefore not inversion-closure of $B$
but that $A$ be a scalar multiple of an inversion-closed set. That hypothesis is weaker than it
looks, and (D3) is what measures how much weaker. A coset $c\zt$ is itself inversion-closed exactly
when $c^{2}\in\zt$, since $(c\zt)^{-1}=c^{-1}\zt$; and (D3) satisfies it for every $t$, its
$\zeta'=e^{\pi i/t}$ having $(\zeta')^{2}=\zeta$. Its alphabet is the odd powers of a $2t$-th root of
unity, and inversion preserves the parity of the exponent. Yet (D3) breaks the identity, in $217$ of
the $600$ shapes swept above. So inversion-closure does not survive as the whole hypothesis even
projectively: what (D3) lacks is the other half of \S\ref{sec:sharp}, that the frozen letters be a
\emph{full} orbit and not a coset of one. A projective form of Conjecture \ref{conj:rank2} has to
carry both, and it is (D3) that shows they are independent.

\begin{problem}[the instrument for the second pair]\label{prob:instrument}
Section \ref{sec:zeros} reduces the vanishing at $r\ge2$ to a statement about the coefficients
$C_\mu$ of \eqref{eq:Cmu}, which are signed sums of Littlewood coefficients. That statement is no
longer what is missing: Theorem \ref{conj:crit} settles the vanishing, by the extremal argument of
\S\ref{sec:rigidity}, which never meets a $C_\mu$. What the reduction asks for now is not a proof
but an \emph{explanation} --- why those signed sums cancel, in Littlewood's own terms --- and there
is still no certificate, the one we proposed failing by Proposition \ref{conj:iso}. A problem that
was a gap is now a question about a second proof, which is a better problem and a smaller one. Lemma \ref{lem:AtoSp} settles the other half of the reduction --- what
the values $o_\nu(A)$ are --- and settles it symplectically: the two readings of the alphabet agree
because they are the same reading, the orbit Lie algebra of $D_{r+1}$ being $C_r$ rather than the
fixed subalgebra $B_r$. What it does not touch is the coefficients. Kumari and Stokke \cite{KS26}
have given Murnaghan--Nakayama rules for symplectic, orthogonal and orthosymplectic Schur functions;
by Lemma \ref{lem:AtoSp} the relevant one here is the symplectic \cite[Theorem 3.4]{KS26}, not the
even orthogonal \cite[Theorem 3.10]{KS26}. Does the recursion it provides compute $C_\mu$? Their rule
carries a third sum beyond the adding and removing of border strips, and it is the one they single out
as complicated; \cite[Corollary 3.5]{KS26} shows it drops out once $\mu_n+1\ge r$, so it is a
shallow-shape phenomenon. An earlier form of this problem asked whether that sum is what accounts for
the failure of Littlewood's rule outside its range; Lemma \ref{lem:AtoSp} answers that it is not,
since the failure is carried by the symplectic modification rules, which act on the labels and not
inside the recursion. The coefficients are what is left, and whether that recursion computes them is
what we have not attempted.

There is a second instrument, and it is the one we would try first. Frohmader's branching rules
\cite{Frohmader} compute $\mathrm{mult}(\pi^{\nu}_{O_n},\pi^{\lambda}_{GL_n})$ as
$\sum_{\mu}c^{\lambda}_{\mu\nu}(O_n)$ over partitions $\mu$ with even rows, with no stable-range
constraint and no modification rule. Jang--Kwon \cite{JK20} compute that same multiplicity, and
their form is the sharper one for what we need: it is a manifestly nonnegative count of flagged
Littlewood--Richardson tableaux. The obstruction here is precisely that our $C_\mu$ of
\eqref{eq:Cmu} carry signs, and what \S\ref{sec:unstable} leaves unfinished is an equality
$m_\nu(\lambda)=m_{\nu^{*}}(\lambda)$ between two multiplicities of exactly this kind, in the range
$\ell(\lambda)>N/2$ where Littlewood's rule stops applying. A subtraction-free expression for each
side is what a proof there would want to start from. Our $C_\mu$ of \eqref{eq:Cmu} are signed sums of exactly the
multiplicities those rules compute, assembled by the associate involution $\nu\mapsto\nu^{*}$. So the
question becomes concrete: is $C_\mu(\lambda)=0$, on the two branches, an identity between
generalized Littlewood--Richardson coefficients that the flag conditions make visible? That is a
question about tableaux, which is the form in which we would like to see it answered.
\emph{Since this problem was written, the companion paper \cite{PaperII} has supplied an instrument
of the kind it asks for.} There the coefficients are computed as symplectic branching multiplicities
against a filter given in closed form, both factors computable without expanding a monomial; and on
the odd side the branching matrix leaves the numerator altogether, which turns that half into a
signed transversal count. Two limits are that paper's own, and they are why this problem stays open:
it does not explain the cancellation and does not claim to, and what it delivers in place of an
explanation is a single sharp conjecture. We give the pointer and not the content.
One measurement bears on which instrument is the right one to reach for, and it is a negative. By
\eqref{eq:Cmu} the vanishing at $r\ge2$ is the equality $m_\nu(\lambda)=m_{\nu^{*}}(\lambda)$ for
every $\nu$, so one may ask how much of that locus lies inside the range where Littlewood's rule
computes those multiplicities at all. Almost none of it: over $(t,r)=(2,2)$, $(2,3)$ and $(4,2)$ ---
$164$, $494$ and $494$ shapes with $\ell(\lambda)\le N/2$ --- exactly one shape in each case
satisfies the equality, and in all three it is the odd-width rectangle with $N/2$ rows \stverif.
That shape is not a residue left over by the sweep: it is branch (b) of Theorem \ref{thm:stable},
which proves it. So inside Littlewood's range the question has no content beyond a theorem already
in hand, and the coefficients whose cancellation \eqref{eq:Cmu} asks about live where the rule does
not apply. That is why an instrument carrying no stable-range constraint is not a convenience here
but a requirement, and it is what makes the two rules above the ones to reach for.
\end{problem}

\begin{problem}[the extra locus, in general]\label{prob:extralocus}
Does Theorem \ref{thm:extra} have an analogue for $sp$, $so$ and $o$? More generally, for which
specializations of the free variables does the criterion \eqref{eq:AK53} acquire extra solutions, and
can the extra locus always be read off the kernel of the specialization acting on the span of the
universal characters, as Remark \ref{rem:why} suggests? If each type acquires a family indexed by an
order-two element of the residue lattice, the principle is real.

The question needs the free part pinned down, and for a computed reason: the extra family is a
phenomenon of the \emph{minimal} alphabet. Adjoin a second reciprocal pair and it is gone --- over
$t=3,4,5,6$ and $|\lambda|\le14$ the locus is then exactly the $t$-cores, for type $s$ no less than
for $sp$ and $o$, while at one pair type $s$ has the two extras of Theorem \ref{thm:extra} at $t=4$
and one at $t=6$. (We test the second pair at $z_2=z_1^{2}$, a specialization, which can only create
coincidences and not remove them, so a count of zero there is a count of zero.) An analogue for
another type must therefore be sought at \emph{that type's} minimal free part, one letter of each
kind; comparing types across free parts of different sizes measures the free part instead, which is
what our first attempt did.

The kernel is the right thing to look at, but its \emph{presence} does not settle the locus; its
size does. Two free parts of the same size can both collapse the independence and behave quite
differently. On the reciprocal locus $s_\mu(z,\zb)=\chi_{\mu_1-\mu_2}(z)$ retains one grading, and
the extra locus is the single family of Theorem \ref{thm:extra}. Replace the pair by the
$\zeta_2$-orbit $(z,-z)$, where $s_\mu(z,-z)=z^{|\mu|}s_\mu(1,-1)$ retains only $|\mu|$ and a sign,
and over $\ell(\lambda)\le2$ the criterion acquires $28$ further solutions at $t=2$ and $186$ at
$t=8$, in no family we can name. The control is the free pair $(z,w)$, which has no kernel and
returns the $t$-cores and nothing else at every $t\le8$ --- that is \eqref{eq:AK53} itself, so the
control is their theorem rather than a measurement of ours. So the
reciprocal pair is the collapsing specialization of \emph{smallest} kernel, and that, rather than
the mere existence of a kernel, is what makes its extra locus one family.
\end{problem}

\begin{problem}[the fibres, refined by the sign]\label{prob:fibres}
Corollary \ref{cor:fibregf} describes the fibre of the interval triple, and only that. The invariant
is $I_t=(d_1,d_2,d_3,\varepsilon_\lambda)$, and the sign cuts each branch further. Its cut is not
bookkeeping, and it is not idle: by Remark \ref{rem:signsees} the sign sees the $t-2$ directions the
triple does not, from $t=3$ on --- constant in $(m_i)$ at fixed $v$ in all $107$ slots at $t=2$, but
in only $43$ of $62$ at $t=3$ and $26$ of $36$ at $t=4$. What is the restriction of
$\varepsilon_\lambda$ to a branch? The obstruction to reading it off \eqref{eq:fibrelattice} is
visible in \eqref{eq:sign}: $\inv(b_S)$ is a statistic of the whole beta-word, and the lattice
coordinates split that word into two distinguished classes and $t-2$ free letters, so a rule would
have to reassemble what the bijection separates.

Two smaller gaps go with it. The same description for the size-three profile should be the same
bookkeeping with one class of three in place of two of two; we have not written it. The degenerate
profile is a different question, since there the fibre is the whole vanishing locus of $\Phi_t$,
which Corollary \ref{cor:zero} and Proposition \ref{rem:lattice} describe by other means.
\end{problem}

\begin{problem}[a sieving phenomenon that keeps a parameter]\label{prob:sieving}
At $t=2$ Corollary \ref{cor:enum} is a signed count of $\mathrm{SSYT}(\lambda,4)$, and a signed count
of order two is what Stembridge's $q=-1$ phenomenon \cite{Ste94} computes: on a set carrying an
involution, the value at $-1$ counts the fixed points. That is the case $\#C=2$ of the cyclic sieving
phenomenon of Reiner, Stanton and White \cite{RSW04}, whose Theorem 4.3 evaluates the principal
specialization $s_\lambda(1,q,\dots,q^{k-1})$ at a primitive $d$-th root of unity in terms of the
$d$-core and the $d$-quotient of $\lambda$ --- the two invariants that carry Theorem \ref{thm:main}
as well --- and which Lee and Oh \cite{LO22} have since extended to skew shapes. Every evaluation in
that line is at a \emph{number}, as a sieving statement requires: the alphabet there is the geometric
progression $1,q,\dots,q^{k-1}$ specialized at a root of unity, not a plain orbit, and nothing is
left free. Ours is not of that kind --- the reciprocal pair keeps $z$, and \eqref{eq:main} is a
product formula in it. Is there a cyclic action on $\mathrm{SSYT}(\lambda,t+2)$, and a $q$-analogue of
$\Phi_t$, whose value at a $t$-th root of unity counts fixed points refined by $m_{t+1}-m_{t+2}$?
The case $\#C=2$ is where to look first, and there the involution such a statement would need already
exists: \S\ref{sec:enum} assembles it.

Asked that way the question has a name. Sieving statements that keep an extra statistic are studied
in their own right --- Ahlbach and Swanson \cite{AS18} refine the cyclic sieving on words by fixing
the cyclic descent type alongside the major index --- and the two-variable form, in which a second
grading carries a second action, is the \emph{bicyclic} sieving of Barcelo, Reiner and Stanton
\cite{BRS08}. On semistandard tableaux in particular, Oh and Park \cite{OP19} build a cyclic action
out of the crystal structure whose sieving polynomial is $q^{-\kappa(\lambda)}
s_\lambda(1,q,\dots,q^{n-1})$ --- the frozen half of our alphabet, evaluated exactly as a sieving
statement requires --- and obtain a bicyclic statement for hooks; Alexandersson, Kantarc\i{}
O\u{g}uz and Linusson \cite{AKL21} do the same under promotion for several families of shapes. None
of these keeps a variable \emph{free}, which is what \eqref{eq:object} does and what makes the
question here a different one, but together they say what an answer should look like. The sharp
form is therefore: does each fixed $z$-weight fibre of $\mathrm{SSYT}(\lambda,t+2)$ carry a cyclic
sieving of its own?

The existence half of the question is decidable without exhibiting the action. Alexandersson and
Amini \cite{AA19} give a necessary and sufficient criterion for a cyclic action realizing a given
sieving polynomial to exist: for every $d\mid t$, the value at $\omega^d$ must be
$\sum_{j\mid d}j\,c_j$ with every $c_j$ a nonnegative integer, and the $c_j$ --- the numbers of
orbits of each size --- are then computable by triangularity. Applied to the candidate that the
alphabet itself suggests, $F(q,z)=s_\lambda(1,q,\dots,q^{t-1},z,z^{-1})$, whose $z$-grading is
already the refinement asked for, the criterion is met by almost every graded piece in the ranges we
can reach; but so it is for a deliberately wrong refinement, grading by $m_{t+1}+m_{t+2}$ instead,
which passes at the same rate. At those sizes the criterion does not discriminate, and we report the
attempt as evidence in neither direction.
\emph{One negative from the companion paper \cite{PaperII} belongs here, because it is easy to
mistake for an answer to this question.} What is tested and fails there is cyclic sieving in the
\emph{free reciprocal} parameter. The question above is the other one --- a cyclic action in the
frozen direction, refined by the free weight --- and nothing in that paper bears on it. We record the
distinction so that the two are not confused, which is the reason that paper records it too.
\end{problem}

\begin{problem}[the toggle on the lift]\label{prob:toggle}
The involution of \S\ref{sec:enum} is assembled at $t=2$ and nowhere else, and \S\ref{sec:lift} says
why: on $J_\lambda$ the requirement is arithmetically impossible for a third of the shapes in range,
and the set on which it is not is the lift $\widehat J_\lambda$ of Definition \ref{def:lift}. What
is missing is the involution itself. Stated so that the objects match: a weight- and sign-preserving
injection $\eta_\lambda:\mathcal L(\lambda)\hookrightarrow\widehat J_\lambda$, together with a
weight-preserving, sign-reversing involution $\iota$ on $\widehat J_\lambda$ whose fixed set is
exactly $\operatorname{im}\eta_\lambda$. For $\Phi_t\ne0$, Corollary \ref{cor:local} fixes the target
independently in each weight space, so it may be built one space at a time, and the vanishing case
has to be treated apart; by Remark \ref{rem:skeleton} its values are already forced on $398$ and $1176$ of
those spaces, and any solution has to agree with them there. The shape to try is a toggle on a
lattice-path model in which the choices of $\Omega_t$ are steps, which is the setting of
\cite{Fulmek}: there Cauchy--Binet and Laplace, the two sides to be matched, live in one picture.
What we do not have is a candidate rule. What we do have is that nothing numerical stands in its way,
which was not true before the lift.
\end{problem}

The distinction we keep throughout is between a \emph{conjecture}, which is a statement we believe
and have tested, and a \emph{problem}, which is not a claim at all.

Taken together the problems say where we think the object stops being ours. The evaluation is
complete and the alphabet is minimal, so what is left is not more of the same: it is whether the two
formalisms that own the two halves of \eqref{eq:object} compose, whether a sieving statement can keep
a variable, and what the coefficients $C_\mu$ do once the certificate we proposed for them is gone.
Of the three, the last is the one the paper needs and the first is the one we would rather see
answered.

\section{Closing: one alphabet, read three ways}\label{sec:closing}

The introduction called \eqref{eq:object} a torus element of $O(N)$ with a single free direction, and
said that the three results here are three readings of that sentence. They are easy to read as three
separate results instead, so we close by saying what separates them, and what is left after them.

\emph{The value.} Theorem \ref{thm:main} is what the free direction is worth: three factors over a
fixed denominator, for every $t$ and every $\lambda\in\PP_N$, with no hypothesis on the shape. The
two halves of the alphabet produce it together and neither produces it alone. The residue profile
supplies the orbit-side skeleton --- it settles the degenerate branch and names the two excess
classes --- and the reciprocal pair is what couples those classes into the interval triple, the order
of the beta word supplying the sign. Remove the pair and the statement degenerates to Littlewood's;
keep it and the profile still does not reach the triple, since already at $t=2$ the single two-class
profile carries ten different values of $d_3$ over $|\lambda|\le16$.

\emph{The invariant.} The same theorem read backwards says that the evaluation cannot see more of
$\lambda$ than an interval triple and a sign: two partitions of any sizes agreeing there take the
same value. On the two-class stratum Corollary \ref{cor:fibregf} writes down the generating function
of what the triple forgets; the size-three stratum and the degenerate one are Problem
\ref{prob:fibres}, and by Remark \ref{rem:signsees} the sign sees directions the triple does not.
This is the part we expect to outlast the formula, because it is a statement about the alphabet and
not about the proof.

\emph{The zero locus.} Past one pair the three-factor formula of Theorem \ref{thm:main} is gone ---
(D1) already gives a counterexample at $r=2$, and that no such formula exists for any $r\ge2$ is
Conjecture \ref{conj:rank2} rather than a theorem --- and the zero set is not. What the inversion half supplies there is a second mechanism, and
at $t=2$ a reading as well: by Theorem \ref{conj:crit} the character vanishes exactly when the beta set has
constant parity, which is a statement about residues, or the shape is self-complementary of odd
width, which is complementation; and
Corollary \ref{cor:dettwist} reads that criterion on the group as rigidity under the twist by
$\det$. The determinant $(-1)^{t+1}$ decides which component of $O(N)$ the alphabet lies in, and with
it which reading is available; it does not by itself decide which $\lambda$ vanish. That the two
mechanisms are genuinely two is what Corollary \ref{cor:oddgen} shows: every odd $t$ and every $r$
falls out of the extremal argument alone, with no external input and no appeal to complementation.

\emph{What remains.} The two reasons the introduction named do not stand equally at the end. The
orbit half is exhausted on this alphabet: Theorem \ref{thm:main} evaluates it in closed form for
every $t$, and
Theorem \ref{thm:extra} shows that within the two-row independence problem exactly one further
family behaves that way. The inversion half is not. It has been enough to say a good deal about
\emph{where} the character vanishes --- Theorem \ref{conj:crit} at $t=2$ for every number of pairs,
Theorem \ref{thm:suff} one implication for every $t$ and every $r$ --- but it has not told us what
the character \emph{is} there for more than one pair, and the other implication is Conjecture
\ref{conj:general}. The value never stops being a character value: it is $s_\lambda$ evaluated at a
torus element throughout. What ceases to hold past one pair is that the folded symplectic expansion
is always a genuine character; from two pairs on, properly virtual examples occur, and that is a
statement about the expansion and not about the object. What the paper hands over is smaller than
either half and more portable: a closed form carrying its sign, which makes this family a test case
for any implementation of twisted or universal characters; a vanishing test that costs two
subtractions on the beta set; and an enumeration in which the parameter is still free. A reader
looking for the next alphabet should look for the one that makes the second reason as complete as the
first has become here; on the evidence of \S\ref{sec:sharp} it will not be found by adding letters
to this one.

\subsection*{Acknowledgements}
This paper exists because Arvind Ayyer asked whether the rank-one case generalizes to arbitrary $t$
with the free variable symbolic. The question was better than the note that prompted it. We thank him
for it, and for the correspondence that followed. Remark \ref{rem:core} exists because Nishu Kumari
asked whether the condition $n_i(\lambda)=0$ could be phrased in terms of the $t$-core; she also
confirmed the reading of \cite[Theorem 5.3]{AK25} in \S\ref{sec:independence}, and it was a pointer
of hers that sent us to \cite[\S3]{AK25}, where Lemma \ref{lem:AtoSp} was waiting; we are grateful
for it. The involution of \S\ref{sec:enum} is assembled from three lemmas of her thesis
\cite{KumariThesis}, and is hers in that sense too.

A third debt is to a tool rather than to a conversation. The proof of Theorem \ref{thm:suff} turns
on being able to reflect a beta set inside a determinant without expanding anything, and the way to
do that --- scale each row by a power to send $x_i$ to $\bar x_i$, then reverse a block of columns
--- is the manoeuvre of Ayyer and Behrend \cite[\S3]{AB19}, used on the root-of-unity side by Albion
\cite{Alb23,Alb25}. We adopted it because it is plainly the right instrument for an inversion-closed
alphabet, and we kept it because it turned out to be more robust than the setting it was written
for: it never uses that the reflection is applied to the whole alphabet, and so it goes through for
a part of one. Theorem \ref{thm:suff}, and with it the reflection-based part of
\S\ref{sec:zeros}, rests on that manoeuvre; Corollary \ref{cor:oddgen} does not, and is
proved from the extremal argument alone.

\subsection*{Code, data and disclosure}\label{sec:disclosure}\label{sec:disclosure}
Every computation quoted in this paper is performed by a named script distributed as an ancillary
file, and every number quoted is reproduced by an archived run of one; \S\ref{sec:verif} tabulates
which script answers for which claim. The same scripts and the same archived output are also kept at
\url{https://github.com/karlesmarin/schur-orbit-and-reciprocal-pair}, which is a convenience: the
ancillary files carry everything the paper appeals to. Generative AI was used throughout as a
research assistant. The mathematics, and the responsibility for it, are the author's.


\end{document}